\tikzset{>=latex}
\tikzset{inner sep=3pt}
\theoremstyle{plain}      
\newtheorem{thm}{Theorem}[section]
\newtheorem{lem}[thm]{Lemma}
\newtheorem{prop}[thm]{Proposition}
\newtheorem{cor}[thm]{Corollary}
\newtheorem{conj}[thm]{Conjecture}
\newtheorem{defn}[thm]{Definition}
\newtheorem{ex}[thm]{Example}
\newtheorem{rmk}[thm]{Remark}
\newtheorem{example}[thm]{Example}
\DeclareMathOperator{\im}{\mathsf{Im}}
\DeclareMathOperator{\Fuk}{\mathsf{Fuk}}
\DeclareMathOperator{\MF}{\mathsf{MF}}
\DeclareMathOperator{\tw}{\mathsf{tw}}
\newcommand{\Z}{\mathbb{Z}}
\def\bbK{\mathbb{K}}
\newcommand{\CC}{\mathcal{C}}
\newcommand{\cA}{\mathcal{A}}
\newcommand{\DD}{\mathcal{D}}
\newcommand{\PP}{\mathcal{P}}
\newcommand{\QQ}{\mathcal{Q}}
\newcommand{\OC}{{\mathcal OC}}
\newcommand{\m}{\mathfrak{m}}
\newcommand{\fl}{\mathfrak{l}}
\newcommand{\n}{\mathfrak{n}}
\def\Ai{$A_\infty$}
\newcommand{\id}{{\sf id}}
\newcommand{\sh}{{\sf sh}}
\newcommand{\Cl}{{\sf Cl}}
\newcommand{\GG}{{\mathbb{G}}}
\newcommand{\wt}{{\sf wt}}
\newcommand{\Aut}{{\sf Aut}}
\newcommand{\fr}{{\sf fr}}
\newcommand{\sym}{{\sf sym}}
\newcommand{\Sym}{{\sf Sym}}
\newcommand{\sgn}{{\sf sgn}}
\newcommand{\ra}{{\rightarrow}}
\newcommand{\one}{{\mathds{1}}}
\begin{document}
	
\title{On the Morita invariance of Categorical Enumerative Invariants}

\author{Lino Amorim and Junwu Tu}

\date{}

	\maketitle
	
	\begin{abstract}
		Categorical Enumerative Invariants (CEI) are invariants associated with a unital, cyclic, smooth $A_\infty$-category and a splitting of its non-commutative Hodge filtration. In this paper, we extend the definition of CEI to Calabi-Yau $A_\infty$-categories with a splitting. Moreover, we formulate and prove the Morita invariance of CEI. As part of our proof, we develop tools to construct unital and cyclic models for Calabi-Yau categories. In particular, we prove a unital version of Kontsevich-Soibelman's Darboux theorem. 
        
      As an application, we compute CEI in some new examples. Also, when applied to derived categories of coherent sheaves, our results yield new invariants of smooth, proper Calabi-Yau 3-folds.
	\end{abstract}

\section{Introduction}

Let $\mathbb{K}$ be a field of characteristic zero and let $A$ be a $\Z/2\Z$-graded, cyclic $A_\infty$-algebra over $\mathbb{K}$. Assume that $A$ is 
\begin{quote}
{($\dagger$)} smooth, finite dimensional, unital, and satisfies the Hodge-to-de-Rham degeneration property. (The Hodge-de Rham degeneration property is automatic if $A$ is
$\Z$-graded~\cite{Kal}.)
\end{quote}
Assume that the cyclic pairing is of parity $d\in \Z/2\Z$.  Let $s$ be a choice of splitting of the non-commutative Hodge filtration, (see Definition~\ref{defi:splitting}). Associated to the pair $(A,s)$, we obtain from~\cite{Cos2,CT} a set of $\mathbb{K}$-valued enumerative invariants, with insertions in the Hochschild homology of $A$. Given integers $g\geq0$, $n\geq1$, satisfying $2g-2+n>0$ one constructs:
\begin{align*}
 \langle \alpha_1 u^{k_1},\ldots, \alpha_n u^{k_n}\rangle_{g,n}^{A,s} & \in \mathbb{K},\\
 \textrm{for any collection} \ \ \alpha_1,\ldots,\alpha_n &\in HH_*(A)[d], \; k_1,\ldots,k_n\in \mathbb{N}.
\end{align*}
These invariants are called {\sl Categorical Enumerative Invariants (CEI for short)} in~\cite{CT}. 

The main motivation to study these invariants goes back to Kontsevich's proposal in \cite{Kon}, that the enumerative predictions of Mirror Symmetry \cite{COGP, BCOV} should follow from Homological Mirror Symmetry. More precisely, on the A-side of Mirror Symmetry, he proposes one can recover the Gromov-Witten invariants of a symplectic manifold $X$ from its Fukaya category ${\sf Fuk}(X)$. Similarly on the B-side, by the same process, one can recover the period integral of a Calabi-Yau variety $Y$ from the category of coherent sheaves ${\sf Coh}(Y)$. Assuming this process of recovering invariants from the categories is Morita invariant, we could then automatically deduce Enumerative Mirror Symmetry from Homological Mirror Symmetry. The CEI from \cite{CT}, conjecturally, provide the mechanism to extract enumerative invariants from categories, predicted by Kontsevich (including the higher genus invariants). Moreover, when applied to appropriate categories/algebras the CEI are expected to recover many other enumerative invariants in the literature, such as FJRW invariants and BCOV invariants. We refer to \cite{CT} for a more detailed list of invariants and the corresponding categories.

\vspace{.2cm}

The goal of this paper is to extend the definition of CEI to \emph{Calabi-Yau} \Ai-categories and then formulate and prove their Morita invariance. 

The main difficulty lies in the very formulation of Morita invariance, since cyclic structures don't even pull-back via $A_\infty$ quasi-isomorphisms. For this reason, we need to replace cyclic structures with something more flexible, namely Calabi-Yau structures. We would like to note that there are two {\sl a priori} different definitions in the literature: proper Calabi-Yau structures~\cite{KS} and smooth Calabi-Yau structures~\cite{KTV} (sometimes also referred to as right and left). However, as proved in \cite{GPS} these two definitions are equivalent under the assumption ($\dagger$), so we will not distinguish them.

\subsection{Main results} 
Let $\CC$ be a {\sl small} $\Z/2\Z$-graded $A_\infty$-category  over $\mathbb{K}$. We assume that $\CC$ is
\begin{quote}
{($\dagger\dagger$)} smooth, proper, unital, and satisfies the Hodge-to-de-Rham degeneration property. 
\end{quote}
Notice that we no longer assume cyclicity, and we have replaced the finite-dimensional condition in ($\dagger$) by the properness condition, that is, the $\hom$ spaces in the category have finite dimensional cohomology. 

Recall from \cite{GPS}, an element $\omega\in HH_\bullet(\CC)$ is called a weak Calabi-Yau structure if its induced pairing on $\CC$ defined by $a\otimes b \mapsto \langle \m_2(a,b), \omega\rangle_{\sf Muk}$ is homologically non-degenerate, see Definition~\ref{def:cy}. Fix $d\in \Z/2\Z$, termed parity, and consider pairs $(\omega, s) $ where 
\begin{itemize}
\item[--] $\omega$ is a weak Calabi-Yau structure of $\CC$ of degree $d$, 
\item[--] $s$ is a splitting of the non-commutative Hodge filtration of $\CC$. 
\end{itemize}
We shall refer to a pair $(\omega, s)$ as an extended Calabi-Yau structure. Next, we formulate a unitality condition for the pair $(\omega,s)$. We denote by  ${\tw}^\pi\CC$ the triangulated split-closure of $\CC$. It is well known that the embedding $\iota: \CC \hookrightarrow {\tw}^\pi\CC$ induces isomorphisms in Hochschild and cyclic homologies, and hence we can identify extended Calabi-Yau structures in $\CC$ with those in ${\tw}^\pi\CC$. In the following, we shall make use of this identification freely. An extended Calabi-Yau structure $(\omega,s)$ is called unital if there exists a split-generating subcategory $\mathcal{A}\subset \tw^\pi \CC$ such that
\[  \langle \one_X , s(\omega)\rangle_{{\sf hres}} \in \mathbb{K}, \;\;\;\forall X\in \mathcal{A},\]
where $\one_X$ in the identity morphism of an object $X$ in the category $\mathcal{A}$, and $\langle -, - \rangle_{{\sf hres}}$ denotes the higher residue pairing (see Subsection~\ref{sec:trivCY}). Denote by $\mathcal{M}^d_\CC$ the set of unital extended Calabi-Yau structures of parity $d$.

Our first result is that a unital pair $(\omega,s)\in \mathcal{M}^d_\CC$, determines a cyclic model for the split-generating subcategory $\mathcal{A}\subset \tw^\pi \CC$. More precisely, there is a minimal, cyclic, unital \Ai-category $\mathcal{A}'$ and a \Ai \ quasi-isomorphism
\[\mathcal{A}' \to \mathcal{A}.\]
Without requiring unitality, a version of this result was first proved by Kontsevich-Soibelman \cite{KS} for \Ai-algebras. The unitality requirement creates new phenomena. To deal with this we introduce a new ``unital version" of cyclic homology (see Section 3.1) which might be of independent interest.

Via the quasi-isomorphism $\mathcal{A}'\cong \mathcal{A}$, $s$ determines a splitting of $\mathcal{A}'$ (still denoted by $s$). We can then apply the CEI construction from \cite{CT} to $(\mathcal{A}',s)$. This defines a function
\begin{equation*}
 \langle \ldots \rangle_{g,n}^{\mathcal{A}',s}:  HH_\bullet(\CC)[d][[u]]^{\otimes n} \ra  \mathbb{K}
 \end{equation*}
for each pair of integers $(g,n)$ such that $2-2g-n<0$. Our second main result is that this function is independent of the choice of $\mathcal{A}$ and its cyclic unital model $\mathcal{A}'$ and therefore we have a well-defined function
\begin{equation}\label{eq:intro-formulation}
 F_{g,n}^{\CC}: \mathcal{M}^d_\CC \times  HH_\bullet(\CC)[d][[u]]^{\otimes n} \ra  \mathbb{K}.
\end{equation}
See Section \ref{sec:definition} for details.

In order to formulate the Morita invariance of this construction first recall \cite{She} that two \Ai-categories $\CC$ and $\DD$ are Morita equivalent if and only if ${\tw}^\pi\CC$ and ${\tw}^\pi\DD$ are quasi-equivalent.

\begin{thm}~\label{thm:intro}
Let $\mathcal{C}$ and $\mathcal{D}$ be two $\Z/2\Z$-graded $A_\infty$-category over a field $\mathbb{K}$ of characteristic zero. Assume that they are both small categories and satisfy Condition ($\dagger\dagger$).  Assume $\CC$ and $\DD$ are Morita equivalent through a quasi-equivalence $f: {\tw}^\pi\CC\to{\tw}^\pi\DD$. Then the naturally defined push-forward map
\[ (f)_*: \mathcal{M}^d_\CC \times  HH_\bullet(\CC)[d][[u]]^{\otimes n} \ra \mathcal{M}_\DD^d \times  HH_\bullet(\DD)[d][[u]]^{\otimes n}, \;\; \forall d\in \Z/2\Z,\]
intertwines CEI, i.e. we have
\[ F^\CC_{g,n} = F^\DD_{g,n} \circ (f)_*, \;\; \forall (g,n), \;\; 2-2g-n<0.\]
\end{thm}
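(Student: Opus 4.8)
The plan is to reduce the theorem to the independence result already quoted (that $F^\CC_{g,n}$ depends neither on the choice of split-generating $\mathcal{A}\subset\tw^\pi\CC$ witnessing unitality nor on the choice of cyclic unital model $\mathcal{A}'$ of $\mathcal{A}$), after first constructing the push-forward $(f)_*$ and checking that it is compatible with all the structures entering the definition of $F_{g,n}$. The key point will be that a Morita equivalence lets us regard $\tw^\pi\CC$ and $\tw^\pi\DD$ as "the same" split-closed category, so that one and the same cyclic model can be used to compute both sides.

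\textbf{Step 1: construction of $(f)_*$.} A quasi-equivalence $f\colon\tw^\pi\CC\to\tw^\pi\DD$ induces isomorphisms on Hochschild homology and on negative/periodic cyclic homology, compatibly with the $u$-linear structure and with the $\iota$-isomorphisms identifying these invariants of a category with those of its split-closure; this already defines the map on the factor $HH_\bullet(\CC)[d][[u]]^{\otimes n}$ by functoriality. On $\mathcal{M}^d_\CC$ I would set $\omega\mapsto f_*\omega$ and $s\mapsto f_*\circ s\circ f_*^{-1}$, and then verify: (i) $f_*\omega$ is again a weak Calabi-Yau structure of parity $d$, using that $f_*$ intertwines $\m_2$ on cohomology and is compatible with the Mukai pairing, hence preserves homological non-degeneracy (and essential surjectivity of $f$ lets one check this on all objects of $\tw^\pi\DD$); (ii) $f_*\circ s\circ f_*^{-1}$ splits the non-commutative Hodge filtration of $\DD$, by functoriality of the filtration and of its splittings; (iii) $(f_*\omega,f_*s)$ is unital, with witnessing subcategory $f(\mathcal{A})$: it split-generates because $f$ is a quasi-equivalence, $f$ carries $\one_X$ to $\one_{f(X)}$ and is compatible with the higher residue pairing, so $\langle\one_{f(X)},f_*(s(\omega))\rangle_{\sf hres}=\langle\one_X,s(\omega)\rangle_{\sf hres}\in\mathbb{K}$. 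Naturality of $(f)_*$ under composition of quasi-equivalences, and its triviality on identities, is then immediate from the functoriality of all these constructions.

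\textbf{Step 2: comparison of invariants.} To compute $F^\CC_{g,n}(\omega,s;\alpha_1,\dots,\alpha_n)$ I am free, by the independence result, to pick any split-generating $\mathcal{A}\subset\tw^\pi\CC$ for which $(\omega,s)$ is unital and any cyclic unital model $\mathcal{A}'$ of $\mathcal{A}$. Now $f$ restricts to a quasi-equivalence $\mathcal{A}\to f(\mathcal{A})\subset\tw^\pi\DD$, and by Step~1(iii) $(f_*\omega,f_*s)$ is unital with witnessing subcategory $f(\mathcal{A})$. Hence $\mathcal{A}'$ — together with its same cyclic structure — is equally a cyclic unital model of $f(\mathcal{A})$ adapted to $(f_*\omega,f_*s)$, since its cyclic pairing represents $f_*\omega$ and the splitting transported to it is $f_*s$, under the $HH$/$HC$ identifications that are matched by $f_*$. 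By the independence result applied on the $\DD$ side, $F^\DD_{g,n}(f_*\omega,f_*s;-)$ may be computed from $f(\mathcal{A})$ and this same $\mathcal{A}'$. Feeding $\mathcal{A}'$ into the CEI construction therefore produces literally the same numbers on both sides; it only remains to check that $\alpha_i\in HH_\bullet(\CC)$ and $f_*\alpha_i\in HH_\bullet(\DD)$ map to the same class in $HH_\bullet(\mathcal{A}')$, which is a diagram chase once the quasi-isomorphism $\mathcal{A}'\simeq f(\mathcal{A})$ is chosen compatible with $f$. This gives $F^\CC_{g,n}=F^\DD_{g,n}\circ(f)_*$ on unital inputs.

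\textbf{Expected main obstacle.} The delicate points are (a) the verifications in Step~1 that $f_*$ preserves weak Calabi-Yau structures and, especially, unitality — which amounts to spelling out that $f$ is compatible with the Mukai pairing, with the higher residue pairing, and with units at the level of Hochschild and cyclic homology — and (b) making the notion of "cyclic unital model of $\mathcal{A}$ adapted to $(\omega,s)$" precise enough that both the independence result applies verbatim and the identification "cyclic model of $\mathcal{A}\iff$ cyclic model of $f(\mathcal{A})$" used in Step~2 is manifest. The underlying hard phenomenon is the one already confronted in the independence result on which everything rests: cyclic structures do not pull back along $A_\infty$ quasi-isomorphisms, and it is only after repackaging them as a Calabi-Yau class together with a splitting of the non-commutative Hodge filtration that $F_{g,n}$ becomes visibly a function of Morita-invariant data. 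I would therefore concentrate the technical effort on (a) and (b) above, treating the numerical comparison itself as a formal consequence of the independence result.
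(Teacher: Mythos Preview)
Your proposal is correct and follows essentially the same approach as the paper: construct $(f)_*$ by $\omega\mapsto f_*\omega$, $s\mapsto f_*\circ s\circ f_*^{-1}$, verify that $f(\mathcal{A})\subset\tw^\pi\DD$ is a split-generating subcategory witnessing unitality of $(f_*\omega,f_*s)$, and then observe that the composition $f\circ G_{\mathcal{A}}:\mathcal{A}'\to f(\mathcal{A})$ exhibits the \emph{same} cyclic unital model $\mathcal{A}'$ for the $\DD$-side, so the independence result gives the equality. The only technical wrinkle the paper adds that you omit is replacing $\mathcal{A}$ by its skeleton first (via Lemma~\ref{lem:inclusion}), so that $f|_{\mathcal{A}}$ is genuinely injective on objects and $f(\mathcal{A})$ is an honest subcategory of $\tw^\pi\DD$.
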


This theorem is an important step towards showing that CEI provide a solution to Kontsevich's original proposal~\cite{Kon}, thus helping unveil some of the mysteries of mirror symmetry. From our point of view, the most important step is then comparing the CEI with the \emph{geometric invariants.} For Fukaya categories, we propose a precise form of this comparison between CEI and Gromov-Witten invariants, see Conjecture~\ref{conj:a-model}. 

In Section~\ref{sec:ex-app} we give a few applications of the above theorem. Namely, we compute the CEI of the category of matrix factorizations of a polynomial with a non-degenerate critical point.

In the case of derived categories of coherent sheaves, using a canonical splitting (the ``Blanc-To\"en" splitting) constructed by Blanc~\cite{Bla}, we argue the resulting CEI are in fact birational invariants when the complex dimension is less or equal to three. This yields new birational invariants of smooth and proper Calabi-Yau $3$-folds. Remarkably, these birational invariants seem to also appear as partition functions of topological string theory~\cite{BCOV}, as pointed out by Costello~\cite{Cos2}. 

\subsection{Strategy of the proof}

A unital pair $(\omega, s)\in \mathcal{M}^d_\CC$, determines a unital Calabi-Yau structure $s(\omega)$ (see Definition~\ref{def:cy}) of parity $d$. To define the map $F^\CC_{g,n}$, we first choose a split-generating subcategory $\mathcal{A}\subset \tw^\pi \CC$ from the definition of the unitality condition. Then we prove the existence of a unital, cyclic model of the Calabi-Yau category $\big(\mathcal{A},s(\omega)\big)$ given by a unital $A_\infty$ quasi-isomorphism
\[ f: \mathcal{A}' \to \mathcal{A}.\]
This is proved in Proposition~\ref{prop:cyclic-even} and Proposition~\ref{prop:cyclic-odd}. The proof has two steps, first we show a Calabi-Yau structure is equivalent to a \emph{strong homotopy inner product} \cite{Cho1} and then show this can be made into a cyclic structure by a Darboux-like theorem. The unital version, follows the same strategy. However $s(\omega)$ might not determine a \emph{unital Calabi-Yau structure}, when $d$ is even, which is why we need to restrict to unital splittings to ensure that $s(\omega)$ lifts to a unital Calabi-Yau structure.

 In the even case, the unital cyclic model $\mathcal{A}'$ is then unique up to unital, cyclic $A_\infty$-functor, see Proposition~\ref{prop:cyclic-even}. In contrast, in the odd case, we always have existence of a unital, cyclic model, since $s(\omega)$ always lifts to a unital Calabi-Yau structure. However the uniqueness of $\mathcal{A}'$ fails to hold - see Example~\ref{ex:non-existence}. 

We then apply the CEI construction to $\mathcal{A}'$ in order to define the value of $F^\CC_{g,n}$ on the pair $(\omega,s)\in \mathcal{M}_\CC^d$. The main difficulty then is to prove that the definition of $F^\CC_{g,n}$ is independent of the choice of the cyclic model $\mathcal{A}'$. We first show that CEI are invariant under cyclic \Ai-isomorphism. In order to do this, we prove that a cyclic \Ai-isomorphism is equivalent to a \emph{cyclic pseudo-isotopy} - a notion introduced by Fukaya \cite{Fuk}. And then show that CEI can be defined in families which allows us to prove that they are invariant under pseudo-isotopies. This is accomplished in Theorem~\ref{thm:cyclic-inv}. In the even case, this is enough to show that $F^\CC_{g,n}$ are independent of the model, since $\mathcal{A}'$ is unique up to unital, cyclic isomorphism.  In the odd case the cyclic model is not unique. To get around this problem, in Appendix~\ref{app:b-f} we prove that CEI are invariant under tensoring with the Clifford algebra $\Cl=\mathbb{K}[\epsilon]$. This property of invariants is known as the Boson-Fermion correspondence in the physics literature.  By applying this tensor trick, we can reduce the odd parity case to the even case, thus bypassing the non-uniqueness of unital cyclic models in this case.
Once we prove that $F^\CC_{g,n}$ is well-defined, its Morita invariance follows relatively easily, see Subsection~\ref{sec:definition}.

\subsection{Organization of the paper}

Section~\ref{sec:cy1} is mainly devoted to prove that a Calabi-Yau category (i.e. an $A_\infty$-category endowed with a strong Calabi-Yau structure) admits a cyclic model which is unique up to $A_\infty$ quasi-isomorphisms. Section~\ref{sec:cy2} is the unital version of Section~\ref{sec:cy1}. Adding the unitality condition requires the introduction of a new version of cyclic homology.

After briefly reviewing the definition of CEI in Section~\ref{sec:defi}, we proceed to formulate and prove their Morita invariance in Section~\ref{sec:morita}, using the results from Section~\ref{sec:cy2}. In Section~\ref{sec:ex-app}, we present some basic examples and applications, as well as some conjectures in the case of Fukaya categories and derived categories of coherent sheaves.

Finally, Appendix~\ref{sec:sign} deals with sign diagrams that appear in different places of the paper. In  Appendix~\ref{app:b-f}, we prove that CEI are invariant under tensoring with the Clifford algebra $\Cl=\mathbb{K}[\epsilon]$.

\subsection{Acknowledgments}
We would like to thank Cheol-Hyun Cho, Sasha Polishchuk, Yan Soibelman and Arkady Vaintrob for useful conversations about cyclic and Calabi-Yau \Ai-categories. LA would like to thank Ilia Zharkov for some helpful combinatorics lessons. 

JT was partially supported by the National Key Research and Development
Program of China No. 2023YFA1009803 and the NSFC grant 12071290.

\subsection{Conventions and Notations.} We shall always work over a field $\mathbb{K}$ of characteristic zero. Given a $\Z/2$-graded vector space $V$ and an element $v\in V$, we denote by $|v|$ the degree of $v$ and by $|v|'$ its shifted degree. That is $|v|'=|v|-1$. 

We denote by $\Sym^k (V)$ the $k$-th (graded) symmetric power of $V$, and by $\Sym (V)=\oplus_k \Sym^k (V)$.


For an $A_\infty$-category $\CC$ over $\mathbb{K}$, we use the notation $\hom_\CC(-,-)$ for the morphism space of $\CC$, while the notation $Hom(-,-)$ for the space of $\mathbb{K}$-linear maps between vector spaces over $\mathbb{K}$.

We use bold letters to denote elements 
$\textbf{x}= x_1 \otimes \ldots \otimes x_n $ and write $|\textbf{x}|'=\sum_i|x_i|'$ for its shifted degree.
Furthermore, we use Sweedler notation for the coproduct on a tensor coalgebra, namely:
$$ \sum {\bf x}^{(1)} \otimes {\bf x}^{(2)} = \sum_{i=0}^{n} (x_1\otimes \ldots \otimes x_i)\otimes (x_{i+1}\otimes \ldots \otimes x_n). $$

When working with a homologically $\mathbb{Z}$-graded complex $V=\oplus_n V_n$, its shift $V[k]$ is the graded vector space whose $n$-th graded piece is $V_{n-k}$.

\section{Calabi--Yau \Ai-categories}\label{sec:cy1}

\subsection{\Ai-bimodules and Hochschild invariants}

In this subsection, we recall some basic definitions of \Ai -bimodules, Hochschild (co) homology and related constructions.

Let $\CC$ be an \Ai -category. We use the following notation
\[\CC(X_0, X_1, \ldots, X_n):= \hom_\CC(X_0, X_1)[1] \otimes \cdots \otimes \hom_\CC(X_{n-1}, X_n)[1],\]
with the convention that $\CC(X_0, X_1, \ldots, X_n)=\mathbb{K}\cdot 1_{X_0}$, when $n=0$.
\begin{defn}
	Hochschild cochains of length $n$ are defined as:
	\[ C^\bullet(\CC)^n=\prod_{X_0, \ldots, X_n} Hom^\bullet(\CC(X_0, X_1, \ldots, X_n), \CC(X_0, X_n))[-1].
	\] 
	The Hochschild cochain complex is defined as $C^\bullet(\CC)=\prod_{n\geq 0} C^\bullet(\CC)^n$. A Hochschild cochain $\varphi=\prod_n \varphi_n$ is said to be of order $n$ if $\varphi_0=\ldots=\varphi_{n-1}=0$.
	
	The normalized (or reduced) Hochschild complex $C^\bullet_{red}(\CC)$ is defined as the subspace of Hochschild cochains $\varphi$ satisfying $\varphi_n(\ldots, \one_{X_i}, \ldots )=0$ for all $X_i$ and $n\geq 1$. 
\end{defn}

On the Hochschild cochain complex, one defines the Gerstenhaber product:
\[\varphi \bullet \psi (\textbf{x})= \sum (-1)^{|\textbf{x}^{(1)}|'|\psi|'}\varphi(\textbf{x}^{(1)}, \psi(\textbf{x}^{(2)}), \textbf{x}^{(3)}),\]
 and the corresponding bracket $[\varphi, \psi]:=\varphi \bullet \psi - (-1)^{|\varphi|'|\psi|'}\psi \bullet \varphi$. The differential on $C^\bullet(\CC)$ is defined as $\delta(-):=[\m,-]$, where $\m=\prod_n \m_n$ are the \Ai-operations. The corresponding cohomology is called the Hochschild cohomology of $\CC$ and is denoted by $HH^\bullet(\CC)$.

It is elementary to check that $C^\bullet_{red}(\CC)$ is closed under the Gerstenhaber product and a subcomplex of $C^\bullet(\CC)$, assuming that $\CC$ is strictly unital. On the cohomology level, this brings nothing new as the inclusion map $C^\bullet_{red}(\CC) \to C^\bullet(\CC)$ is a quasi-isomorphism, see \cite{Lod}, \cite{Cho}. 

One can reinterpret Hochschild cochains as \emph{vector fields} in the following way: first define
\[B\CC :=\bigoplus_{n\geq 0} \bigoplus_{X_0, \ldots, X_n} \CC(X_0, \ldots, X_n).
\]
As explained in \cite[Example 2.1.6]{KS}, $B\CC$ is naturally a counital, coalgebra. A Hochschild cochain $\varphi$ can be extended uniquely to a \emph{coderivation} of this coalgebra by the formula
\[\widehat{\varphi}(\textbf{x}):= \sum_n\sum (-1)^{\star|\phi|'} \textbf{x}^{(1)}\otimes \varphi_n(\textbf{x}^{(2)})\otimes \textbf{x}^{(3)}.
\]

\begin{rmk}\label{rmk:codev}
	In fact, one can check that any coderivation of $B\CC$ with the following support condition
	\begin{equation}
		\widehat{\varphi}(\CC(X_0, \ldots, X_n)) \subset \bigoplus_{Z_0, \ldots, Z_k} \CC(X_0, Z_0)\otimes \CC(Z_{0}, Z_{1}) \otimes \cdots \otimes \CC(Z_k, X_{n})
	\end{equation}
is uniquely determined by a Hochschild cochain by the formula above.
\end{rmk}
With this notation, we have $\varphi\bullet\psi=\varphi\circ\widehat{\psi}$. Also note that, due to our degree convention, we have $|\widehat{\varphi}|=|\varphi|+1$. With this in mind, an easy computation shows $\widehat{[ \varphi, \psi ]}= [\widehat{\varphi}, \widehat{\psi}]$.

\begin{defn}\label{defn:cc}
	The Hochschild complex of $\CC$ is defined as the vector space $$ C_\bullet(\CC)= \bigoplus_{X_0, \ldots, X_n} \CC(X_n, X_0) \otimes \CC(X_0, X_1, \ldots, X_n)[-1],$$
	with differential
	\begin{align}
		b(x_0 \otimes \textbf{x})= &\sum (-1)^{\star} x_0 \otimes \textbf{x}^{(1)}\otimes \m(\textbf{x}^{(2)})\otimes \textbf{x}^{(3)} + \sum (-1)^@ \m(\textbf{x}^{(3)}, x_0, \textbf{x}^{(1)})\otimes \textbf{x}^{(2)}
	\end{align}
	where $\star=|x_0|'+|\textbf{x}^{(1)}|' $ and $@= (|x_0|'+|\textbf{x}^{(1)}|'+ |\textbf{x}^{(2)}|') |\textbf{x}^{(3)}|'$.
\end{defn}

\begin{rmk}
	In the rest of the paper we will use the symbol @ for the sign obtained, following the Koszul convention for the shifted degrees, from rotating the inputs of the expression from the initial order.
\end{rmk}	

As for cochains, there is a \emph{reduced} Hochschild chain complex $C_\bullet^{red}(\CC)$. It is defined as the quotient of $C_\bullet(\CC)$ by the subcomplex spanned by chains of the form $x_0 \otimes x_1 \cdots \otimes x_i \otimes \one_{X_i} \otimes \cdots \otimes x_n$, for $i\geq 0 $. Once again the natural map $C_\bullet(\CC)\to C_\bullet^{red}(\CC)$ is a quasi-isomorphism \cite{Lod, Cho}.

We also recall the cyclic complex of $\CC$. We introduce two additional operators $b' , t : C_\bullet(\CC) \to C_\bullet(\CC)$. Let $t$ be the map (of degree 0) defined as
\[t(x_0 \otimes \cdots \otimes x_n)= (-1)^@ x_n \otimes x_0 \otimes \cdots \otimes x_{n-1}.
\]
The map $b'$, of degree 1, is defined as
\[b'(x_0 \otimes \textbf{x})=  \sum \m( x_0 \otimes \textbf{x}^{(1)})\otimes \textbf{x}^{(2)}+\sum (-1)^{\star} x_0 \otimes \textbf{x}^{(1)} \m(\textbf{x}^{(2)})\otimes \textbf{x}^{(3)}. \]
An important, but easy to verify, fact about this map is that it is a differential, that is $(b')^2=0$. 

\begin{defn}
	Let $\CC^{bar}$ be the complex whose underlying vector space is $C_\bullet(\CC)$ equipped with the differential $b'$. This is called the bar complex of $\CC$.
\end{defn}

The two maps above satisfy the following relation
\begin{equation}\label{eq:b'b}
	(\id-t)b'=b(\id-t).
\end{equation}	
This is a straightforward check. For the case of algebras, see \cite{Lod}.

\begin{defn}\label{defn:cyclic}
	Let $ C^\lambda_\bullet(\CC) = \left. C_\bullet(\CC) \middle/ \im(\id - t) \right.$. The relation in Equation (\ref{eq:b'b}) implies that the differential $b$ induces a differential on $ C^\lambda_\bullet(\CC) $. We will refer to the resulting complex as the (positive) cyclic complex of $\CC$, it is also sometimes called the Connes' complex.
	Its homology is called the cyclic homology of $\CC$, denoted by $HC^{\lambda}_\bullet(\CC)$.
\end{defn}

\begin{rmk}
It is often convenient to use different chain models for the cyclic homology of $\CC$, sometimes also referred to as \emph{positive cyclic homology}. We will make use of the $u$-model, $\big( C^{red}_\bullet(\CC)[u^{-1}], b+uB \big)$ where $B$ is the Connes' differential (see \cite{AT} for example). It is proved in \cite{Lod} (for algebras) and \cite{Cho} (in the \Ai \ case) that the homology $H_\bullet\big( C^{red}_\bullet(\CC)[u^{-1}], b+uB \big)$, usually denoted by $HC_\bullet^+(\CC)$,  is isomorphic to $HC^{\lambda}_\bullet(\CC)$.
\end{rmk}

Let $\CC$ and $\DD$ be \Ai-categories. An \Ai-pre-functor $F:\CC \to \DD$ consists of a map on objects and a sequence of maps
\[ F_n: \CC(X_0, \ldots, X_n) \to \DD(F X_0, F X_n),
\]
for $n\geq 1$. We say $F$ is unital if $F_1(\one_{X_i})=\one_{F X_i}$ and $F_n(\ldots, \one_{X_i}, \ldots)=0$ for all $n\geq 2$.

These can be uniquely extended to a counital, \emph{coalgebra} homomorphism $\widehat{F}: B\CC \to B\DD$, by the formula
\[\widehat{F}(\textbf{x})= \sum F(\textbf{x}^{(1)})\otimes F(\textbf{x}^{(2)}) \otimes \cdots \otimes F(\textbf{x}^{(k)}),
\]
and $\widehat{F}(1_X)=1_{F X}$. As in the case of coderivations, all the counital, coalgebra homomorphisms with the same support condition as in Remark \ref{rmk:codev} are obtained via this formula.


$F$ is called an \Ai-functor if $\widehat{F}\circ \widehat{\m_\CC} = \widehat{\m_\DD}\circ \widehat{F}$, or equivalently $F\circ \widehat{\m_\CC} = \m_\DD\circ \widehat{F}$. There is an (associative) composition operation for \Ai-functors (or pre-functors), denoted by $\circ$, and there is an identity functor (\cite[Section 1]{Sei}). Moreover $\widehat{F\circ G}= \widehat{F}\circ\widehat{G}$. 

In the case where a functor (or pre-functor) $F$ is a bijection on objects and all the $F_1$ maps are linear isomorphisms, it admits a strict inverse, which we denote by $F^{-1}$.\\

 An \Ai-functor $F$ induces a chain map $F_*: C_\bullet(\CC) \to C_\bullet(\DD)$ defined as follows
\begin{equation}\label{eq:funHH}
	F_*(x_0 \otimes \textbf{x})= \sum (-1)^@ F( \textbf{x}^{(k+1)} , x_0, \textbf{x}^{(1)} ) \otimes F(\textbf{x}^{(2)} )\otimes \cdots \otimes F(\textbf{x}^{(k)} )
\end{equation}
We remark that when $F$ is unital, this formula determines a chain map between the reduced complexes as well. This map is also functorial meaning, $(F\circ G)_*=F_*\circ G_*$.

There is  a similar induced chain map between the bar complexes, $F'_*:\CC^{bar}\to \DD^{bar}$ defined as 
\begin{equation}\label{eq:funHH'}
	F'_*(x_0 \otimes \textbf{x})= F(  x_0, \textbf{x}^{(1)} ) \otimes F(\textbf{x}^{(2)} )\otimes \cdots \otimes F(\textbf{x}^{(k)} ).
\end{equation}
Again, $(F\circ G)'_*=F'_*\circ G'_*$.
The two chain maps are related by the equation
\begin{equation}\label{eq:F'F}
	(\id-t)F'_*=F_*(\id-t).
\end{equation}	
In particular, this relation implies that $F_*$ induces a chain map between the cyclic complexes $ C^\lambda_\bullet(\CC) $ and $ C^\lambda_\bullet(\DD) $.

We now recall some facts about \Ai-bimodules following \cite{Sei, Sei2}.

\begin{defn}
	An \Ai-bimodule over $\CC$ (or $\CC-\CC$ bimodule) consists of a graded vector space $\PP(X,Y)$ for each pair of objects $X,Y$ in $\CC$ and a family of maps 
	\[\n_{r,s}: \CC(X_0, \ldots, X_r) \otimes \PP(X_r,Y_0) \otimes \CC(Y_0, \ldots, Y_s) \to \PP(X_0,Y_s)[1],\]
	for objects $X_0,\ldots X_r, Y_0, \ldots Y_s$, satisfying
	\begin{align}
		&\sum (-1)^{\star} \n_{r-i+1,s}(\textbf{x}^{(1)},\m_i(\textbf{x}^{(2)}), \textbf{x}^{(3)}, \underline{p}, \textbf{y}) \nonumber 	\\
		& + \sum (-1)^{\star}\n_{r,s-i+1}(\textbf{x}, \underline{p}, \textbf{y}^{(1)}, \m_i(\textbf{y}^{(2)}), \textbf{y}^{(3)}) \\
		& + \sum (-1)^{\star}\n_{r-i,s-j}(\textbf{x}^{(1)}, \underline{\n_{i,j}(\textbf{x}^{(2)}, \underline{p}, \textbf{y}^{(1)})}, \textbf{y}^{(2)} )=0, \nonumber
	\end{align}
	where $\star$ is the sum of the degrees to the left of the second multi-linear map. For example, in the second line above, $\star= |x_1|'+\ldots + |x_r|'+|y_1|'+\ldots + |y_{j-1}|' + |p|$. 
\end{defn}

\Ai -bimodules form a dg-category $[\CC, \CC]$ (see \cite{Sei}). Given \Ai-bimodules $\PP$ and $\QQ$ an element $\rho \in \hom^k_{[\CC, \CC]}(\PP, \QQ)$ of degree $k$, called a pre-homomorphism, consists of maps
\[\rho_{r, s}: \CC(X_0, \ldots, X_r) \otimes \PP(X_r,Y_0) \otimes \CC(Y_0, \ldots, Y_s) \to \QQ(X_0,Y_s)[k].\]
The differential in $\hom^\bullet_{[\CC, \CC]}(\PP, \QQ)$ is defined as
\begin{align}
	(\partial \rho)_{r,s}(\textbf{x}, \underline{p}, \textbf{y})= &\sum (-1)^{|\rho|\star} \n_{r_1,s_1}(\textbf{x}^{(1)}, \underline{\rho_{r_2,s_2}(\textbf{x}^{(2)}, \underline{p}, \textbf{y}^{(1)})}, \textbf{y}^{(2)} )\nonumber\\
	& + \sum (-1)^{1+|\rho|+\star} \rho_{r_1,s_1}(\textbf{x}^{(1)}, \underline{ \n_{r_2,s_2}( \textbf{x}^{(2)}, \underline{p}, \textbf{y}^{(1)})}, \textbf{y}^{(2)} )\\
	& + \sum (-1)^{1+|\rho|+\star} \rho_{r_1,s}(\textbf{x}^{(1)} \m_{i}( \textbf{x}^{(2)}) \textbf{x}^{(3)}, \underline{p}, \textbf{y} )\nonumber\\
	& + \sum (-1)^{1+|\rho|+\star} \rho_{r,s_1}(\textbf{x}, \underline{p}, \textbf{y}^{(1)} \m_i(\textbf{y}^{(2)}), \textbf{y}^{(3)})\nonumber
\end{align}	

\begin{defn}
	Given \Ai-bimodules $\PP$ and $\QQ$, a pre-homomorphism $\rho \in \hom^\bullet_{[\CC, \CC]}(\PP, \QQ)$ with $\partial \rho=0$ is called a \emph{homomorphism}. A homomorphism  is called a \emph{quasi-isomorphism} if $\rho_{0,0}:\PP(X, Y)\to \QQ(X, Y)$ induces an isomorphism on cohomology for all $X,Y$.
\end{defn}

A standard fact of the \Ai \ world is that any quasi-isomorphism has a homotopy inverse; see, for example, \cite{Sei}. Now we define the two most relevant bimodules for our purposes.

\begin{defn}
	Let $\CC_{sd}$ be the bimodule defined as $\CC_{sd}(X, Y):=\hom_{\CC}(X, Y)[1]$, with operations 
	\[\n_{r,s}(\textbf{x}, \underline{p}, \textbf{y}):=\m_{r+s+1}(\textbf{x}, p, \textbf{y}).\]
	We will refer to $\CC_{sd}$ as the (shifted) diagonal bimodule.
\end{defn}

\begin{defn}
	Let $\CC_{sd}^\vee$ be the bimodule defined as $\CC_{sd}^\vee(X, Y):=\hom_{\CC}(Y, X)^\vee[1]$, where $\vee$ stands for the $\bbK$-linear dual. We define the operations 
	\[\n_{r,s}^\vee(\textbf{x}, \underline{\pi}, \textbf{y})(p):=(-1)^{|\textbf{x}|'+|\textbf{y}|'+|p|'}\pi(\m_{r+s+1}(\textbf{y}, p, \textbf{x})),\]
    for $p\in \hom_{\CC}(Y_0, X_s).$
	We will refer to $\CC_{sd}^\vee$ as the (shifted) dual diagonal bimodule.
\end{defn}

In order to define a Calabi--Yau structure, we need the following lemma. Denote by $C_\bullet(\CC)^\vee$ the linear dual to $C_\bullet(\CC)$ equipped with the dual differential $b^\vee (\varphi)(\textbf{x})=(-1)^{|\varphi|}\varphi(b(\textbf{x}))$.

\begin{lem}
	There is a quasi-isomorphism of chain complexes
	\[\Psi: C_\bullet(\CC)^\vee\to \hom^\bullet_{[\CC, \CC]}(\CC_{sd}, \CC_{sd}^\vee),
	\]
	given explicitly as 
	\begin{equation}
		\Psi(\varphi)(\textbf{x}, \underline{v}, \textbf{y})(w)= \sum (-1)^@ \varphi(\m(\textbf{y}^{(3)}, w, \textbf{x}, v, \textbf{y}^{(1)}), \textbf{y}^{(2)})
	\end{equation}
\end{lem}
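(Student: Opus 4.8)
The plan is to verify directly that $\Psi$ is a map of complexes and then argue it is a quasi-isomorphism by a spectral sequence / filtration argument, reducing to a statement about the bar resolution of the diagonal bimodule. First I would make the conceptual identification that underlies the formula: the complex $hom^\bullet_{[\CC,\CC]}(\CC_{sd},\CC_{sd}^\vee)$ computes $\mathrm{Ext}$ from the diagonal to its dual, and there is a standard bar-type projective (really h-projective) resolution $\mathbf{B}$ of $\CC_{sd}$ in $[\CC,\CC]$ whose underlying bimodule in bidegree $(r,s)$ is $\bigoplus \CC(X_0,\ldots,X_r)\otimes hom_\CC(X_r,Y_0)[1]\otimes \CC(Y_0,\ldots,Y_s)$. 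Dualizing, $hom_{[\CC,\CC]}(\mathbf{B},\CC_{sd}^\vee)$ is term-by-term exactly the linear dual of the cyclic-bar object $\bigoplus \CC(X_n,X_0)\otimes\CC(X_0,\ldots,X_n)[-1] = CC_\bullet(\CC)$, and one checks the induced differential is precisely $b^\vee$; this is what the explicit formula for $\Psi$ is recording once one composes the quasi-isomorphism $\mathbf{B}\to\CC_{sd}$ with the evaluation pairing. So the cleanest route is: (i) show $\mathbf{B}\xrightarrow{\sim}\CC_{sd}$ is a quasi-isomorphism of $\CC$-bimodules with $\mathbf{B}$ h-projective; (ii) identify $hom_{[\CC,\CC]}(\mathbf{B},\CC_{sd}^\vee)$ with $CC_\bullet(\CC)^\vee$ as complexes via the stated formula; (iii) conclude that $\Psi$, which is the composite $hom_{[\CC,\CC]}(\CC_{sd},\CC_{sd}^\vee)\to hom_{[\CC,\CC]}(\mathbf{B},\CC_{sd}^\vee)\cong CC_\bullet(\CC)^\vee$ read backwards, is a quasi-isomorphism.

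If one prefers to avoid invoking the full h-projectivity machinery, the alternative I would carry out is a direct filtration argument on both sides by the bimodule/Hochschild length. Filter $hom^\bullet_{[\CC,\CC]}(\CC_{sd},\CC_{sd}^\vee)$ by $F^p = \{\rho : \rho_{r,s}=0 \text{ for } r+s<p\}$ and filter $CC_\bullet(\CC)^\vee$ by the dual of the length filtration on $CC_\bullet(\CC)$. The map $\Psi$ respects these filtrations (up to checking the leading term of the formula), and on the associated graded the $A_\infty$-structure maps $\m_{\geq 2}$ drop out, leaving only the bar differential built from $\m_1$. On $E_1$ the statement becomes a purely homological-algebra fact: for a dg-category, $hom$ from the bar resolution of the diagonal into the dual diagonal computes the dual of Hochschild chains, which is classical (for algebras this is in Loday; the categorical version is routine book-keeping over the objects $X_0,\ldots,X_n$). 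A convergence argument — both filtrations are exhaustive and bounded below in each fixed total degree because the chain/cochain groups are products/sums indexed by length — then upgrades the $E_1$-isomorphism to a quasi-isomorphism of the total complexes.

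The routine but necessary computational step is checking that $\Psi$ is actually a chain map, i.e. $\Psi(b^\vee\varphi) = \partial(\Psi\varphi)$, which amounts to matching the four terms of $\partial$ on $hom_{[\CC,\CC]}(\CC_{sd},\CC_{sd}^\vee)$ (two from the bimodule structure maps $\n$, two from reapplying $\m$ inside the tensor factors) against the dual of the two terms of $b$ in Definition~\ref{defn:cc}; the $A_\infty$-bimodule relations for $\CC_{sd}$ and $\CC_{sd}^\vee$ (which themselves encode the $A_\infty$-relations for $\m$) make the cross-terms cancel, and the signs $@$ are exactly engineered for this. I would relegate the sign verification to the sign appendix rather than do it inline.

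The main obstacle I anticipate is purely bookkeeping: keeping the four-fold splitting of $\textbf{y}$ in the formula $\Psi(\varphi)(\textbf{x},\underline{v},\textbf{y})(w)=\sum(-1)^@\varphi(\m(\textbf{y}^{(3)},w,\textbf{x},v,\textbf{y}^{(1)}),\textbf{y}^{(2)})$ consistent with Sweedler notation under the differential, and verifying that the filtration-degree of $\Psi(\varphi)$ behaves as claimed (the $\m$ appearing in the formula can have any arity, so one must check the top-length part is what governs the associated graded). There is no deep difficulty — the content is the classical identification of $CC_\bullet(\CC)^\vee$ with $\mathrm{RHom}_{\CC\text{-bimod}}(\CC_{sd},\CC_{sd}^\vee)$ — but the explicit cochain-level formula forces one to be careful about which end of the cyclic word gets folded over, which is where sign errors typically creep in.
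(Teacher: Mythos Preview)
Your proposal is correct and follows essentially the same line as the paper: both arguments rest on the identification of $hom^\bullet_{[\CC,\CC]}(\CC_{sd},\CC_{sd}^\vee)$ with the dual of Hochschild chains of a bar-type resolution of the diagonal (the paper phrases this as $hom^\bullet_{[\CC,\CC]}(\CC_{sd},\CC_{sd}^\vee)\cong CC_\bullet(\CC,\CC_d\otimes_\CC\CC_d)^\vee$ together with the quasi-isomorphism $\CC_d\otimes_\CC\CC_d\simeq\CC_d$, citing Seidel's formulas, which is exactly your bar resolution $\mathbf{B}\to\CC_{sd}$ in different packaging). Your filtration/spectral-sequence alternative and the explicit chain-map verification are extra detail the paper omits, but the underlying mechanism is the same.
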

\begin{proof}
	It follows from \cite[Formula (2.27)] {Sei2} that $\hom^\bullet_{[\CC, \CC]}(\CC_{sd}, \CC_{sd}^\vee)$ is canonically isomorphic to the dual of $C_\bullet(\CC, \CC_d \otimes_{\CC} \CC_d)$, the Hochschild complex of the bimodule $\CC_d \otimes_{\CC} \CC_d$, where $\CC_d$ is the unshifted diagonal bimodule and $\otimes_{\CC}$ and is the tensor product of bimodules as defined in \cite{Sei}. The bimodule $\CC_d \otimes_{\CC} \CC_d$ is naturally quasi-isomorphic to $\CC_d$, see \cite[Formula (2.21)]{Sei}. This induces a quasi-isomorphism on Hochschild complexes $C_\bullet(\CC, \CC_d \otimes_{\CC} \CC_d) \simeq C_\bullet(\CC, \CC_d)$, where $C_\bullet(\CC, \CC_d)$ is by definition the complex $C_\bullet(\CC)$ from Definition \ref{defn:cc}. Tracing through these quasi-isomorphisms one obtains the map $\Psi$ in the statement.
\end{proof}

\begin{defn}~\label{def:cy}
	An element $\phi \in HH_\bullet(\CC)^\vee$ is called non-degenerate if $\Psi(\phi)$ is a quasi-isomorphism.
	
	A \emph{weak Calabi-Yau} structure on $\CC$ is a non-degenerate element  $\phi \in HH_\bullet(\CC)^\vee$.
	
	A \emph{strong Calabi-Yau} structure on $\CC$ is an element $\widetilde{\phi}\in HC^{\lambda}_\bullet(\CC)^\vee$, such that $\widetilde{\phi}\circ \pi_* \in HH_\bullet(\CC)^\vee$ is non-degenerate, where $\pi_*$ is induced by the natural projection $\pi: C_\bullet(\CC)\to C^{\lambda}_\bullet(\CC)$. We call the pair $(\CC, \widetilde{\phi})$ a Calabi-Yau \Ai-category.
\end{defn}

\subsection{From Calabi--Yau to homotopy cyclic}

In this subsection, we give an alternative description of strong Calabi-Yau structures. This is essentially a reinterpretation of a result from \cite{KS} for \Ai-algebras. The main difference is that instead of using the language of non-commutative geometry, we work with the more algebraic notion of a strong homotopy inner product \cite{Cho1}. This approach will also make it easier to tackle the unital version of these results (see Section 3) that we will later need.

\begin{lem}\label{lem:Schain}
	There is a degree one chain map $\displaystyle S: C_\bullet(\CC)^\vee \to \hom^\bullet_{[\CC, \CC]}(\CC_{sd}, \CC_{sd}^\vee)$ defined by the formula
	\[(S\varphi)(\textbf{x},\underline{v}, \textbf{y})(w)=(-1)^{@}\varphi(v,\textbf{y},w, \textbf{x} ) - (-1)^{@} \varphi(w, \textbf{x}, v, \textbf{y}),\]
	where the symbol $@$ stands for the sign obtained, following the Koszul convention for the shifted degrees, from rotating the inputs of the expression from their original order. 
\end{lem}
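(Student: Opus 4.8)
The plan is to verify directly that the formula defining $S$ produces, for each Hochschild cochain $\varphi$, a genuine pre-homomorphism $\CC_{sd}\to\CC_{sd}^\vee$ of the correct degree, and then to check the chain-map identity $S\circ b^\vee = \pm\,\partial\circ S$ at the cochain level. Since both $S\varphi$ and the differentials $b^\vee$, $\partial$ are defined by explicit multilinear formulas, this is in principle a bookkeeping computation; the real content is organizing the Koszul signs so that the cancellations are transparent.

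First I would unwind what it means for $S\varphi$ to land in $hom^\bullet_{[\CC,\CC]}(\CC_{sd},\CC_{sd}^\vee)$: one must exhibit the component maps $(S\varphi)_{r,s}$ and confirm the degree count, namely that $S$ raises degree by one (the cochain $\varphi$ has a fixed degree, and feeding it the extra slots $v,w$ together with the shift conventions on $\CC_{sd}$ and $\CC_{sd}^\vee$ accounts for the $+1$). The definition of $\CC_{sd}^\vee$ supplied in the excerpt already encodes the sign $(-1)^{|\mathbf{x}|'+|\mathbf{y}|'+|p|'}$ in its structure maps, and the two terms in $(S\varphi)$ differ precisely by swapping $(w,\mathbf{x})$ past $(v,\mathbf{y})$, so the antisymmetrization is exactly what is needed for $S\varphi$ to be compatible with the duality built into $\CC_{sd}^\vee$. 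I would record the sign $@$ in each term explicitly as the Koszul sign of the cyclic rotation bringing the arguments into the order in which $\varphi$ receives them, deferring the detailed sign audit to the sign appendix (Appendix~\ref{sec:sign}).

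Next I would compute $\partial(S\varphi)$ using the four-term formula for the bimodule differential $\partial$ on $hom^\bullet_{[\CC,\CC]}(\CC_{sd},\CC_{sd}^\vee)$: the terms where $\m$ is inserted into the $\mathbf{x}$- or $\mathbf{y}$-strings, the term where $\m$ acts through the bimodule structure of $\CC_{sd}$ (i.e. $\m$ absorbing $v$), and the term where it acts through $\CC_{sd}^\vee$ (i.e. $\m$ absorbing $w$). Because $S\varphi$ is built from $\varphi$ applied to a single long cyclic word $v,\mathbf{y},w,\mathbf{x}$ (respectively $w,\mathbf{x},v,\mathbf{y}$), every application of an \Ai-operation $\m$ in $\partial(S\varphi)$ either hits inside that word — contributing a term of the form $\varphi(\dots\m(\dots)\dots)$ — or hits the two "end" slots $v,w$ in a way that, after using the cyclic symmetry of the long word, reassembles into $\varphi$ precomposed with $\m$. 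On the other side, $b^\vee\varphi(\mathbf{z}) = (-1)^{|\varphi|}\varphi(b(\mathbf{z}))$ and $b$ itself has an "internal $\m$" term and a "wrap-around $\m$" term (Definition~\ref{defn:cc}); applying $S$ to this produces exactly the same two families of terms. Matching them term by term, with the antisymmetrization in the definition of $S$ accounting for the two ways the wrap-around term of $b$ can be realized, yields $S b^\vee\varphi = \pm\,\partial(S\varphi)$.

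The main obstacle I anticipate is purely the sign reconciliation: the shifted-degree Koszul conventions, the extra sign built into $\CC_{sd}^\vee$, the sign $(-1)^{|\varphi|}$ in $b^\vee$, and the signs $\star,@$ in $b$ and in $\partial$ all interact, and a naive comparison will be off by global or term-dependent signs unless one is careful about the precise order in which arguments are permuted into $\varphi$. I would handle this by fixing, once and for all, a canonical "reading order" for the cyclic word and expressing every sign as the Koszul sign of the permutation from the ambient order to that reading order; with this discipline the two computations become manifestly equal. A secondary, minor point is to make sure the argument respects the reduced complexes when $\CC$ is strictly unital, but since inserting a unit into $\varphi$ or into an \Ai-operation behaves in the standard way, this is automatic and can be remarked on in one line.
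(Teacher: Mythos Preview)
Your proposal is correct and matches the paper's approach: the paper's entire proof is the sentence ``This is a long but straightforward calculation that we omit,'' so you are simply filling in the bookkeeping the authors chose to leave out. One observation worth incorporating (and which the paper notes in the remark immediately following the lemma): the term-matching you describe never invokes the \Ai\ relation $\m\circ\widehat{\m}=0$, so the same argument shows $S$ intertwines $\mathcal{L}_Z^\vee$ with $\mathcal{L}_Z$ for an arbitrary Hochschild cochain $Z$, and the chain-map statement is the special case $Z=\m$.
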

\begin{proof}
	This a long but straightforward calculation that we omit.
\end{proof}

\begin{rmk}
	The proof of Lemma \ref{lem:Schain} does not use the \Ai \ relations. Therefore, one can prove that the map $S$ commutes with Lie derivatives $\mathcal{L}_Z^\vee$ and $\mathcal{L}_Z$, introduced in Definitions \ref{defn:lie_cc} and \ref{defn:lie}, for any Hochschild cochain $Z$. When $Z=\m$, this is exactly the content of Lemma \ref{lem:Schain}.
\end{rmk}

\begin{defn}
	Given $\rho \in \hom^\bullet_{[\CC, \CC]}(\CC_{sd}, \CC_{sd}^\vee)$, we say 
	\begin{itemize}
		\item $\rho$ is \emph{anti-symmetric} if \[\rho(\textbf{x},\underline{v}, \textbf{y})(w)=-(-1)^@\rho(\textbf{y},\underline{w}, \textbf{x})(v).\]
		\item $\rho$ is \emph{closed} if for any $x_0\otimes \cdots \otimes x_n \in \CC(X_n, X_0, X_1 \ldots, X_n)$ and $0\leq i < j  < k \leq n$ we have 
		\begin{align*}
			(-1)^{\epsilon_j}\rho(\ldots,\underline{x_i},\ldots)(x_j)+ (-1)^{\epsilon_k}\rho(\ldots,\underline{x_j}, \ldots)(x_k)+(-1)^{\epsilon_i}\rho(\ldots, \underline{x_k}, \ldots)(x_i)=0,
		\end{align*}
		where $\epsilon_l=(|x_0|'+\ldots |x_l|')(|x_{l+1}|'+\ldots+|x_n|')$.
	\end{itemize}
	
	Denote by $\Omega^{2,cl}(\CC)$ the subspace of $\hom^\bullet_{[\CC, \CC]}(\CC_{sd}, \CC_{sd}^\vee )$ consisting of closed, anti-symmetric pre-homomorphisms.
\end{defn}
\begin{lem}\label{lem:Ssurjective}
	$\Omega^{2,cl}(\CC)$ is a subcomplex of $\hom^\bullet_{[\CC, \CC]}(\CC_{sd}, \CC_{sd}^\vee )$. Moreover $\Omega^{2,cl}(\CC)$ coincides with the image of the map $S$.
\end{lem}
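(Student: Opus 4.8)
The plan is to verify two assertions separately: first that $\Omega^{2,cl}(\CC)$ is closed under the differential $\partial$ of $hom^\bullet_{[\CC,\CC]}(\CC_{sd},\CC_{sd}^\vee)$, and second that $\im(S)=\Omega^{2,cl}(\CC)$. For the subcomplex claim, I would observe that both the anti-symmetry and the closedness conditions are defined by \emph{linear} equations on the components $\rho_{r,s}$ that do not involve the \Ai-operations $\m$ of $\CC$ — they only permute and re-sign the inputs. The differential $\partial$ on the other hand is built from $\m$ and the bimodule structure maps. So the natural strategy is: check that $\partial$ commutes (up to the expected Koszul signs) with the cyclic-rotation operation that underlies anti-symmetry, and that it preserves the three-term closedness relation. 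Concretely, anti-symmetry says $\rho$ is fixed by an involution $\sigma$ on $hom^\bullet_{[\CC,\CC]}(\CC_{sd},\CC_{sd}^\vee)$ that swaps the roles of $(\textbf{x},v,\textbf{y})$ and $(\textbf{y},w,\textbf{x})$; since the diagonal and dual-diagonal bimodule structures are themselves intertwined by $\sigma$ (this is essentially the symmetry already visible in the definitions of $\CC_{sd}$ and $\CC_{sd}^\vee$), one gets $\partial\circ\sigma = \pm\,\sigma\circ\partial$, hence $\partial$ preserves $\sigma$-invariants. The closedness relation I would phrase as the vanishing of a cyclic symmetrization operator $C$ applied to $\rho$, and then check $C\circ\partial = \pm\,\partial\circ C$ by the same kind of bookkeeping. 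This is the routine-calculation part; I would state it and relegate the sign-chase to the sign appendix.

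For the equality $\im(S)=\Omega^{2,cl}(\CC)$ I would argue in two inclusions. The inclusion $\im(S)\subset\Omega^{2,cl}(\CC)$ is a direct computation from the explicit formula in Lemma~\ref{lem:Schain}: $(S\varphi)(\textbf{x},\underline v,\textbf{y})(w)=(-1)^{@}\varphi(v,\textbf{y},w,\textbf{x})-(-1)^{@}\varphi(w,\textbf{x},v,\textbf{y})$ is manifestly anti-symmetric under swapping the pair $(v,\textbf{y})\leftrightarrow(w,\textbf{x})$ up to the Koszul sign, so $S\varphi$ is anti-symmetric; and the closedness relation follows because the cyclic sum of the three terms telescopes — each $\varphi(\,\cdot\,)$ produced by one summand of $C(S\varphi)$ is cancelled by a summand coming from the ``other half'' of $S$ applied to a neighboring triple, using only the cyclic invariance of how $\varphi$ is being evaluated (again pure combinatorics of rotations, no \Ai-relations). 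The reverse inclusion $\Omega^{2,cl}(\CC)\subset\im(S)$ is the substantive direction: given a closed anti-symmetric $\rho$, I must produce $\varphi\in CC_\bullet(\CC)^\vee$ with $S\varphi=\rho$. The idea is to recover $\varphi$ from $\rho$ by the natural ``forgetful'' recipe — roughly, $\varphi(x_0\otimes x_1\otimes\cdots\otimes x_n)$ should be read off from $\rho(x_1,\ldots,x_{i-1},\underline{x_0 \text{-slot}},x_{i+1},\ldots)(x_i)$ for a suitable choice of which input plays the role of $w$ — and then to check that this $\varphi$ is well-defined and that $S\varphi=\rho$. Well-definedness is exactly where closedness of $\rho$ is needed (it guarantees the different ways of extracting $\varphi$ agree), and anti-symmetry of $\rho$ is what makes the two terms of $S\varphi$ reassemble to $\rho$ rather than to $\rho$ plus its transpose.

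The main obstacle I expect is this reverse inclusion — specifically, constructing the preimage $\varphi$ and verifying it is consistent. One has to be careful that $\rho$, as a pre-homomorphism, carries strictly more data per degree than a Hochschild chain functional (there are two sequences of inputs $\textbf{x}$ and $\textbf{y}$ flanking the distinguished slots), so surjectivity of $S$ is genuinely a constraint: it says that every closed anti-symmetric $\rho$ is ``cyclically generated'' by a single string functional. I would prove this by induction on total word length, peeling off one input at a time and using the closedness relation at each stage to show the partially-reconstructed $\varphi$ extends uniquely; the base cases (small $n$) should be checked by hand. A secondary, purely clerical obstacle is pinning down the sign $@$ in each formula so that the two inclusions match on the nose; I would handle all such signs uniformly via the Koszul-rotation conventions recorded in Appendix~\ref{sec:sign} and not reproduce them in the body of the proof.
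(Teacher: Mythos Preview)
Your outline for the inclusion $\im(S)\subset\Omega^{2,cl}(\CC)$ is fine and matches the paper. There are two issues with the rest.

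\textbf{The subcomplex claim.} Your direct approach---checking that $\partial$ intertwines the involution $\sigma$ and the cyclic-symmetrization operator $C$---can be made to work, but it is more labor than necessary. The paper proceeds in the opposite order: it first proves $\im(S)=\Omega^{2,cl}(\CC)$, and then observes that since $S$ is a chain map (already established in Lemma~\ref{lem:Schain}), its image is automatically a subcomplex. This avoids the sign-chase entirely.

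\textbf{The surjectivity direction.} Here there is a genuine gap. You propose to reconstruct $\varphi$ from $\rho$ by a ``forgetful recipe'' and to show by induction on length that ``the partially-reconstructed $\varphi$ extends uniquely.'' But $\varphi$ is \emph{not} unique: the kernel of $S$ is exactly $C^\lambda_\bullet(\CC)^\vee$ (this is Proposition~\ref{prop:exact1}), so any inductive scheme that relies on uniqueness of the extension will fail. Moreover, the naive single-slot recipe---taking, say, $\varphi(x_0\otimes\cdots\otimes x_n):=\rho(\,\underline{x_0},x_1,\ldots,x_{n-1})(x_n)$---does not in general satisfy $S\varphi=\rho$; closedness does not reduce every value of $\rho(\textbf{x},\underline v,\textbf{y})(w)$ to one with $\textbf{x}$ empty.

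The paper's key idea, which your proposal is missing, is to write down an explicit \emph{averaging} section: for $\rho\in\Omega^{2,cl}(\CC)$, set $h(\rho)(x_0)=0$ and
\[
h(\rho)(x_0\otimes x_1\otimes\cdots\otimes x_n)\;=\;\frac{1}{n+1}\sum_{i=1}^n(-1)^{@}\,\rho(x_{i+1},\ldots,x_n,\underline{x_0},x_1,\ldots,x_{i-1})(x_i),
\]
and then verify directly that $S(h(\rho))=\rho$ by a computation that uses both anti-symmetry and closedness of $\rho$ (each is invoked twice, to regroup terms so that the sum collapses to $\frac{r+s+2}{r+s+2}\,\rho(\textbf{x},\underline v,\textbf{y})(w)$). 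The division by $n+1$ is where characteristic zero enters; there is no inductive construction.
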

\begin{proof}
	We first show that the image of $S$ equals $\Omega^{2,cl}(\CC)$. Since $S$ is a chain map, this implies $\Omega^{2,cl}(\CC)$ is a subcomplex.
		A direct calculation, that we skip, shows that the image of $S$ is contained in  $\Omega^{2,cl}(\CC)$.
		Next given $\rho \in \hom^\bullet_{[\CC, \CC]}(\CC_{sd}, \CC_{sd}^\vee)$ define $h(\rho)\in C_\bullet(\CC)^\vee$ as follows
	\begin{equation}\label{eq:Ssurj}
		h(\rho)(x_0 \otimes x_1\cdots \otimes x_n) = \sum_{i=1}^n \frac{(-1)^@}{n+1} \rho(\ldots, \underline{x_0}, \ldots)(x_i),
	\end{equation}
	for $n\geq 1$ and $h(\rho)(x_0)=0$.
	We claim that if $\rho \in \Omega^{2,cl}(\CC)$ we have $S(h(\rho))=\rho$, which gives the desired result. For this we compute

	\begin{align*}
		S(h(\rho))_{r,s} & (\textbf{x},\underline{v}, \textbf{y})(w)=  \frac{1}{r+s+2} \big(\rho(\textbf{x},\underline{v}, \textbf{y})(w)+ \sum_i (-1)^@\rho(\textbf{x}^{(3)},\underline{v}, \textbf{y}, w, \textbf{x}^{(1)})(x_i)+\\ 
		&+\sum_j(-1)^@ \rho(\textbf{y}^{(3)},w,\textbf{x},\underline{v},\textbf{y}^{(1)})(y_j) - (-1)^@ \rho(\textbf{y},\underline{w}, \textbf{x})(v) \\
		&- \sum_j (-1)^@ \rho(\textbf{y}^{(3)},\underline{w},\textbf{x},v,\textbf{y}^{(1)})(y_j) - \sum_i (-1)^@\rho(\textbf{x}^{(3)},v, \textbf{y},\underline{ w}, \textbf{x}^{(1)})(x_i)\big)\\
        = & \frac{1}{r+s+2} \big( \rho(\textbf{x},\underline{v}, \textbf{y})(w) - \sum_i (-1)^@ \rho(\textbf{y}, w, \textbf{x}^{(1)},\underline{x_i}, \textbf{x}^{(3)})(v)\\
		& + \sum_j (-1)^@ \rho(\textbf{y}^{(3)},w,\textbf{x},\underline{v},\textbf{y}^{(1)})(y_j) + \rho(\textbf{x},\underline{v}, \textbf{y})(w) \\
		& + \sum_j (-1)^@ \rho(\textbf{x},v, \textbf{y}^{(1)},\underline{y_j}, \textbf{y}^{(3)})(w)-\sum_i (-1)^@ \rho(\textbf{x}^{(3)},v,\textbf{y},\underline{w},\textbf{x}^{(1)})(x_i)\big)
        \end{align*}
        Here, on the second equality we used anti-symmetry of $\rho$. Then, we use closedness of $\rho$ twice: once to combine the third and fifth terms, and then the second and sixth. Thus the equation above simplifies to
        \begin{align*}
		& \frac{1}{r+s+2} \big( 2 \rho(\textbf{x},\underline{v}, \textbf{y})(w) - \sum_j (-1)^@ \rho(\textbf{y}^{(1)},y_j,\textbf{y}^{(3)},\underline{w}, \textbf{x})(v)\\
		& + \sum_i \rho(\textbf{x}^{(1)},x_i,\textbf{x}^{(3)},\underline{v}, \textbf{y})(w)\big)\\
		= & \frac{1}{r+s+2} \big( 2 \rho(\textbf{x},\underline{v}, \textbf{y})(w) + s \rho(\textbf{x},\underline{v}, \textbf{y})(w) + r \rho(\textbf{x},\underline{v}, \textbf{y})(w))\\
		= & \rho(\textbf{x},\underline{v}, \textbf{y})(w).
	\end{align*}
	Here, on the second equality we used again anti-symmetry of $\rho$. 
\end{proof}

We now identify the kernel of the map $S$.  Recall from Definition \ref{defn:cyclic}, there is a natural map of complexes 
\[\pi: C_\bullet(\CC)\to C^\lambda_\bullet(\CC).\]
We denote by $\mathfrak{K}_\CC$ the kernel of $\pi$ equipped with the induced differential. 

\begin{prop}\label{prop:exact1}
	We have the following short exact sequence of complexes
	\[0 \to C^\lambda_\bullet(\CC)^\vee  \stackrel{\pi^\vee }{\longrightarrow} C_\bullet(\CC)^\vee  \stackrel{S}{\longrightarrow} \Omega^{2,cl}(\CC)[1] \to 0. \]
\end{prop}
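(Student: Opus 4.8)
The plan is to read the entire short exact sequence off the single ambient space $CC_\bullet(\CC)^\vee$, identifying $C^\lambda_\bullet(\CC)^\vee$ with the subspace of cyclically invariant functionals and $\Omega^{2,cl}(\CC)[1]$ with the complementary quotient, the latter identification being induced by $S$. For the left-hand map, note that $\pi\colon CC_\bullet(\CC)\to C^\lambda_\bullet(\CC)=CC_\bullet(\CC)/\im(\id-t)$ is a surjective chain map, so $\pi^\vee$ is automatically an injective chain map and its image is the annihilator of $\im(\id-t)$; that is, $\im\pi^\vee=\{\varphi\in CC_\bullet(\CC)^\vee : \varphi\circ(\id-t)=0\}=\{\varphi : \varphi\circ t=\varphi\}$. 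This already yields exactness at $C^\lambda_\bullet(\CC)^\vee$.

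For the right-hand map, everything needed is contained in Lemma~\ref{lem:Ssurjective}: $S$ has image in $\Omega^{2,cl}(\CC)$, which is a subcomplex, and the explicit operator $h$ of Equation~(\ref{eq:Ssurj}) is a set-theoretic section of $S$, so $S$ is onto $\Omega^{2,cl}(\CC)$; the shift $[1]$ only records that $S$ raises degree by one, so that the sequence lives in the category of complexes. Hence the sequence is exact at $\Omega^{2,cl}(\CC)[1]$, and it remains to prove exactness in the middle, i.e.\ $\ker S=\im\pi^\vee=\{\varphi : \varphi\circ t=\varphi\}$.

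The inclusion $\im\pi^\vee\subseteq\ker S$ is where cyclic invariance meets $S$: in
\[(S\varphi)(\textbf{x},\underline{v},\textbf{y})(w)=(-1)^{@}\varphi(v,\textbf{y},w,\textbf{x})-(-1)^{@}\varphi(w,\textbf{x},v,\textbf{y}),\]
the two tensors $v\otimes\textbf{y}\otimes w\otimes\textbf{x}$ and $w\otimes\textbf{x}\otimes v\otimes\textbf{y}$ differ by the cyclic rotation $t^{\,r+1}$, $r=|\textbf{x}|$, that carries the tail block $w\otimes\textbf{x}$ to the head. Once the Koszul signs are matched (see below), the right-hand side becomes $\pm\,(\varphi-\varphi\circ t^{\,r+1})$ applied to the single tensor $v\otimes\textbf{y}\otimes w\otimes\textbf{x}$; and since $\id-t^{\,k}=(\id+t+\cdots+t^{k-1})(\id-t)$, any $\varphi$ with $\varphi\circ(\id-t)=0$ annihilates all of these, so $S\varphi=0$. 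For the reverse inclusion $\ker S\subseteq\im\pi^\vee$, suppose $S\varphi=0$ and specialize to $\textbf{x}$ empty (the case $r=0$ is permitted); the formula collapses to $0=\pm\varphi(v,\textbf{y},w)\mp\varphi\bigl(t(v\otimes\textbf{y}\otimes w)\bigr)$ for all $v,w$ and all $\textbf{y}$ of every length, which says precisely that $\varphi$ is $t$-invariant on chains of every length $\ge 2$; on chains of length $0$ or $1$ one has $t=\id$ and there is nothing to check. Hence $\varphi\in\im\pi^\vee$. Finally $\pi^\vee$ and $S$ are chain maps (the former because $\pi$ is, the latter by Lemma~\ref{lem:Schain}), and $S\circ\pi^\vee=0$ follows from $\im\pi^\vee\subseteq\ker S$, so the displayed sequence is a short exact sequence of complexes.

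The one step demanding genuine care rather than immediate bookkeeping is the sign matching just invoked: one must verify that the symbol $@$ in the definition of $S$ is exactly the Koszul sign accumulated by the rotation $t^{\,r+1}$, so that $(S\varphi)(\textbf{x},\underline{v},\textbf{y})(w)$ is honestly $\pm\varphi\circ(\id-t^{\,r+1})$ evaluated on one tensor. Both signs are, by construction, ``the sign of reordering graded symbols by the Koszul rule,'' so this reduces to the cocycle identity for Koszul signs under composition of cyclic rotations; I would carry it out explicitly, since it is the same flavor of computation suppressed in the proof of Lemma~\ref{lem:Schain}. Beyond this point the argument is entirely formal.
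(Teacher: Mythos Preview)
Your proof is correct and follows the same route as the paper's own argument: invoke Lemmas~\ref{lem:Schain} and~\ref{lem:Ssurjective} for the chain-map and surjectivity statements, then identify $\ker S$ with the $t$-invariant functionals and hence with $\im\pi^\vee$. The paper compresses that last identification into one sentence (``from its definition, it is clear''), whereas you spell it out by specializing to $\textbf{x}$ empty to recover a single cyclic rotation; this is exactly the right unpacking, and the sign check you flag at the end is indeed the only point requiring care.
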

\begin{proof}
	Lemmas \ref{lem:Schain} and \ref{lem:Ssurjective} imply the map $S$ is a surjective chain map. From its definition, it is clear that the kernel of $S$ consists of $\varphi \in C_\bullet(\CC)^\vee $ satisfying 
	\[\varphi(x_0 \otimes \cdots \otimes x_n)= (-1)^@ \varphi(x_n \otimes x_0 \otimes \cdots \otimes x_{n-1}).\]
	But this is exactly the image of $\pi^\vee $.
\end{proof}

This short exact sequence is functorial in the following sense. 

\begin{prop}\label{prop:funct1}
	Let $F:\CC \to \DD$ be an \Ai-functor. We have the following commutative diagram
	\[ \begin{CD}
		0 @>>> C^\lambda_\bullet(\CC)^\vee  @>\pi^\vee >>  C_\bullet(\CC)^\vee  @>S>>  \Omega^{2,cl}(\CC)[1] @>>> 0 \\
		@. @A F^* AA    @A F^* AA      @A F^* AA     @. \\
		0 @>>> C^\lambda_\bullet(\DD)^\vee  @>\pi^\vee >>  C_\bullet(\DD)^\vee  @>S>>  \Omega^{2,cl}(\DD)[1] @>>> 0 
	\end{CD} \]
	Here $F^*$ in the first two vertical arrows is simply the dual to $F_*$ in (\ref{eq:funHH}). The third map $F^*$ is a chain map defined as follows:
	\[F^*\rho(\textbf{x},\underline{v}, \textbf{y})(w)= \sum (-1)^{@} \rho(\widehat{F}(\textbf{x}^{(2)}), \underline{F(\textbf{x}^{(3)}, v, \textbf{y}^{(1)})}, \widehat{F}(\textbf{y}^{(2)}))F(\textbf{y}^{(3)},w,\textbf{x}^{(1)}).\]
\end{prop}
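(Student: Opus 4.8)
The plan is to verify commutativity of the diagram square by square, which reduces everything to checking explicit sign-bookkeeping identities for the three chain maps $S$, $\pi^\vee$ and $F^*$. The left square is essentially a formality: $\pi^\vee$ is dual to the projection $\pi\colon CC_\bullet\to C^\lambda_\bullet$, and the leftmost two vertical maps are both duals of $F_*$ from Equation~(\ref{eq:funHH}); commutativity of the left square is then just the dual of the statement (already recorded in the text) that $F_*$ descends to a chain map on the cyclic complexes, which in turn follows from Equation~(\ref{eq:F'F}). So the real content is the right square: one must check that $S\circ F^* = F^*\circ S$ as maps $CC_\bullet(\DD)^\vee\to\Omega^{2,cl}(\CC)[1]$, where the $F^*$ on the right is the (newly defined) map on $\Omega^{2,cl}$ and the $F^*$ on the left is the dual of $F_*$.

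First I would record the formula for $F^*$ on $\Omega^{2,cl}$ and observe that it is the natural ``pull-back'' of a closed antisymmetric form: thinking of $\rho$ as a two-form, $F^*\rho$ plugs the components of $\widehat F$ (the extended coalgebra map) into the ``outer'' slots of $\rho$ while $F$ itself is applied to the blocks containing the two distinguished inputs $v$ and $w$. The Sweedler-type sums $\textbf{x}^{(1)},\textbf{x}^{(2)},\textbf{x}^{(3)}$ and $\textbf{y}^{(1)},\textbf{y}^{(2)},\textbf{y}^{(3)}$ correspond exactly to the blocks of $\widehat F$, with $v$ sitting between $\textbf{x}^{(3)}$ and $\textbf{y}^{(1)}$ inside a single $F$-block. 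I would first check directly that $F^*\rho$ is again closed and antisymmetric — this uses only the combinatorics of how $\widehat F$ respects the cyclic ordering, not the $A_\infty$ relations, mirroring the remark after Lemma~\ref{lem:Schain} — so that the right-hand vertical map is well-defined into $\Omega^{2,cl}(\CC)[1]$, and that it is a chain map (which follows once we know the square commutes, since $S$ is surjective and the other maps are chain maps, but it is cleaner to note it separately).

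The main computation is then the identity $S(F^*\varphi) = F^*(S\varphi)$ for $\varphi\in CC_\bullet(\DD)^\vee$. Unwinding the left side: $S(F^*\varphi)(\textbf{x},\underline v,\textbf{y})(w)$ is, by Lemma~\ref{lem:Schain}, a difference of two terms of the form $\pm(F^*\varphi)(v,\textbf{y},w,\textbf{x})$ and $\pm(F^*\varphi)(w,\textbf{x},v,\textbf{y})$; expanding $F^*\varphi = \varphi\circ F_*$ via Equation~(\ref{eq:funHH}) produces sums over the coproduct blocks of the cyclically-reordered word. Unwinding the right side: $F^*(S\varphi)$ applies the definition of the new $F^*$ to $S\varphi$, and $S\varphi$ is again a difference of two $\varphi$-terms by Lemma~\ref{lem:Schain}; substituting $\widehat F$-blocks into its slots yields another sum of $\varphi$-terms over block decompositions. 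Both sides are therefore sums, indexed by the same combinatorial data (a choice of how to cut the cyclic word $w\,\textbf{x}\,v\,\textbf{y}$ into $F$-blocks), of $\varphi$ evaluated on the corresponding word of $\widehat F$-outputs; I would set up a bijection between the indexing sets of the two sums and check that matched terms carry the same Koszul sign. The sign matching is where all the subtlety is concentrated — the ``$@$'' signs in both the definition of $S$ and the definition of $F^*$ are abbreviations for rotation signs in the shifted grading — and I would relegate the detailed sign diagrams to the sign appendix (Appendix~\ref{sec:sign}), stating in the body only that the verification is a direct but lengthy bookkeeping exercise entirely parallel to the proof of Lemma~\ref{lem:Schain}.

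I expect the main obstacle to be purely the sign bookkeeping: keeping track of the shifted degrees $|v|$, $|w|$ and of the blocks $\textbf{x}^{(i)}$, $\textbf{y}^{(j)}$ as they are cyclically permuted through each other in both the $S$-rotations and the $F^*$-substitutions, and confirming that the two independent sign prescriptions (one coming from $S$, one from the definition of $F^*$ on $\Omega^{2,cl}$) were chosen compatibly. A useful sanity check along the way is functoriality: both $F\mapsto S\circ F^*$ and $F\mapsto F^*\circ S$ should be compatible with composition of $A_\infty$-functors, which cuts down the number of independent sign conventions that need to be pinned down. Once commutativity of both squares is established, exactness of the top and bottom rows is already given by Proposition~\ref{prop:exact1}, so nothing further is needed.
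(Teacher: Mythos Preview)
Your proposal is correct and follows essentially the same approach as the paper: the left square via Equation~(\ref{eq:F'F}), the right square by a direct unwinding of both sides, and the chain-map property of the rightmost $F^*$ deduced from surjectivity of $S$. The paper's proof is more compressed because it isolates the single observation that controls all the signs, namely $|F_k(x_1,\ldots,x_k)|' = |x_1|'+\cdots+|x_k|'$; once you note this, the Koszul signs on both sides of $S\circ F^* = F^*\circ S$ match term by term without the need for extended sign diagrams, and your separate check that $F^*\rho$ is closed and antisymmetric becomes redundant (it follows from commutativity of the right square together with $\operatorname{im}(S)=\Omega^{2,cl}$).
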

\begin{proof}
	Commutativity of the left square is obvious, as the left most vertical arrow is induced by the middle one (Equation \ref{eq:F'F}).
	
	It is straightforward to check that the square on the right commutes once we observe that (by definition) $|F_k(x_1,\ldots, x_k)|'= |x_1|'+\ldots +|x_k|'$. Finally, one can check that $F^*$ on the right is a chain map, directly using the \Ai \ functor equation for $F$. Alternatively, one can use the fact that $S$ is surjective and that all the other sides of the commutative square are chain maps.
\end{proof}

\begin{rmk}
    A straightforward computation shows the map $F^*: \Omega^{2,cl}(\DD) \to \Omega^{2,cl}(\CC)$ respects composition: $(F\circ G)^*=G^*\circ F^*$.  
\end{rmk}

Next we describe $\mathfrak{K}_\CC$.

\begin{lem}
	Let $p: \CC^{bar} \to \mathfrak{K}_\CC$ be the map $p=\id - t$ and  let $N: C^\lambda_\bullet(\CC) \to \CC^{bar}$  be the map $N(x_0\otimes \cdots \otimes x_n)= \sum_{k=0}^n t^k(x_0\otimes \cdots \otimes x_n)$.
	
	The maps $p$ and $N$ are well defined chain maps. 
\end{lem}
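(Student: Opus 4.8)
The statement to prove is that $p = \id - t : \CC^{bar} \to \mathfrak{K}_\CC$ and $N : C^\lambda_\bullet(\CC) \to \CC^{bar}$ are well-defined chain maps. There are two independent well-definedness issues and two chain-map identities, so the plan is to dispatch the four claims one at a time, using only the two structural relations already established in the excerpt: Equation~(\ref{eq:b'b}), namely $(\id - t)b' = b(\id - t)$, and the fact that $(b')^2 = 0$ (together with $b^2 = 0$, which is implicit in Definition~\ref{defn:cc}).

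\textbf{Well-definedness.} For $p$, the target is $\mathfrak{K}_\CC = \ker(\pi : CC_\bullet(\CC) \to C^\lambda_\bullet(\CC)) = \im(\id - t)$ by definition of $C^\lambda_\bullet(\CC)$, so the image of $\id - t$ lands in $\mathfrak{K}_\CC$ tautologically; there is nothing to check beyond noting $p$ is the corestriction of the degree-zero map $\id - t$ on $CC_\bullet(\CC)$, which is defined on all chains. For $N$, the content is that $N$ descends to the quotient $C^\lambda_\bullet(\CC) = CC_\bullet(\CC)/\im(\id - t)$, i.e. that $N$ kills $\im(\id - t)$, equivalently that $N(\id - t) = 0$ as a map $CC_\bullet(\CC) \to \CC^{bar}$. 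Since $N = \sum_{k=0}^n t^k$ on the length-$(n+1)$ piece, the composite $N \circ (\id - t)$ telescopes to $(\id - t^{n+1})$; the remaining point is the normalization identity $t^{n+1} = \id$ on $\CC(X_n, X_0, \ldots, X_n)$ after keeping track of the sign $@$ in the definition of $t$, which is a direct bookkeeping check that the accumulated signs over $n+1$ cyclic rotations multiply to $+1$ (this is the standard Koszul-sign verification for the cyclic action and I would state it as such rather than grind it out). Hence $N(\id - t) = \id - t^{n+1} = 0$ and $N$ is well defined on the quotient.

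\textbf{Chain maps.} For $p$: on $\CC^{bar}$ the differential is $b'$ and on $\mathfrak{K}_\CC$ it is the restriction of $b$; the identity $p\, b' = b\, p$ is precisely Equation~(\ref{eq:b'b}), so $p$ is a chain map immediately. For $N$: I must show $b'\, N = N\, \bar b$ where $\bar b$ is the differential induced by $b$ on $C^\lambda_\bullet(\CC)$, i.e. at the level of $CC_\bullet(\CC)$ that $b' N \equiv N b \pmod{\text{nothing}}$ as maps into $\CC^{bar}$ — note the target $\CC^{bar}$ is not a quotient, so I need the honest identity $b'\, N = N'\, b$ where $N'$ denotes the same formula $\sum t^k$ viewed appropriately, or more precisely that $b' N$ equals $N$ applied to a representative of $b$ of a cyclic class, independent of representative (which is guaranteed once I also know $N$ is well defined). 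The cleanest route: use $(\id - t) b' N = b (\id - t) N = b(\id - t^{n+1}) = 0$ by Equation~(\ref{eq:b'b}) and the normalization $t^{n+1}=\id$, hence $b' N$ lands in $\ker(\id - t)$; but I want it in the bar complex, so instead I argue directly that $b' N = N b$ by the classical computation comparing the Hochschild differential $b$ and the bar differential $b'$ via the norm operator — this is the identity $b' N = N b$ (equivalently $N b = b' N$), which in the algebra case is in~\cite{Lod} and in the \Ai\ case follows from the same manipulation combining the two summands of $b$ (Definition~\ref{defn:cc}) with the two summands of $b'$ and reorganizing the cyclic sum; I would present it as a short computation invoking~(\ref{eq:b'b}) and the telescoping of $N(\id-t)$.

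\textbf{Main obstacle.} The conceptual points are all immediate from~(\ref{eq:b'b}); the only genuinely fiddly step is the sign normalization $t^{n+1} = \id$ on each length-graded piece, because the operator $t$ carries the sign $(-1)^@$ with $@ = (|x_0|' + \cdots + |x_{n-1}|')|x_n|'$ and one must verify that iterating the rotation $n+1$ times returns the identity on the nose (no residual sign), which is exactly the Koszul-sign consistency of the cyclic group action on a tensor power of shifted objects. I expect to handle this by the standard observation that each elementary transposition of adjacent factors in the bar construction is sign-consistent, so any permutation — in particular a full cycle of order $n+1$ — acts without ambiguity, giving $t^{n+1} = \id$; after that, both chain-map identities reduce to~(\ref{eq:b'b}) and telescoping, and there is nothing further to do.
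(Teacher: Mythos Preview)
Your proposal is correct and follows essentially the same approach as the paper's proof. The paper is simply terser: for well-definedness of $N$ it writes the single observation $N\circ t = N$ (equivalent to your telescoping $N(\id-t)=\id-t^{n+1}=0$, and relying on the same implicit sign check $t^{n+1}=\id$), and for the chain-map identity $b'N = Nb$ it just says ``can be checked directly, see \cite{Lod}''. Your detour through $(\id-t)b'N = b(\id-t)N = 0$ only shows $b'N$ is $t$-invariant, which as you correctly note is not enough on its own; since you then fall back to the direct computation anyway, you might as well drop that aside.
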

\begin{proof}
	By definition, $\mathfrak{K}_\CC=\ker(\pi)$ is the image of $\id-t$, so the map $p$ is well defined. The fact that $p$ is a chain map is exactly Equation (\ref{eq:b'b}).
	
	Well-definedness of $N$ follows from the simple observation $N\circ t =N$. The condition $N\circ b = b' \circ N$ can be checked directly, see \cite{Lod} for the case of associative algebras.
\end{proof}

\begin{prop}\label{prop:exac2}
	We have the following short exact sequence of complexes
	\[0 \to C^\lambda_\bullet(\CC) \stackrel{N}{\longrightarrow} \CC^{bar} \stackrel{p}{\longrightarrow} \mathfrak{K}_\CC \to 0 \]
\end{prop}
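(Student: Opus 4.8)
The statement to prove is the short exact sequence
\[0 \to C^\lambda_\bullet(\CC) \stackrel{N}{\longrightarrow} \CC^{bar} \stackrel{p}{\longrightarrow} \mathfrak{K}_\CC \to 0.\]

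The plan is to verify exactness degreewise at the level of the underlying graded vector spaces, since the maps $N$ and $p$ have already been shown to be chain maps in the preceding lemma, and exactness of a sequence of complexes is just exactness at each spot as vector spaces. Working in a fixed total degree, the three terms are built out of the finite-dimensional pieces $\bigoplus_{X_0,\dots,X_n}\CC(X_n,X_0)\otimes\CC(X_0,\dots,X_n)[-1]$ indexed by the cyclic word length $n+1$, and both $N$ and $p$ preserve this length filtration, so it suffices to check exactness on each such piece where the relevant cyclic group is $\Z/(n+1)$ acting via $t$ (up to the Koszul sign built into $t$). The three things to check are: (i) $p$ is surjective; (ii) $N$ is injective; (iii) $\ker p = \im N$.

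Surjectivity of $p = \id - t$ onto $\mathfrak{K}_\CC = \ker\pi = \im(\id-t)$ is immediate from the definition of $\mathfrak{K}_\CC$, exactly as noted in the previous lemma. For injectivity of $N$ and for the middle exactness, I would argue as follows. Since we are over a field of characteristic zero, on each length-$(n+1)$ piece we may use the standard idempotent decomposition coming from the $\Z/(n+1)$-action: the operator $\tfrac{1}{n+1}N$ (i.e. $\tfrac{1}{n+1}\sum_{k=0}^n t^k$) is a projector onto the $t$-invariants, and $\id - t$ vanishes precisely on the $t$-invariants while $N$ vanishes precisely on $\im(\id-t)$, because $t^{n+1}=\id$ on that piece gives $N\circ(\id-t)=0$ and $(\id-t)\circ N = 0$. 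Concretely: if $N\xi = 0$ then applying the projector $\tfrac1{n+1}N$ to a preimage shows $\xi$ lies in the image of $(\id - \tfrac1{n+1}N)$; but $C^\lambda_\bullet(\CC)$ is the quotient $CC_\bullet(\CC)/\im(\id-t)$, and one checks the class $\xi$ is represented by an element of $\im(\id - t)$, hence is zero in $C^\lambda_\bullet(\CC)$. This is the classical fact that over $\Q$ the Connes complex computes the same thing via the norm map, with $N$ injective and cokernel $\im(\id-t)$; see \cite{Lod} for the associative case, and the identical cyclic-group argument applies verbatim here.

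The one genuinely subtle point — and the step I expect to be the main obstacle to a fully rigorous write-up — is making precise the $\Z/(n+1)$-module structure in the presence of the Koszul signs in the definition of $t$, i.e. that the naive relation $t^{n+1}=\id$ actually holds on each length-$(n+1)$ summand. The sign $@ = (|x_0|'+\cdots+|x_{n-1}|')|x_n|'$ that appears when cycling one element past is designed exactly so that iterating $n+1$ times returns the identity; one should check that the accumulated signs cancel (the total sign after $n+1$ cyclic shifts is $\prod$ of pairwise shifted-degree products which telescopes to $+1$). Granting this, $t$ is an honest order-dividing-$(n+1)$ automorphism, $\mathbb{K}[\Z/(n+1)]$-module theory over characteristic zero applies, and all three exactness claims follow. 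I would state the sign verification as a routine Koszul computation (deferring detailed sign bookkeeping to Appendix~\ref{sec:sign}) and give the exactness argument via the idempotents $\tfrac1{n+1}N$ and $\id - \tfrac1{n+1}N$ as above, exactly parallel to the classical associative-algebra proof in \cite{Lod}.
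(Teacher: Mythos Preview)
Your proposal is correct and takes essentially the same approach as the paper: both reduce to the standard cyclic-group argument in characteristic zero, using that $t^{n+1}=\id$ on each length piece so that $\tfrac{1}{n+1}N$ is the projector onto $t$-invariants. The only cosmetic difference is that for injectivity of $N$ the paper writes down the explicit element $\gamma=-\tfrac{1}{n+1}\sum_{k=1}^n k\,t^k(\textbf{x})$ satisfying $(\id-t)\gamma=\textbf{x}$, whereas you invoke the idempotent decomposition abstractly; these are equivalent.
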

\begin{proof}
	As observed before $p\circ N=0$. Also, by definition of $\mathfrak{K}_\CC$, $p$ is surjective. Given $\textbf{x}\in \ker(p)$, we have $t(\textbf{x})=\textbf{x}$. Therefore $\textbf{x}=N(\frac{1}{n+1}[\textbf{x}]).$
	
	All that is left to show is that $N$ is injective. If $N([\textbf{x}])=0$, then $\sum_{k=1}^n t^k(\textbf{x})=-\textbf{x}$. We define $\gamma :=-\frac{1}{n+1}\sum_{k=1}^n kt^k(\textbf{x})$, then 
	\[ (\id -t)(\gamma)= -\frac{1}{n+1}\Big(\sum_{k=1}^n t^k(\textbf{x}) - n \textbf{x}\Big)=-\frac{1}{n+1}\Big(-(n+1)\textbf{x}\Big)=\textbf{x}.
	\]
	Hence $[\textbf{x}]=0\in C^\lambda_\bullet(\CC)$.
\end{proof}

This short exact sequence is also functorial.  

\begin{prop}\label{prop:funct2}
	Let $F:\CC \to \DD$ be an \Ai-functor. We have the following commutative diagram
	\[ \begin{CD}
		0 @>>> C^\lambda_\bullet(\CC) @>N>>  \CC^{bar} @>p>>  \mathfrak{K}_{\CC} @>>> 0 \\
		@. @A F_* AA    @A F'_* AA      @A F_* AA     @. \\
		0 @>>> C^\lambda_\bullet(\DD) @>N>>  \DD^{bar} @>p>>  \mathfrak{K}_{\DD} @>>> 0 
	\end{CD} \]
\end{prop}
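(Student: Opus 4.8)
The plan is to verify the commutativity of the two squares in the diagram, the left one being essentially formal and the right one requiring a direct computation with the explicit formulas for $N$, $p$, $F_*$, and $F'_*$. First I would recall that in Proposition~\ref{prop:funct1} (and Equation~\eqref{eq:F'F}) we already established that $F'_*: \CC^{bar}\to\DD^{bar}$ is a chain map and that $(\id-t)F'_* = F_*(\id-t)$ on $CC_\bullet(\CC)$. Since $\mathfrak{K}_\CC = \im(\id-t)$ with the induced differential, and the map $p = \id-t$, the relation $(\id-t)F'_* = F_*(\id-t)$ says precisely that $F'_*$ carries $\mathfrak K_\CC$ into $\mathfrak K_\DD$ and that the restriction agrees with $F_*$; in other words $p\circ F'_* = F_*\circ p$, which is commutativity of the right square. (One must also note that the $F_*$ appearing as the rightmost vertical arrow is, by definition, the map induced on the quotient $\mathfrak K_\CC$ — but $\mathfrak{K}_\CC$ is a sub, not a quotient; here it is the restriction of $F'_*$, so there is nothing further to check beyond the inclusion $F'_*(\mathfrak K_\CC)\subset\mathfrak K_\DD$, which is immediate from $F'_*(\id-t) = (\id-t)F'_*$ — wait, that identity is on $C^\lambda$; the correct statement is that $F'_*$ preserves the image of $\id-t$, which follows since $F'_*\circ(\id-t) = (\id-t)\circ F'_*$ as operators on $CC_\bullet$, a relation one checks exactly as in Equation~\eqref{eq:F'F}.)

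For the left square I would argue as follows. The map $N: C^\lambda_\bullet(\CC)\to\CC^{bar}$ is $N = \sum_{k=0}^n t^k$ on the length-$(n+1)$ piece, and we must show $F'_*\circ N = N\circ F_*$. The cleanest route is to use the relation between $F_*$ and $F'_*$ already in hand together with the identity $N\circ t = N$ and $t\circ N = N$ on the Connes complex, reducing the claim to a statement about how $F'_*$ interacts with the cyclic operator $t$. Concretely, expanding $F'_*(x_0\otimes\textbf{x}) = F(x_0,\textbf{x}^{(1)})\otimes F(\textbf{x}^{(2)})\otimes\cdots\otimes F(\textbf{x}^{(k)})$ and comparing with the expansion of $F_*(x_0\otimes\textbf{x})$, which places $x_0$ inside the first $F$-block together with a wrap-around portion of $\textbf{x}$, one sees that summing $t^j$ over all cyclic rotations of the inputs on the source side corresponds to summing all the ways of choosing which $F$-block is "distinguished" (i.e. contains the tensor-degree-zero slot) together with all internal rotations of that block — and this double sum reorganizes into $N$ applied to the $\DD$-side. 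This is a bookkeeping argument of the same flavor as the proof that $N$ is a chain map; the signs are governed by the quantity $@$ and match because $|F_k(x_1,\dots,x_k)|' = |x_1|'+\cdots+|x_k|'$, as noted in the proof of Proposition~\ref{prop:funct1}.

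Alternatively, and perhaps more efficiently, I would invoke exactness: we have the short exact sequence of Proposition~\ref{prop:exac2} on both sides, the middle vertical map $F'_*$ is a genuine chain map, the right square commutes by the argument above, and $N$ is injective with image $\ker p$; hence $F'_*$ restricted to $\ker p = \im N$ is determined, and since $F'_*\circ N$ lands in $\ker p$ (because $p\circ F'_*\circ N = F_*\circ p\circ N = 0$) we may write $F'_*\circ N = N\circ G$ for a unique chain map $G$, and identifying $G$ with $F_*$ amounts to a single evaluation on the cyclic generator $N([\textbf{x}])$ using $p\circ N([\textbf{x}]) = 0$ and the explicit inverse $[\textbf{x}]\mapsto \frac{1}{n+1}[\cdot]$ from the proof of Proposition~\ref{prop:exac2}. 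Either way, the main obstacle is purely combinatorial: matching, with correct Koszul signs, the cyclic rotations on the bar side of $\CC$ against the rotations-plus-block-choices produced by $\widehat F$ on the bar side of $\DD$. I expect no conceptual difficulty, only a somewhat intricate sign verification, which I would carry out by the same conventions used throughout Section~\ref{sec:cy1} and relegate the lengthiest part to Appendix~\ref{sec:sign} if needed.
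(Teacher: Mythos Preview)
Your proposal is essentially correct and follows the paper's own approach: the right square is exactly Equation~\eqref{eq:F'F}, and the left square is a direct combinatorial check whose sign verification reduces to the observation that $|F_k(x_1,\dots,x_k)|'=|x_1|'+\cdots+|x_k|'$ (equivalently, $F_k$ has unshifted degree $1-k$). The paper's proof is two lines and says precisely this.

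One small correction: in your parenthetical you write that ``$F'_*\circ(\id-t)=(\id-t)\circ F'_*$ as operators on $CC_\bullet$'' and that the rightmost vertical arrow is ``the restriction of $F'_*$''. Neither is correct. The map $F'_*$ does \emph{not} commute with $\id-t$; rather, Equation~\eqref{eq:F'F} intertwines $F'_*$ with $F_*$ via $(\id-t)F'_*=F_*(\id-t)$. This already tells you that $F_*$ (not $F'_*$) sends $\mathfrak K_\CC=\im(\id-t)$ into $\mathfrak K_\DD$, and that $p\circ F'_*=F_*\circ p$, which is exactly the right square. Your alternative exactness argument for the left square is valid but unnecessary; the direct bijection you describe between (rotation of the $x$-word, then block partition) and (cyclic block partition, then rotation of the $F$-word) is the whole proof, with the degree remark handling the Koszul signs.
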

\begin{proof}
	Commutativity of the square on left is easy to check once we note $F_k$ has degree $1-k$.	
	The square on the right is exactly Equation (\ref{eq:F'F}). 	
\end{proof}

\begin{lem}\label{lem:baracyc}
	Assume $\CC$ is unital. Then $\CC^{bar}$ is an acyclic complex.
\end{lem}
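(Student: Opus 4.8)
\textbf{Proof proposal for Lemma \ref{lem:baracyc}.}

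The plan is to construct an explicit contracting homotopy for the differential $b'$ on $\CC^{bar}$, using the strict unit of $\CC$. The standard picture is that the bar complex of a (unital, $A_\infty$) category is the two-sided bar resolution of the diagonal bimodule, which is contractible; concretely, I would define $h: \CC^{bar}\to\CC^{bar}$ by inserting the unit in the leftmost slot,
\[ h(x_0\otimes x_1\otimes\cdots\otimes x_n) = \pm\,\one_{X_0}\otimes x_0\otimes x_1\otimes\cdots\otimes x_n, \]
where $x_0\in\CC(X_n,X_0)$ so that $\one_{X_0}\in\CC(X_0,X_0)$ makes the composable string well-formed, and the sign is the Koszul sign coming from moving $\one_{X_0}$ past nothing (so effectively it is determined by $|\one_{X_0}|'=-1$). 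Then I would verify $b'h + hb' = \id$ by expanding both sides against the explicit formula for $b'$ from the excerpt.

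The key step is the sign-careful computation $b'h+hb'=\id$. Expanding $b'(h(x_0\otimes\mathbf{x}))$: the term where $\m$ lands on a substring containing the inserted $\one_{X_0}$ in first position contributes, via $\m_2(\one_{X_0},x_0)=x_0$ (strict unitality), exactly $\pm(x_0\otimes\mathbf{x})$ — this is the term that will produce $\id$ — plus terms where $\m_{\geq 2}$ has $\one_{X_0}$ strictly inside, which vanish by strict unitality ($\m_{k\geq 2}(\ldots,\one,\ldots)=0$ and $\m_1(\one)=0$), plus terms where $\m$ acts entirely to the right of the inserted unit, which are precisely $-h(b'(x_0\otimes\mathbf{x}))$ after matching signs. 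So the only surviving contributions are $\id$ and $-hb'$, giving $b'h=\id-hb'$, i.e.\ $b'h+hb'=\id$, hence $\CC^{bar}$ is acyclic.

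The main obstacle I anticipate is purely bookkeeping: getting the Koszul signs in $b'$, in $h$, and in the reordering consistent so that the ``$\m_2(\one,-)$'' term comes out with coefficient exactly $+1$ rather than $-1$, and so that the remaining terms assemble exactly into $-hb'$ rather than $-hb'$ up to an unwanted sign. Since $\one_{X_0}$ has shifted degree $-1$ (odd), inserting it flips signs, and one must track how this interacts with the $\star$-signs in the definition of $b'$ and with the degree conventions $|\varphi|'$, $|{\bf x}^{(1)}|'$ appearing there. This is the same mechanism as in the associative case treated in \cite{Lod}; here one only needs to additionally invoke the higher strict-unitality relations $\m_{k}(\ldots,\one,\ldots)=0$ for $k\geq 2$ and $\m_1(\one)=0$ to kill the extra terms, and the verification, while tedious, is routine. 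I would relegate the detailed sign check to Appendix~\ref{sec:sign}.
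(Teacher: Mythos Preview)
Your approach is exactly the paper's: define a contracting homotopy $h$ by prepending the unit and check $b'h+hb'=\id$ using strict unitality. One slip: with the paper's conventions $x_0\in hom_\CC(X_n,X_0)$ and $x_n\in hom_\CC(X_{n-1},X_n)$, the prepended element must land in $hom_\CC(X_n,X_n)$ for the cyclic chain to be well-formed, so the correct homotopy is $h(x_0\otimes\cdots\otimes x_n)=\one_{X_n}\otimes x_0\otimes\cdots\otimes x_n$, not $\one_{X_0}$ (and no extra sign is needed).
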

\begin{proof}
	There is an explicit contracting homotopy $h : \CC^{bar} \to \CC^{bar}$. Given $x_0 \otimes \cdots x_n \in \CC(X_n, X_0, \ldots, X_n)$ one defines $h(x_0\otimes \cdots \otimes x_n)= \one_{X_n} \otimes x_0\otimes \cdots x_n$. One easily checks that $hb'+b'h=\id_{\CC^{bar}}$.
\end{proof}

Before we state the main result of this subsection, we introduce some terminology.

\begin{defn}
	A strong homotopy inner product (SHIP) on $\CC$, is an element $\rho \in \Omega^{2,cl}(\CC) \subseteq \hom^\bullet_{[\CC, \CC]}(\CC_{sd}, \CC_{sd}^\vee )$, which is a quasi-isomorphism of bimodules. 
\end{defn}

When $\CC$ is minimal and $\rho$ is  a \emph{constant} SHIP, meaning $\rho_{r,s}=0$ for $r,s>0$, then
\[\rho_{0,0}:=\langle -, - \rangle : \hom_\CC (X, Y)[1] \otimes \hom_\CC (Y, X)[1] \to \mathbb{K}
\]
defines an anti-symmetric, non-degenerate pairing. Moreover, $\partial \rho=0$ is equivalent to the following equation
\begin{equation}\label{eq:cyclic}
	\langle \m_n(x_1,\ldots, x_n), x_{n+1} \rangle = (-1)^@ \langle \m_n(x_2,\ldots, x_{n+1}), x_1 \rangle.
\end{equation}
In this case, $\CC$ together with the pairing $\langle -, - \rangle$ is called a \emph{cyclic \Ai-category}.

Given two cyclic \Ai-categories $\CC$ and $\DD$, a functor $F:\CC \to \DD$ is called cyclic if $F^*\langle -, - \rangle_\DD=\langle -, - \rangle_\CC$. This condition is equivalent to the following equations
\begin{equation}\label{eq:F_cyclic}
	\langle F_1(x), F_1(y) \rangle_\DD = \langle x, y \rangle_\CC ; \  \  \sum_{i=1}^n \langle F_i(x_1,\ldots, x_i), F_{n-i}(x_{i+1},\ldots,x_n) \rangle_\DD=0, n\geq 3.
\end{equation}

\begin{cor}\label{cor:CY_SHIP}
	The complexes $\Omega^{2,cl}(\CC)$ and $C^\lambda_\bullet(\CC)^\vee$ are quasi-isomorphic. This quasi-isomorphism is functorial with respect to \Ai-functors.
	Moreover, it identifies strong Calabi--Yau structures with SHIPs.
\end{cor}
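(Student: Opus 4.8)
The plan is to assemble the corollary from the two short exact sequences of Propositions~\ref{prop:exact1} and~\ref{prop:exac2}, together with the acyclicity of the bar complex (Lemma~\ref{lem:baracyc}), by a diagram chase / spectral-sequence-free homological comparison. First, dualize the sequence in Proposition~\ref{prop:exac2}. Since $\CC$ is unital, Lemma~\ref{lem:baracyc} gives $H_\bullet(\CC^{bar})=0$, hence also $H^\bullet((\CC^{bar})^\vee)=0$ (dualizing over a field is exact). Dualizing $0\to C^\lambda_\bullet(\CC)\xrightarrow{N}\CC^{bar}\xrightarrow{p}\mathfrak{K}_\CC\to 0$ yields the short exact sequence $0\to \mathfrak{K}_\CC^\vee \xrightarrow{p^\vee}(\CC^{bar})^\vee\xrightarrow{N^\vee} C^\lambda_\bullet(\CC)^\vee\to 0$, whose long exact sequence in cohomology, combined with the vanishing of $H^\bullet((\CC^{bar})^\vee)$, produces a natural connecting isomorphism
\[
\partial : H_\bullet\big(C^\lambda_\bullet(\CC)^\vee\big) \xrightarrow{\ \sim\ } H_{\bullet+1}\big(\mathfrak{K}_\CC^\vee\big).
\]

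Second, use Proposition~\ref{prop:exact1}: the sequence $0\to C^\lambda_\bullet(\CC)^\vee\xrightarrow{\pi^\vee}CC_\bullet(\CC)^\vee\xrightarrow{S}\Omega^{2,cl}(\CC)[1]\to 0$ identifies $C^\lambda_\bullet(\CC)^\vee$ with the kernel of $S$. But by definition $\mathfrak{K}_\CC=\ker(\pi)$, so dualizing the defining sequence $0\to\mathfrak{K}_\CC\to CC_\bullet(\CC)\xrightarrow{\pi}C^\lambda_\bullet(\CC)\to 0$ identifies $\mathfrak{K}_\CC^\vee$ with the cokernel of $\pi^\vee$, i.e. with $\Omega^{2,cl}(\CC)[1]$ via $S$ — indeed $S$ factors as $CC_\bullet(\CC)^\vee\twoheadrightarrow CC_\bullet(\CC)^\vee/\im(\pi^\vee)=\mathfrak{K}_\CC^\vee\xrightarrow{\sim}\Omega^{2,cl}(\CC)[1]$. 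Hence $H_{\bullet+1}(\mathfrak{K}_\CC^\vee)\cong H_{\bullet+1}(\Omega^{2,cl}(\CC)[1])=H_\bullet(\Omega^{2,cl}(\CC))$. Composing with the isomorphism $\partial$ of the previous paragraph gives the desired quasi-isomorphism $\Omega^{2,cl}(\CC)\simeq C^\lambda_\bullet(\CC)^\vee$. Functoriality with respect to \Ai-functors is then immediate: all four squares in Propositions~\ref{prop:funct1} and~\ref{prop:funct2} commute, so the connecting maps and the identifications above are natural, and naturality of long exact sequences finishes the argument.

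Finally, I would check the last sentence — that the quasi-isomorphism matches strong Calabi--Yau structures with strong homotopy inner products. By Definition~\ref{def:cy}, a strong Calabi--Yau structure is a class $\widetilde\phi\in HC^\lambda_\bullet(\CC)^\vee$ whose image $\widetilde\phi\circ\pi_*\in HH_\bullet(\CC)^\vee$ is non-degenerate, i.e. $\Psi(\widetilde\phi\circ\pi_*)$ is a quasi-isomorphism of bimodules; a strong homotopy inner product is by definition an element of $\Omega^{2,cl}(\CC)$ that is a quasi-isomorphism in $hom^\bullet_{[\CC,\CC]}(\CC_{sd},\CC_{sd}^\vee)$. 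The point is that, on cohomology, the composite $\Omega^{2,cl}(\CC)\simeq C^\lambda_\bullet(\CC)^\vee\xrightarrow{\pi^\vee}CC_\bullet(\CC)^\vee$ agrees, up to the identification of the previous lemma, with the inclusion of closed anti-symmetric pre-homomorphisms into all pre-homomorphisms — i.e. the image of $\widetilde\phi$ under the quasi-isomorphism, viewed inside $hom^\bullet_{[\CC,\CC]}(\CC_{sd},\CC_{sd}^\vee)$, represents the same bimodule map as $\Psi(\widetilde\phi\circ\pi_*)$. Thus non-degeneracy on one side is exactly the quasi-isomorphism condition on the other. The main obstacle I anticipate is not the homological bookkeeping but precisely this last compatibility: reconciling the three maps $S$, $\Psi$, and $\pi^\vee$ at the chain level (including signs and the relation to the map $h$ of Lemma~\ref{lem:Ssurjective}) so that the connecting isomorphism genuinely carries the higher-residue/Mukai-type pairing data correctly; everything else is a formal consequence of exactness and Lemma~\ref{lem:baracyc}.
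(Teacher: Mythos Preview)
Your approach is correct and essentially identical to the paper's: both combine the two short exact sequences (Propositions~\ref{prop:exact1} and~\ref{prop:exac2}) with the acyclicity of $\CC^{bar}$ to obtain $H^\bullet(\Omega^{2,cl}(\CC))\cong HC^\lambda_\bullet(\CC)^\vee$, and both invoke Propositions~\ref{prop:funct1} and~\ref{prop:funct2} for functoriality. The ``main obstacle'' you flag is resolved in the paper not by reconciling $S$, $\Psi$, and $\pi^\vee$ at the chain level, but by passing without loss of generality to a minimal model and comparing only the $(0,0)$-components: one computes $\Psi(\phi\circ\pi_*)_{0,0}(v)(w)=(-1)^@\phi_0(\m_2(w,v))$ directly, while on the other side, if $S(\theta)=\rho$ then $\rho_{0,0}(v)(w)=\theta_1(v\otimes w-(-1)^@w\otimes v)$, and an elementary diagram chase through the connecting map shows $v\otimes w-(-1)^@w\otimes v\in\mathfrak{K}_\CC$ is sent to $(-1)^@\m_2(w,v)\in C^\lambda_\bullet(\CC)[1]$, so the two pairings agree.
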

\begin{proof}
	Proposition \ref{prop:exact1} gives the isomorphism
	\[\Omega^{2,cl}(\CC)[1] \cong  \left. C_\bullet(\CC)^\vee \middle/ C^\lambda_\bullet(\CC)^\vee  \right. \cong \mathfrak{K}_{\CC}^\vee .\]
	Proposition \ref{prop:exac2}, together with $\CC^{bar}$ being acyclic, implies that $\mathfrak{K}_{\CC}$ and   $C^\lambda_\bullet(\CC)[1]$ are quasi-isomorphic. Combining these we obtain the first statement, and therefore, on cohomology level we obtain an isomorphism
	\[HC^\lambda_\bullet(\CC)^\vee  \cong H^\bullet(\Omega^{2,cl}(\CC), \partial).\]
	
	Let $\phi \in HC^\lambda_\bullet(\CC)^\vee $ and $\rho \in  H^\bullet(\Omega^{2,cl}(\CC), \partial)$ be related by the above isomorphism. We claim that $\phi$ is a Calabi-Yau structure if and only if $\rho$ is a quasi-isomorphism of bimodules. We assume, without loss of generality that $\CC$ is a minimal \Ai-category.
	Recall $\phi$ is Calabi-Yau when $\Psi(\phi\circ \pi)$ is a quasi-isomorphism, which is equivalent to the pairing
	\[\Psi(\phi\circ \pi)_{0,0}(v)(w)= (-1)^@\phi_0(\m_2(w,v)),\]
	being non-degenerate as a map $\hom_\CC(X, Y)\otimes \hom_\CC(Y, X)\to \mathbb{K}$, for all objects $X, Y$.
		Similarly, $\rho$ is a quasi-isomorphism if the pairing $\rho_{0,0}(v)(w)$ is non-degenerate. Take $\theta \in C_\bullet(\CC)^\vee $ with $S(\theta)=\rho$. Then 
	$\rho_{0,0}(v)(w)= \theta_1(v\otimes w - (-1)^@ w \otimes v)$.  Diagram chasing one sees that $v\otimes w - (-1)^@ w \otimes v \in \mathfrak{K}_\CC$ is sent to $(-1)^@ \m_2(w,v)\in C^\lambda_\bullet(\CC)[1]$. Therefore under the above isomorphism 
	$$\rho_{0,0}(v)(w)=\theta_1(v\otimes w - (-1)^@ w \otimes v)= (-1)^@\phi_0(\m_2(w,v)).$$
	Hence $\rho_{0,0}(v)(w)= \Psi(\phi\circ \pi)_{0,0}(v)(w)$, which proves the claim.
	
	Finally, the statement on functoriality follows directly from Propositions \ref{prop:funct1} and \ref{prop:funct2}.
\end{proof}


\begin{ex}\label{ex:cyclicCY}
Let $\CC$ be a cyclic \Ai-category. We claim that the the cyclic structure $\rho:=\langle -, -\rangle \in \Omega^{2,cl}(\CC)$ corresponds, by Corollary \ref{cor:CY_SHIP}, to the strong Calabi--Yau structure $\phi \in HC^\lambda_\bullet(\CC)^\vee$ defined as
\begin{align}\label{eq:CY}
	\phi([ x_0\otimes \cdots \otimes x_n])&=\left\{
	\begin{array}{ll}
		0 & , n>0\\
		\langle \one_{X_0}, x_0 \rangle & ,  n=0.
	\end{array}
	\right.
\end{align}
In order to see this first note that if we define $\theta\in C_\bullet(\CC)^\vee$ as $\theta_1(x_0 \otimes x_1):=\frac{1}{2}\langle x_0, x_1 \rangle$, and all other entries zero, then $\theta$ determines an element in $\mathfrak{K}_\CC^\vee$ with $S(\theta)=\rho$. Diagram chasing one sees that the corresponding Calabi--Yau structure is given by
\[\phi([ x_0\otimes \cdots \otimes x_n])= \theta\left(p(\one_{X_0} \otimes N(x_0\otimes \cdots \otimes x_n))\right),\]
which by definition of $\theta$ is non-zero only when $n=0$ in which case it equals $\theta(\one_{X_0} \otimes x_0- (-1)^@ x_0 \otimes \one_{X_0})=\langle \one_{X_0}, x_0 \rangle $, by anti-symmetry of the inner product.
\end{ex}

\subsection{From homotopy cyclic to cyclic}

In this subsection we prove an analogue of the Darboux theorem in symplectic geometry for SHIPs. Namely we show that up to an \Ai-isomorphism any SHIP can be made constant. Therefore any \Ai-category with a SHIP has a cyclic model. We also study the uniqueness of this cyclic model.

From now on we assume that $\CC$ is minimal. Recall, the homological perturbation lemma guarantees one can always achieve this. Together with our starting assumption that $\CC$ is proper, this implies that all $\hom$ spaces in $\CC$ are finite dimensional.

\begin{defn}\label{defn:contraction}
	Let $\rho \in \hom^\bullet_{[\CC, \CC]}(\CC_{sd}, \CC_{sd}^\vee)$ be a bimodule pre-homomorphism and $Z \in C^\bullet(\CC)$ a Hochschild cochain. We define the contraction $\displaystyle \iota_Z \rho \in C_\bullet(\CC)^\vee $ as follows
	\[\iota_Z \rho (x_0 \otimes x_1 \cdots \otimes x_n)= \sum (-1)^{@+\dagger} \rho(x_1, \ldots \underline{Z(x_i, \ldots, x_j)}\ldots x_n)(x_0).
	\]
	where $\dagger:=|Z|'( |\rho| + |x_1|'+\ldots+|x_{i-1}|')$.
\end{defn}

\begin{defn}\label{defn:lie}
	Let $Z \in C^\bullet(\CC)$ be a Hochschild cochain. We define the Lie derivative
	\[\mathcal{L}_Z: \hom^\bullet_{[\CC, \CC]}(\CC_{sd}, \CC_{sd}^\vee ) \to \hom^\bullet_{[\CC, \CC]}(\CC_{sd}, \CC_{sd}^\vee )\]
by the formula
\begin{align}
	(\mathcal{L}_Z\rho)_{r,s}(\textbf{x}, \underline{v}, \textbf{y})(w)= &\sum (-1)^{@+1+|Z|'(|\rho|+|\textbf{y}^{(1)}|'+|v|'+|\textbf{x}^{(2)}|')} \rho(\textbf{x}^{(2)}, \underline{v}, \textbf{y}^{(1)})(Z(\textbf{y}^{(2)}, w,  \textbf{x}^{(1)}))\nonumber\\
	& + \sum (-1)^{1+(|\rho|+\star)|Z|'} \rho_{r_1,s_1}(\textbf{x}^{(1)}, \underline{ Z( \textbf{x}^{(2)}, v, \textbf{y}^{(1)})}, \textbf{y}^{(2)} )(w)\\
	& + \sum (-1)^{1+(|\rho|+\star)|Z|'} \rho_{r_1,s}(\textbf{x}^{(1)}, Z( \textbf{x}^{(2)}), \textbf{x}^{(3)}, \underline{v}, \textbf{y} )(w)\nonumber\\
	& + \sum (-1)^{1+(|\rho|+\star)|Z|'} \rho_{r,s_1}(\textbf{x}, \underline{v}, \textbf{y}^{(1)}, Z(\textbf{y}^{(2)}), \textbf{y}^{(3)})(w)\nonumber
\end{align}	
\end{defn}

Note that when $Z=\m$ we have $\mathcal{L}_Z\rho=\partial \rho$.

\begin{defn}\label{defn:lie_cc}
	Let $Z \in C^\bullet(\CC)$ be a Hochschild cochain. We define the Lie derivative map
	\[\mathcal{L}_Z: C_\bullet(\CC) \to C_\bullet(\CC)
	\]
	by the formula
	\begin{align*}
	\mathcal{L}_Z(x_0 \otimes \textbf{x})= &\sum (-1)^{\star|Z|'} x_0 \otimes \textbf{x}^{(1)} \otimes Z(\textbf{x}^{(2)})\otimes \textbf{x}^{(3)} + \sum (-1)^@ Z(\textbf{x}^{(3)}, x_0, \textbf{x}^{(1)})\otimes \textbf{x}^{(2)}
\end{align*}
\end{defn}

Please note that when $Z=\m$, we have $\mathcal{L}_\m=b$, the Hochschild differential. The following lemma is well-known.

\begin{lem}\label{lem:lieder}
    Let $Z, Y \in C^\bullet(\CC)$ be Hochschild cochains.
    \begin{enumerate}
        \item $[\mathcal{L}_Z,\mathcal{L}_Y]= \mathcal{L}_{[Z,Y]}$ ;
        \item If Z is reduced, $\mathcal{L}_Z \circ B=(-1)^{|Z|'}B\circ \mathcal{L}_Z $.
    \end{enumerate}
\end{lem}

\begin{lem}\label{lem:contraction}
	Let $\rho \in \Omega^{2,cl}(\CC)$ be a bimodule pre-homomorphism.
	\begin{enumerate}
		\item If $\rho$ is a quasi-isomorphism then the map $\iota_{-}\rho: C^\bullet(\CC) \to C_\bullet(\CC)^\vee $ is an isomorphism. 
	    \item For Hochschild cochains $Y, Z$, we have the identity $$\iota_{[Z,Y]}\rho=-\iota_{Z}(\mathcal{L}_Y \rho) + (-1)^{|Z||Y|'} \mathcal{L}_Y^\vee (\iota_{Z}\rho),$$
	    where $\mathcal{L}_Y^\vee (\psi)(\textbf{x})=(-1)^{|\psi||Y|'}\psi(\mathcal{L}_Y(\textbf{x}))$. In particular, when $Y=\m$ we have $\iota_{\delta(Z)}\rho=(-1)^{|Z|'}\iota_{Z}(\partial \rho) + b^\vee (\iota_{Z}\rho)$.
		\item For a Hochschild cochain $Z $ we have $\mathcal{L}_Z \rho = S(\iota_Z \rho)$. In particular $\mathcal{L}_Z \rho \in \Omega^{2,cl}(\CC)$.
		\item Let $F:\CC \to \CC$ be an \Ai-pre-functor with the inverse $F^{-1}$. Given a Hochschild cochain $Z$ we define a new one by the formula $F^*Z:= F^{-1}\circ \widehat{Z}\circ \widehat{F}$. 
		
		We have $\displaystyle F^*(\mathcal{L}_z \rho)= \mathcal{L}_{F^*Z}(F^* \rho)$.
	\end{enumerate}
\end{lem}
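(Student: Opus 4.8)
The plan is to establish the four parts essentially independently, reducing (4) to (3).

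\emph{Part (1).} I would show that $\iota_{-}\rho$ is ``upper triangular with invertible diagonal'' for the order/length filtrations. On $CC^\bullet(\CC)$ take the decreasing filtration $F^p=\prod_{n\ge p}CC^\bullet(\CC)^n$ by Hochschild cochains of order $\ge p$, and on $CC_\bullet(\CC)^\vee$ take $F^p$ to be the functionals vanishing on all chains of length $<p$; both filtrations are complete and Hausdorff. If $Z$ has order $\ge p$, then in Definition~\ref{defn:contraction} the cochain $Z$ can only be applied to a consecutive block of length $\ge p$ inside $x_1\otimes\cdots\otimes x_n$, so $\iota_Z\rho$ vanishes on chains of length $<p$; hence $\iota_{-}\rho$ preserves the filtrations. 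On $\mathrm{gr}^p$ the only surviving term is the one in which $Z_p$ is applied to the whole block, so the induced map sends the class of $Z$ to the functional $x_0\otimes x_1\otimes\cdots\otimes x_p\mapsto\pm\,\rho_{0,0}\big(Z_p(x_1,\dots,x_p)\big)(x_0)$. Since $\CC$ is minimal, $\CC_{sd}$ and $\CC_{sd}^\vee$ carry the zero differential, so the bimodule quasi-isomorphism $\rho_{0,0}$ is in fact an isomorphism of graded vector spaces; composing it with the standard Hom--tensor and double-dual identifications (all hom spaces are finite dimensional) shows that $\mathrm{gr}^p(\iota_{-}\rho)$ is an isomorphism for every $p$. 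Completeness of the two filtrations then forces $\iota_{-}\rho$ to be an isomorphism.

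\emph{Parts (2) and (3).} These are the noncommutative analogues of the Cartan identities $\mathcal L_Z=[d,\iota_Z]$ and $[\mathcal L_Y,\iota_Z]=\iota_{[Y,Z]}$, and I would prove each by expanding the relevant definitions and collecting terms. For (3): substitute Definition~\ref{defn:contraction} into $S(\iota_Z\rho)(\textbf{x},\underline v,\textbf{y})(w)=(-1)^@(\iota_Z\rho)(v,\textbf{y},w,\textbf{x})-(-1)^@(\iota_Z\rho)(w,\textbf{x},v,\textbf{y})$ and sort the resulting sum by where $Z$ lands --- inside $\textbf{x}$, inside $\textbf{y}$, or straddling $w$, respectively $v$. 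After normalizing the straddling terms by the anti-symmetry of $\rho$ and fusing the two $w$-straddling terms (and the two $v$-straddling terms) using the closedness of $\rho$, one obtains exactly the four terms of Definition~\ref{defn:lie}; this is the same kind of bookkeeping as in the proof of Lemma~\ref{lem:Ssurjective}. The assertion $\mathcal L_Z\rho\in\Omega^{2,cl}(\CC)$ is then immediate from Lemma~\ref{lem:Ssurjective}, which identifies $\Omega^{2,cl}(\CC)$ with the image of $S$. For (2): in $\iota_Z(\mathcal L_Y\rho)$ and $(-1)^{|Z||Y|'}\mathcal L_Y^\vee(\iota_Z\rho)$, the summands in which $Y$ and $Z$ act on disjoint groups of inputs occur with opposite signs and cancel in pairs, while the summands in which $Y$ acts inside the block to which $Z$ is applied (respectively $Z$ inside the block of $Y$) reassemble into $\iota_{Z\circ Y}\rho$ (respectively $-(-1)^{|Z|'|Y|'}\iota_{Y\circ Z}\rho$), whose sum is $\iota_{[Z,Y]}\rho$. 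Specializing to $Y=\m$ and using $\mathcal L_\m\rho=\partial\rho$, $\mathcal L_\m=b$ (hence $\mathcal L_\m^\vee=b^\vee$) and $[Z,\m]=-(-1)^{|Z|'}\delta(Z)$ yields the stated special case.

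\emph{Part (4).} By (3), $F^*(\mathcal L_Z\rho)=F^*\big(S(\iota_Z\rho)\big)$, and since $S$ is natural with respect to \Ai-functors (Proposition~\ref{prop:funct1}) this equals $S\big(F^*(\iota_Z\rho)\big)$; similarly $\mathcal L_{F^*Z}(F^*\rho)=S\big(\iota_{F^*Z}(F^*\rho)\big)$. Hence it suffices to prove the naturality of contraction, $F^*(\iota_Z\rho)=\iota_{F^*Z}(F^*\rho)$ in $CC_\bullet(\CC)^\vee$. Unwinding $F^*$ on $CC_\bullet(\CC)^\vee$ (the transpose of the chain map $F_*$ on Hochschild chains) together with the formula for $F^*\rho$ from Proposition~\ref{prop:funct1}, the left-hand side becomes a sum of terms $\rho(\dots,\underline{(\text{iterated }F)},\dots)(\dots)$ with a copy of $Z$ inserted among the $\widehat F$-images; the defining formula $F^*Z=F^{-1}\circ\widehat Z\circ\widehat F$ then shows that these $Z$-insertions are precisely the expansion of the blocks $(F^*Z)(x_i,\dots,x_j)$ appearing on the right-hand side, and the two sides agree term by term.

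\emph{Main obstacle.} Part (1) is conceptually clean once the correct filtrations are identified, and the reductions above confine the genuinely new work to two sign computations --- the term-by-term identity in (3) (together with the disjoint/straddling term analysis in (2)) and the naturality $F^*\iota_Z\rho=\iota_{F^*Z}F^*\rho$. The real difficulty throughout is the Koszul sign bookkeeping for the shifted degrees and cyclic rotations encoded in the symbols $@$ and $\dagger$, of the same flavour as in Lemmas~\ref{lem:Schain} and~\ref{lem:Ssurjective}; I would relegate the detailed sign checks to an appendix, as the paper does elsewhere.
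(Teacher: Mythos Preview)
Your approach to parts (1)--(3) is essentially the paper's: the filtration argument in (1) is the inductive construction there repackaged, and for (2)--(3) both you and the paper do the direct expansion, using anti-symmetry and closedness of $\rho$ in (3) exactly as you describe (the paper leaves (2) to the reader).

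For part (4) you take a genuinely different route. The paper proves it by a single direct computation using only the identity $F\circ\widehat{F^*Z}=Z\circ\widehat F$, which is immediate from the definition $F^*Z=F^{-1}\circ\widehat Z\circ\widehat F$; no appeal to $S$ or to (3) is needed. Your reduction via (3) and the naturality of $S$ is valid, but note two small points. First, Proposition~\ref{prop:funct1} is stated for $A_\infty$-\emph{functors}, whereas here $F$ is only a pre-functor; you should observe that the commutativity of the right-hand square there uses only the degree identity $|F_k(x_1,\dots,x_k)|'=|x_1|'+\dots+|x_k|'$ and not the functor equation, so it extends to pre-functors. Second, to apply (3) to $F^*\rho$ you need $F^*\rho\in\Omega^{2,cl}(\CC)$, which follows from the same commutativity (image of $S$). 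The paper's direct argument avoids both of these checks; your route has the virtue of making the Cartan-calculus structure explicit, at the cost of routing through an auxiliary identity $F^*(\iota_Z\rho)=\iota_{F^*Z}(F^*\rho)$ whose verification is of the same complexity as the paper's direct proof.
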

\begin{proof}
	We have assumed that $\CC$ is minimal, therefore $\rho$ is a quasi-isomorphism if and only if 
	\[\rho_{0,0} : \hom_\CC (X, Y) \otimes \hom_\CC (Y, X) \to \mathbb{K}
	\]
	is a non-degenerate pairing for all $X, Y$. Take $\varphi \in C_\bullet(\CC)^\vee $, we will show by induction on length that there exists a unique $Z \in C^\bullet(\CC)$ satisfying $\iota_{Z}\rho=\varphi.$ Assume we have found $Z_k$ for $k<n$, in order to define $Z_n$ one must solve
	
	\begin{align*}\varphi(x_0 \otimes x_1 \otimes \cdots \otimes x_n)= & (-1)^{@+\dagger}\rho_{0,0}(Z_n(x_1, \ldots, x_n))(x_0) \\
		 & + \sum_{0\leq k <n} (-1)^{@+\dagger} \rho(x_1, \ldots, \underline{Z_k(x_{i+1}, \ldots, x_{i+k})},\ldots x_n)(x_0).
\end{align*}
Since  $\rho_{0,0}$ is non-degenerate there is a unique $Z_n(x_1, \ldots, x_n)$ solving this equation, for all $x_0$.

The second item in this lemma follows from a direct computation that we leave to the interested reader. For the third item we compute
\begin{align*}
	S(\iota_Z\rho)&(\textbf{x},\underline{v}, \textbf{y})(w)= \\
	& \sum (-1)^{@+\dagger}\rho(\textbf{y}^{1},\underline{Z(\textbf{y}^{2}, w, \textbf{x}^{1})}, \textbf{x}^{2})(v) - \sum (-1)^{@+\dagger}\rho(\textbf{x}^{(1)},\underline{Z(\textbf{x}^{2}, v, \textbf{y}^{1})}, \textbf{y}^{(2)})(w)\\ 
	&+\sum(-1)^{@+\dagger} \rho(\textbf{y}^{1},\underline{Z(\textbf{y}^{2})},\textbf{y}^{3}, w, \textbf{x})(v) - \sum (-1)^{@+\dagger} \rho(\textbf{x}^{1},\underline{Z(\textbf{x}^{2})},\textbf{x}^{3}, v, \textbf{y})(w) \\
	&+\sum(-1)^{@+\dagger} \rho(\textbf{y}, w, \textbf{x}^{1}, \underline{Z(\textbf{x}^{2})},\textbf{x}^{3})(v)  - \sum(-1)^{@+\dagger}  \rho(\textbf{x}, v, \textbf{y}^{1}, \underline{Z(\textbf{y}^{2})},\textbf{y}^{3})(w) \\
	= & -\sum (-1)^{@+\dagger} \rho(\textbf{x}^{2},\underline{v}, \textbf{y}^{1})(Z(\textbf{y}^{2}, w, \textbf{x}^{1})) - \sum (-1)^{@+\dagger}  \rho(\textbf{x}^{(1)},\underline{Z(\textbf{x}^{2}, v, \textbf{y}^{1})}, \textbf{y}^{(2)})(w)\\ 
	&-\sum(-1)^{@+\dagger} \rho(\textbf{y}^{3}, w, \textbf{x},\underline{v},\textbf{y}^{1})(Z(\textbf{y}^{2})) + \sum (-1)^{@+\dagger}  \rho(\textbf{x}^{3}, v, \textbf{y},\underline{w},\textbf{x}^{1})(Z(\textbf{x}^{2})) \\
	&+\sum(-1)^{@+\dagger} \rho(\textbf{y}, w, \textbf{x}^{1}, \underline{Z(\textbf{x}^{2})},\textbf{x}^{3})(v)  - \sum(-1)^{@+\dagger} \rho(\textbf{x}, v, \textbf{y}^{1}, \underline{Z(\textbf{y}^{2})},\textbf{y}^{3})(w) \\
	=  &  -\sum (-1)^{@+\dagger}  \rho(\textbf{x}^{2},\underline{v}, \textbf{y}^{1})(Z(\textbf{y}^{2}, w, \textbf{x}^{1})) - \sum (-1)^{@+\dagger} \rho(\textbf{x}^{(1)},\underline{Z(\textbf{x}^{2}, v, \textbf{y}^{1})}, \textbf{y}^{(2)})(w)\\ 
	& + \sum(-1)^{@+\dagger} \rho(\textbf{y}^{1}, Z(\textbf{y}^{2}), \textbf{y}^3,\underline{w},\textbf{x})(v) - \sum (-1)^{\dagger}  \rho(\textbf{x}^{1}, Z(\textbf{x}^{2}), \textbf{x}^3,\underline{v},\textbf{y})(w)\\
	=   &  \sum (-1)^{@+1+\dagger}  \rho(\textbf{x}^{2},\underline{v}, \textbf{y}^{1})(Z(\textbf{y}^{2}, w, \textbf{x}^{1})) + \sum (-1)^{1+\dagger} \rho(\textbf{x}^{(1)},\underline{Z(\textbf{x}^{2}, v, \textbf{y}^{1})}, \textbf{y}^{(2)})(w)\\ 
	& + \sum(-1)^{1+\dagger}  \rho(\textbf{x},\underline{v},\textbf{y}^{1}, Z(\textbf{y}^{2}), \textbf{y}^3)(w) + \sum (-1)^{1+\dagger}  \rho(\textbf{x}^{1}, Z(\textbf{x}^{2}), \textbf{x}^3,\underline{v},\textbf{y})(w)\\
	= & \ \mathcal{L}_Z \rho.
\end{align*}
	Here the first and last equality follow from the definitions, the second and fourth from anti-symmetry of $\rho$ and the third equality follows from closedness of $\rho$. 
	
	The final item follows from a direct calculation, simply using the definitions and the fact that $\widehat{F} \circ \widehat{F^*Z} = \widehat{Z}\circ \widehat{F}$.
\end{proof}

Next we define (one-parameter) families of Hochschild cochains as follows 
	\begin{equation}\label{eq:param}
	    C^\bullet(\CC)\{t\}=\prod_{n\geq 0}\prod_{X_0, \ldots, X_n} Hom^\bullet(\CC(X_0, X_1 \ldots, X_n), \CC(X_0, X_n))\otimes \mathbb{K}[t][-1].
	\end{equation} 
Similarly we can define families of \Ai-pre-functors, which are constant on objects.

The next lemma shows the existence and uniqueness of \emph{flows} in our setting.

\begin{lem}\label{lem:flow}
	Let $Z^t \in C^1(\CC)\{t\}$ be a family of degree one Hochschild cochains of order two, that is $Z^t_0=Z^t_1=0$. Then there exists an unique family of \Ai \ pre-functors $F^t: \CC \to \CC$, which is the identity on objects and satisfies
	\begin{equation}\label{eq:flow}
F^0=\id, \ \ \frac{d}{dt} F^t = Z^t \circ \widehat{F^t}.
	\end{equation}
		We refer to $F^t$ as the flow of $Z^t$. In addition, if $Z^t$ is a reduced cochain, then $F^t$ is unital. Moreover, if $\delta(Z^t)=[\m, Z^t]=0$ then $F^t$ is an \Ai-functor.
\end{lem}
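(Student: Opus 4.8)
The plan is to solve the ODE \eqref{eq:flow} order by order in the length filtration, exactly as one integrates a (time-dependent) vector field on a formal pointed manifold. Write $F^t = \prod_n F^t_n$ with $F^t_0 = 0$ (constant on objects, with a fixed base choice) and $F^t_1 = \id$ for all $t$; the latter is forced because $Z^t$ has order two, so the component of $Z^t \circ \widehat{F^t}$ landing in the length-one part vanishes, and $\frac{d}{dt}F^t_1 = 0$ together with $F^0_1 = \id$ gives $F^t_1 = \id$. For $n \geq 2$, expanding $Z^t \circ \widehat{F^t}$ on an input $\mathbf{x} = x_1 \otimes \cdots \otimes x_n$ produces, by the coalgebra formula for $\widehat{F^t}$ and the fact that $Z^t$ has at least two inputs, a sum of terms $Z^t_k(F^t_{i_1}(\cdots), \ldots, F^t_{i_k}(\cdots))$ with $k \geq 2$ and $i_1 + \cdots + i_k = n$; since each $i_j \geq 1$ and $k \geq 2$, every index $i_j$ is strictly less than $n$. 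Hence the right-hand side of \eqref{eq:flow} in length $n$ depends only on $F^t_{<n}$, and the equation for $F^t_n$ is a (vector-valued) linear first-order ODE $\frac{d}{dt}F^t_n = (\text{known function of }t)$ with initial condition $F^0_n = 0$, which has a unique solution by direct integration. This establishes existence and uniqueness of the family $F^t$ as a family of $A_\infty$-pre-functors.

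For the unitality claim, suppose $Z^t$ is reduced, i.e. $Z^t_k(\ldots, \one, \ldots) = 0$ for $k \geq 1$. One shows by induction on $n$ that $F^t_1(\one_X) = \one_X$ (immediate, since $F^t_1 = \id$) and $F^t_n(\ldots, \one_{X_i}, \ldots) = 0$ for $n \geq 2$ and all $t$. Differentiating this putative identity in $t$, the right-hand side is the length-$n$ component of $Z^t \circ \widehat{F^t}$ evaluated on a tuple containing $\one_{X_i}$; in each summand $Z^t_k(F^t_{i_1}(\cdots), \ldots, F^t_{i_k}(\cdots))$, the unit $\one_{X_i}$ sits inside one block $F^t_{i_j}(\cdots)$, and by the inductive hypothesis (applicable as $i_j < n$) that block vanishes unless $i_j = 1$, in which case that argument of $Z^t_k$ equals $\one_{X_i}$ and the summand vanishes by reducedness of $Z^t_k$ (here $k \geq 2$). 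Thus $\frac{d}{dt}F^t_n(\ldots,\one,\ldots) = 0$, and since the initial value is $F^0_n(\ldots,\one,\ldots) = 0$, we conclude $F^t_n(\ldots,\one,\ldots) = 0$ for all $t$, so $F^t$ is unital.

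For the final claim, assume $\delta(Z^t) = [\m, Z^t] = 0$ for all $t$; we must show $\widehat{F^t} \circ \widehat{\m} = \widehat{\m} \circ \widehat{F^t}$, equivalently that $G^t := \widehat{F^t} \circ \widehat{\m} - \widehat{\m} \circ \widehat{F^t}$ vanishes. Note $G^t$ is a $\widehat{\m}$-coderivation-type object along $\widehat{F^t}$ (an $(\widehat{F^t},\widehat{F^t})$-coderivation); the strategy is to show it satisfies a homogeneous linear ODE with zero initial condition. At $t = 0$, $\widehat{F^0} = \id$, so $G^0 = \widehat{\m} - \widehat{\m} = 0$. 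Differentiating and using $\frac{d}{dt}\widehat{F^t} = \widehat{Z^t} \circ \widehat{F^t}$ (the coalgebra lift of \eqref{eq:flow}, valid since $\widehat{\,\cdot\,}$ intertwines Gerstenhaber composition with composition of coderivations) together with $\widehat{Z^t}\circ\widehat{\m} = \widehat{\m}\circ\widehat{Z^t}$ (the coalgebra form of $[\m,Z^t]=0$, using $\widehat{[\varphi,\psi]} = [\widehat\varphi,\widehat\psi]$), one computes
\[
\frac{d}{dt} G^t = \widehat{Z^t}\circ\widehat{F^t}\circ\widehat{\m} - \widehat{\m}\circ\widehat{Z^t}\circ\widehat{F^t} = \widehat{Z^t}\circ\big(\widehat{F^t}\circ\widehat{\m} - \widehat{\m}\circ\widehat{F^t}\big) = \widehat{Z^t}\circ G^t.
\]
This is a linear ODE for $G^t$ with $G^0 = 0$; reading it off in each length and invoking the same "lower-length" triangularity as above (the length-$n$ part of $\widehat{Z^t}\circ G^t$ involves only strictly-lower-length components of $G^t$, since $Z^t$ has order two), a straightforward induction on length shows $G^t \equiv 0$. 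The only point requiring care — and the mild obstacle — is bookkeeping the signs and the precise combinatorics of the coderivation identities $\widehat{[\varphi,\psi]}=[\widehat\varphi,\widehat\psi]$ and $\frac{d}{dt}\widehat{F^t}=\widehat{Z^t}\circ\widehat{F^t}$ in the $A_\infty$ (rather than dg) setting; these are standard but tedious, and once they are in place the ODE/induction argument is routine.
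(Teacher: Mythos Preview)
Your proposal is correct and follows essentially the same route as the paper: build $F^t_n$ inductively by integrating the ODE (using that $Z^t$ has order two so the right-hand side in length $n$ only involves $F^t_{<n}$), prove unitality by the same induction when $Z^t$ is reduced, and deduce the $A_\infty$-functor equation by differentiating $\widehat{F^t}\circ\widehat{\m}-\widehat{\m}\circ\widehat{F^t}$, using $[\m,Z^t]=0$ to get a homogeneous linear ODE with zero initial data. Your write-up is somewhat more explicit than the paper's about the triangularity argument and the coderivation identities, but the underlying argument is the same.
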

\begin{proof}
	We will construct the  \Ai \ pre-functor maps $F^t_n$ by induction on $n$. For $n=1$, Equation (\ref{eq:flow}) immediately gives $F^t_1|_{\CC(X_0,X_1)}= \id_{\CC(X_0,X_1)}$, since $Z^t$ has order two. For the induction step we have to solve the equation
	\[\frac{d}{dt} F^t_n(x_1, \ldots, x_n) = \sum_{k \geq 2} \sum_{i_1+\ldots +i_k=n} Z^t_k(F^t_{i_1}(x_1,\ldots ),\ldots, F^t_{i_k}( \ldots, x_n)).\]  
	Note that every term on the right-hand side of the equation was constructed in the previous induction steps, therefore we can find an unique $F^t_n(x_1, \ldots, x_n)$, by integrating the right-hand side, with the initial condition $F^0_n=0$. Note that, since $Z^t$ has degree $1$, $F^t_n$ has (shifted) degree $0$ as required.
	
	By construction $F^t_1(\one_{X_i})=\one_{X_i}$. For $n\geq 2$, if $Z^t$ is reduced it is clear that the right-hand side of the above equation vanishes whenever one of the $x_j=\one_{X_j}$. Therefore $F^t_n( \ldots, \one_{X_j}, \ldots)=0$, hence $F^t$ is unital.
	
	We now assume that $[\m, Z^t]=0$ and compute
	\begin{align*}
	 \frac{d}{dt}(\widehat{F^t}\circ \widehat{\m} - \widehat{\m} \circ \widehat{F^t})& = \widehat{Z^t} \circ \widehat{F^t} \circ \widehat{\m} - \widehat{\m} \circ \widehat{Z^t} \circ \widehat{F^t}\\
 & = \widehat{Z^t} \circ (\widehat{F^t} \circ \widehat{\m} - \widehat{\m}\circ\widehat{F^t}  ).
 	\end{align*}
 Here we used the fact that $\frac{d}{dt} \widehat{F^t} = \widehat{Z^t} \circ \widehat{F^t}$, which is equivalent to Equation (\ref{eq:flow}).
	Therefore, as before we conclude, by uniqueness of the solution to this equation, that $\widehat{F^t} \circ \widehat{\m} - \widehat{\m}\circ\widehat{F^t}=0$ which is equivalent to the \Ai-functor equation for $F^t$. 
\end{proof}

\begin{lem}\label{lem:lie_pull}
Let $Z^t$ and $F^t$	be as in the previous lemma. 
	\begin{enumerate}
		\item Given $\rho \in \hom^\bullet_{[\CC, \CC]}(\CC_{sd}, \CC_{sd}^\vee )$, we have $\displaystyle \frac{d}{dt} \big( (F^t)^*\rho\big) = - (F^t)^*(\mathcal{L}_{Z^t} \rho )$.
		\item Let $\textbf{x}\in C_\bullet (\CC)$, we have $\displaystyle \frac{d}{dt} \big(  (F^t)_*(\textbf{x})  \big)=  \mathcal{L}_{Z^t} \big(  F^t_*(\textbf{x})  \big)$.
		\item Given $\theta\in C_\bullet (\CC)^\vee $, we have $\displaystyle \frac{d}{dt} \big( (F^t)^*\theta\big) =  (F^t)^*(\mathcal{L}_{Z^t}^\vee \theta )$.
	\end{enumerate}
\end{lem}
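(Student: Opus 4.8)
The plan is to prove all three identities by the same mechanism: differentiate the defining formula under the integral/flow equation, and reorganize the resulting sum so that it matches the definition of the relevant Lie derivative. The key input is the flow equation $\frac{d}{dt}\widehat{F^t} = \widehat{Z^t}\circ\widehat{F^t}$ from Lemma \ref{lem:flow} (equivalently $\frac{d}{dt}F^t = Z^t\circ\widehat{F^t}$), and the only nontrivial bookkeeping is sign tracking via the Koszul convention on shifted degrees; since $Z^t$ has shifted degree $0$ (it has degree $1$), the signs are in fact comparatively mild.

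First I would establish item (1). Write $(F^t)^*\rho$ using the formula from Proposition \ref{prop:funct1}, which expresses $(F^t)^*\rho$ as $\rho$ precomposed with several copies of $\widehat{F^t}$ and one copy of $F^t$ applied to the slots surrounding the underlined inputs. Differentiating in $t$ and applying the Leibniz rule, each occurrence of $\widehat{F^t}$ contributes a term in which it is replaced by $\widehat{Z^t}\circ\widehat{F^t}$, and the single $F^t$ (which is $F^t$ extended as a coalgebra map on the bar side, hence governed by the same equation) contributes analogously. After replacing $\widehat{Z^t}\circ\widehat{F^t}$ and using that $\rho$ already has the $F^t$'s absorbed, the four families of terms that appear are precisely the four terms in the definition of $\mathcal{L}_{Z^t}\rho$ (Definition \ref{defn:lie}), pulled back by $F^t$ — i.e., $(F^t)^*(\mathcal{L}_{Z^t}\rho)$ — up to an overall sign. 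The sign is $-1$: this can be pinned down by checking the lowest-order term (the slot containing the single $Z^t$ acting as $\m$-like insertion), or more conceptually by noting that pulling back is contravariant so the flow parameter enters with the opposite sign, exactly as in the classical identity $\frac{d}{dt}(\varphi_t^*\rho) = -\varphi_t^*(\mathcal{L}_X\rho)$ for a time-dependent flow.

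For item (2), I would differentiate the formula \eqref{eq:funHH} for $(F^t)_*$ on $CC_\bullet(\CC)$. Again the Leibniz rule turns each $F^t$ (resp. $\widehat{F^t}$) factor into a $Z^t\circ\widehat{F^t}$ (resp. $\widehat{Z^t}\circ\widehat{F^t}$) factor; collecting terms, one of the $F^t$-outputs gets a $Z^t$ inserted at its head or somewhere inside, and matching these against Definition \ref{defn:lie_cc} gives exactly $\mathcal{L}_{Z^t}\big((F^t)_*\textbf{x}\big)$. Here there is no contravariant sign flip — $(F^t)_*$ is covariant — so the sign is $+1$, consistent with the classical $\frac{d}{dt}(\varphi_{t*}\textbf{x}) = \varphi_{t*}(\mathcal{L}_X\textbf{x})$ being rewritten as $\mathcal{L}_X(\varphi_{t*}\textbf{x})$ by $\varphi_t$-equivariance of the Lie derivative under pushforward. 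Finally, item (3) is formal given (2): by definition $(F^t)^*\theta = \theta\circ (F^t)_*$ for $\theta\in CC_\bullet(\CC)^\vee$, so $\frac{d}{dt}\big((F^t)^*\theta\big)(\textbf{x}) = \pm\,\theta\big(\frac{d}{dt}(F^t)_*\textbf{x}\big) = \pm\,\theta\big(\mathcal{L}_{Z^t}((F^t)_*\textbf{x})\big)$, which by the definition $\mathcal{L}_{Z^t}^\vee\psi(\textbf{x}) = (-1)^{|\psi||Z^t|'}\psi(\mathcal{L}_{Z^t}\textbf{x})$ — and using $|Z^t|'=0$ — is precisely $(F^t)^*(\mathcal{L}_{Z^t}^\vee\theta)(\textbf{x})$.

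The main obstacle is purely combinatorial: in item (1), when one differentiates a product of several $\widehat{F^t}$'s, the term in which the $j$-th copy is differentiated produces an expression where $Z^t$ is inserted \emph{between} two outputs of $F^t$ — but the definition of $\mathcal{L}_{Z^t}\rho$ only has $Z^t$ acting on a single consecutive block of inputs. Reconciling these requires using the coalgebra compatibility (the Sweedler-coproduct identities already recorded in the conventions) to regroup $\widehat{Z^t}\circ\widehat{F^t}$ into a sum of terms each of which has $Z^t$ applied to a single block $F^t(\ldots)$ — this is the $A_\infty$-analogue of "$Z^t$ is a coderivation, so it acts one tensor-factor at a time." Once that regrouping is done the identification with the four terms of Definition \ref{defn:lie} is mechanical. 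I would present (1) in full (it is the template), and then remark that (2) follows by the identical method applied to Definition \ref{defn:lie_cc}, and (3) by dualizing (2).
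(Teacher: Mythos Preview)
Your approach is correct and essentially identical to the paper's: differentiate under the flow equation $\frac{d}{dt}\widehat{F^t}=\widehat{Z^t}\circ\widehat{F^t}$, apply the Leibniz rule, and recognize the result as the relevant Lie derivative; item (3) follows by dualizing (2). The only difference is cosmetic: the paper chooses to spell out item (2) in full and declares (1) to be identical, whereas you propose to do (1) first. The paper's choice is slightly more economical, since the formula \eqref{eq:funHH} for $(F^t)_*$ has only two $F^t/\widehat{F^t}$ factors to differentiate, while $(F^t)^*\rho$ from Proposition~\ref{prop:funct1} has four; the ``main obstacle'' you flag (regrouping $\widehat{Z^t}\circ\widehat{F^t}$ as a coderivation acting factorwise) is essentially invisible in the paper's computation of (2), where the match with Definition~\ref{defn:lie_cc} is immediate.
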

\begin{proof}
We observe that the third statement is the dual to the second one. The first two claims have identical proofs, we prove the second one:
\begin{align*}
	\frac{d}{dt} \big(  (F^t)_*(x_0\otimes\textbf{x})  \big) & =  \sum (-1)^@ \frac{d F^t}{dt}(\textbf{x}^{(3)}, x_0, \textbf{x}^{(1)})\otimes \widehat{F^t}(\textbf{x}^{(2)}) +\\
	& \ \ + \sum (-1)^@ F^t(\textbf{x}^{(3)}, x_0, \textbf{x}^{(1)})\otimes \frac{d \widehat{F^t}}{dt}(\textbf{x}^{(2)})=\\
	& = \sum (-1)^@ Z^t\circ\widehat{F^t}(\textbf{x}^{(3)}, x_0, \textbf{x}^{(1)})\otimes \widehat{F^t}(\textbf{x}^{(2)}) +\\
	& \ \ + \sum (-1)^@ F^t(\textbf{x}^{(3)}, x_0, \textbf{x}^{(1)})\otimes \widehat{Z^t}\circ\widehat{F^t}(\textbf{x}^{(2)}),
\end{align*}
	where we used Equation (\ref{eq:flow}) in the second equality. The expression above then equals
	\[\mathcal{L}_{Z^t}\big(\sum (-1)^@ F^t(\textbf{x}^{(3)}, x_0, \textbf{x}^{(1)})\otimes \widehat{F^t}(\textbf{x}^{(2)})\big)=\mathcal{L}_{Z^t}\big(  (F^t)_*(x_0\otimes\textbf{x})  \big).\]
\end{proof}

We are now ready to prove the analogue of the Darboux theorem in our setting.

\begin{prop}\label{prop:darboux}
	Let $\rho \in \Omega^{2,cl}(\CC)$ be a SHIP. There exists a cyclic \Ai-category $\CC'$, with the same objects and morphism spaces as $\CC$, and an \Ai-isomorphism $F: \CC' \to \CC$. Moreover, if we denote by $\langle -, - \rangle$ the cyclic pairing (or constant SHIP on $\CC'$), we have $F^*\rho=\langle -, - \rangle$.
\end{prop}
\begin{proof}
	Define $\rho^t=(1-t)\rho_{0,0} +t \rho \in \hom^\bullet_{[\CC, \CC]}(\CC_{sd}, \CC_{sd}^\vee )\{t\}$. It is easy to see that $\rho^t$ is anti-symmetric, closed and non-degenerate since $\rho^t_{0,0}= \rho_{0,0}$. By Proposition \ref{prop:exact1}, there exists $\theta \in C_\bullet(\CC)^\vee$ with $S(\theta)= \rho - \rho^0$. In fact, since $(\rho-\rho^0)_{0,0}=0$, we can take $\theta$ satisfying $\theta_0=\theta_1=0$. Lemma \ref{lem:contraction} then implies the existence of a Hochschild cochain $Z^t$ satisfying $\iota_{Z^t} \rho^t= \theta$. Moreover from the construction we see that we can choose $Z^t$ of order two, that is $Z^t_0=Z^t_1=0$. Also note that, since $S$ has degree 1, $|\theta|=|\rho|+1$ which implies $Z$ has degree one.
	
	Then applying Lemma \ref{lem:flow} to $Z^t$ we obtain a family of \Ai-pre-functors $F^t$ satisfying (\ref{eq:flow}). We compute
	\begin{align*}
		\frac{d}{dt} (F^t)^* \rho^t = &  (F^t)^*(\frac{d \rho^t}{dt} - \mathcal{L}_{Z^t}\rho^t)\\
		 = &  (F^t)^*( \rho -\rho^0 - S(\iota_{Z^t} \rho^t))\\
		 =&  (F^t)^*( S(\theta) - S(\theta)) = 0.
	\end{align*}
	Here we used Lemma \ref{lem:lie_pull}(1) in the first equality and Lemma \ref{lem:contraction}(3) in the second.
Therefore $(F^1)^* \rho=(F^1)^* \rho^1= (F^0)^*\rho^0= \rho_{0,0}$ and we can take $F:=F^1$. We then define a new \Ai-category $\CC'$, with the same objects and $\hom$ spaces as $\CC$ and operations given by the formula
\[ \m':= F^{-1} \circ \widehat{\m} \circ \widehat{F}.
\]
A simple computation shows that $\m'$ is an \Ai\   structure, and by construction $F: \CC' \to \CC$ defines an \Ai-isomorphism with $F^*\rho=\rho_{0,0}$. Finally,  we have
\[\partial \rho_{0,0} = \mathcal{L}_{\m'}\rho_{0,0}= \mathcal{L}_{F^*\m}(F^*\rho) = F^*(\mathcal{L}_{\m}\rho)= F^*(\partial \rho) = 0,  \]
where the third equality follows from Lemma \ref{lem:contraction}(4). This implies $\rho_{0,0}$ is a constant strong homotopy inner product for $\CC'$, hence $\CC'$ is cyclic.
\end{proof}

The previous proposition guarantees that any (minimal) \Ai-category equipped with a strong homotopy inner product is isomorphic to a cyclic one. We want to examine to what extent this is unique. For this purpose we need the following lemma, which is the analogue of the Moser stability theorem in Symplectic Geometry.

\begin{lem}\label{lem:homologous}
	Let $\rho^0$ and $\rho^1$ be two SHIPs with $[\rho^0]=[\rho^1]\in H^\bullet(\Omega^{2,cl}(\CC), \partial)$. Then there exists an \Ai-isomorphism $F: \CC \to \CC$ with $F^*\rho^1=\rho^0$.
\end{lem}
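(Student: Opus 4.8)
The plan is to use a Moser-type (homotopy) argument analogous to the proof of Proposition~\ref{prop:darboux}, but now interpolating between the two given strong homotopy inner products rather than between $\rho$ and its constant part. First I would use the hypothesis $[\rho^0]=[\rho^1]$ in $H^\bullet(\Omega^{2,cl}(\CC),\partial)$: since $S$ is surjective with the short exact sequence of Proposition~\ref{prop:exact1}, the difference $\rho^1-\rho^0=\partial\sigma$ for some $\sigma\in\Omega^{2,cl}(\CC)$, and I can lift $\sigma$ to a Hochschild chain dual $\theta\in CC_\bullet(\CC)^\vee$ with $S(\theta)=\sigma$ (so $S(\partial\theta)=\partial S(\theta)=\partial\sigma=\rho^1-\rho^0$, using that $S$ is a chain map). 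Since the ambient category is minimal and $(\rho^1-\rho^0)_{0,0}=0$ — this requires a small remark that two constant pieces agreeing on cohomology and living in a minimal category must actually agree, which follows because on a minimal category the degree-zero part of the complex computing $HC^\lambda_\bullet$ sees the pairing directly — I may arrange $\theta$ (hence the cochain produced below) to have order two.

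Next I would form the linear path $\rho^t=(1-t)\rho^0+t\rho^1$ inside $hom^\bullet_{[\CC,\CC]}(\CC_{sd},\CC_{sd}^\vee)\{t\}$. Each $\rho^t$ is anti-symmetric and closed (these are linear conditions), and it is non-degenerate for all $t$ because $\rho^t_{0,0}=\rho^0_{0,0}=\rho^1_{0,0}$ (the two constant parts coincide), which is non-degenerate since $\rho^0$ is a quasi-isomorphism on a minimal category. Then, exactly as in the proof of Proposition~\ref{prop:darboux}, Lemma~\ref{lem:contraction}(1) gives a family of Hochschild cochains $Z^t$, of order two, with $\iota_{Z^t}\rho^t=\theta$ where $\theta$ is the lift chosen above; here $\tfrac{d}{dt}\rho^t=\rho^1-\rho^0=S(\theta)$. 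Lemma~\ref{lem:flow} produces the flow $F^t:\CC\to\CC$, a family of \Ai-pre-functors that are the identity on objects with each $F^t_1$ an isomorphism. The Moser computation
\[
\frac{d}{dt}(F^t)^*\rho^t=(F^t)^*\Bigl(\frac{d\rho^t}{dt}-\mathcal{L}_{Z^t}\rho^t\Bigr)=(F^t)^*\bigl(S(\theta)-S(\iota_{Z^t}\rho^t)\bigr)=0,
\]
using Lemma~\ref{lem:lie_pull}(1) and Lemma~\ref{lem:contraction}(3), shows $(F^1)^*\rho^1=(F^0)^*\rho^0=\rho^0$. Setting $F:=F^1$ gives the desired \Ai-isomorphism with $F^*\rho^1=\rho^0$; note that since $\rho^0$ is a quasi-isomorphism and $F^*\rho^1=\rho^0$, $F$ is automatically an isomorphism of \Ai-categories (bijective on objects, each $F_1$ an isomorphism).

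One subtlety I would make sure to address is that I need $F$ to be an honest \Ai-\emph{functor} (not merely a pre-functor), since the statement asks for an \Ai-isomorphism. In Proposition~\ref{prop:darboux} this was arranged by \emph{defining} a new category $\CC'$ via $\widehat{\m'}=\widehat{F}^{-1}\widehat{\m}\widehat{F}$; here, since the target must be $\CC$ itself, I instead need $\delta(Z^t)=[\m,Z^t]=0$ so that Lemma~\ref{lem:flow} guarantees $F^t$ is a functor. This is the step I expect to be the main obstacle, and the way to handle it is Lemma~\ref{lem:contraction}(2): from $\iota_{Z^t}\rho^t=\theta$ one computes $\iota_{\delta(Z^t)}\rho^t=(-1)^{|Z^t|'}\iota_{Z^t}(\partial\rho^t)-b^\vee(\iota_{Z^t}\rho^t)$; since $\partial\rho^t=0$ (each $\rho^t$ is $\partial$-closed, being an affine combination of closed elements — which in turn holds because $\rho^0,\rho^1\in\Omega^{2,cl}$ and $\partial$ is linear, and indeed $\partial\rho^i=\mathcal{L}_\m\rho^i$ vanishes as $\rho^i$ represents a class but more to the point one should choose the path so that $\partial\rho^t\equiv 0$), and since $b^\vee\theta=0$ (choosing $\theta$ to be a cocycle, possible because $S(\partial\theta)=\rho^1-\rho^0$ is already exactness data and $\theta$ itself may be taken closed, or more carefully because $\rho^1-\rho^0=\partial\sigma$ means the relevant chasing produces a genuine cocycle lift), we get $\iota_{\delta(Z^t)}\rho^t=0$, hence $\delta(Z^t)=0$ by the injectivity part of Lemma~\ref{lem:contraction}(1). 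Thus $F^t$, and in particular $F=F^1$, is an \Ai-functor, completing the argument.
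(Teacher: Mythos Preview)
Your overall strategy---the Moser path $\rho^t$, the contraction to produce $Z^t$, and the flow $F^t$---is exactly the paper's, but there is a genuine gap in what you set $\iota_{Z^t}\rho^t$ equal to, and this error propagates to both the Moser step and the verification that $F^t$ is a functor. You take $\theta$ with $S(\theta)=\sigma$ where $\rho^1-\rho^0=\partial\sigma$, and then set $\iota_{Z^t}\rho^t=\theta$. But you yourself computed $\rho^1-\rho^0=\partial\sigma=\partial S(\theta)=S(b^\vee\theta)$, not $S(\theta)$; so your Moser cancellation ``$S(\theta)-S(\iota_{Z^t}\rho^t)=0$'' does not match $\tfrac{d}{dt}\rho^t$. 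Worse, your argument for $\delta(Z^t)=0$ requires $b^\vee(\iota_{Z^t}\rho^t)=b^\vee\theta=0$; but if $\theta$ were $b^\vee$-closed then $\sigma=S(\theta)$ would be $\partial$-closed, forcing $\rho^1-\rho^0=\partial\sigma=0$ and trivializing the lemma. One cannot ``choose $\theta$ to be a cocycle'' here.

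The fix, and this is precisely what the paper does, is to set $\iota_{Z^t}\rho^t=b^\vee\theta$ rather than $\theta$. Then the Moser step reads
\[
\frac{d\rho^t}{dt}-\mathcal{L}_{Z^t}\rho^t=\partial S(\theta)-S(\iota_{Z^t}\rho^t)=S(b^\vee\theta)-S(b^\vee\theta)=0,
\]
using that $S$ is a chain map. For the functor property, Lemma~\ref{lem:contraction}(2) gives (up to sign) $\iota_{[Z^t,\m]}\rho^t=\iota_{Z^t}(\partial\rho^t)\pm b^\vee(\iota_{Z^t}\rho^t)=0\pm b^\vee b^\vee\theta=0$, since $\partial\rho^t=0$ and $(b^\vee)^2=0$; non-degeneracy of $\rho^t$ then forces $[\m,Z^t]=0$, so $F^t$ is an \Ai-functor by Lemma~\ref{lem:flow}. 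This choice also cleanly handles the order-two issue: one can always take $\theta_0=0$ (since $S$ never sees the length-zero component), and minimality of $\CC$ then gives $(b^\vee\theta)_0=(b^\vee\theta)_1=0$, whence $Z^t$ has order two.
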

\begin{proof}
	By assumption $\rho^1-\rho^0= \partial \beta$ for some $\beta \in \Omega^{2,cl}(\CC)$.  This, in particular, implies $(\rho^1)_{0,0}=(\rho^0)_{0,0}$, since $\CC$ is minimal. Therefore $\rho^t:=(1-t)\rho^0+t \rho^1$ is a quasi-isomorphism for all $t$. Moreover, by Lemma \ref{lem:Ssurjective}, $\beta=S(\theta)$ for some $\theta \in C_\bullet(\CC)^\vee$, with $\theta_0=0$. 
	
	Lemma \ref{lem:contraction}(1) guarantees the existence of $Z^t$ (of degree 1) satisfying $\iota_{Z^t} \rho^t = b^\vee \theta$. Since $\theta_0=0$ and $\CC$ is minimal, $(b^\vee\theta)_0=(b^\vee\theta)_1=0$ and therefore $Z^t$ has order 2. Hence we can apply Lemma \ref{lem:flow} to $Z^t$ and obtain a family of \Ai-pre-functors $F^t$. As in the previous proof,  we have
		\begin{align*}
		\frac{d}{dt} (F^t)^* \rho^t = &  (F^t)^*(\frac{d \rho^t}{dt} - \mathcal{L}_{Z^t}\rho^t)\\
		= &  (F^t)^*( \rho^1 -\rho^0 - S(\iota_{Z^t} \rho^t))\\
		=&  (F^t)^*( \partial S(\theta) - S(b^\vee(\theta)) = 0,
	\end{align*}
	since $S$ is a chain map. Hence $(F^t)^* \rho^t = \rho^0$. 
	Additionally, by Lemma \ref{lem:contraction}(2), one has
	\[\iota_{\delta(Z^t)} \rho^t =\iota_{Z^t}(\partial \rho^t)+b^\vee(\iota_{Z^t}\rho^t)= b^\vee(\iota_{Z^t} \rho^t)= b^\vee(b^\vee \theta)=0.\]
	
	Therefore, since $\rho^t$ is non-degenerate we conclude $\delta(Z^t)=0$ and therefore $F^t$ are \Ai-functors by Lemma \ref{lem:flow}. Hence $F^1$ is the required \Ai-isomorphism.
\end{proof}

The following theorem assembles all the results from this section.

\begin{thm}~\label{thm:cyclic-model}
	Let $(\CC, \phi_\CC)$ and $(\DD, \phi_\DD)$ be minimal Calabi--Yau \Ai-categories and let $F:\CC \to \DD$ be an \Ai-functor with $[F^*\phi_\DD]=[\phi_\CC]$. Then there exist cyclic \Ai-categories $\CC'$ and $\DD'$, \Ai-isomorphisms $G_\CC: \CC'\to \CC$ and $G_\DD: \DD'\to \DD$, and a cyclic \Ai-functor $F': \CC' \to \DD'$ making the following diagram commute.
	\[ \begin{CD}
		\CC @>F>>  \DD \\
		 @A G_\CC AA    @A G_\DD AA \\
		 \CC'  @>F'>>  \DD' 
	\end{CD} \]
\end{thm}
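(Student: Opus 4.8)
The plan is to reduce Theorem~\ref{thm:cyclic-model} to Proposition~\ref{prop:darboux} and Lemma~\ref{lem:homologous} by a careful bookkeeping of the cochains and flows involved, keeping track of the functor $F$ throughout. First I would pick, via Corollary~\ref{cor:CY_SHIP}, strong homotopy inner products $\rho_\CC \in \Omega^{2,cl}(\CC)$ and $\rho_\DD \in \Omega^{2,cl}(\DD)$ corresponding to the strong Calabi--Yau structures $\phi_\CC$ and $\phi_\DD$; by the functoriality statement in that corollary (which ultimately rests on Propositions~\ref{prop:funct1} and~\ref{prop:funct2}), the relation $F^*\phi_\DD = \phi_\CC$ translates into $[F^*\rho_\DD] = [\rho_\CC]$ in $H^\bullet(\Omega^{2,cl}(\CC),\partial)$. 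Note that $F^*\rho_\DD$ is again closed and anti-symmetric, and since $\CC$ is minimal and $F_1$ is a quasi-isomorphism of finite-dimensional complexes, $(F^*\rho_\DD)_{0,0}$ is non-degenerate, so $F^*\rho_\DD$ is itself a strong homotopy inner product on $\CC$.

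Next I would apply Proposition~\ref{prop:darboux} to $\rho_\DD$ on $\DD$, producing a cyclic \Ai-category $\DD'$ (same objects and hom spaces as $\DD$) and an \Ai-isomorphism $G_\DD : \DD' \to \DD$ with $G_\DD^*\rho_\DD = \langle -,-\rangle_\DD$ constant. Pulling back along $F$, the composite $F^*\rho_\DD$ on $\CC$ is a strong homotopy inner product cohomologous to $\rho_\CC$; by Lemma~\ref{lem:homologous} there is an \Ai-isomorphism $H : \CC \to \CC$ with $H^*(F^*\rho_\DD) = \rho_\CC$, i.e. $(F\circ H)^*\rho_\DD = \rho_\CC$. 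Now run the Darboux argument of Proposition~\ref{prop:darboux} for $\rho_\CC$ on $\CC$ to get a cyclic category $\CC'$ and an \Ai-isomorphism $G_\CC : \CC' \to \CC$ with $G_\CC^*\rho_\CC = \langle -,-\rangle_{\CC'}$ constant. Define $F' := G_\DD^{-1} \circ F \circ H \circ G_\CC : \CC' \to \DD'$; this is an \Ai-isomorphism (composite of such), it makes the square commute up to the automorphism $H$ of $\CC$ — so to get the square exactly as stated I would instead set $G_\CC := H \circ (\text{Darboux isomorphism for }\CC)$, which is still an \Ai-isomorphism $\CC' \to \CC$ and satisfies $(\text{Darboux iso})^*(H^* \rho_\CC')=\ldots$; more cleanly, replace $\rho_\CC$ by $F^*\rho_\DD$ from the start when invoking Proposition~\ref{prop:darboux}, so that $G_\CC^*(F^*\rho_\DD)=\langle-,-\rangle_{\CC'}$ directly. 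Then with $F' = G_\DD^{-1}\circ F\circ G_\CC$ the diagram commutes on the nose.

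It remains to check that $F'$ is \emph{cyclic}, i.e. $(F')^*\langle -,-\rangle_\DD = \langle -,-\rangle_{\CC'}$. This is a formal consequence of the intertwining relations: $(F')^*\langle -,-\rangle_{\DD'} = (G_\CC)^* F^* (G_\DD^{-1})^* \langle -,-\rangle_{\DD'}$; since $G_\DD$ is an isomorphism with $G_\DD^*\rho_\DD = \langle-,-\rangle_{\DD'}$ we get $(G_\DD^{-1})^*\langle-,-\rangle_{\DD'} = \rho_\DD$, and then $(G_\CC)^*F^*\rho_\DD = (G_\CC)^*(F^*\rho_\DD) = \langle-,-\rangle_{\CC'}$ by construction of $G_\CC$. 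Here I am using that pullback of bimodule pre-homomorphisms along \Ai-functors is contravariantly functorial, $(\alpha\circ\beta)^* = \beta^*\alpha^*$, which follows from the explicit formula for $F^*$ in Proposition~\ref{prop:funct1} together with the chain-rule-type identity $\widehat{F\circ G} = \widehat{F}\circ\widehat{G}$; I would state this as a small lemma if it is not already implicit. The main obstacle is precisely this bookkeeping: making sure the composition of the two isomorphisms (Darboux for $\DD$, and Lemma~\ref{lem:homologous} applied to $\CC$ with the pulled-back form) lands us with $F'$ respecting the two \emph{constant} pairings simultaneously, rather than merely up to a further automorphism. Choosing the reference form on $\CC$ to be $F^*\rho_\DD$ (not $\rho_\CC$) from the outset is what makes everything align, and it is the one genuinely non-mechanical decision in the argument; the flows themselves, the order-two property of the generating cochains, and the \Ai-functor condition $[\m,Z^t]=0$ are all handled verbatim as in Proposition~\ref{prop:darboux} and Lemma~\ref{lem:homologous}.
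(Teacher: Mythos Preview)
Your proposal is correct and uses the same ingredients as the paper (Corollary~\ref{cor:CY_SHIP}, Proposition~\ref{prop:darboux}, functoriality of $F^*$), but your ``clean'' variant is a slight simplification of the paper's argument. The paper applies Proposition~\ref{prop:darboux} to $\rho_\CC$ and $\rho_\DD$ separately, then notes that $\widetilde{F}:=G_\DD^{-1}\circ F\circ G_\CC$ only satisfies $[\widetilde{F}^*\langle-,-\rangle_{\DD'}]=[\langle-,-\rangle_{\CC'}]$ in cohomology, and invokes Lemma~\ref{lem:homologous} to produce an automorphism $E:\CC'\to\CC'$ correcting this; one then replaces $G_\CC$ by $G_\CC\circ E$. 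Your clean approach instead applies Darboux directly to $F^*\rho_\DD$ on $\CC$ (which is indeed a strong homotopy inner product: it is $\partial$-closed since $F^*$ is a chain map, and $(F^*\rho_\DD)_{0,0}=(\rho_\CC)_{0,0}$ is non-degenerate since $[F^*\rho_\DD]=[\rho_\CC]$ and $\CC$ is minimal). This makes $F'=G_\DD^{-1}\circ F\circ G_\CC$ cyclic on the nose and Lemma~\ref{lem:homologous} is never needed. Both routes rely on the contravariant functoriality $(\alpha\circ\beta)^*=\beta^*\alpha^*$ of pullback on $\Omega^{2,cl}$, which the paper also uses (last line of its proof). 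Your first attempt, inserting $H$ from Lemma~\ref{lem:homologous} \emph{before} Darboux and then absorbing it into $G_\CC$, also works and is essentially the paper's argument with the order of the two corrections swapped; your hesitation there was unwarranted.
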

\begin{proof}
	The Calabi-Yau structures determine SHIPs $\rho_\CC$ and $\rho_\DD$ by Corollary \ref{cor:CY_SHIP}. Moreover $[F^*\rho_\DD]=[\rho_\CC] \in H^\bullet(\Omega^{2,cl}(\CC),\partial)$. Applying Proposition \ref{prop:darboux} to $\rho_\CC$ and $\rho_\DD$ one obtains cyclic categories $\CC'$ and $\DD'$ and \Ai-isomorphisms $G_\CC: \CC'\to \CC$ and $G_\DD: \DD'\to \DD$. 
	
	Let $\widetilde{F}:= G_\DD^{-1}\circ F \circ G_\CC$, by assumption $[\widetilde{F}^*\langle -, - \rangle_{\DD'}]=[\langle -, - \rangle_{\CC'}]$. By the previous lemma there exists an \Ai-isomorphism $E: \CC' \to \CC'$ satisfying $E^*(\widetilde{F}^*\langle -, - \rangle_{\DD'})=\langle -, - \rangle_{\CC'}$. Now replace $G_\CC$ by $G_\CC \circ E $ and take $F':= \widetilde{F} \circ E$. With these choices we have the required commutative diagram and the condition $(F')^*\langle -, - \rangle_{\DD'}= \langle -, - \rangle_{\CC'}$, since $(\widetilde{F}\circ E)^*\rho=E^*\widetilde{F}^*\rho$. 
\end{proof}

\section{Unital Calabi--Yau structures}\label{sec:cy2}

We would like to have unital versions (meaning all the \Ai-categories and functors are unital) of Proposition \ref{prop:darboux} and Theorem \ref{thm:cyclic-model}. As we will see, this is not true in the later case. For this purpose we need to introduce an unital version of cyclic homology. Unlike the case of Hochschild (co)homology this is not isomorphic to the normalized or reduced versions~\cite[Section 2.2.12]{Lod}.

Throughout this chapter we assume all the \Ai-categories are minimal.

\subsection{Unital cyclic homology}

\begin{defn}
	Denote by $Q$ the subspace of $C^\lambda_\bullet(\CC)$ spanned by chains of the form $[\one_{X_0} \otimes x_1 \cdots \otimes x_n]$, for $n\geq 1$. This is a subcomplex of $C^\lambda_\bullet(\CC)$. We define $C^{\lambda, un}_\bullet(\CC)$ to be the quotient complex $\left. C^\lambda_\bullet(\CC)\middle/ Q \right.$. We will refer to the homology of this complex as \emph{unital cyclic homology} and denote it by $HC^{un}_\bullet(\CC)$.
\end{defn}

There are natural maps
\[ HH_\bullet(\CC) \stackrel{\pi}{\longrightarrow} HC_\bullet^\lambda(\CC) \to HC^{un}_\bullet(\CC).
\]
As before, an element $\phi \in HC^{un}_\bullet(\CC)^\vee$ will be called non-degenerate, if the induced element in $HH_\bullet(\CC)^\vee$ is non-degenerate. In this case, $\phi$ will be called a \emph{unital Calabi--Yau structure}.

Let $F:\CC \to \DD$ be an unital $A_\infty$ functor. The induced map $F_*$ on the cyclic complex preserves $Q$ and therefore it induces a chain map $F_*: C^{\lambda, un}_\bullet(\CC)\to C^{\lambda, un}_\bullet(\DD)$.

\begin{rmk}
	Loday \cite[Section 2.2.13]{Lod} introduces the notion of \emph{reduced cyclic homology} for unital, associative algebras. That notion is different from the one just introduced. For example, let $\CC:=\mathbb{K}$ be the ground field. Using the proposition below, it is easy to see that $HC^{un}_\bullet(\mathbb{K})\cong \mathbb{K}$, while the reduced cyclic homology of \cite{Lod} vanishes in this case.
\end{rmk}

In order to compare $HC^{un}_\bullet(\CC)$ and $HC^{\lambda}_\bullet(\CC)$ we will use the $u$-model for cyclic homology: $C_\bullet^+(\CC):=\left(C_\bullet^{red}(\CC)[u^{-1}], b+uB\right)$. 
We define $C_\bullet^{+,un}(\CC)$ as the cokernel of the inclusion
\begin{align}
	\bigoplus_X u^{-1}\mathbb{K}[u^{-1}] & \to C_\bullet^+(\CC) \\
	 (u^{-k})_X & \to u^{-k} \one_{X}.\nonumber
\end{align}
Note that this is a chain map (from a complex with trivial differential) since $\one_X$ is $(b+uB)$-closed on $C_\bullet^{red}(\CC)[u^{-1}]$.

\begin{prop}
The map $\Pi: C_\bullet^{+,un}(\CC) \to C^{\lambda, un}_\bullet(\CC)$ defined as
\begin{align}
	\Pi(u^{-k} x_0\otimes \cdots \otimes x_n)&=\left\{
	\begin{array}{ll}
		0 & k>0\\
		\left[x_0\otimes \cdots \otimes x_n\right] & k=0
	\end{array}\nonumber
	\right.
	\end{align}
		is a  quasi-isomorphism.
\end{prop}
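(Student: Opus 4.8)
The plan is to compare the two complexes $CC_\bullet^{+,un}(\CC)$ and $C^{\lambda,un}_\bullet(\CC)$ by reducing to the already-known comparison between their non-unital counterparts, namely the quasi-isomorphism $CC_\bullet^+(\CC) \simeq C^\lambda_\bullet(\CC)$ cited in the excerpt (Loday \cite{Lod}, Cho \cite{Cho}). Concretely, I would exhibit a commutative diagram whose rows are short exact sequences of complexes: the top row is
\[ 0 \to \bigoplus_X u^{-1}\mathbb{K}[u^{-1}] \to CC_\bullet^+(\CC) \to CC_\bullet^{+,un}(\CC)\to 0, \]
which is exact by the very definition of $CC_\bullet^{+,un}(\CC)$ as a cokernel (injectivity of the left map is clear since the $\one_X$ are linearly independent in $CC^{red}_\bullet(\CC)$), and the bottom row is
\[ 0 \to Q \to C^\lambda_\bullet(\CC) \to C^{\lambda,un}_\bullet(\CC)\to 0, \]
exact by definition of $C^{\lambda,un}_\bullet(\CC)$. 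The middle vertical map is the known quasi-isomorphism $CC_\bullet^+(\CC)\to C^\lambda_\bullet(\CC)$ (sending $u^{-k}x_0\otimes\cdots\otimes x_n$ to $0$ for $k>0$ and to the class $[x_0\otimes\cdots\otimes x_n]$ for $k=0$), the right vertical map is $\Pi$, and the left vertical map is the evident one sending $(u^{-k})_X \mapsto u^{-k}\one_X \mapsto 0$ when $k>0$ — wait, more precisely I must check that the middle map carries the subspace $\bigoplus_X u^{-1}\mathbb{K}[u^{-1}]\cdot\one_X$ into $Q$: an element $u^{-k}\one_X$ with $k>0$ goes to $0\in Q$, so indeed the left square commutes with left vertical map the zero map (or rather the map with that target), and the right square commutes by inspection of the formulas. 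Then by the five lemma (or the long exact sequence in homology) it suffices to know that the left vertical map is a quasi-isomorphism.

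The heart of the argument is therefore the claim that the complex $Q$ and the complex $\bigoplus_X u^{-1}\mathbb{K}[u^{-1}]$ (with zero differential) are quasi-isomorphic via this map. The complex $\bigoplus_X u^{-1}\mathbb{K}[u^{-1}]$ has zero differential, so its homology is itself: one copy of $u^{-1}\mathbb{K}[u^{-1}]$ for each object $X$. So I must compute $H_\bullet(Q)$ and show it is this. Now $Q\subset C^\lambda_\bullet(\CC)$ is spanned by classes $[\one_{X_0}\otimes x_1\otimes\cdots\otimes x_n]$ with $n\geq 1$; the key structural point is that, because we are in the cyclic (Connes) complex, cyclic rotation lets one always move a unit $\one$ into the first slot, and the reduced/normalized nature means chains with a unit in a \emph{non-first} slot vanish. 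The strategy I would use: build an explicit contracting-type homotopy on $Q$, analogous to the homotopy $h(x_0\otimes\cdots\otimes x_n)=\one_{X_n}\otimes x_0\otimes\cdots\otimes x_n$ used in Lemma~\ref{lem:baracyc} to kill the bar complex. Indeed there should be a short exact sequence relating $Q$ to a shifted copy of the bar complex (or a reduced version thereof) modulo the object-indexed $\mathbb{K}[u^{-1}]$-lines; since the bar complex is acyclic by Lemma~\ref{lem:baracyc}, the homology of $Q$ collapses onto exactly the $\bigoplus_X u^{-1}\mathbb{K}[u^{-1}]$ piece. Alternatively — and this may be cleaner — I would directly use the $u$-model: $Q$ (transported to the $u$-model via the quasi-isomorphism $\Pi$-analogue) becomes $\bigoplus_X u^{-1}\mathbb{K}[u^{-1}]\cdot\one_X$ plus an acyclic complement governed by $b+uB$ acting on reduced chains beginning with a unit, which one contracts using the bar homotopy together with the standard $S$-$B$-$I$ formalism.

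The main obstacle I anticipate is precisely this computation of $H_\bullet(Q)$: one must be careful that the reduced cyclic complex genuinely kills the ``spurious'' chains $[\one_{X_0}\otimes x_1\otimes\cdots]$ up to homology except for the length-one (constant) ones, and the bookkeeping with the powers of $u$ in the $u$-model — making sure nothing survives in positive length at each fixed power $u^{-k}$ — requires the normalization hypothesis and the minimality assumption standing at the head of the section. Once $H_\bullet(Q)\cong\bigoplus_X u^{-1}\mathbb{K}[u^{-1}]$ compatibly with the left vertical map, the five lemma applied to the map of long exact homology sequences immediately yields that $\Pi$ is a quasi-isomorphism. I would present the homology computation of $Q$ as a separate lemma preceding this proposition, since it is the only non-formal ingredient; the rest is diagram-chasing that I would describe but not belabor.
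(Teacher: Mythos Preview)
Your approach has a genuine gap. You set up a map of short exact sequences with the middle vertical arrow the known quasi-isomorphism $CC_\bullet^+(\CC)\to C^\lambda_\bullet(\CC)$, and you correctly observe that the induced left vertical arrow $\bigoplus_X u^{-1}\mathbb{K}[u^{-1}]\to Q$ is the \emph{zero} map (every $u^{-k}\one_X$ with $k\ge 1$ is killed by the middle map). But then the five lemma cannot possibly give the conclusion: for the zero map to be a quasi-isomorphism one would need both its source and target to be acyclic, and $\bigoplus_X u^{-1}\mathbb{K}[u^{-1}]$ manifestly has nonzero homology. Your subsequent plan --- computing $H_\bullet(Q)$ independently and showing it is abstractly isomorphic to $\bigoplus_X u^{-1}\mathbb{K}[u^{-1}]$ --- does not repair this: the long exact sequence argument requires the \emph{given} vertical map to realise the isomorphism on homology, and the zero map does not. (Indeed, commutativity of the left square forces $H_\bullet(A)\to H_\bullet(B)$ to be zero in the top row, which is false already for $\CC=\mathbb{K}$, where $[u^{-k}\one]\ne 0$ in $HC^+_\bullet(\mathbb{K})$; so the map of long exact sequences you envision does not behave as you hope.)

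The paper sidesteps this entirely by not comparing the two complexes via short exact sequences. It instead equips both $CC_\bullet^{+,un}(\CC)$ and $C^{\lambda,un}_\bullet(\CC)$ with increasing length-type filtrations $\widetilde{\mathcal F}^p$ and $\mathcal F^p$, checks that $\Pi$ respects them, and then verifies directly that $\Pi$ is a quasi-isomorphism on each associated graded piece. On those pieces the differential $b+uB$ degenerates to an alternating sequence of $\id-t$ and $N$ acting on a fixed reduced tensor space, and one recognises this (via Proposition~\ref{prop:exac2}) as a resolution of the corresponding cyclic quotient. If you want to salvage a short-exact-sequence strategy you would need a genuinely nonzero chain map $\bigoplus_X u^{-1}\mathbb{K}[u^{-1}]\to Q$ making the left square commute up to homotopy, and constructing such a homotopy is essentially as hard as the original statement.
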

\begin{proof}
	We first observe that, by definition of $Q$, $\Pi$ is a well-defined map on $C_\bullet^{red}(\CC)$. Moreover $\Pi$ is a chain map since $\Pi(\im(B))\subseteq Q$.
	Now consider the (non-negative) increasing filtrations 
	\[\mathcal{F}^pC^{\lambda, un}_\bullet(\CC):= {\sf span}\left\{ [x_0\otimes \cdots \otimes x_n] \  | \ n\leq p \right\}
	\]
		\[\widetilde{\mathcal{F}}^pC^{+, un}_\bullet(\CC):= {\sf span}\left\{ u^{-k} x_0\otimes \cdots \otimes x_n \  | \ n\leq p \right\} \oplus {\sf span}\left\{ u^{-k} \one_{X_n}\otimes x_0\otimes \cdots \otimes x_n \  | \ n=p \right\}
	\]
	Again by definition of $Q$ we see that $\Pi$ preserves the above filtrations. We claim $\Pi$ induces quasi-isomorphisms on the associated graded pieces, which implies the statement. For $p=0$ we have $\mathcal{F}^0= \oplus_X \CC(X,X)$ and $\widetilde{\mathcal{F}}^0$ is given by the complex 
	\begin{align*}
		\oplus_X \ \CC(X, X) \stackrel{b}{\longleftarrow} \one_X \otimes \overline{\CC(X, X)} \stackrel{uB}{\longleftarrow} u^{-1} \overline{\CC(X, X)} \stackrel{b}{\longleftarrow} u^{-1}\one_X \otimes \overline{\CC(X, X)} \stackrel{uB}{\longleftarrow} u^{-2}\overline{\CC(X, X)} \cdots
	\end{align*}
where $\overline{\CC(X, X)}= \left. \CC(X, X)\middle/ {\sf span}\{\one_X\}\right.$. The above complex is isomorphic to 
	\begin{align*}
	\oplus_X \ \CC(X, X) \stackrel{0}{\longleftarrow} \overline{\CC(X, X)} \stackrel{\id}{\longleftarrow} \overline{\CC(X, X)} \stackrel{0}{\longleftarrow} \overline{\CC(X, X)} \stackrel{\id}{\longleftarrow} \overline{\CC(X, X)} \cdots.
	\end{align*}
Hence it is quasi-isomorphic to $\mathcal{F}^0$ via $\Pi$. For $p>0$ we have $$\left.\mathcal{F}^p \middle/ \mathcal{F}^{p-1} \right. \cong \bigoplus_{X_0, \ldots, X_p} \left.\overline{\CC(X_p, X_0, \ldots, X_p)}\middle/ \im(\id -t) \right.,$$
where, analogously, $\overline{\CC(X_p, X_0, \ldots, X_p)}$ is the quotient of $\CC(X_p, X_0, \ldots, X_p)$ by the span of tensors with a unit. Similarly to the $p=0$ case we have
	\begin{align*}
		\left.\widetilde{\mathcal{F}}^p \middle/ \widetilde{\mathcal{F}}^{p-1} \right. \cong & \bigoplus_{X_0, \ldots, X_p}  \overline{\CC(X_p, X_0, \ldots, X_p)} \stackrel{b}{\longleftarrow} \one_{X_p} \otimes \overline{\CC(X_p, X_0, \ldots, X_p)} \stackrel{uB}{\longleftarrow} \\
		&  \hspace{1.1cm} u^{-1} \overline{\CC(X_p, X_0, \ldots, X_p)} \stackrel{b}{\longleftarrow} u^{-1} \one_{X_p} \otimes \overline{\CC(X_p, X_0, \ldots, X_p)} \stackrel{uB}{\longleftarrow} \cdots \\
		\cong & \bigoplus_{X_0, \ldots, X_p}  \overline{\CC(X_p, X_0, \ldots, X_p)} \stackrel{\id -t}{\longleftarrow} \overline{\CC(X_p, X_0, \ldots, X_p)} \stackrel{N}{\longleftarrow}\\
		& \hspace{1.1cm} \overline{\CC(X_p, X_0, \ldots, X_p)} \stackrel{\id -t}{\longleftarrow} \overline{\CC(X_p, X_0, \ldots, X_p)} \stackrel{N}{\longleftarrow} \cdots
\end{align*}
since $b(\one_{X_n}\otimes x_0\otimes \cdots \otimes x_p)= (\id-t)(x_0\otimes \cdots \otimes x_p)+ \widetilde{\mathcal{F}}^{p-1}$. Therefore, as follows from Proposition \ref{prop:exac2}, this is a resolution of $\bigoplus_{X_0, \ldots, X_p} \left.\overline{\CC(X_p, X_0, \ldots, X_p)}\middle/ \im(\id -t) \right.$ which proves the claim.
\end{proof}

It now follows we have two different chain models for computing the unital cyclic homology. Using $C_\bullet^{+,un}(\CC)$ we immediately obtain:
		
\begin{cor}\label{cor:exa_uni}
	We have the following exact sequence
	\begin{align*}
		0 \to HC^{+}_{odd}(\CC) \to HC^{un}_{odd}(\CC) \to \bigoplus_X u^{-1}\mathbb{K}[u^{-1}] \to HC^{+}_{even}(\CC) \to HC^{un}_{even}(\CC) \to 0.
	\end{align*}
\end{cor}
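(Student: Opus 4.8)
The statement is the long exact sequence in homology associated to the short exact sequence of complexes
\[
0 \to \bigoplus_X u^{-1}\mathbb{K}[u^{-1}] \to CC_\bullet^+(\CC) \to CC_\bullet^{+,un}(\CC) \to 0,
\]
which is exactly the definition of $CC_\bullet^{+,un}(\CC)$ as a cokernel (the left-hand map is the chain map $(u^{-k})_X \mapsto u^{-k}\one_X$, injective because the elements $u^{-k}\one_X$ are linearly independent in $CC_\bullet^{red}(\CC)[u^{-1}]$). So the plan is: first record that this sequence of complexes is exact — the only point is injectivity of the inclusion, which is immediate since the reduced complex still contains the $\one_X$'s and they remain linearly independent over $\mathbb{K}[u^{-1}]$. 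Then apply the snake lemma / long exact sequence functor.

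Next I would identify the three homologies appearing. For the middle term, $H_\bullet\big(CC_\bullet^+(\CC)\big) = HC^+_\bullet(\CC)$ by the remark recalled earlier in the excerpt (the $u$-model computes positive cyclic homology, isomorphic to $HC^\lambda_\bullet(\CC)$). For the quotient term, $H_\bullet\big(CC_\bullet^{+,un}(\CC)\big) = HC^{un}_\bullet(\CC)$ by the Proposition just proved, since $\Pi$ is a quasi-isomorphism $CC_\bullet^{+,un}(\CC)\to C^{\lambda,un}_\bullet(\CC)$. For the sub-term, the complex $\bigoplus_X u^{-1}\mathbb{K}[u^{-1}]$ has trivial differential, so its homology is itself, concentrated in the appropriate degrees: an element $u^{-k}\one_X$ has the same ($\Z/2\Z$) degree as $\one_X$, i.e. degree $0$, so $\bigoplus_X u^{-1}\mathbb{K}[u^{-1}]$ sits in homological degree $0$ and contributes nothing in odd degree. (Here I am using that the summand in degree $1$ of this constant complex is zero.)

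Finally I would write down the long exact sequence. With $S := \bigoplus_X u^{-1}\mathbb{K}[u^{-1}]$ concentrated in degree $0$, the six-term tail of the long exact sequence reads
\[
H_1(S) \to HC^+_1(\CC) \to HC^{un}_1(\CC) \xrightarrow{\ \partial\ } H_0(S) \to HC^+_0(\CC) \to HC^{un}_0(\CC) \to 0,
\]
and since $H_1(S)=0$ and $H_0(S)=S=\bigoplus_X u^{-1}\mathbb{K}[u^{-1}]$, and the sequence terminates at $0$ because $CC_\bullet^{+,un}(\CC)\to 0$ (the quotient map is surjective, so $HC^{un}_0(\CC)\to 0$ is the end of the exact tail), this is precisely the claimed five-term exact sequence. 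I should also note that the higher terms of the long exact sequence involve $H_{\geq 2}(S)=0$ and only reshuffle higher cyclic homology groups, so nothing new is asserted there; the $\Z/2\Z$-grading means the sequence is genuinely periodic and the displayed five terms capture all the content.

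\textbf{Main obstacle.} There is essentially no obstacle: the only things to be careful about are (i) confirming the inclusion of the constant complex is a chain map and injective — already observed in the text just before the Proposition — and (ii) getting the $\Z/2\Z$-grading bookkeeping right so that $S$ really contributes only in degree $0$ and the sequence closes off on the right. Everything else is a formal invocation of the long exact homology sequence together with the two identifications $H_\bullet(CC^+_\bullet)=HC^+_\bullet$ and $H_\bullet(CC^{+,un}_\bullet)=HC^{un}_\bullet$ supplied above.
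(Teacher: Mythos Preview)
Your proposal is correct and is exactly the argument the paper has in mind: the corollary is stated as an immediate consequence of the definition of $CC_\bullet^{+,un}(\CC)$ as the cokernel of $\bigoplus_X u^{-1}\mathbb{K}[u^{-1}]\hookrightarrow CC_\bullet^+(\CC)$ together with the preceding Proposition identifying $H_\bullet(CC_\bullet^{+,un}(\CC))$ with $HC^{un}_\bullet(\CC)$, and your write-up simply spells out the long exact sequence and the grading bookkeeping that the paper leaves implicit.
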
	

Using $C_\bullet^{\lambda,un}(\CC)$ we can relate $HC^{un}_\bullet(\CC)$ to an unital version of $\Omega^{2,cl}(\CC)$.

\begin{defn}
	An element $\rho \in \Omega^{2,cl}(\CC)$ is called unital if 
	\[\rho(\textbf{x}^{(1)}, \one_{X_i}, \textbf{x}^{(2)},\underline{v}, \textbf{y})(w)=0,\]
	for all $\textbf{x}^{(1)}, \textbf{x}^{(2)}, v, \textbf{y}, w$. By anti-symmetry, this also implies $\rho(\textbf{x},\underline{v}, \textbf{y}^{(1)}, \one_{Y_j}, \textbf{y}^{(2)})(w)=0$. We denote by $\Omega^{2,cl}_{un}(\CC)$ the subset of unital pre-homomorphisms. It is easy to check that $\Omega^{2,cl}_{un}(\CC)$ is a subcomplex of $\Omega^{2,cl}(\CC)$. 
	
	A quasi-isomorphism $\rho \in \Omega^{2,cl}_{un}(\CC)$  will be called an unital strong homotopy inner product (unital SHIP).
	
\end{defn}

We will now prove unital versions of Propositions \ref{prop:exact1} and \ref{prop:exac2}. First we need to modify the domain of the map $S$ (Lemma \ref{lem:Ssurjective}) for  $\Omega^{2,cl}_{un}(\CC)$. We introduce the sub-complexes
\[I:= {\sf span}\left\{ x_1 \otimes \cdots \otimes x_{i} \otimes \one_{X_{i+1}} \otimes x_{i+1} \otimes \cdots \otimes x_n \  | \ n\geq 2, 1\leq i \leq n-1 \right\} \subseteq C_\bullet(\CC)
\]
and $U:= (\id -t)(I)$. One can check that $U$ is a subcomplex of $C_\bullet(\CC)$.

\begin{lem}\label{lem:SESun}
	There are short exact sequences of complexes
	$$0 \to C^\lambda_\bullet(\CC)^\vee  \stackrel{\pi^\vee_U }{\longrightarrow} \big(\left.C_\bullet(\CC)\middle/U\right.\big)^\vee  \stackrel{S}{\longrightarrow} \Omega^{2,cl}_{un}(\CC)[1] \to 0,
	$$
	
	$$ 0 \to \left.C^\lambda_\bullet(\CC)\middle/N^{-1}(I)\right. \stackrel{N}{\longrightarrow} \left.\CC^{bar}\middle/I\right. \stackrel{p}{\longrightarrow} \left.\mathfrak{K}_\CC\middle/U\right. \to 0. $$
	
	Moreover, these are functorial under unital $A_\infty$ functors, as in Propositions \ref{prop:funct1} and \ref{prop:funct2}.
\end{lem}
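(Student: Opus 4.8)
The plan is to mimic the proofs of Proposition~\ref{prop:exact1} and Proposition~\ref{prop:exac2} verbatim, checking at each step that the extra quotient by $U$ (respectively $I$, $N^{-1}(I)$) is compatible with all the maps involved. For the first sequence: the map $S$ of Lemma~\ref{lem:Schain} takes $CC_\bullet(\CC)^\vee \to \Omega^{2,cl}(\CC)[1]$, and by Proposition~\ref{prop:exact1} its kernel is exactly $\pi^\vee(C^\lambda_\bullet(\CC)^\vee)$, i.e. the cyclically-symmetric functionals. First I would observe that a functional $\varphi\in CC_\bullet(\CC)^\vee$ descends to $(CC_\bullet(\CC)/U)^\vee$ iff $\varphi|_U=0$; since $U=(\id-t)(I)$ and $\varphi$ being in the image of $\pi^\vee$ means $\varphi\circ(\id-t)=0$, the left exactness $0\to C^\lambda_\bullet(\CC)^\vee\to (CC_\bullet(\CC)/U)^\vee$ is immediate. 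For surjectivity onto $\Omega^{2,cl}_{un}(\CC)[1]$, I would show that $S$ restricted to $(CC_\bullet(\CC)/U)^\vee$ lands in $\Omega^{2,cl}_{un}(\CC)[1]$ — this is a direct unravelling of the formula for $S$: an input of the form $\rho(\textbf{x}^{(1)},\one_{X_i},\textbf{x}^{(2)},\underline v,\textbf{y})(w)$ is, up to sign, $\theta$ evaluated on a cyclic rotation of $\one_{X_i}\otimes(\text{rest})$, which lies in $(\id-t)(I)=U$ after accounting for the rotation, hence vanishes — and conversely that the section $h$ of Equation~\eqref{eq:Ssurj} sends $\Omega^{2,cl}_{un}(\CC)$ into $(CC_\bullet(\CC)/U)^\vee$, using unitality of $\rho$ directly in the defining sum for $h(\rho)$. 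The computation $S(h(\rho))=\rho$ then goes through unchanged since it only uses anti-symmetry and closedness, which are preserved.

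For the second sequence, I would run the proof of Proposition~\ref{prop:exac2} with the quotients inserted. The key points to check: that $p=\id-t$ descends to a well-defined surjection $\CC^{bar}/I\to \mathfrak{K}_\CC/U$ (clear, since $p(I)=(\id-t)(I)=U$ by definition of $U$); that $N$ descends to $C^\lambda_\bullet(\CC)/N^{-1}(I)\to \CC^{bar}/I$ (tautological from the definition of $N^{-1}(I)$); and that the resulting sequence is exact. Exactness at the middle term: if $\xi+I\in\ker(p\bmod I)$, then $(\id-t)(\xi)\in I$; I would then lift the argument from Proposition~\ref{prop:exac2} — one needs to produce a preimage under $N$, which amounts to showing $\xi$ differs from something $t$-invariant by an element of $I$, and then applying the averaging formula $N(\tfrac1{n+1}[\cdot])$. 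Injectivity of $N\bmod$: this is essentially automatic from the definition of $N^{-1}(I)$ as the preimage, provided we check $N$ is injective on the nose before quotienting, which is Proposition~\ref{prop:exac2}; one must verify that the explicit $\gamma=-\tfrac1{n+1}\sum k t^k(\textbf{x})$ used there respects the quotient, i.e. that if $N(\textbf{x})\in I$ then $\textbf{x}\in N^{-1}(I)$ — true by definition. I would also note in passing that $\Omega^{2,cl}_{un}(\CC)$ being a subcomplex (already asserted in the text) and $I,U$ being subcomplexes are needed for all quotients to make sense; the subcomplex property of $I$ and $U$ follows from a short check that $b'$ and $\id-t$ preserve the span of tensors with an interior unit (using strict unitality and minimality).

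The main obstacle, I expect, is bookkeeping the signs and the cyclic rotations when verifying that $S$ maps $(CC_\bullet(\CC)/U)^\vee$ into the \emph{unital} subcomplex $\Omega^{2,cl}_{un}(\CC)[1]$ and, conversely, that $h(\rho)\in(CC_\bullet(\CC)/U)^\vee$ for unital $\rho$: the definition of $U$ involves $(\id-t)(I)$ rather than $I$ itself, so an interior unit in a chain can be rotated to a boundary unit and vice versa, and one must be careful that the functional-theoretic statement ``$\varphi$ vanishes on $U$'' matches the geometric statement ``$\rho$ has no interior units'' precisely — i.e. that one is not off by the difference between $I$ and $U$. Once that translation is pinned down, everything else is a routine transcription of the two cited propositions, since, as the paper notes, the remaining identities (anti-symmetry, closedness, $S\circ h=\id$, $p\circ N=0$, injectivity of $N$) are formal and do not interact with the unit-removal.
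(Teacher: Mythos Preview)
Your proposal is correct and matches the paper's own approach exactly: the paper does not write out a proof of this lemma at all, simply stating that one proves it ``with only minor modifications'' of Propositions~\ref{prop:exact1} and~\ref{prop:exac2}, which is precisely the plan you outline. One small sharpening: for surjectivity in the first sequence you do not need to verify directly that $h(\rho)|_U=0$ from the sum in~\eqref{eq:Ssurj}; it is cleaner to observe that for \emph{any} $\varphi$ with $S\varphi=\rho$ one has $\varphi(\alpha)-\varphi(t\alpha)=\pm(S\varphi)(\underline{x_0},x_1,\ldots,x_{n-1})(x_n)=\pm\rho(\underline{x_0},x_1,\ldots,x_{n-1})(x_n)$, which vanishes by unitality of $\rho$ in the $\textbf{y}$-slot whenever $\alpha\in I$.
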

\begin{proof}
	The proof follows closely the proofs of Propositions \ref{prop:exact1} and \ref{prop:exac2}, with only minor modifications. For the first sequence, we observe that $\pi^\vee_U$ is injective, since $U\subset Im(\id -t)$, by definition. Additionally, a straightforward check shows that, if $\rho \in \Omega^{2,cl}_{un}(\CC)$, then $h(\rho)$ (see proof of Proposition \ref{prop:exact1}) vanishes on $U$, which  proves $S$ is surjective.
	
	The second sequence follows directly from the non-unital case, since $p(I)=U$ by definition.	 
	
	It is clear that $F^*$ preserves unitality on $\Omega^{2,cl}$. We already saw that $F_*$ induces a map on the cyclic unital complex. Similarly one checks that $F_*'$ preserves $I$. This together with Equation (\ref{eq:F'F}) implies that $F_*$ preserves $U$. Functoriality now follows as in the non-unital setting.
\end{proof}

We now arrive at the unital version of Corollary \ref{cor:CY_SHIP}.

\begin{prop}\label{prop_un_iso}
	There is an isomorphism
	\[HC^{un}_\bullet(\CC)^\vee  \cong H^\bullet(\Omega^{2,cl}_{un}(\CC), \partial),\]
	which identifies unital Calabi--Yau structures with unital SHIPs.
\end{prop}
\begin{proof}
	The homotopy $h$ used in Lemma \ref{lem:baracyc} to prove $\CC^{bar}$ is acyclic preserves $I$ therefore we can use it to show acyclicity of $\left.\CC^{bar}\middle/I\right.$. The kernel of $\pi_U$ is $\left.\mathfrak{K}_\CC\middle/U\right.$. Combining these with the two short exact sequences in Lemma \ref{lem:SESun} we conclude that 
	\[H^\bullet(\Omega^{2,cl}_{un}(\CC), \partial) \cong H_\bullet(\left.C^\lambda_\bullet(\CC)\middle/N^{-1}(I)\right.)^\vee.
	\]
	
	We compute the later cohomology by considering the projection 
		$$q: \left.C^\lambda_\bullet(\CC)\middle/N^{-1}(I)\right. \to \left.C^\lambda_\bullet(\CC)\middle/Q\right.=C^{\lambda, un}_\bullet(\CC).$$ 
		
	By definition of $I$ we see that $N^{-1}(I)$ is spanned by tensors with at least three units, or two non-consecutive units. In particular this shows that $N^{-1}(I)\subseteq Q$, which in turn implies $q$ is well-defined, surjective and has kernel $\ker(q)\cong\left. Q \middle/ N^{-1}(I)\right. \subseteq C^\lambda_\bullet(\CC)$. 
	
	We claim that $\ker(q)$ is acyclic, which shows that $q$ is a quasi-isomorphism and in turn proves the statement.
	On $\ker(q)$ we consider the filtration
		\begin{align}
		\mathcal{F}^p\ker(q) & :=  {\sf span}\left\{ [\one_{X_1}\otimes x_1 \otimes \cdots \otimes x_n]  \in Q \  | \ n\leq p-1 \right\} \nonumber \\
		& \ \ \  \bigoplus {\sf span}\left\{ [\one_{X_1}\otimes \one_{X_1} \otimes x_1\otimes \cdots \otimes x_{p-1}] \in Q \right\}\nonumber
	\end{align}
	From the description of $N^{-1}(I)$ above, we have
	\begin{align}
		\left.\mathcal{F}^p \middle/ \mathcal{F}^{p-1} \right. \cong \one_{X_{p-1}} \otimes \overline{\CC(X_{p-1}, X_1, \ldots, X_{p-1})} 
	 \bigoplus \one_{X_{p-1}}\otimes \one_{X_{p-1}} \otimes \overline{\CC(X_{p-1}, X_1, \ldots, X_{p-1})} \nonumber
	\end{align}
	The induced differential is non-zero only on the second summand, where it equals
	\[b([\one_{X_{p-1}}\otimes \one_{X_{p-1}} \otimes x_1\otimes \cdots \otimes x_{p-1}])= - [\one_{X_{p-1}}\otimes x_1\otimes \cdots \otimes x_{p-1}] + [\one_{X_{p-1}} \otimes \one_{X_{p-1}}\otimes b'(x_1\otimes \cdots \otimes x_{p-1})].
	\]
	By definition the second term is in $\mathcal{F}^{p-1} $, thus we conclude $\left.\mathcal{F}^p \middle/ \mathcal{F}^{p-1} \right. $ is acyclic. Hence $\ker(q)$ is acyclic.
	
	The last statement has the same proof as the non-unital case.
\end{proof}
	
\subsection{Unital Darboux theorem}
In this subsection we prove the unital version of the Darboux theorem. We will also see that the analog of the Moser stability theorem is only partially true in the unital setting.

\begin{lem}\label{lem:unit}
  Let $\rho \in \Omega^{2,cl}_{un}(\CC)$ be an unital pre-homomorphism.
  \begin{enumerate}
  	\item If $\rho$ is a quasi-isomorphism then the map $\iota_{-}\rho: C^\bullet_{red}(\CC) \to (C^{red}_\bullet(\CC))^\vee$ is an isomorphism. 
  	\item Assume $\rho_{0,0}(\one_X)(\one_X)=0$ for all $X$, then there is $\theta \in (C^{red}_\bullet(\CC))^\vee$ satisfying $S(\theta)=\rho$.
  \end{enumerate}
\end{lem}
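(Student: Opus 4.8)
\emph{Overall approach.} The plan is to follow the proofs of Lemma~\ref{lem:contraction}(1) and Lemma~\ref{lem:Ssurjective}, inserting unitality bookkeeping at each step. Part (1) is a direct adaptation; part (2) needs one extra diagram chase, and that is where the hypothesis $\rho_{0,0}(\one_X)(\one_X)=0$ gets used.

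\emph{Part (1).} First I would check the map is well defined, i.e.\ that a reduced Hochschild cochain $Z$ produces a reduced chain-functional. By Definition~\ref{defn:contraction}, $\iota_Z\rho(x_0\otimes x_1\otimes\cdots\otimes x_n)$ is a sum of terms $\pm\,\rho(x_1,\ldots,\underline{Z(x_i,\ldots,x_j)},\ldots,x_n)(x_0)$; if some $x_\ell=\one$ with $\ell\ge 1$, then in each term either $x_\ell$ is among the arguments fed to $Z$, so the term vanishes since $Z$ is reduced, or $x_\ell$ sits in one of the non-underlined, non-evaluation slots of $\rho$, so the term vanishes since $\rho$ is unital. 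Hence $\iota_Z\rho\in(CC^{red}_\bullet(\CC))^\vee$. Injectivity of $\iota_{-}\rho$ on $CC^\bullet_{red}(\CC)$ is inherited from Lemma~\ref{lem:contraction}(1). For surjectivity: given a reduced $\varphi$, Lemma~\ref{lem:contraction}(1) yields the unique Hochschild cochain $Z$ with $\iota_Z\rho=\varphi$, and I would show $Z$ is reduced by induction on length. Assuming $Z_k$ is reduced for $k<n$, I feed into the relation determining $Z_n$,
\[
\varphi(x_0\otimes\textbf{x})=\pm\,\rho_{0,0}\big(Z_n(\textbf{x})\big)(x_0)+\big(\text{terms built from the }Z_k\text{ with }k<n\big),
\]
a tuple $\textbf{x}=x_1\otimes\cdots\otimes x_n$ with some $x_\ell=\one$ ($\ell\ge 1$): the left-hand side vanishes since $\varphi$ is reduced, and every lower-order term on the right vanishes by the same dichotomy (the unit is consumed by a reduced $Z_k$, or occupies a unital slot of $\rho$), leaving $\rho_{0,0}(Z_n(\textbf{x}))(x_0)=0$ for all $x_0$; since $\CC$ is minimal and $\rho$ is a quasi-isomorphism, $\rho_{0,0}$ is non-degenerate, so $Z_n(\textbf{x})=0$. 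This completes the induction.

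\emph{Part (2).} The hypothesis is forced on any $\rho=S(\theta)$ with $\theta$ reduced, since unwinding $S$ gives $S(\theta)_{0,0}(\one_X)(\one_X)=2\,\theta_1(\one_X\otimes\one_X)$, which is $0$ because the chain $\one_X\otimes\one_X$ carries a unit in position one; so it is precisely the right condition. For the construction, first I would invoke the first short exact sequence of Lemma~\ref{lem:SESun} to pick some $\theta_0\in\big(CC_\bullet(\CC)/U\big)^\vee$ with $S(\theta_0)=\rho$. Since $\ker S=\pi^\vee\big(C^\lambda_\bullet(\CC)^\vee\big)$, it suffices to find $\psi\in C^\lambda_\bullet(\CC)^\vee$ so that $\theta:=\theta_0-\pi^\vee(\psi)$ vanishes on the span $D$ of chains carrying a unit in positions $\ge 1$ (that is exactly reducedness). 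Because every interior unit can be cyclically rotated to position $0$ one has $\pi(D)=Q$, hence the functionals $\pi^\vee(\psi)|_D$ are exactly those on $D$ vanishing on $\ker(\pi|_D)=D\cap\mathfrak{K}_\CC$ (over a field any such extends to $C^\lambda_\bullet(\CC)$), and so a suitable $\psi$ exists iff $\theta_0$ vanishes on $D\cap\mathfrak{K}_\CC$. It already vanishes on $U\subseteq D\cap\mathfrak{K}_\CC$; the remaining generators of $D\cap\mathfrak{K}_\CC$ are $(\id-t)$ applied to chains carrying a unit only at the two ends, $\one_X\otimes x_1\otimes\cdots\otimes x_{n-1}\otimes\one_X$. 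Using $S(\theta_0)=\rho$, the value of $\theta_0$ on such an element equals, up to sign, $\rho$ evaluated with $\one_X$ in the underlined slot, $\one_X$ in the evaluation slot and $x_1,\ldots,x_{n-1}$ in between; for $n\ge 2$ this vanishes by applying the closedness relation of $\rho$ to the triple (position $0$, a middle position, position $n$), the two other terms dying by unitality, and for $n=1$ it is $\pm\rho_{0,0}(\one_X)(\one_X)$, which vanishes by hypothesis. Thus $\theta_0$ kills $D\cap\mathfrak{K}_\CC$, the correction $\psi$ exists, and $\theta$ is the desired reduced functional with $S(\theta)=\rho$.

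\emph{Main obstacle.} I expect the hard part to be, in part (2), the combinatorial identification that $D\cap\mathfrak{K}_\CC$ is generated modulo $U$ by $(\id-t)$ of the doubly-end-unital chains — ruling out cancellation among $t$-orbits that is not visible on individual basis chains — together with the sign bookkeeping needed to match $\theta_0$ on these elements with a value of $\rho$ through the defining formula of $S$.
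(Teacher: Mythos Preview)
Your argument for part~(1) is correct and is exactly the paper's proof, only spelled out in more detail: the paper simply remarks that if $Z$ is reduced and $\rho$ is unital then $\iota_Z\rho$ is reduced, and that the inductive construction in Lemma~\ref{lem:contraction}(1) goes through verbatim in the reduced setting.

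For part~(2) you take a genuinely different route from the paper. The paper does not touch Lemma~\ref{lem:SESun} at all: it fixes a basis of each hom space extending the units, observes (exactly as you do) that closedness, unitality and the hypothesis force $\rho(\textbf{x},\underline{\one_X},\textbf{y})(\one_Y)=0$ for all $\textbf{x},\textbf{y}$, and then writes down $\theta$ explicitly --- it equals $h(\rho)$ from~\eqref{eq:Ssurj} on chains with no unit, vanishes on chains with a unit in position $\ge 1$, and is set to $\rho(\underline{\one_X},x_1,\ldots,x_{n-1})(x_n)$ when $x_0=\one_X$ --- and finally checks $S(\theta)=\rho$ by the computation of Lemma~\ref{lem:Ssurjective}. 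Your obstruction-theoretic approach (lift via the short exact sequence, then correct by an element of $\ker S$) is more conceptual, avoids choosing bases, and is also correct. Both proofs rest on the same key vanishing $\rho(\textbf{x},\underline{\one},\textbf{y})(\one)=0$.

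Regarding the step you flag as the main obstacle: it is easier than you fear. An orbit-by-orbit dimension count shows that in every word length $n\ge 2$ one already has $D\cap\mathfrak{K}_\CC=U$ (on single-unit orbits both sides have dimension $n-1$; on orbits containing a doubly-end-unital chain both have dimension $n$; on all other orbits with $\ge 2$ units the entire orbit lies in $I$). Hence $(D\cap\mathfrak{K}_\CC)/U$ is concentrated in length~$1$, spanned by the classes of $\one_X\otimes\one_X$, and your $n=1$ computation $\theta_0((\id-t)(\one_X\otimes\one_X))=\pm\rho_{0,0}(\one_X)(\one_X)=0$ already finishes the job. Your closedness argument for $n\ge 2$ is correct but redundant: those generators were already in $U$.
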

\begin{proof}
	From Definition \ref{defn:contraction} it is clear that if $Z$ is reduced and $\rho$ is unital then $\iota_Z \rho$ is in $(C^{red}_\bullet(\CC))^\vee$. Moreover, close inspection shows that the proof of Lemma \ref{lem:contraction}(1) still applies in the reduced setting.
	
	For the second claim, we first observe that the condition on $\rho_{0,0}$ plus the fact that $\rho$ is closed and unital implies that any expression of the form \begin{equation}\label{eq:rhovanish}
		\rho(\textbf{x},\underline{\one_X}, \textbf{y})(\one_Y)
	\end{equation}
 vanishes. Next, one picks basis for the hom spaces, which in the case of self-homs extend the units. Then one defines $\theta(x_0, x_1, \ldots, x_n)$ on the basis elements as follows: when none of the $x_i$ are units one uses $h(\rho)$ as in Equation (\ref{eq:Ssurj}); when one or more of the $x_i$ (for $i>0$) is a unit then $\theta$ must be zero, for it to be reduced; and when $x_0=\one_X$ one takes 
 \[\theta(\one_X, x_1, \ldots, x_n):=\rho(\underline{\one_X},  x_1, \ldots, x_{n-1} )(x_n).\]
	Equation (\ref{eq:rhovanish}) guarantees these conditions are compatible - the above expression vanishes if $x_n$ is a unit. We claim that $S(\theta)=\rho$. This follows from Lemma \ref{lem:Ssurjective}, when none of $x_i$ are units. The only non-trivial case left is when $x_0=\one_X$, in which case we have:
	\begin{align*}
		S(\theta)_{r,s}(\textbf{x},\underline{\one_X}, \textbf{y})(w) & =  (-1)^@\rho(\underline{\one_X}, \textbf{y},w, \textbf{x}^{(1)})(x_r)\\
		& = (-1)^{@+1}\rho( \textbf{y}, \underline{w},\textbf{x}^{(1)}, x_r)(\one_X)+ (-1)^{@+1}\rho(\textbf{x}^{(1)}, \underline{x_r},\one_X,\textbf{y})(w)\\
		& = \rho(\textbf{x},\underline{\one_X}, \textbf{y})(w).
	\end{align*}
Here we used closedness of $\rho$ in the second equality and unitality and anti-symmetry in the third. This takes care of the case $r>0$, the remaining cases follow directly from the definition.
\end{proof}

\begin{prop}\label{prop:norm_darboux}
	Let $\rho \in \Omega^{2,cl}_{un}(\CC)$ be an unital SHIP. There exists a cyclic, unital \Ai-category $\CC'$, with the same objects, morphism spaces and units as $\CC$, and an unital \Ai-isomorphism $F: \CC' \to \CC$. Moreover, if we denote by $\langle -, - \rangle$ the cyclic pairing on $\CC'$, we have $F^*\rho=\langle -, - \rangle$.
\end{prop}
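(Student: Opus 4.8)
The plan is to mimic the proof of Proposition~\ref{prop:darboux}, but carrying the unitality condition through every step. The key point is that all the ingredients used there have unital refinements established in this section: Lemma~\ref{lem:unit} gives the reduced version of the contraction isomorphism $\iota_{-}\rho$ and the reduced potential $\theta$; Proposition~\ref{prop_un_iso} (together with Lemma~\ref{lem:SESun}) provides the unital analogue of the short exact sequence used to solve for $\theta$; and Lemma~\ref{lem:flow} already records that when the generating cochain $Z^t$ is reduced, its flow $F^t$ is automatically unital. So the strategy is to run the same interpolation-and-flow argument, but starting from a \emph{reduced} $Z^t$.

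First I would set $\rho^t := (1-t)\rho_{0,0} + t\rho$, exactly as before; since $\rho^t_{0,0} = \rho_{0,0}$ is non-degenerate, each $\rho^t$ is a quasi-isomorphism, and since $\rho$ is unital and $\rho_{0,0}$ is constant (hence trivially unital), each $\rho^t$ lies in $\Omega^{2,cl}_{un}(\CC)$. Next, because $(\rho - \rho^0)_{0,0} = 0$, the pairing $\rho_{0,0}$ restricted to the units is the constant one, so in particular the constant pairing satisfies $\rho_{0,0}(\one_X)(\one_X)=0$ only if the original $\rho$ does — here I should be a little careful: the correct statement to invoke is that $\rho - \rho^0$ is a unital pre-homomorphism whose $(0,0)$-part vanishes, and then apply Lemma~\ref{lem:unit}(2) to it (its hypothesis $\rho_{0,0}(\one_X)(\one_X)=0$ holds vacuously since the whole $(0,0)$-component is zero) to obtain a \emph{reduced} $\theta \in (CC^{red}_\bullet(\CC))^\vee$ with $S(\theta) = \rho - \rho^0$ and $\theta_0 = \theta_1 = 0$. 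Then Lemma~\ref{lem:unit}(1), applied to the non-degenerate unital pre-homomorphism $\rho^t$, produces a unique reduced Hochschild cochain $Z^t \in CC^\bullet_{red}(\CC)\{t\}$ of degree one solving $\iota_{Z^t}\rho^t = \theta$; the inductive construction in Lemma~\ref{lem:contraction}(1)/Lemma~\ref{lem:unit}(1) shows $Z^t$ has order two, i.e. $Z^t_0 = Z^t_1 = 0$, since $\theta_0 = \theta_1 = 0$.

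With $Z^t$ reduced and of order two, Lemma~\ref{lem:flow} yields a family $F^t$ of \emph{unital} \Ai-pre-functors, which is the identity on objects, morphism spaces and units. The computation
\[
\frac{d}{dt}(F^t)^*\rho^t = (F^t)^*\Big(\tfrac{d\rho^t}{dt} - \mathcal{L}_{Z^t}\rho^t\Big) = (F^t)^*\big(\rho - \rho^0 - S(\iota_{Z^t}\rho^t)\big) = (F^t)^*\big(S(\theta) - S(\theta)\big) = 0
\]
goes through verbatim, using Lemma~\ref{lem:lie_pull}(1) and Lemma~\ref{lem:contraction}(3), so $(F^1)^*\rho = \rho_{0,0}$. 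Then, as in Proposition~\ref{prop:darboux}, define $\CC'$ to have the same objects, morphisms and units as $\CC$ with $\widehat{\m'} := \widehat{F}^{-1}\circ\widehat{\m}\circ\widehat{F}$, where $F := F^1$; since $F$ is a unital isomorphism of categories, $\CC'$ is unital, $F\colon\CC'\to\CC$ is a unital \Ai-isomorphism, and $\m' = F^{-1}\circ\m\circ F$ shows $\CC'$ is unital as an \Ai-category. Finally $\partial\rho_{0,0} = \mathcal{L}_{\m'}\rho_{0,0} = \mathcal{L}_{F^*\m}(F^*\rho) = F^*(\mathcal{L}_\m\rho) = F^*(\partial\rho) = 0$ by Lemma~\ref{lem:contraction}(4), so $\rho_{0,0}$ is a constant strong homotopy inner product on $\CC'$, i.e. $\CC'$ is cyclic with cyclic pairing $\langle-,-\rangle = \rho_{0,0}$, and $F^*\rho = \langle-,-\rangle$ by construction.

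The main obstacle is the bookkeeping around reducedness: one must check that every object produced in the iteration — the potential $\theta$, the cochain $Z^t$, and hence the flow $F^t$ — stays in the reduced/unital world, which is precisely the content of Lemma~\ref{lem:unit} and the reducedness clause of Lemma~\ref{lem:flow}; the one genuinely new subtlety compared with Proposition~\ref{prop:darboux} is ensuring that the hypothesis of Lemma~\ref{lem:unit}(2) is met, which here is automatic because we apply it to $\rho - \rho^0$ whose $(0,0)$-component is identically zero. Everything else is a direct transcription of the non-unital Darboux argument.
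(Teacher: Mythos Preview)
Your proposal is correct and follows essentially the same route as the paper: run the interpolation-and-flow argument of Proposition~\ref{prop:darboux}, but apply Lemma~\ref{lem:unit}(2) to $\rho-\rho^0$ (whose $(0,0)$-part vanishes) to get a reduced $\theta$, then Lemma~\ref{lem:unit}(1) to get a reduced $Z^t$, so that Lemma~\ref{lem:flow} produces a unital flow. The paper's own proof is more terse but makes exactly the same moves; your identification of the one new subtlety---that Lemma~\ref{lem:unit}(2) is applied to $\rho-\rho^0$ rather than to $\rho$---is precisely the point.
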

\begin{proof}
	The proof follows the same argument as Proposition \ref{prop:darboux}, just taking the unital version of every ingredient. Since $\rho$ is unital so is $\rho^t$ and $\rho-\rho^0$. Since $(\rho-\rho^0)_{0,0}=0$, we can apply the previous lemma to obtain a reduced $\theta$ with $S(\theta)=\rho-\rho^0$. Moreover, we can take $\theta$ satisfying $\theta_0=\theta_1=0$. By Lemma \ref{lem:unit}(1) there is a reduced family of cochains $Z^t$ satisfying $\iota_{Z^t} \rho^t= \theta$. 
	For reduced $Z^t$, the family $F^t$ provided by Lemma \ref{lem:flow} are also unital.
	Then it is easy to check that the new \Ai \ structure $\m'$ is unital (for the same units $\one_{X_i}$) and $F^1$ is an unital \Ai-isomorphism.
\end{proof}

Next we prove our version of the Moser stability theorem for unital SHIPs of \emph{even degree}.

\begin{lem}\label{lem:unit_coho}
	If $\rho^0, \rho^1$ are unital cohomologous SHIPs in $\CC$ of \emph{even degree}, then there is an unital \Ai-isomorphism $F: \CC \to \CC$ with $F^*\rho^1=\rho^0$.
\end{lem}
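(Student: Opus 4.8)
The plan is to run the argument of Lemma~\ref{lem:homologous} essentially verbatim, replacing each ingredient by its unital (reduced) counterpart and inserting one new observation that makes essential use of the evenness hypothesis. Since $\rho^0$ and $\rho^1$ are unital and cohomologous we may write $\rho^1-\rho^0=\partial\beta$ with $\beta\in\Omega^{2,cl}_{un}(\CC)$. As in Lemma~\ref{lem:homologous}, minimality of $\CC$ forces $(\partial\beta)_{0,0}=0$, hence $(\rho^1)_{0,0}=(\rho^0)_{0,0}$, so the interpolation $\rho^t:=(1-t)\rho^0+t\rho^1$ is a unital pre-homomorphism with $\rho^t_{0,0}=\rho^0_{0,0}$ non-degenerate; in particular each $\rho^t$ is a quasi-isomorphism.

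The new input is that $\beta$ admits a \emph{reduced} primitive under $S$. Here the evenness of $|\rho^0|=|\rho^1|$ enters: since $\partial$ has degree one, $\beta$ is of odd degree, so its anti-symmetric $(0,0)$-component pairs morphisms of opposite parity, and in particular $\beta_{0,0}(\one_X)(\one_X)=0$ for every object $X$. This is exactly the hypothesis of Lemma~\ref{lem:unit}(2), which therefore yields $\theta\in\big(CC^{red}_\bullet(\CC)\big)^\vee$ with $S(\theta)=\beta$, and the construction used there (via the formula~(\ref{eq:Ssurj})) produces $\theta$ with $\theta_0=0$. Since $\theta$ is reduced and $\CC$ is minimal, $b^\vee\theta$ is again reduced and satisfies $(b^\vee\theta)_0=(b^\vee\theta)_1=0$. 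By Lemma~\ref{lem:unit}(1) the map $\iota_{-}\rho^t$ is an isomorphism of reduced complexes, so there is a unique reduced family $Z^t\in CC^1_{red}(\CC)\{t\}$, automatically of order two, with $\iota_{Z^t}\rho^t=b^\vee\theta$.

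Now feed $Z^t$ into the flow. Since $Z^t$ is reduced and of order two, Lemma~\ref{lem:flow} produces a family of \emph{unital} $A_\infty$-pre-functors $F^t$ with $F^0=\id$. Using Lemma~\ref{lem:lie_pull}(1), the identity $\mathcal{L}_{Z^t}\rho^t=S(\iota_{Z^t}\rho^t)$ from Lemma~\ref{lem:contraction}(3), and that $S$ is a chain map, one computes exactly as in Lemma~\ref{lem:homologous} that
\[ \frac{d}{dt}(F^t)^*\rho^t = (F^t)^*\big(\tfrac{d\rho^t}{dt}-\mathcal{L}_{Z^t}\rho^t\big) = (F^t)^*\big(\partial S(\theta)-S(b^\vee\theta)\big)=0, \]
so $(F^t)^*\rho^t=\rho^0$ for all $t$. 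To upgrade the $F^t$ to honest $A_\infty$-functors, note that $\rho^0,\rho^1$ closed gives $\partial\rho^t=0$, so Lemma~\ref{lem:contraction}(2) yields $\iota_{[\m,Z^t]}\rho^t=-b^\vee(\iota_{Z^t}\rho^t)=-b^\vee(b^\vee\theta)=0$; since $[\m,Z^t]$ is a reduced cochain and $\iota_{-}\rho^t$ is injective on reduced cochains, $[\m,Z^t]=0$, and Lemma~\ref{lem:flow} gives that each $F^t$ is a unital $A_\infty$-functor. Taking $F:=F^1$ (an isomorphism because $F^t_1=\id$) gives $F^*\rho^1=\rho^0$, as desired.

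I expect the only real difficulty to be the reducedness bookkeeping, and at its heart the step producing the reduced primitive $\theta$ of $\beta$. This is precisely where the even-degree hypothesis is unavoidable: it is what forces $\beta_{0,0}(\one_X)(\one_X)$ to vanish so that Lemma~\ref{lem:unit}(2) applies. In the odd case this term need not vanish, and one cannot in general absorb it, which is exactly why the corresponding uniqueness statement fails for odd strong homotopy inner products.
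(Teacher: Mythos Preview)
Your proposal is correct and follows essentially the same route as the paper: the paper's proof observes that $\beta$ has odd degree, applies Lemma~\ref{lem:unit}(2) to obtain a reduced $\theta$ with $S(\theta)=\beta$, and then declares the rest identical to Lemma~\ref{lem:homologous}; you have simply written out that ``identical'' part explicitly, correctly tracking the reduced/unital bookkeeping (reduced $Z^t$ from Lemma~\ref{lem:unit}(1), unital $F^t$ from Lemma~\ref{lem:flow}, and $[\m,Z^t]$ reduced so that injectivity of $\iota_{-}\rho^t$ on reduced cochains forces it to vanish). Your closing remark on why evenness is essential is also on point and matches the paper's Example~\ref{ex:non-existence}.
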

\begin{proof}
By assumption $\rho^1-\rho^0= \partial \beta$ for some $\beta \in \Omega^{2,cl}_{un}(\CC)$ of odd degree. Since $\beta$ has odd degree, it satisfies the condition in Lemma \ref{lem:unit}(2), which then gives $\theta\in (C^{red}_\bullet(\CC))^\vee$ satisfying $S(\theta)=\beta$. The remainder of the proof is identical to Lemma \ref{lem:homologous}.
\end{proof}

The following example shows how the above lemma fails in the odd case.

\begin{ex}\label{ex:non-existence}
	Let ${\sf Cl}$ be the (one-dimensional) Clifford algebra:  a $\mathbb{Z}/2$-graded \Ai-algebra with underlying vector space $\mathbb{K}[\epsilon]$, where $\epsilon$ is of odd degree, which is strictly unital and the only non-trivial \Ai \ operation is $\m_2(\epsilon, \epsilon)= \frac{1}{2}\one$. Moreover it is cyclic (of odd degree) with the pairing given by $\langle \one, \epsilon \rangle = 1$.
	
	Let ${\sf A}$ denote the cyclic, unital \Ai-algebra with the same vector space and pairing as ${\sf Cl}$, but with \Ai \ operations $\m_k(\epsilon, \ldots, \epsilon)= \frac{1}{2}C_{k-1} \one$, for $k\geq 2$, where $C_n$ is the $n$-th Catalan number. Recall, $C_0=1$ and $C_n=\sum_{i=1}^n C_{i-1}C_{n-i}$.
	
	There is an unital \Ai-isomorphism $G: {\sf A} \to {\sf Cl}$, defined by the formula $G_k(\epsilon, \ldots, \epsilon)= C_{k-1} \epsilon$, $k\geq 1$. Define the unital cohomologous strong homotopy inner product $\rho:=G^*\langle -, - \rangle$ in ${\sf A}$. One can check that $[\rho]=[\langle -, - \rangle ]$, explicitly 
	$$\rho=G^*\langle -, - \rangle = \langle -, - \rangle + \partial G^* \beta,$$
	where $\beta\in \Omega^{2,cl}_{un}({\sf Cl})$ is defined by $\beta(\one)(\one)=2$. By  Lemma \ref{lem:homologous} there is an \Ai-isomorphism $F: {\sf A} \to {\sf A}$ satisfying $F^*\rho=\langle -, - \rangle$. 
    
    We claim that there is no \emph{unital} \Ai-isomorphism $F: {\sf A} \to {\sf A}$ satisfying $F^*\rho=\langle -, - \rangle$. Given such $F$, then $G\circ F: {\sf A} \to {\sf Cl}$ is an unital, cyclic \Ai-isomorphism, that is $(G\circ F)^*\langle -, - \rangle = \langle -, - \rangle$. By unitality $G\circ F$ is determined by $(G\circ F)_k(\epsilon, \ldots, \epsilon)$, but it easy to check that cyclicity implies $(G\circ F)_k(\epsilon, \ldots, \epsilon)=0$  for $k\geq 2$, which does not satisfy the \Ai \ equations.
\end{ex}

We are now ready to state the unital versions of Theorem \ref{thm:cyclic-model}. The situation is different in the odd and even cases.

\begin{prop}~\label{prop:cyclic-odd}
	Let $\CC$ be an \Ai-category with an odd strong Calabi--Yau structure $\phi$. Then there exists a cyclic, unital \Ai-category $\CC'$ and an unital \Ai-isomorphism $G_\CC: \CC'\to \CC$.
\end{prop}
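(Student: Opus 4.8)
The plan is to reduce the odd case to the (already available) machinery by passing through a minimal model and then applying the unital Darboux theorem, being careful only about what is actually true in odd degree. First I would replace $\CC$ by a minimal model: by the homological perturbation lemma there is a minimal \Ai-category $\CC_{min}$ and a unital \Ai-isomorphism $\CC_{min}\simeq \CC$ (one can transfer the strict unit), and since both Hochschild and cyclic homology are invariants one may transport the odd strong Calabi--Yau structure $\phi$ to $\CC_{min}$. So from now on $\CC$ is minimal and we have an odd class $\phi\in HC^{\lambda}_\bullet(\CC)^\vee$ whose image under $\pi^\vee$ is non-degenerate.

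Next I would produce the associated strong homotopy inner product and arrange it to be \emph{unital}. By Corollary~\ref{cor:CY_SHIP} (equivalently, via Proposition~\ref{prop_un_iso}) the class $\phi$ corresponds to a class $[\rho]\in H^\bullet(\Omega^{2,cl}_{un}(\CC),\partial)$, and non-degeneracy of $\phi$ translates into $\rho_{0,0}$ being a (graded) non-degenerate pairing, i.e.\ $\rho$ is an \emph{unital} strong homotopy inner product. Here is where the odd degree is genuinely helpful rather than harmful: since $|\rho|$ is odd, the pairing $\rho_{0,0}$ is anti-symmetric, so automatically $\rho_{0,0}(\one_X)(\one_X)=0$ for every object $X$; more to the point, within the cohomology class of $\rho$ the relevant obstruction to choosing a good representative (the analogue of the even-case difficulty highlighted in Example~\ref{ex:non-existence}) does not need to be killed — we simply take \emph{some} unital $\rho$ representing $\phi$, which exists by Proposition~\ref{prop_un_iso}.

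Finally I would apply the unital Darboux theorem, Proposition~\ref{prop:norm_darboux}, directly to this $\rho$: it yields a cyclic, unital \Ai-category $\CC'$ with the same objects, morphism spaces and units as $\CC$, together with an unital \Ai-isomorphism $G_\CC\colon \CC'\to\CC$ satisfying $G_\CC^*\rho=\langle-,-\rangle$. Composing $G_\CC$ with the minimal-model isomorphism gives the desired unital \Ai-isomorphism onto the original $\CC$ (after renaming), and by construction the cyclic structure on $\CC'$ induces, via Corollary~\ref{cor:CY_SHIP}/Proposition~\ref{prop_un_iso}, the class $\phi$ we started with; compare Example~\ref{ex:cyclicCY}. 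The one point that needs a sentence of care — and the only place the odd hypothesis is doing real work — is the verification that $\rho$ may be chosen unital with $\rho_{0,0}(\one_X)(\one_X)=0$, which as noted is forced by anti-symmetry in odd degree; this is precisely the hypothesis of Lemma~\ref{lem:unit}(2) that the proof of Proposition~\ref{prop:norm_darboux} invokes, and it is exactly the step that \emph{fails} to be automatic in the even case, explaining why no extra unitality restriction on the splitting is needed here. I do not expect any serious obstacle; the content is entirely in the earlier propositions, and the proposition is essentially their corollary. (Uniqueness of $\CC'$, which does fail in the odd case per Example~\ref{ex:non-existence}, is deliberately not claimed.)
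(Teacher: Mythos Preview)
Your overall strategy is the paper's strategy: produce a unital strong homotopy inner product and then invoke Proposition~\ref{prop:norm_darboux}. The gap is in the step where you assert that $\phi$ determines a class in $H^\bullet(\Omega^{2,cl}_{un}(\CC),\partial)$. You cite Corollary~\ref{cor:CY_SHIP} and Proposition~\ref{prop_un_iso} as ``equivalent'', but they are not: the former identifies $HC^\lambda_\bullet(\CC)^\vee$ with $H^\bullet(\Omega^{2,cl}(\CC))$, the latter identifies $HC^{un}_\bullet(\CC)^\vee$ with $H^\bullet(\Omega^{2,cl}_{un}(\CC))$. Your $\phi$ lives in $HC^\lambda_\bullet(\CC)^\vee$, so Proposition~\ref{prop_un_iso} alone says nothing until you have lifted $\phi$ to $HC^{un}_\bullet(\CC)^\vee$. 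That lift is exactly what the paper extracts from (the dual of) Corollary~\ref{cor:exa_uni}: the obstruction to lifting sits in $\big(\bigoplus_X u^{-1}\mathbb{K}[u^{-1}]\big)^\vee$, which is concentrated in even degree, so any odd $\phi$ lifts.

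Your parity argument, that $\rho_{0,0}(\one_X)(\one_X)=0$ by anti-symmetry, is a true statement but it is not the relevant obstruction. Being \emph{unital} for $\rho\in\Omega^{2,cl}(\CC)$ means the vanishing of $\rho(\textbf{x}^{(1)},\one,\textbf{x}^{(2)},\underline{v},\textbf{y})(w)$ with the unit in a bar slot, which is unrelated to the value $\rho_{0,0}(\one_X)(\one_X)$. You also misplace the role of Lemma~\ref{lem:unit}(2): in the proof of Proposition~\ref{prop:norm_darboux} that lemma is applied to $\rho-\rho^0$, whose $(0,0)$-component is identically zero, so the hypothesis there is automatic regardless of parity. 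The odd hypothesis does no work inside Proposition~\ref{prop:norm_darboux}; it is used only to guarantee the lift of $\phi$ to the unital cyclic theory, via Corollary~\ref{cor:exa_uni}, before that proposition is invoked.
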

\begin{proof}
It follows from Corollary \ref{cor:exa_uni} that the odd Calabi--Yau structure $\phi\in HC^{\lambda}_1(\CC)^\vee$ lifts to an element $\phi^{un} \in HC_1^{un}(\CC)^\vee$ , meaning the natural map $HC_1^{un}(\CC)^\vee \to HC^{\lambda}_1(\CC)^\vee$ sends $\phi^{un}$ to $\phi$. Note the lift will, in general, not be unique. Since $\phi$ is non-degenerate, so will be the lift $\phi^{un}$. Therefore $\phi^{un}$ is an unital Calabi--Yau structures and hence determines an unital SHIP by Proposition \ref{prop_un_iso}. Applying Proposition \ref{prop:norm_darboux} to this unital SHIP gives the desired result.
\end{proof}

\begin{prop}~\label{prop:cyclic-even}
	Let $\CC$ and $\DD$ be \Ai-categories with even Calabi--Yau structures $\phi_\CC$ and $\phi_\DD$, respectively, which have unital lifts. Let $F:\CC \to \DD$ be an unital \Ai-functor with $[F^*\phi_\DD]=[\phi_\CC]$. Then there exist cyclic, unital \Ai-categories $\CC'$ and $\DD'$; unital \Ai-isomorphisms $G_\CC: \CC'\to \CC$ and $G_\DD: \DD'\to \DD$; and a cyclic, unital \Ai-functor $F': \CC' \to \DD'$ making the following diagram commute
		\[ \begin{CD}
		\CC @>F>>  \DD \\
		@A G_\CC AA    @A G_\DD AA \\
		\CC'  @>F'>>  \DD' 
	\end{CD} \]
\end{prop}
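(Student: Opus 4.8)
The plan is to mirror the strategy of Theorem~\ref{thm:cyclic-model}, but carrying the unitality refinements developed in this section. First I would invoke the hypothesis that $\phi_\CC$ and $\phi_\DD$ have unital lifts $\widetilde{\phi}_\CC \in HC^{un}_\bullet(\CC)^\vee$ and $\widetilde{\phi}_\DD \in HC^{un}_\bullet(\DD)^\vee$. Via Proposition~\ref{prop_un_iso} these correspond to unital strong homotopy inner products $\rho_\CC \in \Omega^{2,cl}_{un}(\CC)$ and $\rho_\DD \in \Omega^{2,cl}_{un}(\DD)$. Then I would apply the unital Darboux theorem, Proposition~\ref{prop:norm_darboux}, to each of $\rho_\CC$ and $\rho_\DD$ to produce cyclic, unital \Ai-categories $\CC'$, $\DD'$ together with unital \Ai-isomorphisms $G_\CC : \CC' \to \CC$ and $G_\DD : \DD' \to \DD$ satisfying $G_\CC^*\rho_\CC = \langle -,-\rangle_{\CC'}$ and $G_\DD^*\rho_\DD = \langle -,-\rangle_{\DD'}$.

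Next, set $\widetilde{F} := G_\DD^{-1} \circ F \circ G_\CC : \CC' \to \DD'$. Since $F$ is unital and each $G$ is a unital \Ai-isomorphism, $\widetilde{F}$ is unital. The functoriality statement in Proposition~\ref{prop_un_iso} (descending from the functoriality established in Corollaries~\ref{cor:CY_SHIP} and the diagrams in Propositions~\ref{prop:funct1}, \ref{prop:funct2}) together with the hypothesis $F^*\phi_\DD = \phi_\CC$, and the fact that the lifts are compatible under $F^*$ on $HC^{un}$, gives $[\widetilde{F}^*\langle -,-\rangle_{\DD'}] = [\langle -,-\rangle_{\CC'}] \in H^\bullet(\Omega^{2,cl}_{un}(\CC'),\partial)$. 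Here I should be a little careful: a priori the unital lift of $\phi_\CC$ need only be \emph{some} lift; I would either assume (as the statement implicitly does) that the lifts are chosen compatibly, i.e.\ $F^*\widetilde{\phi}_\DD = \widetilde{\phi}_\CC$ in $HC^{un}_\bullet$, which is possible since $F^*$ is a chain map on the unital cyclic complexes, or absorb the discrepancy into the next step. Both $\widetilde{F}^*\langle -,-\rangle_{\DD'}$ and $\langle -,-\rangle_{\CC'}$ are unital strong homotopy inner products in $\CC'$, and since $|\phi_\CC|$ is even they have even degree, so Lemma~\ref{lem:unit_coho} applies: there is an unital \Ai-isomorphism $E : \CC' \to \CC'$ with $E^*(\widetilde{F}^*\langle -,-\rangle_{\DD'}) = \langle -,-\rangle_{\CC'}$.

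Finally I would replace $G_\CC$ by $G_\CC \circ E$ (still an unital \Ai-isomorphism $\CC' \to \CC$, and the cyclic pairing on $\CC'$ is unchanged) and set $F' := \widetilde{F} \circ E : \CC' \to \DD'$. Then $F'$ is unital, the square commutes by construction ($G_\DD \circ F' = G_\DD \circ G_\DD^{-1} \circ F \circ G_\CC \circ E = F \circ (G_\CC \circ E)$), and $(F')^*\langle -,-\rangle_{\DD'} = E^*\widetilde{F}^*\langle -,-\rangle_{\DD'} = \langle -,-\rangle_{\CC'}$, so $F'$ is cyclic. This gives the desired diagram.

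The main obstacle, and the reason the even hypothesis is essential, is the appeal to Lemma~\ref{lem:unit_coho}: its proof requires the primitive $\beta$ of $\rho^1 - \rho^0$ to satisfy the vanishing hypothesis of Lemma~\ref{lem:unit}(2), which (via the closedness/unitality argument showing expressions $\rho(\textbf{x},\underline{\one_X},\textbf{y})(\one_Y)$ vanish) holds automatically when $\beta$ has odd degree — forced here because $\rho^1 - \rho^0$ is even. In the odd case this fails, as Example~\ref{ex:non-existence} shows, which is precisely why there is no unital-functor uniqueness statement paralleling this proposition in odd parity. A secondary point to handle carefully is the compatibility of the chosen unital lifts under $F^*$, which must be arranged before comparing cohomology classes in $\Omega^{2,cl}_{un}(\CC')$.
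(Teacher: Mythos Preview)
Your proposal is correct and follows essentially the same route as the paper: pass to unital strong homotopy inner products via Proposition~\ref{prop_un_iso}, apply the unital Darboux theorem (Proposition~\ref{prop:norm_darboux}) on each side, form $\widetilde{F} = G_\DD^{-1}\circ F\circ G_\CC$, then correct by an automorphism $E$ supplied by Lemma~\ref{lem:unit_coho}, exactly mirroring Theorem~\ref{thm:cyclic-model}.

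The one place you hedge is the compatibility of the unital lifts under $F^*$, which you flag as a ``secondary point to handle carefully.'' The paper resolves this cleanly and without any choice: Corollary~\ref{cor:exa_uni} (dualized) shows that in \emph{even} degree the map $HC^{un}_\bullet(\CC)^\vee \to HC^\lambda_\bullet(\CC)^\vee$ is injective, so the unital lift of an even Calabi--Yau structure is unique when it exists. Hence $F^*\widetilde{\phi}_\DD$ and $\widetilde{\phi}_\CC$, both being unital lifts of $\phi_\CC$, are automatically equal. This is a second, independent use of the even-parity hypothesis (beyond Lemma~\ref{lem:unit_coho}), and it removes the ambiguity you were worried about. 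Your alternative of simply defining $\widetilde{\phi}_\CC := F^*\widetilde{\phi}_\DD$ would also work, since $F$ unital implies $F^*$ preserves the unital cyclic complex, but the uniqueness argument is what the paper actually uses.
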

\begin{proof}
	 Let $\phi_\CC^{un}$ and $\phi_\DD^{un}$ be the unital lifts of $\phi_\CC$ and $\phi_\DD$, respectively. Corollary \ref{cor:exa_uni} implies these lifts are unique (since they are of even degree) which in turn implies $[F^*(\phi_\DD^{un})]= [\phi_\CC^{un}] $.  These lifts determine  unital strong homotopy inner products $\rho_\CC$ and $\rho_\DD$, by Proposition \ref{prop_un_iso}. One then applies Proposition \ref{prop:norm_darboux}, to obtain unital $\CC', \DD'$, $G_\CC$ and $G_\DD$. Next one observes that $\widetilde{F}^*\langle -, - \rangle_{\DD'}$ is an unital SHIP and $[\widetilde{F}^*\langle -, - \rangle_{\DD'}]=[\langle -, - \rangle_{\CC'}]$ (in the notation of Theorem \ref{thm:cyclic-model}). Now, using Lemma \ref{lem:unit_coho} we can proceed as in the proof of Theorem \ref{thm:cyclic-model}.
\end{proof}

\subsection{Splittings and unitality}\label{sec:trivCY}

In order to define CEI we need to choose a splitting of the nc-Hodge filtration. In this subsection we recall how a splitting together with a weak Calabi--Yau structure determine a strong Calabi--Yau structure. We also introduce a unitality condition on splittings of the nc-Hodge filtration.

We first review some preliminary notions, following \cite{AT}. The chain level Mukai pairing on $C_\bullet^{red}(\CC)$ descends to a pairing which we still denote by
\[ \langle -,-\rangle_{\sf Muk} :HH_\bullet(\CC)\otimes HH_\bullet(\CC) \to \mathbb{K}.\]
It is non-degenerate when $\CC$ is smooth and proper~\cite{Shk}. One can also extend $\langle -,-\rangle_{\sf Muk}$ sesquilinearly to the negative cyclic complex $CC_\bullet^{red}(\CC)[[u]]$ to obtain the so-called higher residue pairing
\[ \langle \alpha u^i,\beta u^j\rangle_{\sf hres}:=(-1)^j  \langle \alpha,\beta\rangle_{\sf Muk} \cdot u^{i+j}. \]
Taking the $(b+uB)$-homology yields the higher residue pairing in homology
\[ \langle -,-\rangle_{\sf hres}: HC_\bullet^-(\CC) \otimes HC_\bullet^-(\CC) \ra \mathbb{K}[[u]].\]
The following definition is taken from~\cite[Definition 3.7]{AT}, except the last unitality condition which is new.
\begin{defn}~\label{defi:splitting}
	A $\mathbb{K}$-linear map  $s: HH_\bullet(\CC) \to HC_\bullet^-(\CC)$ is called a splitting of the Hodge filtration of $\CC$ if it satisfies
	\begin{itemize}
		\item[S1.] {{\sf (Splitting)}} $s$ splits the canonical map $\pi: HC_\bullet^-(\CC) \ra HH_\bullet(\CC)$,  $\pi(\sum_{n\geq0} \alpha_n u^n)=\alpha_0$.
		\item[S2.] {{\sf (Lagrangian)}} $ \langle s(\alpha), s(\beta)\rangle_{{\sf hres}} = \langle \alpha,\beta\rangle_{{\sf Muk}}, \;\; \forall \alpha, \beta \in HH_\bullet(\CC)$. 
	\end{itemize}
	A splitting $s$ is called {\sl good} if it satisfies
	\begin{itemize}
		\item[S3.] {{\sf (Homogeneity)}} The subspace $\displaystyle\bigoplus_{l\in \mathbb{N}} u^{-l}\cdot {{\sf Im}} (s)$  is stable under the $u$-connection $\nabla_{u\frac{d}{du}}$. This is equivalent to requiring $ \nabla_{u\frac{d}{du}} s(\alpha) \in u^{-1} {{\sf Im}} (s) + {{\sf Im}}(s), \; \forall \alpha\in HH_\bullet(\CC)$.
	\end{itemize}
	Let $\omega\in HH_\bullet(\CC)$ be an element of the Hochschild homology. 	
	\begin{itemize}
		\item[S4.] {{\sf ($\omega$-Compatibility)}} A splitting $s$ is called {\sl $\omega$-compatible} if
$$\nabla_{u\frac{d}{du}} s(\omega) \in r\cdot s(\omega) +u^{-1}\cdot {{\sf Im}} (s) \;\; \mbox{ for some\;\;}  r\in \mathbb{K}.$$
	\item[S5.] {{\sf (Unitality)}} For $X \in \tw^\pi\CC$, let $[\one_X]$ be the (potentially zero) class in $HC_\bullet^-(\tw^\pi\CC)$ determined by the unit. A splitting $s$ is called {\sl unital } if there exists a split-generating subcategory $\mathcal{A}\subset \tw^\pi\CC$ such that we have
	\[ \langle [\one_X], s(\omega)\rangle_{{\sf hres}} \in \mathbb{K},\] 
	i.e. the higher residue pairing is a constant, for any object $X$ in $\mathcal{A}$.
	\end{itemize}
\end{defn}

\begin{rmk}
	The unitality condition is empty when $\omega$ is an odd class, since the higher residue pairing is even. Furthermore, the unitality condition is for a split-generating subcategory, instead of $\CC$ itself, so that this condition is stable under Morita equivalences.
\end{rmk}

As mentioned above, Shklyarov~\cite{Shk} proved that the Mukai pairing is non-degenerate, which gives a natural isomorphism
\[HH_\bullet (\CC)^\vee  \cong HH_\bullet(\CC).\]
Moreover it gives an isomorphism $\displaystyle HC_\bullet^+(\CC)^\vee \cong HC_\bullet^-(\CC)$ via the pairing
\[ \langle \alpha u^i,\beta u^j\rangle:=\delta_{i+j=0}\cdot \langle \alpha,\beta\rangle_{\sf Muk} \]
Using the above isomorphisms, we can identify elements of $HH_\bullet(\CC)$ (respectively $HC_\bullet^-(\CC)$) 
satisfying the same non-degeneracy condition with weak 
(respectively strong)
Calabi--Yau structures.

Let $\omega\in HH_\bullet(\CC)$ be a weak Calabi--Yau structure and $s$ a splitting.  By construction, the element $s(\omega) \in HC_\bullet^-(\CC)$ (via the identification above) is non-degenerate and therefore defines a strong Calabi--Yau structure. We have the following

\begin{lem}
	Let $\omega\in HH_\bullet(\CC)$ be a weak Calabi--Yau structure and $s$ an unital splitting with respect to $\mathcal{A}\subset \tw^\pi \CC$. Then the strong Calabi--Yau structure $s(\omega)$, viewed as an element of $HC_\bullet^-(\mathcal{A})$, admits a lift to an unital Calabi--Yau structure of $\mathcal{A}$.
\end{lem}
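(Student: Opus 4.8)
The strategy is to recognise the unitality hypothesis S5 on $s$ as precisely the vanishing of the obstruction to lifting $s(\omega)$ along the natural map $HC^{un}_\bullet(\mathcal{A})^\vee \to HC^\lambda_\bullet(\mathcal{A})^\vee \cong HC^-_\bullet(\mathcal{A})$, where the last isomorphism is Shklyarov's higher residue duality $HC^+_\bullet(\mathcal{A})^\vee \cong HC^-_\bullet(\mathcal{A})$. Throughout I would replace $\mathcal{A}$ by a minimal model, which is harmless since $\CC$, hence the full subcategory $\mathcal{A}\subset\tw^\pi\CC$, is proper; split-generation identifies $HC^-_\bullet(\mathcal{A})$ with $HC^-_\bullet(\CC)$ and the classes $[\one_X]$ of S5 with their images in $HC^-_\bullet(\mathcal{A})$, so that S5 becomes a statement internal to $\mathcal{A}$.

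The first step is to dualise Corollary~\ref{cor:exa_uni} (applied to $\mathcal{A}$). As $(-)^\vee$ is exact over $\mathbb{K}$ and $HC^+_j(\mathcal{A})^\vee\cong HC^-_j(\mathcal{A})$, one obtains the exact sequence
\[ 0 \to HC^{un}_0(\mathcal{A})^\vee \to HC^-_0(\mathcal{A}) \xrightarrow{\,\partial\,} \prod_X \big(u^{-1}\mathbb{K}[u^{-1}]\big)^\vee \to HC^{un}_1(\mathcal{A})^\vee \to HC^-_1(\mathcal{A}) \to 0, \]
in which $\partial$ is dual to the map $(u^{-k})_X \mapsto [u^{-k}\one_X]$ of Corollary~\ref{cor:exa_uni}. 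Equivalently, for a class $\xi\in HC^-_0(\mathcal{A})\cong HC^+_0(\mathcal{A})^\vee$, the value $\partial(\xi)$ records $\big(\langle u^{-k}\one_X,\xi\rangle\big)_{k\ge1,\,X\in\mathcal{A}}$ under the $HC^+$--$HC^-$ pairing, for \emph{any} cocycle representative of $\xi$; this is well defined because each $u^{-k}\one_X$ is $(b+uB)$-closed. Exactness then says: an \emph{odd} class of $HC^-_1(\mathcal{A})$ always lifts (the last map is onto), while an \emph{even} class $\xi\in HC^-_0(\mathcal{A})$ lifts to $HC^{un}_0(\mathcal{A})^\vee$ if and only if $\partial(\xi)=0$. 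In particular, if $\omega$ is odd there is nothing to prove, in agreement with the remark after Definition~\ref{defi:splitting}, so I would assume $\omega$ even from now on.

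The second step is to compute $\partial\big(s(\omega)\big)$. Write $s(\omega)=\sum_{k\ge0}\alpha_k u^k$, so $\alpha_0=\omega$ by the splitting condition S1. Using the pairing $\langle\alpha u^i,\beta u^j\rangle=\delta_{i+j=0}\langle\alpha,\beta\rangle_{\sf Muk}$ that defines the duality $HC^+_\bullet(\mathcal{A})^\vee\cong HC^-_\bullet(\mathcal{A})$, one has $\langle u^{-k}\one_X,s(\omega)\rangle=\langle\one_X,\alpha_k\rangle_{\sf Muk}$ for every $k\ge1$. On the other hand, by the definition of the higher residue pairing, $\langle[\one_X],s(\omega)\rangle_{\sf hres}=\sum_{k\ge0}(-1)^k\langle\one_X,\alpha_k\rangle_{\sf Muk}\,u^k$. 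Hence the unitality hypothesis S5, namely $\langle[\one_X],s(\omega)\rangle_{\sf hres}\in\mathbb{K}$ for all $X\in\mathcal{A}$, is exactly the assertion that $\langle\one_X,\alpha_k\rangle_{\sf Muk}=0$ for all $k\ge1$ and all $X\in\mathcal{A}$, i.e. that $\partial\big(s(\omega)\big)=0$. Therefore $s(\omega)$ lifts to some $\widetilde\phi\in HC^{un}_0(\mathcal{A})^\vee$; since lifting does not change the image in $HH_\bullet(\mathcal{A})^\vee$, which is $\omega$ and hence non-degenerate by hypothesis, $\widetilde\phi$ is a genuine unital Calabi--Yau structure of $\mathcal{A}$ lifting $s(\omega)$.

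The only delicate point is the bookkeeping in the second paragraph: identifying the connecting homomorphism $\partial$ in the dual of Corollary~\ref{cor:exa_uni} with evaluation against the unit classes $[u^{-k}\one_X]$, and matching that, via the $HC^+$--$HC^-$ pairing, with the coefficients of the positive powers of $u$ in $\langle[\one_X],s(\omega)\rangle_{\sf hres}$ (the sign $(-1)^k$ being immaterial). Everything else is formal, and the statement drops out of S5 once this dictionary is in place.
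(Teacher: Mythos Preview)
Your proof is correct and follows essentially the same route as the paper. Both arguments observe that, since $CC_\bullet^{+,un}(\mathcal{A})$ is defined as the cokernel of $\bigoplus_X u^{-1}\mathbb{K}[u^{-1}]\to CC_\bullet^+(\mathcal{A})$, dualising shows that $\phi\in HC^+_\bullet(\mathcal{A})^\vee$ lifts to $HC^{un}_\bullet(\mathcal{A})^\vee$ precisely when $\phi(u^{-k}[\one_X])=0$ for all $X$ and $k\ge1$, and then identify this condition for $\phi=\langle-,s(\omega)\rangle$ with S5; your version simply makes the bookkeeping (the dual exact sequence, the explicit coefficients $\langle\one_X,\alpha_k\rangle_{\sf Muk}$, and the odd case) more explicit than the paper's two-line argument.
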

\begin{proof}
	From the definition of the unital cyclic complex $CC_\bullet^{+,un}(\CC)$, an element $\phi \in HC_\bullet^+(\CC)^\vee$ lifts to $HC_\bullet^{un}(\CC)^\vee$ if and only if $\phi(u^{-k}[\one_X])=0$ for any object $X$ and $k>0$. When $\phi= \langle - , s(\omega)\rangle$, this is exactly the Unitality condition for $s$.
\end{proof}
 
 Since the Unitality condition is new, we end this section by providing a few examples where it is implied by the more conventional condition of $\omega$-compatibility.
 
 \begin{prop}\label{prop:comp_unital}
 	Let $s$ be a $\omega$-compatible splitting for some $\omega\in HH_\bullet(\CC)$, with $r \notin \mathbb{Z}_{>0}$. Assume that $\CC$ satisfies one of the following conditions:
 	\begin{enumerate}
 		\item $\CC$ is a $\mathbb{Z}$-graded \Ai-category;
            \item  The $u$-connection on periodic cyclic homology has a simple pole at $u=0$;
 		\item $\CC$ has semi-simple Hochschild cohomology and $s$ is the \emph{semi-simple splitting}, as defined in \cite[Corollary 3.8]{AT}.
 	\end{enumerate}
 Then $s$ is unital with respect to $\CC$ itself.
 \end{prop}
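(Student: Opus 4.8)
The plan is to show that under either hypothesis the $\omega$-compatibility condition S4 forces the higher residue pairing $\langle [\one_X], s(\omega)\rangle_{\sf hres}$ to be a constant for every object $X$, which is exactly the Unitality condition S5 (with $\mathcal{A}=\CC$). The key observation is that both sides of the pairing interact nicely with the $u$-connection $\nabla_{u\frac{d}{du}}$, so we can set up a differential equation in $u$ for the quantity $g(u):=\langle [\one_X], s(\omega)\rangle_{\sf hres} \in \mathbb{K}[[u]]$ and use a weight/grading argument to conclude $g(u)\in\mathbb{K}$.

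First I would recall that the higher residue pairing satisfies the Leibniz-type compatibility $u\frac{d}{du}\langle \xi,\eta\rangle_{\sf hres} = \langle \nabla_{u\frac{d}{du}}\xi,\eta\rangle_{\sf hres} + \langle \xi,\nabla_{u\frac{d}{du}}\eta\rangle_{\sf hres}$ up to the sesquilinearity sign conventions from \cite{AT}. Next I would identify $\nabla_{u\frac{d}{du}}[\one_X]$: since $\one_X$ is $(b+uB)$-closed with no positive powers of $u$, the class $[\one_X]\in HC_\bullet^-(\CC)$ is, up to the standard Euler-field correction, an eigenvector of $\nabla_{u\frac{d}{du}}$ — in the $\mathbb{Z}$-graded case with eigenvalue determined by the internal degree of the unit (namely $0$, since $\one_X$ lives in degree $0$), and in the semisimple case one computes directly from \cite[Corollary 3.8]{AT} that the semisimple splitting sends the idempotent classes to $u$-covariantly-constant (or eigenvector) sections, so again $\nabla_{u\frac{d}{du}}[\one_X]$ is a multiple of $[\one_X]$ plus a term in $u^{-1}$ times the image of $s$. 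Combining this with S4, which says $\nabla_{u\frac{d}{du}}s(\omega) = r\cdot s(\omega) + u^{-1}\cdot\eta$ with $\eta\in{\sf Im}(s)$, and using that $\langle u^{-1}\eta, [\one_X]\rangle_{\sf hres}$ has no constant term while $\langle s(\omega),[\one_X]\rangle_{\sf hres}$ lies in $\mathbb{K}[[u]]$, I would derive $u\frac{d}{du} g(u) = (r + c)\,g(u) + (\text{negative-}u\text{-degree terms that vanish because }g\in\mathbb{K}[[u]])$, where $c$ is the eigenvalue for $[\one_X]$. Writing $g(u)=\sum_{k\ge 0} g_k u^k$, the equation $k g_k = (r+c) g_k$ holds for each $k$; since $r\notin\mathbb{Z}_{>0}$ and $c$ can be arranged (by a degree count on $\one_X$) to be $0$ or at least to make $r+c\notin\mathbb{Z}_{>0}$, we get $g_k=0$ for all $k\ge 1$, so $g(u)=g_0\in\mathbb{K}$, which is S5.

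The main obstacle I expect is pinning down the precise value of the eigenvalue $c$ of $\nabla_{u\frac{d}{du}}$ on the class $[\one_X]$ and checking it is compatible with the constraint $r\notin\mathbb{Z}_{>0}$: in the $\mathbb{Z}$-graded case this is a clean Euler-grading computation (the unit sits in cohomological degree $0$, so the induced weight of $[\one_X]$ in $HC^-$ is $0$ and $r+c=r\notin\mathbb{Z}_{>0}$ suffices), but in the semisimple case one must unwind the explicit formula for the semisimple splitting from \cite[Corollary 3.8]{AT} and verify that the idempotent/unit classes are genuinely eigenvectors — this is where a short but careful computation is needed, and it is essentially the reason the proposition is stated for these two specific settings rather than in general. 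Once the eigenvalue claim is established, the differential-equation argument above closes the proof.
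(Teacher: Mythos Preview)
Your overall strategy matches the paper's: apply the Leibniz rule for $\nabla_{u\frac{d}{du}}$ with respect to $\langle-,-\rangle_{\sf hres}$, obtain a differential equation $u\frac{d}{du}g(u)=r\,g(u)$ for $g(u)=\langle[\one_X],s(\omega)\rangle_{\sf hres}$, and use $r\notin\mathbb{Z}_{>0}$ to conclude $g\in\mathbb{K}$. However, the step where you dispose of the $u^{-1}$ contributions is a genuine gap.

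You write that the extra terms ``vanish because $g\in\mathbb{K}[[u]]$''. This does not work: knowing $g\in\mathbb{K}[[u]]$ only forces the $u^{-1}$ coefficient of the inhomogeneous term to vanish, but says nothing about its $u^{\ge 0}$ coefficients. If those survive, your recursion becomes $k\,g_k=r\,g_k+h_k$ and you cannot conclude $g_k=0$. The paper handles this differently in each case. In the $\mathbb{Z}$-graded case the key input (which you do not use) is that the $u$-connection has a \emph{simple} pole at $u=0$ \cite[Lemma~3.2]{CLT}, so the $\omega$-compatibility condition reads $\nabla_{u\frac{d}{du}}s(\omega)=r\cdot s(\omega)$ with \emph{no} $u^{-1}$ correction, and likewise $\nabla_{u\frac{d}{du}}[\one_X]=0$; thus no inhomogeneous term ever appears. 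In the semisimple case there \emph{are} $u^{-1}$ terms on both sides: $\nabla_{u\frac{d}{du}}\one_X=u^{-1}\lambda_l\one_X$ for $X\in\CC_{\lambda_l}$, and $\nabla_{u\frac{d}{du}}s(\omega)=r\,s(\omega)+u^{-1}\sum_i\lambda_i s(\omega_i)$. The point is that these two $u^{-1}$ contributions cancel \emph{exactly}, and this cancellation uses that the semisimple splitting respects the orthogonal decomposition $\CC=\prod_i\CC_{\lambda_i}$, so $\langle\one_X,s(\omega_i)\rangle_{\sf hres}=0$ for $i\neq l$. This orthogonality argument is what you are missing; it is not an eigenvalue statement about $[\one_X]$ alone but a matching of the $u^{-1}$ pieces on both slots of the pairing.
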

 \begin{proof}
 	When $\CC$ is $\mathbb{Z}$-graded, the $u$-connection has a simple pole at $u=0$ (instead of a pole of order two) as proved in \cite[Lemma 3.2]{CLT}. Therefore, (1.) is a special case of (2.).  In this case, $\omega$-compatibility takes the form $\nabla_{u\frac{d}{du}} s(\omega)= r \cdot s(\omega)$. We compute
 	\begin{align}
 		u \frac{d}{d u}\langle \one_X, s(\omega) \rangle_{{\sf hres}} & = \langle \nabla_{u \frac{d}{d u}} \one_X, s(\omega) \rangle_{{\sf hres}} + \langle \one_X, \nabla_{u \frac{d}{d u}} s(\omega) \rangle_{{\sf hres}}\nonumber\\
 		&= r \langle \one_X, s(\omega) \rangle_{{\sf hres}},\nonumber
 	\end{align}
 since $\nabla_{u \frac{d}{d u}} \one_X =0$. It is elementary that, when $r \notin \mathbb{Z}_{>0}$, the only possible solutions of the differential equation $u \frac{d}{d u} f(u)= r f(u)$ are constants. Therefore we conclude that $\langle \one_X, s(\omega) \rangle_{{\sf hres}}$ is constant.

 For the second condition, we freely use the notation from  \cite{AT}. Recall $\CC$ has the decomposition $\CC=\Pi_i \CC_{\lambda_i}$, and consider $X\in \CC_{\lambda_l}$. A simple computation shows $\nabla_{u \frac{d}{d u}} \one_X= u^{-1} \lambda_l \one_X$. Under the decomposition above we write $\omega= \sum_i \omega_i \in \oplus_i HH_\bullet(\CC_{\lambda_i})$. Then $\omega$-compatibility gives $\nabla_{u \frac{d}{d u}} s(\omega)= r\cdot s(\omega)+u^{-1}s(\xi(\omega))$, where $\xi(\omega)=\sum_i \lambda_i \omega_i$. As above we compute
 \begin{align}
 	u \frac{d}{d u}\langle \one_X, s(\omega) \rangle_{{\sf hres}} & = u^{-1}\langle \lambda_l \one_X, s(\omega) \rangle_{{\sf hres}} + \langle \one_X, r s(\omega) + u^{-1} s(\sum \lambda_i \omega_i) \rangle_{{\sf hres}}\nonumber\\
 	& = u^{-1}\lambda_l \langle \one_X, s(\omega) \rangle_{{\sf hres}}+ r \langle \one_X, s(\omega) \rangle_{{\sf hres}} - u^{-1} \langle \one_X, \sum \lambda_i s(\omega_i) \rangle_{{\sf hres}}\nonumber\\
 	& = u^{-1}\lambda_l \langle \one_X, s(\omega_l) \rangle_{{\sf hres}}+ r \langle \one_X, s(\omega) \rangle_{{\sf hres}} - u^{-1} \lambda_l \langle \one_X, s(\omega_l) \rangle_{{\sf hres}}\nonumber\\
 	& = r \langle \one_X, s(\omega) \rangle_{{\sf hres}}.\nonumber
 \end{align}
Here in the third and fourth equality rely on the fact that $s$ is the semi-simple splitting and therefore respects the orthogonal decomposition  $\CC=\Pi_i \CC_{\lambda_i}$. As before we can now conclude that $\langle \one_X, s(\omega) \rangle_{{\sf hres}}$ is constant.	
 \end{proof}

\section{Definition of categorical enumerative invariants}\label{sec:defi}

We briefly recall the construction of categorical enumerative invariants following~\cite{Cos2,CT}. In~\cite{CT}, the construction is done for a finite dimensional cyclic $A_\infty$-algebra. As in~\cite{Cos2}, we shall work with cyclic $A_\infty$-categories. Observe that by requiring morphisms be composable in the definition of the Hochschild chain complex of $\CC$ (see Section~\ref{sec:cy1}), the constructions in~\cite{CT} generalize to the categorical setting.

Throughout the section, let $\CC$ be a $\Z/2\Z$-graded cyclic  $A_\infty$-category over a field $\mathbb{K}$ of characteristic zero. Note that in particular, the non-degeneracy condition of the cyclic pairing implies that the Hom spaces of $\CC$ are finite dimensional. We also assume that $\CC$ is strictly unital and its cyclic pairing is of parity $d\in \Z/2\Z$.

 In this chapter we will use homological gradings, for consistency with \cite{CT}. This can be made compatible with the previous chapters (where cohomological grading was used) by turning a homological grading $V=\oplus_n V_n$ into the cohomological one $V=\oplus_n V^{-n}$.


\subsection{TCFT's and DGLA's}\label{subsec:tcft-dgla} Let $M_{g,k,l}^{\sf fr}$ be the moduli space of smooth Riemann surfaces of genus $g$, with $k$ incoming framed marked points, $l$ outgoing framed marked points. Here a framing of a marked point means a choice of an embedded disk around the marked point. We also assume the framings along all $k+l$ marked points be disjoint from each other, i.e. the closures of all the embedded disks be disjoint. Denote by $C^{\sf comb}_\bullet(M_{g,k,l}^{\sf fr})$ the combinatorial model of these moduli spaces with coefficients in $\mathbb{K}$, as described in Costello~\cite{Cos1}, Kontsevich-Soibelman~\cite{KS}, and Wahl-Westerland~\cite{WahWes}.

\begin{thm}~\label{thm:tcft}
	Let $C^{red}_\bullet(\CC)[d]$ denote the $d$-shifted reduced Hochschild chain complex of the $A_\infty$-category $\CC$. Then it carries a $2$-dimensional topological conformal field theory structure, i.e. there are action maps compatible with sewing of Riemann surfaces
	\[ \rho^\CC_{g,k,l}: C^{\sf comb}_\bullet(M_{g,k,l}^{\sf fr}) \ra {\sf Hom} ( C_\bullet^{red}(\CC)[d]^{\otimes k}, C_\bullet^{red}(\CC)[d]^{\otimes l})\]
	with $g\geq 0$, $k\geq 1$, $l\geq 0$, and $2-2g-k-l<0$. 
\end{thm}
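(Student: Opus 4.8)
The plan is to construct the action maps $\rho^\CC_{g,k,l}$ by the ribbon-graph (Strebel) recipe of Kontsevich--Soibelman~\cite{KS} and Costello~\cite{Cos1,Cos2}, exactly as it is carried out for finite-dimensional cyclic \Ai-algebras in~\cite{CT}, and then to check that nothing in that argument uses more than what is available for a cyclic \Ai-category. The only point requiring care is that in the categorical setting the Hochschild chains of $\CC$ are built out of \emph{composable} strings $x_0\otimes\cdots\otimes x_n$ with $x_i\in hom_\CC(X_i,X_{i+1})$; once this is built in, every $\m_n$ and every contraction occurring in the graph operations below is automatically applied to composable inputs, so the construction makes sense verbatim.

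Concretely, I would proceed as follows. Represent a class in $C^{\sf comb}_\bullet(M^{\sf fr}_{g,k,l})$ by a metric ribbon graph of genus $g$ with $k+l$ boundary cycles, the first $k$ labelled incoming and the last $l$ outgoing, together with the marking on each cycle coming from the framing; the differential collapses edges of length zero (the cellular differential of the ribbon-graph CW decomposition, in the framed version described in~\cite{Cos1,KS,WahWes}). To such a graph $G$ I assign $\rho^\CC(G)$ by: placing a reduced Hochschild chain on each incoming boundary cycle, i.e. distributing its entries along the corners of that cycle; applying the \Ai-operation $\m_n$ at each internal vertex of valence $n$, read cyclically; inserting on each internal edge the copropagator, the canonical element dual to the cyclic pairing, which makes sense because the pairing is non-degenerate and, being non-degenerate, forces all hom-spaces of $\CC$ to be finite dimensional; and then reading the resulting decorations off the $l$ outgoing boundary cycles. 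The shifts $[d]$ together with the parity-$d$ pairing are chosen precisely so that $\rho^\CC(G)$ has degree equal to the dimension of the cell indexed by $G$, and strict unitality of $\CC$ is what lets the whole assignment descend to $CC_\bullet^{red}(\CC)[d]$.

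It then remains to check two things: that $\rho^\CC$ is a chain map and that it is compatible with sewing of surfaces. For the chain-map property one matches the graph differential with the total differential induced by $b$ on the incoming and outgoing factors: collapsing an internal non-loop edge merges two operations $\m$ through a copropagator, and summing over such collapses reproduces -- after using the cyclic symmetry of $\m$ with respect to the pairing to slide the copropagator around -- the quadratic \Ai-relation $\sum\pm\,\m\circ\m=0$, while collapsing an edge incident to a boundary cycle produces the corresponding term of $b$ along that cycle; the two contributions together give $\rho^\CC(\partial G)=\partial\bigl(\rho^\CC(G)\bigr)$. Compatibility with sewing is then formal: gluing an outgoing cycle of one graph to an incoming cycle of another while inserting one extra edge carrying the copropagator is, by construction, the composition law of the framed moduli modular operad, and the symmetry and (co)associativity of the copropagator make $\rho^\CC$ respect it. I expect the genuinely laborious step to be none of these conceptual points -- they are already present in~\cite{Cos2,KS,CT} -- but the sign bookkeeping: reconciling the Koszul signs coming from the shifted grading conventions on $\m$, on Hochschild chains and on the copropagator with the orientation signs of the ribbon-graph cells. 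This I would handle exactly as in~\cite{CT}, with the sign diagrams collected in Appendix~\ref{sec:sign}.
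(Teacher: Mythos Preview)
Your proposal is correct and matches the paper's approach: the paper does not give a self-contained proof of this theorem but treats it as known from~\cite{Cos1,Cos2,KS,WahWes,CT}, noting only that the constructions of~\cite{CT} go through once one restricts to composable strings in the Hochschild complex; the explicit ribbon-graph evaluation formula and the chain-map verification are deferred to Section~5.2 and Appendix~\ref{sec:sign}, exactly as you anticipate.
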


To obtain invariants from $\rho^\CC_{g,k,l}$'s one needs to overcome the difficulty that the moduli spaces $M_{g,k,l}^{\sf fr}$ are non-compact since we are only dealing with smooth surfaces. One natural approach to this problem is to extend this action to the Deligne-Mumford compactification, assuming furthermore that $\CC$ satisfies the Hodge-to-de-Rham degeneration property (which follows from smoothness by Kaledin~\cite{Kal}). This was conjectured by Kontsevich-Soibelman~\cite{KS}.

Following Costello, in~\cite{CT} we take a different (but still closely related) route which completely bypasses the Deligne-Mumford compactification. One of the key ideas is due to
Sen-Zwiebach~\cite{SenZwi} who introduced a differential graded Lie algebra (DGLA) structure on the normalized singular chains of moduli spaces of smooth curves. This idea is further implemented in the combinatorial setup in~\cite{CCT}. 

Indeed, for an integer $k\geq 1$, denote by $\underline{\sgn_k}[-k]$ the rank one local system over $M_{g,k,l}^\fr$ whose fiber over a Riemann surface $(\Sigma, p_1,\ldots,p_k,q_1,\ldots,q_l,\phi_1,\ldots,\phi_k,\psi_1,\ldots,\psi_l)$ is the sign representation of the symmetric group $S_k$ on the set $\{ p_1,\ldots, p_k\}$, shifted by $[-k]$. For simplicity, we often write this local system as $\underline{\sgn}$ when the integer $k$ is clear from the context. The combinatorial version of Sen-Zwiebach's DGLA is given by
\[ \widehat{\mathfrak{g}}:= \bigoplus_{\substack{g\geq 0, k\geq 1, l\geq 0\\ 2g-2+k+l>0}} C^{\sf comb}_\bullet(M_{g,k,l}^{\sf fr}, \underline{\sgn})_{\sf hS}[2][[\hbar,\lambda]],\]
where the subscript ${\sf hS}$ denotes the homotopy quotient by the symmetric group $\Sigma_k\times \Sigma_l$ and the $k+l$ circle actions that rotate the framings; and the formal variables $\hbar$ and $\lambda$ are both of homological degree $-2$. The additional shift by $[2]$ is explained in the remark following Theorem~\ref{thm:comb-mc} below.

In the construction of Sen-Zwiebach's DGLA structure, we need to perform the ``twisted sewing operation" where the moduli space of annuli plays an important role. Indeed, let us consider the Mukai ribbon graph
\begin{equation}~\label{eq:Mukai-graph}
M:= \begin{tikzpicture}[baseline={([yshift=-0.4ex]current bounding box.center)},scale=0.3]
\draw [thick] (0,2) circle [radius=2];
\draw [thick] (-2,2) to (-0.6,2);
\draw [thick] (-0.8,2.2) to (-0.4,1.8);
\draw [thick] (-0.8, 1.8) to (-0.4, 2.2);
\draw [thick] (2,2) to (3.4,2);
\draw [thick] (3.2,2.2) to (3.6,1.8);
\draw [thick] (3.2, 1.8) to (3.6, 2.2);
\end{tikzpicture}
\end{equation}
which is a zero chain inside $C_*^{\sf comb}(M_{0,2,0}^{\sf fr})$. 
Here, the two crosses in $M$ correspond to its $2$ inputs (cycles of the ribbon graph $M$). Using $M$ we may define a map 
\[\iota:C^{\sf comb}_\bullet(M_{g,k,l}^{\sf fr}, \underline{\sgn})_{\sf hS} \; \ra \; C^{\sf comb}_\bullet(M_{g,k+1,l-1}^{\sf fr}, \underline{\sgn})_{\sf hS},\]
obtained by sewing with one input of $M$ with an output of a chain in $C^{\sf comb}_\bullet(M_{g,k,l}^{\sf fr})_{\sf hS}$, hence this operation reduces the number of outputs by one while increases the number of inputs also by one. There is a thickened version of $M$ which we shall denote by
\begin{equation}~\label{eq:Thick-Mukai-graph} \mathbb{M}:=\begin{tikzpicture}[baseline={([yshift=-0.4ex]current bounding box.center)},scale=0.3]
\draw [thick] (0,2) circle [radius=2];
\draw [thick] (2,2) to (-0.6,2);
\draw [thick] (-0.8,2.2) to (-0.4,1.8);
\draw [thick] (-0.8, 1.8) to (-0.4, 2.2);
\draw [thick] (2,2) to (3.4,2);
\draw [thick] (3.2,2.2) to (3.6,1.8);
\draw [thick] (3.2, 1.8) to (3.6, 2.2);
\end{tikzpicture}
\end{equation}
It is obtained from $M$ by a circle action from one of its two inputs. This is a degree one chain inside $C_*^{\sf comb}(M_{0,2,0}^{\sf fr})$. Sewing with $\mathbb{M}$ at outputs yields a map of homological degree one
\[ \Delta: C^{\sf comb}_\bullet(M_{g,k,l}^{\sf fr}, \underline{\sgn})_{\sf hS} \; \ra \; C^{\sf comb}_\bullet(M_{g,k,l-2}^{\sf fr}, \underline{\sgn})_{\sf hS},\]
called the {{\em  twisted self-sewing map}}. In a similar way, twisted sewing between $r\; (r\geq 1)$ outputs and $r$ inputs yields a map
\[ \{-,-\}_r : C^{\sf comb}_\bullet(M_{g',k',l'}^{\sf fr}, \underline{\sgn})_{\sf hS} \otimes C^{\sf comb}_\bullet(M_{g'',k'',l''}^{\sf fr}, \underline{\sgn})_{\sf hS} \; \ra \; C^{\sf comb}_\bullet(M_{g,k,l}^{\sf fr}, \underline{\sgn})_{\sf hS},\]
called the $r$-th twisted sewing map, where $g=g'+g''+r-1$, $k=k'+k''-r$, and $l=l'+l''-r$. We refer to~\cite{CCT} for more details (such as signs) of the these constructions .

\begin{thm}[Caldararu--Costello--Tu\cite{CCT}]\label{thm:comb-mc}
	There exists a $\Z$-graded DGLA structure on $\widehat{\mathfrak{g}}$ whose differential and Lie bracket are of the form
	\begin{itemize}
	\item $d := \eth +\iota+ \hbar \Delta$ with $\eth$ the boundary map of $C^{\sf comb}_\bullet(M_{g,k,l}^{\sf fr})_{\sf hS}$, and $\Delta$ is the twisted self-sewing operator. 
	\item $\{-,-\}_\hbar := \sum_{r\geq 1} \frac{1}{r!} \cdot \{-,-\}_r \hbar^{r-1}$ with $\{-,-\}_r$ the $r$-th twisted sewing map.
	\end{itemize}
	Furthermore, the DGLA $ \widehat{\mathfrak{g}}$ has a unique Maurer-Cartan element $\widehat{\mathcal{V}}$ (up to gauge equivalence) of homological degree $-1$, and of the form
	\begin{align*}
	\widehat{\mathcal{V}} & = \sum_{g,k,l} \widehat{\mathcal{V}}_{g,k,l} \hbar^g \lambda^{2g-2+k+l}\\
	\widehat{\mathcal{V}}_{0,1,2} & =  \begin{tikzpicture}[baseline={([yshift=-1.2ex]current bounding
      box.center)},scale=0.3] 
\draw [thick] (0,0) to (0,2);
\draw [thick] (-0.2, 1.8) to (0.2, 2.2);
\draw [thick] (0.2, 1.8) to (-0.2, 2.2);
\draw [thick] (0,0) to (-2,0);
\draw (-2,0) node[label=left:{$\frac{1}{2}\;$}] {};
\draw [thick] (0,0) to (2,0);
\draw [thick] (-2.2,0) circle [radius=0.2];
\draw [thick] (2.2,0) circle [radius=0.2];
\end{tikzpicture}
	\end{align*}
	We shall refer to $\widehat{\mathcal{V}}_{g,k,l}$ as combinatorial string vertices.
\end{thm}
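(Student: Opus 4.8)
The plan is to prove the two assertions in turn: first the DGLA axioms for $(\widehat{\mathfrak{g}},d,\{-,-\}_\hbar)$, and then the existence and uniqueness up to gauge of the Maurer--Cartan element $\widehat{\mathcal{V}}$. The first part is a lengthy combinatorial check of how the various sewing operations compose and interact; the second is an obstruction-theoretic induction organized by the ``complexity'' $n=2g-2+k+l$, equivalently by the power of $\lambda$.

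For the DGLA structure one must verify $d^{2}=0$, that $d$ is a graded derivation of the bracket $\{-,-\}_\hbar$, and that $\{-,-\}_\hbar$ satisfies the graded Jacobi identity. Since $\eth$ is the honest boundary of the combinatorial chains, $\eth^{2}=0$, and $d^{2}=0$ then reduces to the expected (anti)commutation relations among $\eth$, the propagator-insertion operator $\iota$, and the self-sewing operator $\Delta$, sorted by powers of $\hbar$; the key input is the explicit chain-level relations satisfied by the Mukai graphs $M$ and $\mathbb{M}$ inside $C^{\sf comb}_\bullet(M_{0,2,0}^{\sf fr})$ (in particular the way $\mathbb{M}$ interpolates between the two cappings of the annulus by $M$), which is what forces $\iota$ into the differential alongside $\eth$ and $\hbar\Delta$ and renders them mutually compatible. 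The Leibniz rule and Jacobi identity reduce to the ``associativity up to the framing-rotation circle actions'' of the twisted sewing maps $\{-,-\}_{r}$ and to their compatibility with $\iota$ and $\Delta$; the expansion $\{-,-\}_\hbar=\sum_{r\ge1}\frac{1}{r!}\{-,-\}_{r}\hbar^{r-1}$ is the device that absorbs the genus produced when several pairs of boundary circles are sewn simultaneously, and the grading conventions ($\hbar$ and $\lambda$ of homological degree $-2$, the overall shift by $[2]$) are chosen precisely so that $d$ and $\{-,-\}_\hbar$ come out homogeneous of degrees $-1$ and $0$. All of this is the combinatorial reimplementation of the Sen--Zwiebach DGLA from \cite{CCT}; what remains is sign-chasing, for which the formalism of Appendix~\ref{sec:sign} is built.

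For the Maurer--Cartan element, write $\widehat{\mathcal{V}}=\sum_{n\ge1}\widehat{\mathcal{V}}^{(n)}\lambda^{n}$ with $\widehat{\mathcal{V}}^{(n)}=\sum_{2g-2+k+l=n}\widehat{\mathcal{V}}_{g,k,l}\hbar^{g}$, and expand the master equation $d\widehat{\mathcal{V}}+\tfrac12\{\widehat{\mathcal{V}},\widehat{\mathcal{V}}\}_\hbar=0$ in powers of $\lambda$. Because $2g-2+k+l\ge1$ on $\widehat{\mathfrak{g}}$, the $\lambda^{n}$-component takes the form
\[ d\,\widehat{\mathcal{V}}^{(n)}=-\tfrac12\!\!\sum_{\substack{a+b=n\\ a,b\ge1}}\{\widehat{\mathcal{V}}^{(a)},\widehat{\mathcal{V}}^{(b)}\}_\hbar, \]
whose right-hand side involves only pieces of strictly smaller complexity. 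The usual computation with $d^{2}=0$ and the Jacobi identity shows this right-hand side is $d$-closed, so the obstruction to choosing a degree $-1$ solution $\widehat{\mathcal{V}}^{(n)}$ lies in the degree $-2$ homology of the complexity-$n$ summand of $(\widehat{\mathfrak{g}},d)$, while the degree $-1$ homology of the same summand measures the remaining ambiguity. For $n=1$ the quadratic term is empty, so the equation only requires that $\widehat{\mathcal{V}}^{(1)}$ — which contains the displayed $\widehat{\mathcal{V}}_{0,1,2}$ — be $d$-closed; this is checked directly and provides the base of the induction. Uniqueness up to gauge then follows by running the induction one homological degree higher: if two Maurer--Cartan elements agree through complexity $n-1$, their discrepancy at complexity $n$ is a degree $-1$ $d$-cocycle, which (again by the vanishing of the relevant homology) is a coboundary $d\xi$, and the gauge transformation generated by $\xi$, supported in complexity $\ge n$, removes the discrepancy; iterating yields the gauge equivalence.

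The single step that is not of a purely formal nature is therefore the vanishing of those low-degree homology groups of the complexity-graded pieces of $\widehat{\mathfrak{g}}$. This is where the topology of moduli of curves genuinely enters: after passing to the homotopy quotient by the symmetric groups and the $k+l$ framing rotations, the homology of $C^{\sf comb}_\bullet(M_{g,k,l}^{\sf fr},\underline{\sgn})_{\sf hS}$ is concentrated in the degree one needs — a computation carried out through the combinatorial (ribbon-graph) model and ultimately reflecting the properness of the Deligne--Mumford compactifications. I expect this to be the main obstacle; by contrast the DGLA axioms, the order-by-order assembly of $\widehat{\mathcal{V}}$, and the gauge-uniqueness argument, though intricate, are combinatorics and sign bookkeeping.
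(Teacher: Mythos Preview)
The paper does not prove this theorem: it is stated with attribution to \cite{CCT} and used as a black box, so there is no proof in the paper to compare your proposal against. Your outline --- checking the DGLA axioms via the chain-level relations among $M$, $\mathbb{M}$, and the twisted sewing operations, then building $\widehat{\mathcal{V}}$ by an obstruction-theoretic induction on the complexity $2g-2+k+l$ with the obstructions living in low-degree equivariant homology of the ribbon-graph complexes --- is a reasonable sketch of the argument in \cite{CCT}, and you correctly identify the vanishing of those homology groups as the one genuinely topological (as opposed to formal) input; but if your goal is to supply what the present paper omits, you should simply cite \cite{CCT} rather than reproduce its proof.
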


\begin{rmk}
The $\Z$-grading of $\widehat{\mathfrak{g}}$ is designed so that the boundary map $d$ has homological degree $-1$, the Lie bracket $\{-,-\}_\hbar$ has degree $0$, and that the equivariant chain $\widehat{\mathcal{V}}_{g,k,l}$ is of homological degree $6g-5+2(k+l)$ in the shifted chain complex $C^{\sf comb}_\bullet(M_{g,k,l}^{\sf fr}, \underline{\sgn})_{\sf hS}[2]$. This explains the extra shift $[2]$ in the definition of $\widehat{\mathfrak{g}}$.  
\end{rmk}

On the algebraic side, there is also a DGLA associated with the $A_\infty$-category $\CC$. Following the notations in~\cite{CT}, we have the following chain complexes
\begin{align*}
L &:= \big( C_\bullet^{red}(\CC)[d], b \big)\\
L_+ &:= \big( C_\bullet^{red}(\CC)[d][[u]], b+uB\big)\\
L^{\sf Tate} &:= \big( C_\bullet^{red}(\CC)[d]((u)), b+uB\big)\\
L_- &:= L^{\sf Tate}/u\cdot L_+ = \big( C_\bullet^{red}(\CC)[d][u^{-1}], b+uB \big)
\end{align*}
With these notations, define a DGLA 
\begin{equation}~\label{eq:hhat}
 \widehat{\mathfrak{h}}_\CC := \bigoplus_{k\geq 1,l\geq 0} {\sf Hom}^c \big( \Sym^k (L_+[1]), \Sym^l (L_-)\big)[2] [[\hbar,\lambda]],
 \end{equation}
where ${\sf Hom}^c$ stands for $u$-adic continuous $\mathbb{K}$-linear maps.

The construction of $\widehat{\mathfrak{h}}_\CC$ is completely parallel to that of the DGLA $ \widehat{\mathfrak{g}}$. For example, the Mukai graph $M$ under the TCFT action $\rho^\CC$ yields the chain-level Mukai pairing $\langle-,-\rangle_{\sf Muk}:= \rho^\CC(M) : L \otimes L \ra \mathbb{K}$, which we use to define a map 
\[\iota: {\sf Hom}^c \big( \Sym^k (L_+[1]), \Sym^l (L_-)\big) \ra {\sf Hom}^c \big( \Sym^{k+1} (L_+[1]), \Sym^{l-1} (L_-)\big).\] 
Similarly, we use $\langle B-,-\rangle_{\sf Muk}= \rho^\CC(\mathbb{M}) : L \otimes L \ra \mathbb{K}$ to define the twisted self-sewing map 
\[\Delta: {\sf Hom}^c \big( \Sym^k (L_+[1]), \Sym^l (L_-)\big) \ra {\sf Hom}^c \big( \Sym^{k} (L_+[1]), \Sym^{l-2} (L_-)\big).\]
As a conclusion, we obtain a DGLA structure on $ \widehat{\mathfrak{h}}_\CC$ of the form:

\begin{itemize}
\item Its differential is of the form $\eth + \hbar \Delta+ \iota$, where $\eth$ is the commutator with $b+uB$ and $\Delta$ is the algebraic analogue of the twisted self-sewing operator.
\item Its Lie bracket is also of the form
$\{-,-\}_\hbar := \sum_{r\geq 1} \frac{1}{r!} \cdot \{-,-\}_r \hbar^{r-1}$ with $\{-,-\}_r$ the algebraic version of the $r$-th twisted sewing operator.
\end{itemize}

By construction, the action map $\rho^\CC$ induces a morphism of {\em $\Z/2\Z$-graded} DGLA's which we still denote by
\begin{equation}~\label{eq:rho-dgla}
 \rho^\CC: \widehat{\mathfrak{g}} \;\;\ra \;\; \widehat{\mathfrak{h}}_\CC .
 \end{equation}
The push-forward of the combinatorial string vertex $\widehat{\mathcal{V}}$ yields a Maurer-Cartan element of the form
\[\widehat{\beta}^\CC  := \rho^\CC_* \widehat{\mathcal{V}} = \sum_{g,k,l} \rho^\CC\big( \widehat{\mathcal{V}}_{g,k,l}\big) \hbar^g\lambda^{2g-2+k+l}.\]
Sometimes we also denote the components of $\widehat{\beta}^\CC$ by $\widehat{\beta}^\CC_{g,k,l}:=\rho^\CC\big( \widehat{\mathcal{V}}_{g,k,l}\big)$.

In~\cite{Cos2}, instead of working with $\widehat{\mathfrak{h}}_\CC$, Costello works with the symmetric algebra $\Sym(L_-) [[\hbar,\lambda]]$. The twisted sewing map $\Delta:\Sym^2 (L_-) \rightarrow \bbK$ defines a differential graded Batalin–Vilkovisky algebra structure on $\Sym (L_-) [[\hbar,\lambda]]$.  In particular, this yields a DGLA structure on $\mathfrak{h}_\CC:= \Sym( L_-)[[\hbar,\lambda]] [1]$ with its Lie bracket defined by $$\{x,y\} := (-1)^{|x|}\big( \Delta(x\cdot y)- \Delta x \cdot y - (-1)^{|x|} x\cdot \Delta y \big).$$ Inside this DGLA lies the scalars $\mathbb{K}[[\hbar,\lambda]][1]$ which is clearly central. Let us define the quotient DGLA by
\begin{align*}
\mathfrak{h}_\CC^+ & := \Sym( L_-) [[\hbar,\lambda]][1]/ \mathbb{K}[[\hbar,\lambda]][1] = \Sym^{\geq 1}( L_-) [[\hbar,\lambda]][1].
\end{align*}
By construction, there is a short exact sequence of DGLA's:
\[ 0 \to \mathbb{K}[[\hbar,\lambda]][1] \to \mathfrak{h}_\CC \to \mathfrak{h}_\CC^+ \to 0.\]
The following lemma explains the relationship between the two DGLA's $\mathfrak{h}_\CC^+$ and $\widehat{\mathfrak{h}}_\CC$. It is proved in the same way as in~\cite[Lemma 4.4]{CCT} where $\CC$ is only an $A_\infty$-algebra.

\begin{lem}\label{lem:iota}
Assume the $A_\infty$-category $\CC$ (which is already proper by our assumption) is smooth and satisfies the Hodge-to-de-Rham degeneration property. Then there is a quasi-isomorphism of DGLA's
\begin{equation}\label{eq:iota-bar}
\bar{\iota} : \mathfrak{h}_\CC^+\; \to \; \widehat{\mathfrak{h}}_\CC.\end{equation}
\end{lem}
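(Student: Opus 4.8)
The plan is to split the statement into the (formal) fact that $\iota$ respects the DGLA structures and the (substantive) fact that it is a quasi-isomorphism of the underlying complexes. The first is built in: the maps $\iota$, $\Delta$ and the twisted sewing operators $\{-,-\}_r$ are all obtained from gluing of Riemann surfaces through the action map $\rho^\CC$, and $\widehat{\mathfrak{h}}_\CC$, $\mathfrak{h}_\CC^+$ are by construction the ``inputs and outputs'' and ``outputs only'' pieces of the same Fock-space formalism, so that $\iota$ intertwines the differentials and the brackets. So I may forget the Lie structure entirely; by homological perturbation I may also assume $\CC$ is minimal.

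The first real step is a bookkeeping reduction. Assemble both sides into the larger complex
\[ \widetilde{\mathfrak{h}}_\CC := \bigoplus_{\substack{k\geq 0,\; l\geq 0\\ (k,l)\neq (0,0)}} {\sf Hom}^c\big(S^k(L_+[1]),S^lL_-\big)[2][[\hbar,\lambda]],\qquad d=b+uB+\iota+\hbar\Delta. \]
Because $\iota$ raises the number of inputs $k$ by one while $b+uB$ and $\hbar\Delta$ preserve it, the part $\{k\geq1\}$ is a subcomplex, equal to $\widehat{\mathfrak{h}}_\CC$, with quotient the $\{k=0\}$ part $\bigoplus_{l\geq1}S^lL_-[2][[\hbar,\lambda]]$ carrying the induced differential $b+uB+\hbar\Delta$ --- which, up to the overall shift, is $\mathfrak{h}_\CC^+$. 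A direct diagram chase identifies the connecting homomorphism of the resulting short exact sequence with $\iota$; equivalently, $\widetilde{\mathfrak{h}}_\CC$ is the mapping cone of $\iota$. Thus the lemma is equivalent to: $\widetilde{\mathfrak{h}}_\CC$ is acyclic --- or, reinstating the central $(0,0)$-summand, the full two-sided complex $\bigoplus_{k,l\geq0}{\sf Hom}^c(S^k(L_+[1]),S^lL_-)[2][[\hbar,\lambda]]$ has cohomology the ground ring $\mathbb{K}[[\hbar,\lambda]]$.

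The second step proves this acyclicity by an iterated spectral sequence, and this is where the hypotheses enter. Filter $\widetilde{\mathfrak{h}}_\CC$ first by the total weight $k+l$ (increasingly, so that $\hbar\Delta$, which lowers the weight by two, decreases the filtration), so that the associated graded carries $b+uB+\iota$; then filter by $k$ (decreasingly, so that $\iota$, which raises $k$, decreases the filtration), so that the associated graded carries only $b+uB$, acting on each summand through the $b+uB$-differentials of $L_+$ and $L_-$. Hodge-to-de-Rham degeneration gives $H_\bullet(L_+,b+uB)=HC^-_\bullet(\CC)$, free of finite rank over $\mathbb{K}[[u]]$, and $H_\bullet(L_-,b+uB)=HC^+_\bullet(\CC)$; smoothness and properness make $HH_\bullet(\CC)$ finite dimensional and the Mukai pairing nondegenerate (Shklyarov \cite{Shk}), hence induce an isomorphism $HC^+_\bullet(\CC)^\vee\cong HC^-_\bullet(\CC)$ compatible with the pairing used to define $\iota$. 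Granting that, in characteristic zero and under these freeness and finiteness properties, passing to $b+uB$-cohomology commutes with $S^k(-)$ and with ${\sf Hom}^c(-,-)$ for the complexes at hand, the corresponding page becomes $\bigoplus_{k,l}{\sf Hom}^c\big(S^k(HC^-_\bullet(\CC)[1]),S^lHC^+_\bullet(\CC)\big)[2][[\hbar,\lambda]]$, with the induced $\iota$-differential the contraction of one output leg against one input leg by the now-perfect pairing. This is a completed Koszul-type complex, exact in every positive weight over a field of characteristic zero (compare Costello's treatment of the Fock space $\widehat{F}_\hbar$ in \cite{Cos2}). Reassembling the spectral sequences shows $\widetilde{\mathfrak{h}}_\CC$ is acyclic, hence $\iota\colon\mathfrak{h}_\CC^+\ra\widehat{\mathfrak{h}}_\CC$ is a quasi-isomorphism, and therefore a quasi-isomorphism of DGLA's.

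The main obstacle I anticipate is exactly that identification of the first page: one must establish the base-change isomorphisms (that $b+uB$-cohomology commutes with the symmetric powers and the continuous Hom), keep careful track of the signs coming from the local system $\underline{\sgn}$ on the inputs in order to recognize the induced differential as the expected acyclic Koszul differential, and --- more routinely --- check that the nested filtrations are exhaustive and complete so that the spectral sequences converge in the presence of the $u$-adic, $\hbar$-inverted and $\lambda$-complete topologies. It is precisely here that all three hypotheses on $\CC$ are genuinely used: smoothness and properness to cut everything down to finite rank so that Künneth applies, and Hodge-to-de-Rham degeneration both to make $H_\bullet(L_+,b+uB)$ free over $\mathbb{K}[[u]]$ and to make the cyclic pairing nondegenerate --- without either, the $E_1$-complex would fail to be this acyclic Koszul complex.
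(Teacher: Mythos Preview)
The paper states this lemma without proof --- it is invoked as a black box, implicitly deferring to the constructions in \cite{Cos2} and \cite{CT} --- so there is no paper-side argument to compare against directly.

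Your sketch is the expected one and is correct in outline. The cone reformulation is right: enlarging to include the $k=0$ summands produces a complex whose $\{k\ge1\}$ part is $\widehat{\mathfrak{h}}_\CC$ as a subcomplex (since the internal $\iota$ only raises $k$), whose quotient is $\mathfrak{h}_\CC^+$ up to a shift, and whose connecting map is precisely the lemma's $\iota$ (conversion of one output into one input via the Mukai pairing). Reducing acyclicity of this cone via the two nested filtrations to the exactness of a Koszul-type contraction complex on $\bigoplus_{k,l}{\sf Hom}^c(S^k HC^-_\bullet(\CC)[1], S^l HC^+_\bullet(\CC))$ is the standard route, and your identification of where each hypothesis enters --- Hodge-to-de-Rham for freeness of $HC^-_\bullet$ over $\mathbb{K}[[u]]$, smoothness/properness for finite-dimensionality of $HH_\bullet$ and nondegeneracy of the Mukai pairing --- is accurate.

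The obstacles you flag are the genuine ones, and none of them is fatal. The K\"unneth-type base change (commuting $b+uB$-cohomology past $S^k$ and ${\sf Hom}^c$) is exactly what finite rank over $\mathbb{K}[[u]]$ buys you; the convergence issues are handled by working modulo $\lambda^N$ throughout, where only finitely many $(g,k,l)$ contribute, and then passing to the limit. The sign bookkeeping from the $\underline{\sgn}$ local system is routine once one fixes conventions. In short: your proposal is a faithful expansion of what the paper leaves implicit.
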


Using the above lemma, we obtain a unique (up to gauge equivalence) Maurer-Cartan element $\beta^\CC\in \Sym(L_-)[[\hbar,\lambda]] [1]$ such that $\bar{\iota} \beta^\CC$ is gauge equivalent to $\widehat{\beta}^\CC\in \widehat{\mathfrak{h}}_\CC$.

The discussions in this subsection may be summarized into the following diagram of DGLA's:
\[ \begin{CD}
  @. @. \widehat{\mathfrak{g}} \\
 @.  @.    @VV \rho^\CC V \\
\mathfrak{h}_\CC @>>> \mathfrak{h}_\CC^+@>\bar{\iota}>> \widehat{\mathfrak{h}}_\CC
  \end{CD}\]
In~\cite{CT}, it was proved that the Maurer-Cartan element $\beta^\CC\in \mathfrak{h}_\CC^+$ can be lifted to a Maurer-Cartan element $\widetilde{\beta^\CC}\in \mathfrak{h}_\CC$. The set of liftings is a torsor over the additive group underlying the scalars $ \mathbb{K}[[\hbar,\lambda]]$. A preferred lifting may be chosen using the Dilaton equation. Nevertheless, the particular choice of lifting will not be important in this paper. 

For a differential graded Batalin–Vilkovisky algebra, it is well-known that the Maurer-Cartan equation satisfied by $\widetilde{\beta^\CC}$ is equivalent to the equation
\[ (b+uB+\hbar\Delta) \exp ( \frac{\widetilde{\beta^\CC}}{\hbar} ) =0,\]
where the left hand side is computed in the localized and completed symmetric algebra $\widehat{\Sym}_\hbar( L_-) [[\hbar,\lambda]]$. From this, we obtain a $(b+uB+\hbar \Delta)$-homology class
\[ [ \exp  ( \frac{\widetilde{\beta^\CC}}{\hbar} ) ] \in H_\bullet\big(\widehat{\Sym}_\hbar( L_-) [[\hbar,\lambda]]\big).\]
This was called the {{\em abstract  total ancestral potential}} of $\CC$. Note that the abstract  total  ancestral potential of $\CC$ is independent of the choice of string vertices by its homotopy uniqueness.

\subsection{Trivializations and CEI}

To obtain more familiar looking invariants, we need to ``trivialize" the BV operator $\Delta$. This shall involve a choice of splitting of the non-commutative Hodge filtration, whose definition is recalled in Definition~\ref{defi:splitting}. 

Let $s: H_\bullet(L) \ra H_\bullet(L_+)$ be a splitting of the Hodge filtration of $\CC$. We may consider $H_\bullet(L)$ with the trivial circle action, and denote by $H_\bullet(L)_-:= H_\bullet(L)[u^{-1}]$. As in the previous paragraph, we take $\widehat{\Sym}_\hbar H_\bullet(L)_-[[\hbar,\lambda]]$ to be the localized and completed symmetric algebra generated by $H_\bullet(L)_-$. Following the construction in~\cite[Section 11]{Cos2}, the splitting map $s$ induces an isomorphism 
\begin{equation}\label{diag:def}
\begin{CD}
\widehat{\Sym}_\hbar H_\bullet(L)_-[[\hbar,\lambda]] @>\Psi^s>> H_\bullet(\widehat{\Sym}_\hbar L_- [[\hbar,\lambda]]).
\end{CD}
\end{equation}

\begin{defn}~\label{def:main}
The {{\sl total  ancestral potential }} of the pair $(\CC,s)$ is defined by the pre-image of the abstract  total  ancestral potential $\exp ( \frac{\widetilde{\beta^\CC}}{\hbar} ) $, i.e. we set
\[ \mathcal{D}_{\CC,s}:=(\Psi^s)^{-1} [ \exp ( \frac{\widetilde{\beta^\CC}}{\hbar} )].\]
The $n$-point function of genus $g$ denoted by $F^{\CC,s}_{g,n}\in \Sym^n H_\bullet(L)_-$ is defined by the identity
\[ \sum_{g,n}F^{\CC,s}_{g,n}\cdot \hbar^g\lambda^{2g-2+n}:= \hbar\cdot \ln \mathcal{D}_{\CC,s}.\]
\end{defn}

From the definition above, it is not immediately clear how to compute these invariants. In~\cite{CT}, an explicit formula is obtained expressing $F^{\CC,s}_{g,n}$'s using $\beta^\CC_{g,n}$'s and the splitting data $s$. The formula is obtained by constructing a $L_\infty$ quasi-isomorphism (which depends on the splitting data $s$):
\begin{equation}~\label{eq:trivialization}
 K^s: \mathfrak{h}_\CC \;\to \;  \mathfrak{h}_\CC^{\sf triv} ,
 \end{equation}
where $ \mathfrak{h}_\CC^{\sf triv}$ has the same underlying graded vector space as $ \mathfrak{h}_\CC$ but is endowed with the differential $b+uB$ and zero Lie bracket.  Namely, the morphism $K^s$ kills the extra differential $\hbar\Delta$ and the Lie bracket $\{-,-\}$ of $ \mathfrak{h}_\CC$. The explicit formula of $F^{\CC,s}_{g,n}$ is then obtained by pushing forward the Maurer-Cartan element $\widetilde{\beta^\CC}$ via the morphism $K^s$, i.e. we have
\begin{equation}~\label{eq:explicit-formula} 
\sum_{g,n} F^{\CC,s}_{g,n} \hbar^g \lambda^{2g-2+n}= K^s_* \widetilde{\beta^\CC}.
\end{equation}
The right hand side can be expanded using the definition of $K^s$ as a stable graph sum. 

Since the Maurer-Cartan element $\widetilde{\beta^\CC}$ is also not explicitly constructed, in practice it is more desirable to work with $\widehat{\beta}^\CC$ directly. In~\cite{CT}, another explicit formula of CEI was obtained which expresses $\bar{\iota}(F_{g,n}^{\CC,s})$ using the $\widehat{\beta}^\CC_{g,k,l}$'s. This formula takes the following form
\begin{equation}\label{eq:cei}
\bar{\iota} (F_{g,n}^{\CC,s}) = \sum_{m\geq 1}  \sum_{\GG\in \Gamma((g,1,n-1))_m} (-1)^{m-1} \frac{\wt(\GG)}{\Aut(\GG)} \prod_{v} {\sf Cont} (v)  \prod_{e} {\sf Cont} (e)  \prod_{l} {\sf Cont} (l).
\end{equation}
The second summation in the above equation is over isomorphism classes of the so-called {\em partially directed graphs}~\cite[Definition 8.2]{CT} of genus $g$, with $1$ input, $n-1$ outputs, and $m$ vertices. Roughly speaking, such a graph is a graph $G$ with a spanning tree $T\subset G$ in it. Furthermore, some of the edges in $G$ (hence in $T$) are directed. These data should also satisfy some axioms such as each vertex has at least one incoming half-edge, and no directed loops are allowed in $G$. We refer to {\em loc. cit.} for details. The fraction $\frac{\wt(\GG)}{\Aut(\GG)}$ is a rational weight associated to such a graph. 

Finally, the contribution $ \prod_{v} {\sf Cont} (v)  \prod_{e} {\sf Cont} (e)  \prod_{l} {\sf Cont} (l)$ is given by the composition along a given partially directed graph $\GG$ where vertices, edges and legs are explicitly given by the following assignments.
\begin{itemize}
\item At a vertex $v$, we assign the multi-linear maps given by the image of combinatorial string vertices $\widehat{\beta}^\CC_{g,k,l}$'s. 
\item Let $S: L^\CC \ra L^\CC_+$ be a chain-level lift of the splitting map $s: H_*(L^\CC) \ra H_*(L^\CC_+)$ of the form $S=\id+ O(u)$. Extending it $u$-linearly to obtain a map still denoted by $S: L^\CC_+ \ra L^\CC_+$. At an incoming leg of $\GG$, we put this map.
\item At an outgoing leg, we put the inverse operator of $S$ (which exists since $S=\id+ O(u)$). Denoted this inverse map by $R=S^{-1}$.
\item For a directed edge in the spanning tree $T$ of $\GG$, assign an homotopy operator $F: L^\CC_- \ra L^\CC_+[1] $, defined by
\[ F(\beta) := - u^{-1} \cdot S \tau_{\geq 1} R (\beta).\]
where $\tau_{\geq 1}: L^\CC((u)) \ra u\cdot L^\CC_+$ is the projection onto strictly positive powers of $u$. 
\item While at other directed edges (that are not in the spanning tree $T$) we put the operator $\Theta: L^\CC_- \ra L^\CC_+[1]$ given by 
\[ \Theta( \alpha_0 + \alpha_1 u^{-1} +\cdots ) = B\alpha_0.\]
\item At an un-directed edge that is not in the spanning tree $T$, we put the homotopy operator $H^{\sym}: L^\CC_- \otimes L^\CC_- \ra \mathbb{K}$ given by the symmetrization of the operator $H: L^\CC_- \otimes L^\CC_- \ra \mathbb{K}$ defined by
\[ H(\alpha,\beta) := - \langle S\tau_{\geq 1} R (\alpha), \beta \rangle_{\sf res}.\]
\item For an un-directed edge in the spanning tree $T$, we assign an operator
$\delta: L^\CC_- \otimes L^\CC_- \ra \mathbb{K}$ which satisfies the following equation:
\[ [b+uB, \delta] = H - H^\sym,\]
i.e. it bounds the failure of $H$ being a symmetric operator. 
\end{itemize}
Composing along $\GG$ using the above operators yields the desired contribution in Equation~\eqref{eq:cei}.

\section{Morita invariance of CEI}\label{sec:morita}

In this section, we establish the Morita invariance of CEI. In the first subsection, we show that they are invariant under inclusions of full subcategories with isomorphic Hochschild homology. The following four subsections prove invariance under cyclic isomorphisms, and the last subsection wraps up the proof for general Morita equivalences.

\subsection{Invariance for subcategories}

We shall continue to use notation from the previous section.

\begin{prop}\label{lem:inclusion}
Let $\CC$ be a minimal, unital, cyclic \Ai-category. Let $I:\cA \hookrightarrow \CC$ be the inclusion of a full $A_\infty$-subcategory. Assume $I$ induces an isomorphism $I_*: HH_\bullet(\cA) \ra HH_\bullet(\CC)$. Let $s$ be a splitting of the Hodge filtration of $\cA$. Then, for any $(g,n)$ such that $2g-2+n>0$,  we have
\[ I_* F_{g,n}^{\cA, s} = F_{g,n}^{\CC, I_* s I_*^{-1}},\]
where $I_* s I_*^{-1}$ is the splitting of $\CC$ induced by $s$ via $I_*$.
\end{prop}

\begin{proof}
$I$ is a naive $A_\infty$\ functor (no higher order terms), therefore $I_*: L_-^\cA \to  L_-^\CC$ induces a map of DGLA's $\overline{I}:\mathfrak{h}_\cA=\Sym (L_-^\cA) [[\hbar,\lambda]][1]\to \Sym (L_-^\CC) [[\hbar,\lambda]][1]=\mathfrak{h}_\CC$.
Let us then consider the following diagram of DGLA homomorphisms:
\[\begin{tikzcd}
\mathfrak{h}_\CC  \arrow[r,"\overline{\iota}^\CC"] & \widehat{\mathfrak{h}_\CC} \\
\mathfrak{h}_\cA \arrow[u,"\overline{I}"] \arrow[r,"\overline{\iota}^\cA"] & \widehat{\mathfrak{h}_\cA}. 
\end{tikzcd}\]
By construction, the two horizontal maps are quasi-isomorphisms, and by our assumption the vertical map $\overline{I}$ is also a quasi-isomorphism. However, since the DGLA's $\widehat{\mathfrak{h}_\CC}$ and $\widehat{\mathfrak{h}_\cA}$ have inputs and outputs, $I_*$ does not induce a map between them. To remedy this, we define a sub-DGLA $\widehat{\mathfrak{h}}_{\cA,\CC}\subset \widehat{\mathfrak{h}}_\CC$ which consists of elements $\Phi$ satisfying the support condition that if all inputs of $\Phi$ are in $L_+^\cA$, then the outputs are in $L_-^\cA$. It is straightforward to verify that indeed $\widehat{\mathfrak{h}}_{\cA,\CC}$ is a sub-DGLA. Denote by $i: \widehat{\mathfrak{h}}_{\cA,\CC} \rightarrow \widehat{\mathfrak{h}}_{\CC}$ the natural inclusion map. The support condition also allows us to define a restriction map denoted by $r: \widehat{\mathfrak{h}}_{\cA,\CC} \rightarrow \widehat{\mathfrak{h}}_{\cA}$. 

Next, we argue that both $i$ and $r$ are quasi-isomorphisms. Indeed, we first run the spectral sequence associated to the filtration by powers of $\hbar$. On the associated graded pieces the differential is given by $\eth +\iota$. Then we run a second spectral sequence associated to the filtration by tensor powers of the outputs. The map on the corresponding associated graded pieces are given by
\begin{align*}
    i &: \oplus_k{\sf Hom}^{c,supp}\big( \Sym^k(L_+^\CC[1]),\Sym^l (L_-^\CC)\big) \to \oplus_k{\sf Hom}^{c}\big( \Sym^k(L_+^\CC[1]),\Sym^l( L_-^\CC)\big),\\
    r &: \oplus_k{\sf Hom}^{c,supp}\big( \Sym^k(L_+^\CC[1]),\Sym^l (L_-^\CC)\big) \to \oplus_k{\sf Hom}^{c}\big( \Sym^k(L_+^\cA[1]),\Sym^l( L_-^\cA)\big)
\end{align*}
for each $l\geq 0$. Both sides are endowed with the differential $\eth$ (commutator with $b+uB$). To see $i$ is a quasi-isomorphism, we observe that it is an injective map and its cokernel is given by $\oplus_k{\sf Hom}^{c}\big( \Sym^k(L_+^\cA[1]),\Sym^l (L_-^\CC/L_-^\cA)\big)$ which is acyclic since, by our assumption that $I_*$ is an isomorphism on Hochschild homology, $L_-^\CC/L_-^\cA$ is acyclic. Similarly, in the case of $r$, the map is surjective with kernel given by $\oplus_k{\sf Hom}^{c}\big( \Sym^k(L_+^\CC/L_+^\cA[1]),\Sym^l( L_-^\CC)\big)$ which is again acyclic by the same reason.

Let us consider the following diagram of DGLA homomorphisms:
\[\begin{tikzcd}
\mathfrak{h}_\CC  \arrow[r,"\overline{\iota}^\CC"] & \widehat{\mathfrak{h}_\CC} & \\
&\widehat{\mathfrak{h}}_{\cA,\CC}\arrow[d,"r"] \arrow[u,"i"]& \widehat{\mathfrak{g}} \arrow[ld,"\rho^{\cA}"above] \arrow[lu,"\rho^{\CC}" above] \arrow[l,"\rho^{\CC}" above] \\
\mathfrak{h}_\cA \arrow[uu,"\overline{I}"] \arrow[r,"\overline{\iota}^\cA"] & \widehat{\mathfrak{h}}_\cA  & 
\end{tikzcd}\]
It easily follows from the definition of $\rho$ that both triangles commute. We now claim that $\overline{I}_* \beta^\cA$ is gauge equivalent to $\beta^\CC$. Indeed, we have
\begin{align*}
& \;\;\; \overline{I}_* \beta^\cA \cong \beta^\CC \\
\Leftrightarrow \;\;\;& \overline{\iota}^\CC_* \overline{I}_* \beta^\cA \cong \overline{\iota}^\CC_*\beta^\CC \\
\Leftrightarrow \;\;\;& \overline{\iota}^\CC_* \overline{I}_* \beta^\cA \cong \widehat{\beta^\CC}\\
\Leftrightarrow \;\;\;& \overline{\iota}^\CC_* \overline{I}_* \beta^\cA \cong \rho^\CC_*\widehat{\mathcal{V}}
\end{align*}
Now observe that both sides satisfy the support condition, hence they in fact are elements in the sub-DGLA $\widehat{\mathfrak{h}}_{\cA,\CC}$. Thus we may continue as
\begin{align*}
& \;\;\;\overline{\iota}^\CC_* \overline{I}_* \beta^\cA \cong \rho^C_*\widehat{\mathcal{V}}\\
\Leftrightarrow \;\;\;& r_*\overline{\iota}^\CC_* \overline{I}_* \beta^\cA \cong r_*\rho^\CC_*\widehat{\mathcal{V}}\\
\Leftrightarrow \;\;\;& r_*\overline{\iota}^\CC_* \overline{I}_* \beta^\cA \cong \rho^\cA_*\widehat{\mathcal{V}}
\end{align*}
Finally, we notice that $r_*\overline{\iota}^\CC_* \overline{I}_* \beta^\cA= \overline{\iota}^\cA_* \beta^\cA$. This implies the last identity  is equivalent to 
\[\overline{\iota}^\cA_* \beta^\cA \cong \widehat{\beta^\cA},\]
which holds by definition. This proves our claim that $\overline{I}_* \beta^\cA \cong \beta^\CC$.

Since $\cA\subset \CC$ is a subcategory, we have a commutative diagram of the trivialization map:
\[\begin{tikzcd}
\mathfrak{h}_\cA  \arrow[d,"\overline{I}"]  \arrow[r,"K^s"] & \mathfrak{h}^{\sf triv}_\cA \arrow[d,"\overline{I}"]  \\
\mathfrak{h}_\CC\arrow[r,"K^{I_* s I_*^{-1}}"] & \mathfrak{h}^{\sf triv}_\CC
\end{tikzcd}\]
The Lemma then follows from Equation~\eqref{eq:explicit-formula}:
\[I_*\sum_{g,n} F^{\cA,s}_{g,n} \hbar^g \lambda^{2g-2+n} =  \overline{I}_* K^s_* \beta^\cA = K^{ I_* s I_*^{-1}}_* \overline{I}_* \beta^\cA= K^{ I_* s I_*^{-1}}_* \beta^\CC= \sum_{g,n} F^{\CC,I_* s I_*^{-1}}_{g,n} \hbar^g \lambda^{2g-2+n}\]
\end{proof}

\subsection{Cyclic Pseudo-isotopies}

Let $\CC$ be an $A_\infty$-category over a field $\mathbb{K}$ of characteristic zero. Let $\CC'$ be an $A_\infty$-category which has the same underlying objects and ${\sf Hom}$-spaces as $\CC$, but is endowed with $A_\infty$-maps $\m'_k\;(k\geq 1)$. Without loss of generality, we assume that both $\CC$ and $\CC'$ are minimal, i.e., $\m_1=\m'_1=0$. We recall the notion of cyclic pseudo-isotopy, introduced by Fukaya~\cite{Fuk}.

\begin{defn}
A minimal pseudo-isotopy between $\CC$ and $\CC'$ consists of families of cochains (as in Equation (\ref{eq:param})),
\begin{align*}
\m(t)=\prod_{k\geq 2}\m_k(t)\in C^\bullet(\CC)\{t\}, \ 
\fl(t)=\prod_{k\geq 2}\fl_k(t) \in  C^\bullet(\CC)\{t\},
\end{align*}
of degree $0$ and $1$ respectively. The maps $\m(t)$  satisfy the $A_\infty$ equations $\m(t)\bullet\m(t)=0$, while the $\fl(t)$  satisfy the identity $\frac{d}{dt} \m(t) = [\m(t),\fl(t)]$, or explicitly the equation
\begin{align*}
\frac{d}{dt} \m_k(t)& (x_1,\ldots,x_k)  = \sum_{r, i+j=k+1} \m_i(t)\big( x_1,\ldots,x_r,\fl_j(t)(x_{r+1},\ldots,x_{j+r}),\ldots,x_k\big) \\
& - \sum_{r, i+j=k+1} (-1)^{|x_1|'+\cdots+|x_r|'} \fl_i(t)\big( x_1,\ldots,x_r,\m_j(t)(x_{r+1},\ldots,x_{j+r}),\ldots,x_k\big)
\end{align*}
Finally, we require the boundary conditions  $\m_k(0)=\m_k$, while $\m_k(1)=\m'_k$.
\end{defn}

This definition can be rephrased as follows. Let $\Omega^\bullet:= \mathbb{K}[t,dt]$ be the space of polynomial differential forms on $\mathbb{A}^1$. Denote by $\CC\otimes \Omega^\bullet$ the (pre-)category with the same objects as $\CC$ and the hom spaces $\hom_{\CC\otimes \Omega^\bullet}(X, Y):=\hom_\CC(X, Y)\otimes \Omega^\bullet $. Given ${\bf{x_i}}:=x_i(t)+y_i(t)dt \in \hom_{\CC\otimes \Omega^\bullet}(X_i, X_{i+1})$ We define the operations:
\begin{align*}
  \overline{\m}_1({\bf{x_1}})&:=(-1)^{|x_1|'}\frac{dx_1(t)}{dt} dt,\\
  \overline{\m}_k({\bf{x_1}}, \ldots, \bf{x_k} )&:= \m_k(t)(x_1(t),\ldots, x_k(t))+(-1)^{\sum_i |x_i|'} \fl_k(t)(x_1(t),\ldots, x_k(t))dt +\\
  &  \ + \sum_i (-1)^{\sum_{a>i} |x_a|'}\m_k(t)(x_1(t),\ldots,y_i(t),\ldots x_k(t)) dt, \ k\geq 2.
\end{align*}
It is elementary to see that being a pseudo-isotopy is equivalent to these operations defining an \Ai\ structure on  $\CC\otimes \Omega^\bullet$ with (naive) evaluation maps at $t=0$ and $t=1$ to $\CC$ and $\CC'$ respectively.

Moreover, assume that both $A_\infty$-categories $\CC$ and $\CC'$ are endowed with the same cyclic structure $\langle-,-\rangle$.
\begin{itemize}
\item A minimal pseudo-isotopy $\m(t) + \fl(t)dt$ between $\CC$ and $\CC'$ is called {\em cyclic} if the structure maps $\m_k(t)$'s and $\fl_k(t)$'s are all cyclic with respect to $\langle-,-\rangle$, meaning they satisfy Equation (\ref{eq:cyclic}).
\item It is called unital if $\m_k(t)$'s form a strictly unital $A_\infty$ structure, and the $\fl_k(t)$ are reduced cochains.
\end{itemize}

\begin{lem}\label{lem:isotopy_vectorfield}
	Let $\big(\CC,\langle-,-\rangle,\{\m_k\}_{k=2}^\infty\big)$ and $\big(\CC',\langle-,-\rangle,\{\m'_k\}_{k=2}^\infty\big)$ be two minimal $A_\infty$-categories with the same underlying set of objects and ${\sf Hom}$-spaces. Let $F: \CC \to \CC'$ be an \Ai-functor with $F_1=\id$. Then there is a degree one Hochschild cochain $Z \in CC^1(\CC)$ of order two, such that its flow, that is the family of \Ai-pre-functors determined by Lemma \ref{lem:flow}, satisfies $F^1=F$. 
	
	If $F$ is unital, then $Z$ is reduced and therefore the $F^t$ are unital. If $\CC$ and $\CC'$ are cyclic and $F$ is a cyclic functor, then $Z$ and $F^t$ are cyclic.  
\end{lem}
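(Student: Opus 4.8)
The strategy is to ``take the logarithmic derivative'' of the one-parameter family of \Ai-functors interpolating from $\id$ to $F$, and then invoke Lemma~\ref{lem:flow} to recognize that family as a flow. Concretely, first I would build a path of \Ai-functors $F^t:\CC\to\CC'$ with $F^0=\id$ and $F^1=F$. The most natural choice is $F^t_k(x_1,\ldots,x_k):=t^{k-1}F_k(x_1,\ldots,x_k)$ for $k\geq 1$ (so $F^t_1=\id$ for all $t$, using $F_1=\id$); one checks directly that this rescaling is an \Ai-functor from $\big(\CC,\{\m_k\}\big)$ to $\big(\CC',\{t^{k-1}\m'_k\}\big)$. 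Since we only need the endpoint $t=1$ to land in $\CC'$, this is harmless, but to stay inside a single target it is cleaner to instead let the source \Ai-structure vary: set $\m_k(t):=t^{k-1}\m_k$ as well, so $F^t$ is an \Ai-functor between the rescaled categories. Either way, the key point is that $F^t$ is polynomial in $t$, is the identity on objects and on $F^t_1$, and has $F^0=\id$, $F^1=F$.

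Next I would define $Z^t:=\big(\tfrac{d}{dt}\widehat{F^t}\big)\circ \widehat{F^t}^{-1}$, which makes sense because each $F^t_1=\id$ is invertible, hence $\widehat{F^t}$ is an invertible coalgebra map with a strict inverse as recalled in the excerpt. One checks that $Z^t$ is a coderivation of $B\CC$ satisfying the support condition, hence corresponds to a Hochschild cochain $Z^t\in CC^1(\CC)\{t\}$; it has degree one because $\frac{d}{dt}\widehat{F^t}$ does. Because $F^t_1=\id$ is constant in $t$, the length-$1$ component of $\frac{d}{dt}\widehat{F^t}$ vanishes, and the length-$0$ component vanishes as $F^t$ is constant (indeed trivial) on objects; therefore $Z^t_0=Z^t_1=0$, i.e.\ $Z^t$ has order two. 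By construction $\frac{d}{dt}\widehat{F^t}=\widehat{Z^t}\circ\widehat{F^t}$ with $F^0=\id$, which is exactly the flow equation~\eqref{eq:flow}; uniqueness in Lemma~\ref{lem:flow} then identifies $F^t$ with the flow of $Z^t$, so in particular $F^1=F$.

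For the additional properties: if $F$ is unital, then each $F^t_k$ for $k\geq 2$ vanishes whenever an input is a unit (this is inherited from $F$ under the $t^{k-1}$ rescaling), and $F^t_1(\one)=\one$; differentiating in $t$ and composing with $\widehat{F^t}^{-1}$ preserves this vanishing, so $Z^t$ is a reduced cochain, and then Lemma~\ref{lem:flow} gives that the $F^t$ are unital. For cyclicity, suppose $\CC,\CC'$ carry the same pairing $\langle-,-\rangle$ and $F$ is cyclic, i.e.\ satisfies~\eqref{eq:F_cyclic}. The rescaled maps $F^t_k=t^{k-1}F_k$ then satisfy the cyclicity relations $\sum_i \langle F^t_i(\ldots),F^t_{n-i}(\ldots)\rangle=0$ for each $t$ (each term picks up the same power $t^{n-2}$), so each $F^t$ is cyclic, and likewise each $\m_k(t)=t^{k-1}\m_k$ is cyclic. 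Differentiating the cyclicity relation for $F^t$ in $t$ and translating through $Z^t=(\frac{d}{dt}\widehat{F^t})\circ\widehat{F^t}^{-1}$ yields that $Z^t$ is cyclic with respect to $\langle-,-\rangle$, i.e.\ satisfies~\eqref{eq:cyclic}; alternatively one observes that the subspace of cyclic cochains is a Lie subalgebra of $CC^\bullet(\CC)$ containing $\m(t)$, that the flow of a cyclic vector field preserves cyclic structures, and that $F^1=F$ being cyclic forces $Z^t$ cyclic by the injectivity of $\iota_{-}\rho$ from Lemma~\ref{lem:contraction}(1) applied to the (constant) pairing $\rho=\langle-,-\rangle$.

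The main obstacle I anticipate is the cyclicity statement: unlike unitality, which is manifestly preserved at every step, cyclicity of $Z^t$ is not literally the $t$-derivative of a single identity but must be extracted from the one-parameter family of cyclicity constraints on $F^t$, keeping careful track of Koszul signs when transporting the relation through $\widehat{F^t}^{-1}$. The cleanest route is probably the Lie-algebraic one: show cyclic Hochschild cochains form a sub-DGLA, show its flow preserves the constant pairing $\langle-,-\rangle$ (this is a small computation using $\mathcal{L}_{Z^t}\langle-,-\rangle=S(\iota_{Z^t}\langle-,-\rangle)$ from Lemma~\ref{lem:contraction}(3)), and conclude by uniqueness of flows that $Z^t$ must be cyclic given that its time-$1$ flow $F$ is. Everything else is a routine unwinding of the coalgebra formalism already set up in Section~\ref{sec:cy1}.
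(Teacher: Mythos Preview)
Your construction is sound as far as it goes, but it produces a genuinely \emph{$t$-dependent} cochain $Z^t$, whereas the lemma asks for a single $Z\in CC^1(\CC)$ (not $CC^1(\CC)\{t\}$), and this constancy is actually used downstream: in the proof of Proposition~\ref{prop:isotopy} the authors need $[\widehat{\xi},\widehat{F^t}]=0$, which they get precisely because $\widehat{F^t}=\exp(t\widehat{\xi})$ with $\xi$ constant. Your family $F^t_k=t^{k-1}F_k$ does not have this form; for instance one computes $Z^t_3=2tF_3-t\big(F_2(F_2,-)+F_2(-,F_2)\big)$, which depends on $t$. The paper instead sets $\widehat{Z}:=\log\widehat{F}=\sum_{n\ge1}(-1)^{n+1}\tfrac{1}{n}(\widehat{F}-\id)^{\circ n}$, which converges because $F_1=\id$ makes $\widehat{F}-\id$ length-decreasing, and then $\widehat{F^t}=\exp(t\widehat{Z})$ solves the flow equation with $Z$ constant and $F^1=F$ by $\exp\circ\log=\id$. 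Unitality of $F$ visibly makes $Z$ reduced.

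For cyclicity the paper invokes an external fact \cite[Proposition~4.14]{Kaj}: the logarithm of a cyclic \Ai-isomorphism is a cyclic cochain, i.e.\ $S(\iota_Z\rho)=0$; then $(F^t)^*\rho=\rho$ follows from Lemmas~\ref{lem:lie_pull}(1) and~\ref{lem:contraction}(3). Your ``cleanest route'' is essentially the converse of this computation: from $(F^t)^*\rho=\rho$ for all $t$ (which your rescaling does give) one differentiates to get $\mathcal{L}_{Z^t}\rho=0$, hence $\iota_{Z^t}\rho\in\ker S$, which for constant $\rho$ is exactly cyclicity of $Z^t$. That argument is fine and avoids the citation, but note your appeal to ``injectivity of $\iota_{-}\rho$'' is not the right tool here; what you need is that $\ker S=\im\pi^\vee$ (Proposition~\ref{prop:exact1}). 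Also, a small correction: $F^t_k=t^{k-1}F_k$ is an \Ai-functor between the rescaled structures $\{t^{k-1}\m_k\}$ and $\{t^{k-1}\m'_k\}$, not from the unrescaled $\{\m_k\}$---but since only pre-functors are needed for the flow equation this is harmless. In summary, to prove the lemma as stated you should switch to the $\log$/$\exp$ construction; your interpolation idea is a valid alternative for producing a (time-dependent) pseudo-isotopy directly, but it bypasses rather than proves Lemma~\ref{lem:isotopy_vectorfield}.
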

\begin{proof}
Let $\widehat{F}$ be the coalgebra homomorphism extension of $F$ as before, and define $G:=\widehat{F}-\id$. Since $F_1=\id$, the map $G$ decreases the length of elements in $B\CC$. Therefore the following sum is convergent: 
\[\widehat{Z}:=\log(\widehat{F})= \log(\id +G) := \sum_{n=1}^\infty (-1)^{n+1} \frac{G^{\circ n}}{n}.\]
Since $\widehat{F}$ is a coalgebra homomorphism, $G$ satisfies the identity
\[(G\otimes G + G\otimes \id + \id\otimes G)\circ \Delta = \Delta\circ G,\]
where $\Delta$ is the coproduct in $B\CC$. We then compute
\begin{align}\label{eq:coder}
    \Delta \circ \widehat{Z} & = \sum_{n=1}^\infty \frac{(-1)^{n+1}}{n}  (G\otimes G + G\otimes \id + \id\otimes G)^n \circ \Delta\nonumber\\
    &= \sum_{\substack{a,b\geq0 \\a+b>0}}\sum_{a,b\leq n\leq a+b} \frac{(-1)^{n+1}}{n}\binom{n}{n-a,n-b,a+b-n} G^{a}\otimes G^{b} \circ \Delta.
\end{align}
  From the Taylor expansion of the standard identity $\log(1+x+y+xy)=\log(1+x)+\log(1+y)$, we see that the coefficients in (\ref{eq:coder}) vanish, unless $a=0$ or $b=0$. Therefore we obtain
\[\Delta \circ \widehat{Z} = (\widehat{Z}\otimes \id+\id\otimes \widehat{Z})\circ \Delta,\]
 which means $\widehat{Z}$ is a coderivation. From the definition it is clear it satisfies the support condition in Remark \ref{rmk:codev}, hence it is determined by a degree one Hochschild cochain $Z$ of order $2$ (since $Z_0=Z_1=0$).

Since $Z$ is independent of $t$, the differential equation in Lemma \ref{lem:flow} can be easily solved:
\[\widehat{F^t}= \exp(t \widehat{Z})=\id + \sum_{n=1}^\infty \frac{t^n \widehat{Z}^{\circ n}}{n!} .\]
Then, as for the standard exponential, we have $\widehat{F^1}=\exp(\log(\widehat{F}))=\widehat{F}$. 

When $F$ is unital, it is clear from the formula above that $Z$ is reduced. 

Assume for now that $Z$ is cyclic. This is equivalent to $\mathcal{L}_Z\rho=S(\iota_Z \rho)=0$, where $\rho$ is the cyclic structure. Then using Lemmas \ref{lem:lie_pull} and \ref{lem:contraction} we calculate
\[\frac{d}{dt}(F^t)^* \rho= -(F^t)^*(\mathcal{L}_Z \rho)=-(F^t)^*(S(\iota_Z \rho))=0.\]
Hence $(F^t)^*\rho=(F^0)^*\rho=\rho$, and $F^t$ is cyclic.

Finally, we prove that if $F$ is cyclic then $Z$ is cyclic. First we observe that Lemma \ref{lem:lieder} implies that the space of cyclic Hochschild cochains is closed under the Gerstenhaber bracket. We will show that $Z_k$ is cyclic  by induction on $k$. From the definition we have $Z_2=F_2$. In length two, Equation (\ref{eq:F_cyclic}) for cyclicity of $F$ is equivalent to Equation (\ref{eq:cyclic}). Hence $Z_2$ is cyclic. Now define the pre-functor $\widehat{F^{(2)}}:= \exp(\widehat{-Z_2})\circ \widehat{F}$ and the coderivation $\widehat{Z^{(2)}}:=\log(\widehat{F^{(2)}})$. By construction, $F^{(2)}_1=\id$ and $F^{(2)}_2=0$.

We then inductively define a sequence of cyclic pre-functors $\widehat{F^{(2)}}, \ldots, \widehat{F^{(k)}}$, by setting $\widehat{F^{(k)}}:= \exp(\widehat{-Z^{(k-1)}_k})\circ \widehat{F^{(k-1)}}$ and $\widehat{Z^{(k)}}:=\log(\widehat{F^{(k)}})$. An easy computation shows that $F^{(k)}_l=0$ for $l=2,\ldots,k$, which then implies $Z^{(k)}_l=0$ for $l=0,\ldots,k$ and $Z^{(k)}_{k+1}=F^{(k)}_{k+1}$. Again, Equation (\ref{eq:F_cyclic}) for the cyclicity of $F^{(k)}_{k+1}$ is equivalent to Equation (\ref{eq:cyclic}). Hence $Z^{(k)}_{k+1}$ is a cyclic cochain and therefore $\exp(\widehat{-Z^{(k)}_{k+1}})$ is a cyclic pre-functor.

Next we apply the Baker--Campbell--Hausdorff formula (see \cite{DSV} for example) to $\widehat{Z^{(k)}}=\log \left( \exp(\widehat{-Z^{(k-1)}_k})\circ \cdots \circ \exp(\widehat{-Z_2})\circ \exp(\widehat{Z})\right) $ and obtain:
\begin{equation}
    \widehat{Z^{(k)}}= \widehat{Z} -\widehat{Z_2}-\widehat{Z^{(2)}_3}\ldots-\widehat{Z^{(k-1)}_k}+ BCH(\widehat{Z},-\widehat{Z_2}, -\widehat{Z^{(2)}_3}, \ldots -\widehat{Z^{(k-1)}_k}),
\end{equation}
where $BCH(--)$ is a linear combination of iterated brackets of the inputs (the precise formula is not important). Looking at inputs of length $k+1$, and projecting to length one, the above equation reads:
\begin{equation}
    Z^{(k)}_{k+1}=Z_{k+1}+[Z_2,\ldots, Z_k, Z^{(2)}_3, \ldots Z^{(k-1)}_k],
\end{equation}
where $[Z_2,\ldots, Z_k, Z^{(2)}_3, \ldots Z^{(k-1)}_k]$ is some linear combination of iterated brackets of those cochains, determined by the Baker--Campbell--Hausdorff formula. By induction hypothesis, all the cochains $Z_2,\ldots, Z_k, Z^{(2)}_3, \ldots Z^{(k-1)}_k$ are cyclic. Since the bracket preserves cyclicity and we have already shown that $Z^{(k)}_{k+1}$ is cyclic we conclude that $Z_{k+1}$ is cyclic, which completes the induction step.
\end{proof}

\begin{rmk}
    There is a more direct proof of the statement $F$ cyclic implies $Z$ cyclic. From the definition $\widehat{Z}=\log(\widehat{F})$, one can write down an explicit expression for $Z$ as a sum of ``trees with levels". A combinatorial argument then reduces the claim that $Z$ is cyclic to the same combinatorial identity used to simplify Equation (\ref{eq:coder}) above. We thank Ilia Zharkov for explaining us how to do this.
\end{rmk}

\begin{prop}~\label{prop:isotopy}
	Let $\big(\CC,\langle-,-\rangle,\{\m_k\}_{k=2}^\infty\big)$ and $\big(\CC',\langle-,-\rangle,\{\m'_k\}_{k=2}^\infty\big)$ be two minimal cyclic $A_\infty$-categories with the same underlying set of objects, $\hom$-spaces and cyclic structure. Then there exists an unital, cyclic $A_\infty$ functor
	\[ f=(f_1,f_2,\ldots): \CC \to \CC'\]
	with $f_1=\id$ if and only if there exists an unital, cyclic, minimal pseudo-isotopy $ \m(t)+\fl(t)dt $.
\end{prop}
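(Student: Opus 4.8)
The plan is to prove the two directions separately, with the ``only if'' direction being the harder one. For the ``if'' direction, suppose we are given an unital, cyclic, minimal pseudo-isotopy $\m(t)+\fl(t)dt$. I would integrate the family of vector fields $\fl(t)$ to build the desired functor. More precisely, consider the family of $A_\infty$-pre-functors $F^t\colon \CC \to \CC_t$ (where $\CC_t$ denotes the category with operations $\m(t)$) obtained by solving the flow equation $\frac{d}{dt}F^t = \fl(t)\circ \widehat{F^t}$, $F^0 = \id$, in analogy with Lemma~\ref{lem:flow}. The defining differential equation $\frac{d}{dt}\m(t) = [\m(t),\fl(t)]$ is precisely the compatibility condition ensuring that $\widehat{F^t}$ intertwines $\widehat{\m(0)}=\widehat{\m}$ with $\widehat{\m(t)}$; this is the same computation as the last part of the proof of Lemma~\ref{lem:flow}, since $\frac{d}{dt}\big(\widehat{F^t}\circ\widehat\m - \widehat{\m(t)}\circ\widehat{F^t}\big) = \widehat{\fl(t)}\circ\big(\widehat{F^t}\circ\widehat\m - \widehat{\m(t)}\circ\widehat{F^t}\big)$ and the initial value vanishes. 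Setting $f := F^1$ gives an $A_\infty$-functor $\CC\to\CC'$ with $f_1 = \id$. Unitality of $f$ follows because $\fl(t)$ is reduced, exactly as in Lemma~\ref{lem:flow}. Cyclicity of $f$ follows because each $\fl_k(t)$ is cyclic, hence $S(\iota_{\fl(t)}\rho)=0$ by the correspondence between cyclic cochains and the vanishing of $\mathcal L$ used in Lemma~\ref{lem:isotopy_vectorfield}, so $\frac{d}{dt}(F^t)^*\rho = (F^t)^*(\mathcal L_{\fl(t)}\rho) = (F^t)^*(S(\iota_{\fl(t)}\rho)) = 0$, giving $f^*\langle-,-\rangle = \langle-,-\rangle$.

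For the ``only if'' direction, suppose we are given an unital, cyclic $A_\infty$-functor $f\colon\CC\to\CC'$ with $f_1 = \id$. By Lemma~\ref{lem:isotopy_vectorfield} there is a reduced, cyclic, degree-one Hochschild cochain $Z\in CC^1(\CC)$ of order two whose flow $F^t$ satisfies $F^1 = f$. Since $Z$ is $\CC$-cyclic but need not be $\CC_t$-cyclic for intermediate $t$, the natural candidate is to define $\m(t) := \widehat{F^t}{}^{-1}\circ\widehat\m\circ\widehat{F^t}$ --- wait, more precisely, following the proof of Proposition~\ref{prop:darboux}, one sets $\widehat{\m(t)} := \widehat{F^t}\circ\widehat\m\circ(\widehat{F^t})^{-1}$, so that $F^t\colon (\CC,\m)\to(\CC,\m(t))$ is an isomorphism and $\m(0)=\m$, $\m(1) = f\circ\widehat\m\circ\widehat f{}^{-1}$. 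But we want $\m(1)=\m'$, i.e. $f\colon\CC\to\CC'$, so in fact we should transport: since $f\colon(\CC,\m)\to(\CC',\m')$ is an isomorphism onto $\CC'$ with $f_1=\id$, the category $\CC'$ has operations $\m' = \widehat f\circ\widehat\m\circ\widehat f{}^{-1}$, and the family $\m(t):=\widehat{F^t}\circ\widehat\m\circ(\widehat{F^t})^{-1}$ interpolates between $\m$ and $\m'$. Differentiating and using $\frac{d}{dt}\widehat{F^t}=\widehat Z\circ\widehat{F^t}$ gives $\frac{d}{dt}\widehat{\m(t)} = [\widehat Z, \widehat{\m(t)}]$; since $Z$ itself is independent of $t$ but the $A_\infty$-relation we want is $\frac{d}{dt}\m(t)=[\m(t),\fl(t)]$, we simply set $\fl(t) := Z$ transported, i.e. $\fl(t) := (F^t)^* $-pushforward of $Z$, concretely $\widehat{\fl(t)} := \widehat{F^t}\circ\widehat Z\circ(\widehat{F^t})^{-1}$, so that $\frac{d}{dt}\widehat{\m(t)} = [\widehat{\m(t)},\widehat{\fl(t)}]$ with the appropriate sign. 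One then checks: $\m(t)$ satisfies the $A_\infty$-identities (it is conjugate to $\m$); $\m(0)=\m$, $\m(1)=\m'$; $\m(t)$ is strictly unital and $\fl(t)$ is reduced, because $Z$ is reduced and $F^t$ is unital (conjugating the strict unit by a unital iso preserves strict unitality, and the unit $\one_X$ is fixed); and, crucially, $\m(t)$ and $\fl(t)$ are cyclic with respect to $\langle-,-\rangle$.

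The main obstacle is verifying the cyclicity of the whole family $\m(t)$, $\fl(t)$ --- a priori conjugating $\m$ by $F^t$ need not preserve the pairing for intermediate $t$, even though $Z$ is $\CC$-cyclic. The resolution is the flow computation already indicated: since $Z$ is cyclic, $S(\iota_Z\rho)=0$, and Lemma~\ref{lem:contraction}(3) gives $\mathcal L_Z\rho = S(\iota_Z\rho) = 0$, hence by Lemma~\ref{lem:lie_pull}(1), $\frac{d}{dt}(F^t)^*\rho = -(F^t)^*(\mathcal L_Z\rho) = 0$, so $(F^t)^*\rho = \rho$ for all $t$. Unwinding the definition of $(F^t)^*\rho$ --- this is exactly the statement that $\rho = \langle-,-\rangle$ is the cyclic pairing for the transported operations $\m(t)$, i.e. that $\m(t)$ is cyclic with respect to $\langle-,-\rangle$; equivalently $\mathcal L_{\m(t)}\rho = \partial_{\m(t)}\rho = 0$. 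Similarly, $\fl(t) = (F^t)_*$-transport of $Z$ is cyclic with respect to $\langle-,-\rangle$ because cyclicity of a cochain is equivalent to $S(\iota_{(\cdot)}\rho)=0$, and $S(\iota_{\fl(t)}\rho) = (F^t)^*S(\iota_Z(F^t)^*{}^{-1}\rho)\cdots$ reduces via Lemma~\ref{lem:contraction}(4) (the $F^*\mathcal L$-naturality) and $(F^t)^*\rho=\rho$ to $S(\iota_Z\rho)=0$. This closes the argument. I expect the bookkeeping of the conjugation conventions ($F^t$ vs.\ $(F^t)^{-1}$) and the precise sign in $\frac{d}{dt}\m(t) = [\m(t),\fl(t)]$ versus $[\fl(t),\m(t)]$ to require care, but these are the routine parts; the conceptual content is entirely in the flow-invariance of $\rho$ under a cyclic vector field, which is already available from Section~\ref{sec:cy1}.
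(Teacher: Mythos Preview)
Your proposal is correct and follows essentially the same approach as the paper. Two minor points of comparison: (i) for the ``if'' direction the paper simply cites Fukaya~\cite{Fuk}, whereas you sketch the flow construction explicitly; (ii) for the ``only if'' direction the paper takes $\xi=\log(\widehat{f^{-1}})$ and sets $\fl(t)=\xi$ \emph{constant}, with $\m(t)=(F^t)^*\m$, using that $\widehat{\xi}$ commutes with $\widehat{F^t}=\exp(t\widehat{\xi})$ to get the isotopy equation directly. Your conjugated $\widehat{\fl(t)}=\widehat{F^t}\circ\widehat{Z}\circ(\widehat{F^t})^{-1}$ in fact equals $\widehat{Z}$ for the same reason, so your construction reduces to the paper's once you notice this; the sign and pushforward/pullback issues you flag are indeed routine.
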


\begin{proof}
	Assume that $f=(\id,f_2,f_3,\ldots)$ is such a cyclic isomorphism. Since $f_1=\id$, $f$ has a strict inverse $f^{-1}$, which is again unital, cyclic and $f^{-1}_1=\id$. By the previous lemma there exists a cyclic, reduced $\xi$, whose flow $F^t$ satisfies $f^{-1}= F^1$. 
	Pulling-back the \Ai-structure $\m$ via $ F^t$ yields a family of cochains $\m(t):= (F^t)^* \m= F^{-t}\circ\widehat{\m}\circ \widehat{F^t}$. As in the proof of Proposition \ref{prop:darboux}, $\m(t)$ form a family of \Ai-structures, such that
	\[\m(0)=\m \;\;\;\mbox{and}\;\;\; \m(1)=\m'.\] 
	The previous lemma implies that $F^t$ is cyclic, that is $(F^t)^*\rho=\rho$. Using Lemma \ref{lem:lie_pull} we compute
    \[\mathcal{L}_{\m(t)}\rho = \mathcal{L}_{(F^t)^*\m}\left((F^t)^*\rho\right) = (F^t)^*\mathcal{L}_\m \rho=0,\]
    which shows $\m(t)$ is cyclic. Next, we verify that $\m(t) + \xi dt$
	forms a cyclic pseudo-isotopy. Indeed, we have
	\begin{align*}
		 \frac{d}{dt}  \m(t)= & \frac{d}{dt} \big( F^{-t} \circ \widehat{\m} \circ \widehat{F^t}\big)\\
		= & -\xi \circ \widehat{F^{-t}} \circ \widehat{\m} \circ \widehat{F^t} + F^{-t} \circ \widehat{\m} \circ \widehat{\xi}\circ \widehat{F^t} \\
		= & -\xi \circ \widehat{F^{-t}} \circ \widehat{\m} \circ \widehat{F^t} + F^{-t} \circ \widehat{\m} \circ \widehat{F^t}\circ \widehat{\xi}\\
		= & \m(t)\circ \widehat{\xi} - \xi\circ\widehat{\m(t)} =[\m(t), \xi].
	\end{align*}
	In the second equality we used the differential equation defining $F^t$ and on the third we used that $\widehat{\xi}$ and $\widehat{F^t}$ commute, as can be seen from the explicit description of $F^t$ in the previous lemma.
	
	The other direction of the statement was proved by Fukaya \cite{Fuk}.
\end{proof}

\subsection{Homotopic TCFT's}

Let $ \m(t) + \fl(t) dt$ be a minimal, unital and cyclic pseudo-isotopy on $\CC$. Recall that this is equivalent to an \Ai\   structure in $\CC\otimes \Omega^\bullet$. 

Our next goal is to extend the construction of TCFT's in Theorem~\ref{thm:tcft} to pseudo-isotopic families. To do this, it is necessary to determine the signs when defining the map $\rho$. These are discussed in~\cite[Appendix]{WahWes}, and will be treated in more detail in~\cite{CalChe}. 

For exposition purposes, we describe the construction of the chain map $\rho^\CC$ in Theorem~\ref{thm:tcft} only in the case of Calabi-Yau dimension $d\equiv 0 \pmod{2}$ and with the number of outputs $l=0$. The discussion in the general case is analogous. We refer the interested reader to~\cite{CalChe} for full details.

Keeping the above assumptions in mind, let us consider the running example of the ribbon graph $G\in C_1^{\sf comb}(M^{\sf fr}_{1,1,0})$ depicted as
\[\includegraphics[scale=.6]{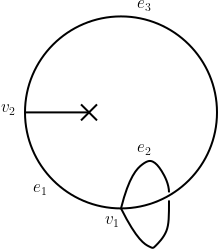}\]
In the first step we redraw the graph $G$ in the manner as illustrated in the black part of following picture.
\[\includegraphics[scale=.7]{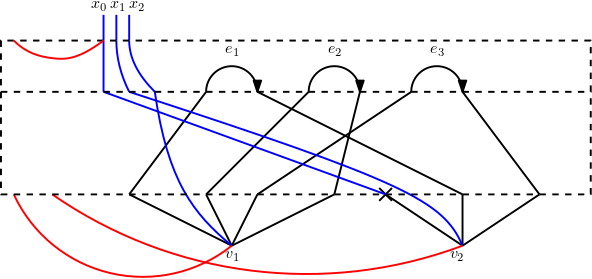}\]
In doing so, we need to choose 
\begin{itemize}
\item[(1)] an ordering of vertices, 
\item[(2)] an ordering of half-edges of $G$ (including leaves),
\item[(3)] an ordering of edges.
\end{itemize}
Also by definition an orientation of a ribbon graph is given by a choice of $(1)$ and $(2)$. We choose $(1)$ and $(2)$ so that it is compatible with the orientation of $G$. Since the Calabi-Yau pairing is of parity $d$ which we assumed to be even, the choice of $(3)$ is not relevant. In the blue part of the picture, there is also a Hochschild chain $x_0|x_1|x_2$ which we insert into the unique input cycle of the ribbon graph $G$. The red lines are attached to odd maps: the top red line is due to the shift map $$s: x_0|x_1|x_2 \mapsto sx_0|x_1|x_2;$$ while the bottom red lines are due to the odd map $$\widetilde{\m}_v:=\langle \m_{|v|-1}(-,\ldots,-),-\rangle$$  associated with black vertices of $G$.

In the evaluation $\rho^\CC(G)$, we simply put the Koszul sign associated with the permutation $\sigma_G$ as shown in the dashed box. Observe that this is indeed well-defined, i.e. we need to check that $\rho^\CC(G)$ flips its sign under $(1)$ and $(2)$, and is unchanged under $(3)$. At a black vertex $v$, its associated map is odd, which shows that switching $v_1$ with $v_2$ indeed produces a sign $-1$.  At an edge $e$, we put the inverse tensor of the pairing $\langle-,-\rangle^{-1}$ which is even, and hence the ordering of edges doesn't matter in this case. However, the orientation of each edge does flip the sign since the pairing $\langle-,-\rangle$ is anti-symmetric in the shifted degree. 

It is useful to write down the evaluation $\rho^\CC(G)$ (as explained above graphically) by the following equation:
\begin{equation}~\label{eq:action-map}
\rho^\CC(G):= (\prod_{v\in V_G^{\sf black}} \widetilde{\m}_v) \circ \sigma_G \circ  \big(\underbrace{s\otimes \cdots \otimes s}_{k \mbox{\; copies\;}} \otimes \underbrace{\langle-,-\rangle^{-1}\otimes \cdots \otimes \langle-,-\rangle^{-1}}_{|E_G| \mbox{\; copies\;}}\big)
\end{equation}
We choose the orderings involved in the above formula so that
\[ v_1\wedge\cdots\wedge v_n \wedge l_1 \wedge\cdots \wedge l_k  \wedge e_1^+\wedge e_1^- \wedge \cdots  \wedge e_m^+ \wedge e_m^- \]
agrees with the orientation of $G$, with $n=|V_G^{\sf black}|$ and $m=|E_G|$.

With the above construction of $\rho^\CC$, one can prove that it is a chain map, i.e.
\begin{equation}~\label{eq:chain-map}
 \rho^\CC(\partial G) = [\mathcal{L}_\m, \rho(G)].
 \end{equation}
See the Appendix~\ref{sec:sign} for a proof of this equation.


\begin{thm}
There exist chain maps
\[ \rho^{\CC\otimes \Omega^\bullet}_{g,k,l}: C^{\sf comb}_\bullet(M_{g,k,l}^{\sf fr}) \ra {\sf Hom} ( C_\bullet(\CC)[d]^{\otimes k}, C_\bullet(\CC)[d]^{\otimes l})\otimes_\mathbb{K} \Omega^\bullet\]
	with $g\geq 0$, $k\geq 1$, $l\geq 0$, and $2-2g-k-l<0$. The differential on the right hand side is $[\mathcal{L}_{\m(t)}+\mathcal{L}_{\fl(t)}dt, -]+d_{DR}$. Furthermore, these chain maps satisfy
	\begin{itemize}
	\item[(1.)] The restriction $\rho^{\CC\otimes \Omega^\bullet}_{g,k,l} \mid_{t=t_0} = \rho^{(\CC,\m(t_0))}_{g,k,l}$ for any fixed $t_0$.
	\item[(2.)] They are compatible with the composition maps on both sides.
	\end{itemize}
\end{thm}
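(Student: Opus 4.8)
The plan is to build the family action maps by mimicking the construction of $\rho^\CC$ in Equation~\eqref{eq:action-map}, but replacing the fixed $A_\infty$-structure $\m$ with the pseudo-isotopic family $\m(t)+\fl(t)dt$. Concretely, for a ribbon graph $G$ I would assign to each black vertex $v$ either the odd map $\widetilde{\m}_v(t):=\langle \m_{|v|-1}(t)(-,\ldots,-),-\rangle$, contributing a $\mathbb{K}[t]$-valued term, or, on exactly one chosen vertex, the map $\widetilde{\fl}_v(t):=\langle \fl_{|v|-1}(t)(-,\ldots,-),-\rangle$, contributing a $\mathbb{K}[t]\,dt$-valued term; summing over the choice of which vertex carries $\fl$ gives the $dt$-component of $\rho^{\CC\otimes\Omega^\bullet}(G)$, while taking $\fl$ nowhere gives the $t$-dependent $dt^0$-component. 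The cyclicity of both $\m(t)$ and $\fl(t)$ is what makes the contractions along edges (insertion of $\langle-,-\rangle^{-1}$) well-defined, exactly as in the one-structure case; since $d$ is even the ordering of edges is again irrelevant, and the Koszul sign $\sigma_G$ is unchanged. Property~(1.) is then immediate: restricting to $t=t_0$ and setting $dt=0$ recovers $\rho^{(\CC,\m(t_0))}$ verbatim.

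The heart of the matter is the chain map property with respect to the total differential $L_{\m(t)}+L_{\fl(t)}dt+d_{DR}$, i.e. the identity
\[
\rho^{\CC\otimes\Omega^\bullet}(\partial G)=\big[L_{\m(t)},\rho^{\CC\otimes\Omega^\bullet}(G)\big]+\big[L_{\fl(t)},\rho^{\CC\otimes\Omega^\bullet}(G)\big]dt+d_{DR}\rho^{\CC\otimes\Omega^\bullet}(G).
\]
I would prove this by separating the $\mathbb{K}[t]$ and $\mathbb{K}[t]dt$ components. The $dt^0$-component is precisely Equation~\eqref{eq:chain-map} applied pointwise at each value of $t$, using the $A_\infty$-relations for $\m(t)$; this follows from the sign analysis already referenced in Appendix~\ref{sec:sign}. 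The $dt$-component is where the pseudo-isotopy equation $\frac{d}{dt}\m(t)=[\m(t),\fl(t)]$ enters: the term $d_{DR}\rho^{\CC\otimes\Omega^\bullet}(G)$ produces $\frac{d}{dt}$ applied to each vertex map, i.e. a sum of insertions of $\frac{d}{dt}\m(t)=[\m(t),\fl(t)]$; the bracket $[L_{\m(t)},-]$ applied to the $dt$-part of $\rho(G)$ (the configurations with one $\fl$-vertex) produces the ``$[\m(t),\fl(t)]$'' terms with the opposite bookkeeping; and the combinatorial boundary $\partial G$ on the $dt$-part, together with $[L_{\fl(t)},-]$ on the $dt^0$-part, accounts for the remaining graph-degeneration contributions. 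Matching these four families of terms is a bookkeeping argument entirely parallel to the proof of~\eqref{eq:chain-map}, with the pseudo-isotopy identity playing the role that the $A_\infty$-relation plays there.

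Property~(2.), compatibility with composition (sewing of Riemann surfaces on the geometric side, composition of $\mathsf{Hom}$-spaces on the algebraic side), is inherited from the single-structure case because sewing two ribbon graphs simply concatenates the assignments of vertex maps and edge contractions; the only point to check is that a $\fl$-vertex on a sewn graph corresponds to a $\fl$-vertex on exactly one of the two factors, which is immediate from the Leibniz-type definition of the $dt$-component. Thus the composition axiom reduces to the associativity/multiplicativity of the evaluation rule~\eqref{eq:action-map}, which is already established.

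I expect the main obstacle to be purely the sign verification in the $dt$-component of the chain map identity: one must track the Koszul signs coming from the odd shift maps $s$, the odd vertex maps, the de Rham differential on $\Omega^\bullet=\mathbb{K}[t,dt]$ (so that $dt$ anticommutes past odd factors), and the extra sign in the pseudo-isotopy equation $\frac{d}{dt}\m(t)(x_1,\ldots,x_k)=\cdots-\sum(-1)^{|x_1|'+\cdots+|x_r|'}\fl_i(t)(\ldots)$. Since the paper defers the detailed sign discussion of even the $l=0$, $d$ even case to Appendix~\ref{sec:sign} and to~\cite{CalChe}, I would handle the family case in the same spirit: give the construction and the structural matching of terms explicitly, and relegate the sign diagram chase to Appendix~\ref{sec:sign}, noting that the new ingredient is only the single $\fl$-insertion and the $d_{DR}$ on the one-variable de Rham algebra.
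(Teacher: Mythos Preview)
Your proposal is correct and follows essentially the same approach as the paper: define $\rho^{\CC\otimes\Omega^\bullet}(G)$ by replacing each vertex operator $\widetilde{\m}_v$ with $\widetilde{\m}_v(t)+\widetilde{\fl}_v(t)\,dt$ and expand using $dt^2=0$, then verify the chain map identity by separating the $dt^0$ and $dt$ components, with the pseudo-isotopy equation $\frac{d}{dt}\m(t)=[\m(t),\fl(t)]$ governing the $dt$-component matching and the sign analysis deferred to Appendix~\ref{sec:sign}. The only cosmetic difference is that the paper organizes the $dt$-component verification by computing $[d_{DR},\rho^{[0]}(G)]$ first and then identifying its three configuration types with the terms $(i)$, $(ii)$, $(iii)$ on the other side, whereas you describe the matching from both directions simultaneously; the content is the same.
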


\begin{proof}

The action map $\rho^{\CC\otimes \Omega^\bullet}_{g,k,l}$  is defined in the same way as in Theorem~\ref{thm:tcft}, except at one of the black vertices we put the operator $\fl dt$. Again, we shall only describe the construction in the case $l=0$ and $d\equiv 0\pmod{2}$. In view of Equation~\eqref{eq:action-map}, we set
\begin{equation}~\label{eq:action-map-tensor}
\rho^{\CC\otimes \Omega^\bullet}(G):= (\prod_{v\in V_G^{\sf black}} (\widetilde{\m}_v+\widetilde{\fl}_v dt)) \circ \sigma_G \circ  \big(\underbrace{s\otimes \cdots \otimes s}_{k \mbox{\; copies\;}} \otimes \underbrace{\langle-,-\rangle^{-1}\otimes \cdots \otimes \langle-,-\rangle^{-1}}_{|E_G| \mbox{\; copies\;}}\big)
\end{equation}
Since $dt^2=0$, we can decompose
\begin{align*}
&\rho^{\CC\otimes \Omega^\bullet}(G) = \rho^{\CC\otimes \Omega^\bullet}_{[0]}(G)+\rho^{\CC\otimes \Omega^\bullet}_{[1]}(G)\\
&\rho^{\CC\otimes \Omega^\bullet}_{[0]}(G):=(\prod_{v\in V_G^{\sf black}} \widetilde{\m}_v) \circ \sigma_G \circ  \big(\underbrace{s\otimes \cdots \otimes s}_{k \mbox{\; copies\;}} \otimes \underbrace{\langle-,-\rangle^{-1}\otimes \cdots \otimes \langle-,-\rangle^{-1}}_{|E_G| \mbox{\; copies\;}}\big)\\
&\rho^{\CC\otimes \Omega^\bullet}_{[1]}(G):=\\
&\sum_{j=1}^{n}  (\widetilde{\m}_{v_1}\otimes \cdots \otimes\widetilde{\fl}_{v_j}dt \otimes \cdots \otimes \widetilde{\m}_{v_{n}}) \circ \sigma_G \circ  \big(\underbrace{s\otimes \cdots \otimes s}_{k \mbox{\; copies\;}} \otimes \underbrace{\langle-,-\rangle^{-1}\otimes \cdots \otimes \langle-,-\rangle^{-1}}_{|E_G| \mbox{\; copies\;}}\big)
\end{align*}
To verify that $\rho^{\CC\otimes \Omega^\bullet}$ defined as above is indeed a chain map, we need to prove
\[ \rho^{\CC\otimes \Omega^\bullet}(\partial G) = [ \mathcal{L}_\m+ \mathcal{L}_\fl dt + d_{DR}, \rho^{\CC\otimes \Omega^\bullet}(G)]\]
In the case $l=0$, writing the above equation in its components yields
\begin{align}~\label{eq:25}
\begin{split}
 \rho^{\CC\otimes \Omega^\bullet}_{[0]}(\partial G)&+(-1)^{|G|}\rho^{\CC\otimes \Omega^\bullet}_{[0]}(G)\mathcal{L}_\m = 0\\
\underbrace{\rho^{\CC\otimes \Omega^\bullet}_{[1]}(\partial G)}_{(i)} & +\underbrace{(-1)^{|G|}\rho^{\CC\otimes \Omega^\bullet}_{[0]}(G) \mathcal{L}_\fl dt}_{(ii)}  + \underbrace{(-1)^{|G|}\rho^{\CC\otimes \Omega^\bullet}_{[1]}(G) \mathcal{L}_\m}_{(iii)}=[d_{DR},\rho^{\CC\otimes \Omega^\bullet}_{[0]}(G)]
\end{split}
\end{align}
The top equation is the same as in Equation~\eqref{eq:chain-map}. For the second equation,  since $[d_{DR},\m]+[\m,\fl dt]=0$ we have
\begin{align*}
& [d_{DR},\rho^{\CC\otimes \Omega^\bullet}_{[0]}(G)]= \\
& \sum_{j=1}^{n}  (-1)^j (\widetilde{\m}_{v_1}\otimes \cdots \otimes[\widetilde{\m},\widetilde{\fl}dt] \otimes \cdots \otimes \widetilde{\m}_{v_{n}}) \circ \sigma_G \circ  \big(\underbrace{s\otimes \cdots \otimes s}_{k \mbox{\; copies\;}} \otimes \underbrace{\langle-,-\rangle^{-1}\otimes \cdots \otimes \langle-,-\rangle^{-1}}_{|E_G| \mbox{\; copies\;}}\big)
\end{align*}
In this equation, there are three types of configurations in the above composition, depicted in the following picture.
\[\includegraphics[scale=.8]{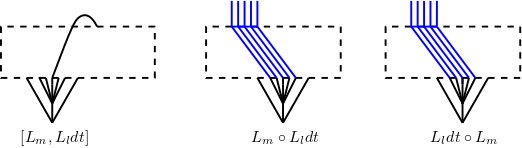}\]
The leftmost case is when the inputs of the upper vertex involves an edge in $G$, this term corresponds to $(i)$ in Equation $(23)$ above. The middle and rightmost cases correspond to $(ii)$ and $(iii)$ respectively. The signs involved here are explained in the Appendix~\ref{sec:sign}.
\end{proof}


\subsection{Homotopic DGLA's}

One of the main difficulties in proving that CEI are invariant under unital cyclic $A_\infty$-isomorphisms is that the construction of the DGLA $\widehat{\mathfrak{h}}_\CC$ in Equation~\eqref{eq:hhat} associated with $\CC$ is {\em not} functorial. To see this, it suffices to observe that the Mukai pairing is already not functorial on the chain-level. To avoid this issue, in this subsection we extend the construction of $\widehat{\mathfrak{h}}_\CC$ associated with an $A_\infty$ category $\CC$ to the family version. Again, we shall work with $ \m(t) + \fl(t) dt$ that is a minimal, cyclic and unital pseudo-isotopy on an $A_\infty$-category category $\CC$. 

The family version of the DGLA construction is defined as follows. As a graded vector space, it is given by 
\[ \widehat{\mathfrak{h}}_{\CC\otimes\Omega^\bullet}:= \bigoplus_{k\geq 1,l\geq 0} {\sf Hom}^c \Big( \Sym^k \big(L_+[1]\big), \Sym^l (L_-)\otimes_\mathbb{K} \Omega^\bullet\Big)[2] [[\hbar,\lambda]].\]
Its DGLA structure is analogously defined as in Subsection~\ref{subsec:tcft-dgla}:
\begin{itemize}
\item Its differential is of the form $\eth_\otimes+d_{DR}+\iota_\otimes+ \hbar \Delta_\otimes$, where
$\eth_\otimes$ is the commutator with $\mathcal{L}_{\m(t)}+\mathcal{L}_{\mathfrak{l}(t)}dt+uB$, $\Delta_\otimes$ is the twisted self-sewing operator, i.e. sewing with $\rho^{\CC\otimes\Omega^\bullet}(\mathbb{M})$, and the map $\iota_\otimes$ is sewing with $\rho^{\CC\otimes\Omega^\bullet}(M)$. 
\item Its Lie bracket is of the form
$\{-,-\}_\hbar := \sum_{r\geq 1} \frac{1}{r!} \cdot \{-,-\}_r \hbar^{r-1}$ with $\{-,-\}_r$ the $r$-th twisted sewing operator. This is the same Lie bracket (extended $\Omega^\bullet$ linearly) as in the DGLA $ \widehat{\mathfrak{h}}_{\CC}$. 
\end{itemize}

By definition, the isotopic family of TCFT's $\rho^{\CC\otimes\Omega^\bullet}$ then gives us a morphism of DGLA's:
\[ \rho^{\CC\otimes\Omega^\bullet}: \widehat{\mathfrak{g}} \; \ra \;  \widehat{\mathfrak{h}}_{\CC\otimes\Omega^\bullet},\]
which specializes to $\rho^{(\CC,\m(t_0))}$ in Equation~\eqref{eq:rho-dgla} for any fixed $t_0$. 
More precisely, for each $t_0$ there is a DGLA quasi-isomorphism ${\sf ev_{t_0}}: \widehat{\mathfrak{h}}_{\CC\otimes\Omega^\bullet} \to \widehat{\mathfrak{h}}_{(\CC, \m(t_0))}$, such that the following diagram commutes
\[\begin{tikzcd}
\widehat{\mathfrak{g}}  \arrow[r,"\rho^{\CC\otimes\Omega^\bullet}"] \arrow[dr,"\rho^{(\CC,\m(t_0))}"left] & \widehat{\mathfrak{h}}_{\CC\otimes\Omega^\bullet} \arrow[d, "{\sf ev_{t_0}}"] \\
& \widehat{\mathfrak{h}}_{(\CC, \m(t_0))}
\end{tikzcd}\]

Similarly, there is a family version of the DGLA $\mathfrak{h}$. We define
\[\mathfrak{h}_{\CC\otimes\Omega^\bullet}:=\big(\Sym(L_-)\otimes \Omega^\bullet\big)[[\hbar,\lambda]][1],\]
equipped with the differential $\mathcal{L}_{\m(t)}+\mathcal{L}_{\mathfrak{l}(t)}dt+uB+\hbar\Delta_\otimes$. The Lie bracket is determined by $\Delta_\otimes$ as in Subsection \ref{subsec:tcft-dgla}. Again, there are evaluation maps ${\sf ev_{t_0}}: \mathfrak{h}_{\CC\otimes\Omega^\bullet} \to \mathfrak{h}_{(\CC, \m(t_0))}$, for each $t_0\in \mathbb{K}$. These are maps of DGLAs by construction. Using our assumption that $\CC$ is minimal and arguing by the length filtration, one may deduce that ${\sf ev_{t_0}}$ are quasi-isomorphisms. Finally, as in Lemma \ref{lem:iota} there is a DGLA homomorphism
\[\bar{\iota}_\otimes: \mathfrak{h}_{\CC\otimes\Omega^\bullet}^+\to \widehat{\mathfrak{h}}_{\CC\otimes\Omega^\bullet}.\]
It follows from the construction, that $\bar{\iota}_\otimes$ specializes to $\bar{\iota}$ at $t=0$, that is ${\sf ev_{0}}\circ \bar{\iota}_\otimes = \bar{\iota}\circ {\sf ev_{0}}$. The evaluation maps and $\bar{\iota}$ are quasi-isomorphisms by Lemma \ref{lem:iota},  therefore $\bar{\iota}_\otimes$ is a quasi-isomorphism of DGLAs. 

\subsection{Cyclic invariance}
Let $\big(\CC,\langle-,-\rangle,\{\m_k\}\big)$ and $\big(\CC',\langle-,-\rangle,\{\m'_k\}\big)$ be two minimal, unital and cyclic $A_\infty$-categories with the same underlying set of objects, $\hom$-spaces, and the cyclic pairing. Let $f$ be an unital and cyclic $A_\infty$ functor $f=(f_1,f_2,\ldots): \CC \to \CC'$ with $f_1=\id$.
	
For a splitting map $s: H_*(L_{\CC})^{\sf Tate} \ra H_*(L_{\CC}^{\sf Tate})$ of $\CC$, there is a naturally associated splitting  of $\CC'$ defined by the composition
	\[ f_* s f_*^{-1} : H_*(L_{\CC'})^{\sf Tate} \ra H_*(L_{\CC})^{\sf Tate} \ra H_*(L_{\CC}^{\sf Tate})\ra H_*(L_{\CC'}^{\sf Tate}).\]
\begin{thm}~\label{thm:cyclic-inv}
Let $f$ be an unital and cyclic $A_\infty$-isomorphism
	\[ f=(f_1,f_2,\ldots): \big(\CC,\langle-,-\rangle,\{\m_k\}) \;\ra\; \big(\CC',\langle-,-\rangle,\{\m'_k\}\big)\]
	with $f_1=\id$. Then we have
	\[ f_* F_{g,n}^{\CC,s} = F_{g,n}^{\CC', f_* s f_*^{-1}}.\]
	\end{thm}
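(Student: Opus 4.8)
The plan is to reduce the statement about the invariants $F_{g,n}^{\CC,s}$, which are built from Maurer--Cartan elements of the DGLA's $\mathfrak{h}_\CC$ and $\mathfrak{h}_{\CC'}$, to a single ``family'' DGLA $\widehat{\mathfrak{h}}_{\CC\otimes\Omega^\bullet}$ interpolating between $t=0$ (the category $\CC$) and $t=1$ (the category $\CC'$). First, by Proposition~\ref{prop:isotopy} the cyclic unital isomorphism $f$ with $f_1=\id$ gives rise to a minimal, unital, cyclic pseudo-isotopy $\m(t)+\fl(t)dt$ on the fixed underlying graded category, with $\m(0)=\m$ and $\m(1)=\m'$. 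This is the object $\CC\otimes\Omega^\bullet$, and the two previous subsections provide (i) a family TCFT $\rho^{\CC\otimes\Omega^\bullet}$ and (ii) the DGLA morphism $\rho^{\CC\otimes\Omega^\bullet}:\widehat{\mathfrak g}\to\widehat{\mathfrak h}_{\CC\otimes\Omega^\bullet}$ whose specialization at $t=t_0$ recovers $\rho^{(\CC,\m(t_0))}$. Pushing forward the combinatorial string vertex $\widehat{\mathcal V}$ yields a Maurer--Cartan element $\widehat\beta^{\CC\otimes\Omega^\bullet}\in\widehat{\mathfrak h}_{\CC\otimes\Omega^\bullet}$ restricting to $\widehat\beta^\CC$ at $t=0$ and to $\widehat\beta^{\CC'}$ at $t=1$.

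Next I would run the algebraic side of the CEI construction in the family. The smoothness/Hodge-to-de-Rham hypotheses hold for each fiber, so there is a quasi-isomorphism $\iota:\mathfrak h^+_{\CC\otimes\Omega^\bullet}\to\widehat{\mathfrak h}_{\CC\otimes\Omega^\bullet}$ compatible with restriction to $t=t_0$ (here one uses that $\Omega^\bullet=\mathbb K[t,dt]$ is quasi-isomorphic to $\mathbb K$ via evaluation, so the family statement follows from the fiberwise one plus a standard homotopy-invariance argument for Maurer--Cartan elements over $\Omega^\bullet$). This produces a family Maurer--Cartan element $\widetilde{\beta^{\CC\otimes\Omega^\bullet}}\in\mathfrak h_{\CC\otimes\Omega^\bullet}=SL_-[[\hbar,\lambda]][1]\otimes\Omega^\bullet$, and by the ``constant in $t$ up to gauge'' principle its restrictions at $t=0$ and $t=1$ are gauge equivalent inside $\mathfrak h_\CC$ after the natural identification $L_-^{\CC}\cong L_-^{\CC'}$ induced by $f_*$. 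Then, for a fixed splitting $s$ one builds the family trivialization $L_\infty$-morphism $K^s:\mathfrak h_{\CC\otimes\Omega^\bullet}\to\mathfrak h^{\sf triv}_{\CC\otimes\Omega^\bullet}$ exactly as in~\cite{CT}, and by Equation~\eqref{eq:explicit-formula} the generating series $\sum_{g,n}F_{g,n}\hbar^g\lambda^{2g-2+n}=K^s_*\widetilde{\beta^{\CC\otimes\Omega^\bullet}}$ is now a closed element of $\mathfrak h^{\sf triv}_{\CC\otimes\Omega^\bullet}$. Since the differential there is $b+uB+d_{DR}$ with zero bracket, its de Rham–closed component implies the $t=0$ and $t=1$ restrictions are homologous, hence equal in $H_\bullet(L)_-$; tracking how $f_*$ acts on the identification of the two fibers converts the $t=1$ restriction into $F_{g,n}^{\CC',f_*sf_*^{-1}}$, giving $f_*F_{g,n}^{\CC,s}=F_{g,n}^{\CC',f_*sf_*^{-1}}$.

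I expect the main obstacle to be establishing the family analogue of the key lemma guaranteeing the quasi-isomorphism $\iota:\mathfrak h^+_{\CC\otimes\Omega^\bullet}\to\widehat{\mathfrak h}_{\CC\otimes\Omega^\bullet}$ together with the uniqueness (up to gauge) of the lift $\widetilde{\beta^{\CC\otimes\Omega^\bullet}}$ over $\Omega^\bullet$, and making sure the trivialization $K^s$ can genuinely be carried out $\Omega^\bullet$-linearly so that restriction at $t_0$ commutes with everything. The subtlety is that the Mukai pairing — hence the whole Weyl algebra/Fock space apparatus — varies with $t$, so one must check that the $t$-dependence only enters through $\mathcal L_{\fl(t)}dt$ in a controlled way and that the formal-variable completions behave well; this is precisely why the pseudo-isotopy (rather than a bare $A_\infty$-isomorphism) and the family TCFT of the previous subsections are needed. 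A secondary technical point is bookkeeping the identification $L^{\sf Tate}_\CC\cong L^{\sf Tate}_{\CC'}$ and the splitting $s\mapsto f_*sf_*^{-1}$ compatibly with the gauge equivalence, but this is routine once the family framework is in place.
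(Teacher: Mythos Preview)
Your overall strategy matches the paper's: pass from $f$ to a cyclic unital pseudo-isotopy via Proposition~\ref{prop:isotopy}, push $\widehat{\mathcal V}$ forward along the family TCFT $\rho^{\CC\otimes\Omega^\bullet}$, lift through $\iota$, apply a family trivialization, and compare restrictions at $t=0,1$. That is exactly the architecture of the paper's proof.

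There is, however, a genuine gap at the step you yourself flag as the main obstacle. You propose to build the family trivialization ``for a fixed splitting $s$ \dots\ exactly as in~\cite{CT}'', and then say the differential on $\mathfrak h^{\sf triv}_{\CC\otimes\Omega^\bullet}$ is $b+uB+d_{DR}$. Neither is quite right. The trivialization in~\cite{CT} is built from a chain-level splitting $R$, and in the family the chain-level Mukai pairing $\rho^{\CC\otimes\Omega^\bullet}(M)$ and the twisted sewing $\rho^{\CC\otimes\Omega^\bullet}(\mathbb M)$ acquire $dt$-components; a constant $R$ does not produce a homotopy $\mathcal H$ satisfying the required commutator identity with $b(t)+uB+d_{DR}+\mathcal L_\xi dt$. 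The paper's resolution is to use the $t$-dependent lift $R(t)=\exp(-t\xi)_*\circ R\circ\exp(t\xi)_*$ and to verify by hand that the resulting $\mathcal H$ satisfies $[b(t)+uB+d_{DR}+\mathcal L_\xi dt,\mathcal H]=\rho^{\CC\otimes\Omega^\bullet}(\mathbb M)$ on the relevant inputs (this is Equation~\eqref{eq:commutator}, and the key cancellation is $d_{DR}T_1(t)=-[\mathcal L_\xi dt,T_1(t)]$). With this choice, the family trivialization $\mathcal K$ specializes at $t=1$ to the trivialization for the splitting $f_*sf_*^{-1}$, which is precisely how that splitting enters the statement.

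Relatedly, the differential on the trivialized family DGLA retains the term $\mathcal L_\xi dt$: a Maurer--Cartan element $\gamma(t)=\theta(t)+\eta(t)dt$ satisfies $(b(t)+uB)\theta(t)=0$ together with $\tfrac{d}{dt}\theta(t)+\mathcal L_\xi\theta(t)+(b(t)+uB)\eta(t)=0$. Hence closedness does \emph{not} say $\theta(0)$ and $\theta(1)$ are homologous; it says $[\exp(-\xi)_*\theta(0)]=[\theta(1)]$, and since $f^{-1}=\exp(\xi)$ this is exactly $f_*[\theta(0)]=[\theta(1)]$. That ODE argument (the final lemma in the paper's proof) is what converts the family closedness into the identity $f_*F_{g,n}^{\CC,s}=F_{g,n}^{\CC',f_*sf_*^{-1}}$; your ``de Rham--closed implies homologous, then track $f_*$ afterwards'' does not account for it.
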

	
	\begin{proof}
	We first use Proposition~\ref{prop:isotopy} to construct a pseudo-isotopic family of $A_\infty$ categories given by
	\[ \m(t)+\fl(t) dt:= \exp (t \xi)^* \m + \xi dt \]
	with $f^{-1}= \exp (\xi)$. Using the associated isotopic family of TCFT's we obtain a morphism $\rho^{\CC\otimes\Omega^\bullet}: \widehat{\mathfrak{g}} \; \ra \;  \widehat{\mathfrak{h}}_{\CC\otimes\Omega^\bullet}$ of DGLA's. Use it to push-forward the string vertex $\widehat{\mathcal{V}}$ we obtain a Maurer-Cartan element $\widehat{\beta}^{\CC\otimes\Omega^\bullet} =  \rho^{\CC\otimes\Omega^\bullet}_* (\widehat{\mathcal{V}})$. Consider the following diagram
	\[ \begin{CD}
  @. @. \widehat{\mathfrak{g}} \\
 @.  @.    @VV \rho^{\CC\otimes \Omega^\bullet} V \\
\mathfrak{h}_{\CC\otimes\Omega^\bullet} @>>> \mathfrak{h}_{\CC\otimes\Omega^\bullet}^+@>\bar{\iota}>> \widehat{\mathfrak{h}}_{\CC\otimes\Omega^\bullet}
  \end{CD}\]
  
Since $\bar{\iota}$ is a quasi-isomorphism of DGLA's, denote by $\beta^{\CC\otimes\Omega^\bullet}$ the unique (up to gauge equivalence) Maurer-Cartan element of $\mathfrak{h}_{\CC\otimes\Omega^\bullet}^+$ such that $\bar{\iota}_*( \beta^{\CC\otimes\Omega^\bullet})$ is gauge equivalent to $\widehat{\beta}^{\CC\otimes\Omega^\bullet}$. As in Subsection~\ref{subsec:tcft-dgla}, we may consider $\beta^{\CC\otimes\Omega^\bullet}$ as a Maurer-Cartan element of $\mathfrak{h}_{\CC\otimes\Omega^\bullet}$ via the inclusion $\mathfrak{h}_{\CC\otimes\Omega^\bullet}^+\subset \mathfrak{h}_{\CC\otimes\Omega^\bullet}$.

Next, we proceed to construct the family version of the trivialization map used in Equation~\eqref{eq:trivialization} to compute CEI. Recall from {\em Loc. Cit.} this is an $L_\infty$ quasi-isomorphism
	\[ \mathcal{K}: \mathfrak{h}_{\CC\otimes\Omega^\bullet}  \; \ra \; \mathfrak{h}_{\CC\otimes\Omega^\bullet} ^{\sf triv}.\] 
The construction of $\mathcal{K}$ is similar to that of $K$, and relies on a homotopy trivialization of the twisted sewing map map $\rho^{\CC\otimes\Omega^\bullet}(\mathbb{M})=\rho^{\CC\otimes\Omega^\bullet}\big( \begin{tikzpicture}[baseline={([yshift=-0.4ex]current bounding box.center)},scale=0.3]
\draw [thick] (0,2) circle [radius=2];
\draw [thick] (2,2) to (-0.6,2);
\draw [thick] (-0.8,2.2) to (-0.4,1.8);
\draw [thick] (-0.8, 1.8) to (-0.4, 2.2);
\draw [thick] (2,2) to (3.4,2);
\draw [thick] (3.2,2.2) to (3.6,1.8);
\draw [thick] (3.2, 1.8) to (3.6, 2.2);
\end{tikzpicture}\big)$. That is, we need to construct an operator
\[\mathcal{H} : L_- \otimes L_- \ra \mathbb{K}[t,dt]\]
such that 
\begin{equation}~\label{eq:commutator}
 [ \mathcal{L}_{\m(t)} + uB + d_{DR}+ \mathcal{L}_\xi dt , \mathcal{H} ] (\alpha u^{-i},\beta u^{-j})= \begin{cases}
\rho^{\CC\otimes\Omega^\bullet} (\mathbb{M})(\alpha ,\beta) & \mbox{\; if \;} i=j=0;\\
0 & \mbox{\; otherwise. \;}
\end{cases}
\end{equation}
Indeed, as in~\cite[Section 7.1]{CT}, let us choose a chain level lift of the splitting map $s$ of the form
\[ R= \id +R_1 u +R_2 u^2 + \cdots \in {\sf End}(L_{\CC})[[u]],\]
and let $T$ denote its inverse.
From this, we form the operator
\[R(t):= \exp(-t\xi) _* \circ R  \circ\exp (t\xi)_* \in {\sf End}(L_{\CC}[t])[[u]]\]
which is a chain level lift of the splitting map $\exp(-t\xi)_* \circ s \circ\exp (t\xi)_*$. Denote by $T(t)=\exp(-t\xi) _* \circ T  \circ\exp (t\xi)_*$ the inverse operator of $R(t)$. As in~\cite[Proposition 7.5]{CT}, we set
\begin{align*}
 \mathcal{H} (\alpha u^{-i},\beta u^{-j}) &:= \rho^{\CC\otimes\Omega^\bullet}(M)\Big( (-1)^j \sum_{l=0}^j R(t)_l T(t)_{i+j+1-l} \alpha,\beta\Big)
\end{align*}
To verify the commutator identity we need the following identities:
\begin{itemize}
    \item $\mathcal{L}_{\xi}\circ \exp(t\xi)_*=\exp(t\xi)_*\circ \mathcal{L}_{\xi}$ ;
    \item $\exp(t\xi)_*\circ \mathcal{L}_{\m(t)} = \mathcal{L}_{\m} \circ \exp(t\xi)_*$ .
\end{itemize}
The first identity can be proved, using Lemma \ref{lem:lie_pull}(2), by checking both sides satisfy the same differential equation: $\frac{d}{dt}\gamma(t)= \mathcal{L}_\xi \gamma(t)$, with the same initial condition. Then the identity follows from uniqueness of solutions to ODE's. The second identity holds because $\exp(t\xi)$ is an \Ai homomorphism between the $\m(t)$ and $\m$ \Ai \ structures.

We now compute the left-hand side of 
Equation~\eqref{eq:commutator}, in the case $i=j=0$,
\begin{align*}
&[\mathcal{L}_{\m(t)}+d_{DR}+\mathcal{L}_\xi dt, \mathcal{H}] (\alpha,\beta)\\
=&-\rho^{\CC\otimes\Omega^\bullet}(M)\big(T_1(\mathcal{L}_{\m(t)}+d_{DR}+\mathcal{L}_\xi dt) \alpha, \beta\big)-(-1)^{|\alpha|}\rho^{\CC\otimes\Omega^\bullet}(M)\big(T_1 \alpha, (L_{\m(t)}+d_{DR}+L_\xi dt)\beta\big)\\
=& \rho^{\CC\otimes\Omega^\bullet}(M)\big((\mathcal{L}_{\m(t)}+d_{DR}+\mathcal{L}_\xi dt)T_1 \alpha, \beta\big)-\rho^{\CC\otimes\Omega^\bullet}(M)\big(T_1(\mathcal{L}_{\m(t)}+d_{DR}+\mathcal{L}_\xi dt) \alpha, \beta\big)\\
= & \rho^{\CC\otimes\Omega^\bullet}(M)\big( [\mathcal{L}_{\m(t)},T_1] \alpha, \beta\big) + \rho^{\CC\otimes\Omega^\bullet}(M)\big( (d_{DR} T_1) \alpha, \beta\big) + \rho^{\CC\otimes\Omega^\bullet}(M)\big( [\mathcal{L}_\xi dt, T_1] \alpha,\beta\big).
\end{align*}
Here we used the fact that $\rho^{\CC\otimes\Omega^\bullet}(M)$ is a chain map. Using the second identity above we observe that $[\mathcal{L}_{\m(t)},T(t)_1]= B$, since $[\mathcal{L}_{\m},T_1]= B$, which in turn follows from $R$ being a chain map. For the term $d_{DR} T_1$ we compute
\begin{align*} d_{DR} T(t)_1 & =d_{DR}( \exp(-t\xi)_* T_1 \exp(t\xi)_* )\\
&= \big(\mathcal{L}_{-\xi}\exp(-t\xi)_* T_1 \exp(t\xi)_* + \exp(-t\xi)_* T_1 \mathcal{L}_{\xi} \exp(t\xi)_*\big)dt\\  
&=-[ \mathcal{L}_\xi dt, T(t)_1], 
\end{align*}
using Lemma \ref{lem:lie_pull}(2) and the first identity above. Putting these together we obtain
\[[\mathcal{L}_{\m(t)}+d_{DR}+\mathcal{L}_\xi dt, \mathcal{H}] (\alpha,\beta)= \rho^{\CC\otimes\Omega^\bullet}(M)\big(B \alpha, \beta\big) = \rho^{\CC\otimes\Omega^\bullet}(\mathbb{M})\big(\alpha, \beta\big),\]
as claimed. The other cases with $i\neq 0$ or $j\neq 0$ can be computed similarly using the commutator relations proved in~\cite[Proposition 7.5]{CT}.

Using the homotopy operator $\mathcal{H}$, we obtain an $L_\infty$ map
\[ \mathcal{K}: \mathfrak{h}_{\CC\otimes\Omega^\bullet}  \; \ra \; \mathfrak{h}_{\CC\otimes\Omega^\bullet} ^{\sf triv}.\] 
defined using a stable graph sum formula~\cite[Section 4.8]{CT}. 

Since by construction $\mathcal{K}$ specializes to $K(0)$ and $K(1)$ at $t=0$ and $t=1$ respectively, the push-forward $\mathcal{K}_* \widetilde{\beta^{\CC\otimes \Omega^\bullet}}$ specializes to $\sum_{g,n} F_{g,n}^{\CC,s} \hbar^g\lambda^{2g-2+n}$ and $\sum_{g,n} F_{g,n}^{\CC',f_*sf_*^{-1}}\hbar^g\lambda^{2g-2+n}$ at $t=0$ and $t=1$. The theorem now follows from the following lemma.
	\end{proof}

	\begin{lem}
	Let $\gamma(t)$ be a Maurer-Cartan element of the trivialized DGLA $\mathfrak{h}_{\CC\otimes\Omega^\bullet} ^{\sf triv}$, i.e. it satisfies the equation
	\[ (\mathcal{L}_{\m(t)}+uB+d_{DR}+\mathcal{L}_\xi dt ) \gamma(t) =0.\]
	Then we have $[\exp(-\xi)_* \gamma(0)] = [\gamma(1)]$ in the $(b+uB)$-homology $H_*(\mathfrak{h}_{\CC'}^{\sf triv})$.
	\end{lem}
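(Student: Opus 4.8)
The plan is to recognize this as a standard statement about Maurer--Cartan elements in a DGLA tensored with the polynomial de Rham algebra $\Omega^\bullet = \mathbb{K}[t,dt]$: such a family of MC elements is a \emph{homotopy} between its two specializations, so the specializations are gauge equivalent, hence cohomologous when the bracket vanishes. Concretely, I would set up the following. Write $\gamma(t) = \gamma_0(t) + \gamma_1(t)\,dt$ with $\gamma_0(t),\gamma_1(t)$ polynomial in $t$ with values in $\mathfrak{h}_{\CC}^{\sf triv}$, and observe that since $\mathfrak{h}_{\CC\otimes\Omega^\bullet}^{\sf triv}$ has \emph{zero} Lie bracket, the MC equation $(b(t)+uB+d_{DR}+\mathcal{L}_\xi dt)\gamma(t)=0$ is simply the closedness condition. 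Expanding in the $dt$-degree, the $dt$-component of this equation reads
\[ \frac{d}{dt}\gamma_0(t) + (b(t)+uB)\gamma_1(t) + \mathcal{L}_\xi \gamma_0(t) = 0, \]
using that $b(t) = \mathcal{L}_{\m(t)}$ and $d_{DR}(\gamma_0(t)) = \frac{d}{dt}\gamma_0(t)\,dt$ while $d_{DR}(\gamma_1(t)\,dt)=0$.

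The next step is to integrate this identity from $t=0$ to $t=1$. The subtlety is that $b(t)=\mathcal{L}_{\m(t)}$ varies with $t$, so $\gamma_0(t)$ lives in a $t$-dependent complex and I cannot directly integrate $\frac{d}{dt}\gamma_0(t) = -(b(t)+uB)\gamma_1(t) - \mathcal{L}_\xi\gamma_0(t)$ to conclude $\gamma_0(1)-\gamma_0(0)$ is a boundary. To fix this I would first use the flow $\exp(t\xi)$ (the coderivation flow from Lemma~\ref{lem:flow}, here with $t$-independent generator $\xi$, so $F^t = \exp(t\xi)$) to transport everything to the fixed complex at $t=0$. Recall $\m(t) = \exp(t\xi)^*\m$, so $\exp(t\xi)_* \circ \mathcal{L}_{\m(t)} = \mathcal{L}_{\m}\circ \exp(t\xi)_*$ (this is Lemma~\ref{lem:lie_pull}(2)/(3) together with $\mathcal{L}_{F^*Z}(F^*\rho)=F^*(\mathcal{L}_Z\rho)$-type identities and the fact that $\exp(t\xi)_*$ is a chain map from $(L,\mathcal{L}_{\m(t)})$ to $(L,\mathcal{L}_{\m})$; it also commutes with $B$ since $\xi$ is reduced so that $\exp(t\xi)_*$ respects the reduced complex). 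Set $\tilde\gamma_0(t) := \exp(t\xi)_*\gamma_0(t)$ and $\tilde\gamma_1(t) := \exp(t\xi)_*\gamma_1(t)$. Applying $\exp(t\xi)_*$ to the displayed $dt$-component equation and using $\frac{d}{dt}\big(\exp(t\xi)_*\gamma_0(t)\big) = \exp(t\xi)_*\big(\frac{d}{dt}\gamma_0(t) + \mathcal{L}_\xi\gamma_0(t)\big)$ (which is exactly Lemma~\ref{lem:lie_pull}-style differentiation of the flow), I get
\[ \frac{d}{dt}\tilde\gamma_0(t) = -(b+uB)\tilde\gamma_1(t), \]
now entirely inside the \emph{fixed} complex $\big(\mathfrak{h}_{\CC}^{\sf triv}, b+uB\big)$.

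Now I integrate: $\tilde\gamma_0(1) - \tilde\gamma_0(0) = -(b+uB)\int_0^1 \tilde\gamma_1(t)\,dt$, where the integral is the coefficient-wise polynomial integration (well-defined since $\tilde\gamma_1(t)$ is polynomial in $t$), and $(b+uB)$ pulls out of the integral because it is $t$-independent. Hence $[\tilde\gamma_0(1)] = [\tilde\gamma_0(0)]$ in $H_*(\mathfrak{h}_{\CC}^{\sf triv})$. Finally I unwind the transport: $\tilde\gamma_0(0) = \gamma_0(0) = \gamma(0)$ since $\exp(0\cdot\xi)=\id$, and $\tilde\gamma_0(1) = \exp(\xi)_*\gamma(1)$. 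But the statement is phrased with $\exp(-\xi)_* \gamma(0)$ and $\gamma(1)$; since $\exp(\xi)_*$ is an isomorphism of the complex $(L,\m(1)=\m') = (L_{\CC'}, \m')$ with $(L_{\CC},\m)$ with inverse $\exp(-\xi)_*$ (note $f^{-1}=\exp(\xi)$ so $f = \exp(-\xi)$ on the coalgebra, matching the notation $f_* = \exp(-\xi)_*$ used for the splitting in Theorem~\ref{thm:cyclic-inv}), applying $\exp(-\xi)_*$ to the cohomological identity $[\exp(\xi)_*\gamma(1)] = [\gamma(0)]$ gives exactly $[\gamma(1)] = [\exp(-\xi)_*\gamma(0)]$ in $H_*(\mathfrak{h}_{\CC'}^{\sf triv})$, as claimed.

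I expect the main obstacle to be bookkeeping rather than conceptual: carefully justifying that $\exp(t\xi)_*$ is compatible with all pieces of the differential $b(t)+uB$ (the $B$-compatibility needs $\xi$ reduced, the $b$-compatibility needs the $\mathcal{L}$-naturality from Lemma~\ref{lem:lie_pull} and Lemma~\ref{lem:contraction}(4)) and that it does not interfere with the $\hbar,\lambda$-grading or the $u$-adic completion in $\mathfrak{h}_{\CC}^{\sf triv} = S L_-[[\hbar,\lambda]][1]$ — i.e., checking that all the formal structure survives the transport. Once the transport identity $\exp(t\xi)_*(b(t)+uB) = (b+uB)\exp(t\xi)_*$ is in hand, the rest is the elementary "integrate the $dt$-component" argument. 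A secondary point worth a sentence is verifying the sign conventions in the $dt$-expansion of the MC equation match the differential $b(t)+uB+d_{DR}+\mathcal{L}_\xi dt$ as declared, but this is routine.
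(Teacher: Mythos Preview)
Your approach is correct and close in spirit to the paper's, but the execution is genuinely different. The paper also splits $\gamma(t)=\theta(t)+\eta(t)\,dt$ and extracts the same $dt$-component equation, but then sets $D(t):=\exp(-t\xi)_*\theta(0)-\theta(t)$ and argues that $[D(t)]=0$ in the $t$-varying $(b(t)+uB)$-homology by deriving $\frac{d}{dt}D(t)=-\mathcal{L}_\xi D(t)+(b(t)+uB)\eta(t)$ and invoking ODE uniqueness with $D(0)=0$. You instead transport forward to the fixed complex at $t=0$ via $\exp(t\xi)_*$ and integrate, obtaining the explicit coboundary $\int_0^1\exp(t\xi)_*\gamma_1(t)\,dt$. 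Your version is more constructive and sidesteps the mild informality in the paper's ``ODE in a moving family of homology groups'' step; the paper's version avoids having to check that $\exp(t\xi)_*$ intertwines all the relevant operators.

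One small point you should make explicit: the identity you invoke, $\frac{d}{dt}\big(\exp(t\xi)_*\gamma_0(t)\big)=\exp(t\xi)_*\big(\frac{d}{dt}\gamma_0(t)+\mathcal{L}_\xi\gamma_0(t)\big)$, is not literally what Lemma~\ref{lem:lie_pull}(2) says. That lemma gives $\frac{d}{dt}(F^t)_*(\mathbf{x})=\mathcal{L}_\xi\big((F^t)_*(\mathbf{x})\big)$, so the product rule yields $\frac{d}{dt}\tilde\gamma_0(t)=\mathcal{L}_\xi\tilde\gamma_0(t)+\exp(t\xi)_*\frac{d}{dt}\gamma_0(t)$, and you still need $\mathcal{L}_\xi\circ\exp(t\xi)_*=\exp(t\xi)_*\circ\mathcal{L}_\xi$. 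This holds because $\xi$ is $t$-independent and of order $\geq 2$, so $\mathcal{L}_\xi$ is locally nilpotent on the length filtration and the flow ODE has the unique solution $(F^t)_*=\exp(t\,\mathcal{L}_\xi)$ as operators; hence it commutes with $\mathcal{L}_\xi$. With that remark added, your argument is complete.
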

	
	\begin{proof}
	Writing $\gamma(t)$ as $\theta(t) + \eta(t)dt$, the Maurer-Cartan equation yields
	\begin{align}\label{eq:MCfamily}
	(\mathcal{L}_{\m(t)}+uB) \theta(t) &=0\nonumber\\
	\frac{d}{dt} \theta(t) + (\mathcal{L}_{\m(t)}+uB)\eta(t) + \mathcal{L}_\xi \theta(t) & = 0
	\end{align}
	We shall prove the difference $D(t):=\exp(-t\xi)_*\theta(0)- \theta(t)\equiv 0$ in the $(\mathcal{L}_{\m(t)}+uB)$-homology of all fixed $t\in [0,1]$. Using Lemma \ref{lem:lie_pull} we differentiate,
	\begin{align*}
\frac{d}{dt} D(t) &=-\mathcal{L}_\xi (\exp(-t\xi)_*\theta(0)) - \frac{d}{dt}\theta(t)\\
&= -\mathcal{L}_\xi (\exp(-t\xi)_*\theta(0)) +  (\mathcal{L}_{\m(t)}+uB)\eta(t) + \mathcal{L}_\xi \theta(t)\\
&= -\mathcal{L}_\xi D(t) + (\mathcal{L}_{\m(t)}+uB)\eta(t),
	\end{align*}
    and use Equation (\ref{eq:MCfamily}) in the second equality.

   Let $X(t)$ be the unique solution to the differential equation
   \[\frac{d}{dt} X(t)= - \mathcal{L}_\xi X(t) + \eta(t), \ X(0)=0.  \]
	Note that since $\xi$ has order two, this equation has indeed a unique solution. Now we compute:
    \begin{align*}
\frac{d}{dt}(\mathcal{L}_{\m(t)}+uB)X(t) &=\mathcal{L}_{\frac{d\m}{dt}}X(t) + \mathcal{L}_{\m(t)}\left(- \mathcal{L}_\xi X(t) + \eta(t)\right)+ uB\left(- \mathcal{L}_\xi X(t) + \eta(t)\right)\\
&= - \mathcal{L}_\xi\mathcal{L}_{\m(t)}X(t)- \mathcal{L}_\xi(uB X(t))+ (\mathcal{L}_{\m(t)}+uB)\eta(t)\\
&= -\mathcal{L}_\xi (\mathcal{L}_{\m(t)}+uB)X(t) + (\mathcal{L}_{\m(t)}+uB)\eta(t).
	\end{align*}
Here we have used the pseudo-isotopy condition $\frac{d\m}{dt}=[\m(t),\xi]$ together with Lemma \ref{lem:lieder}. Therefore $D(t)$ and $(\mathcal{L}_{\m(t)}+uB)X(t)$ satisfy the same differential equation and initial condition at $t=0$. Hence, by uniqueness of solutions of ODE's, $D(t)=(\mathcal{L}_{\m(t)}+uB)X(t)$, which proves the claim.   
    \end{proof}

\begin{cor}\label{coro:cyclic-inv}
Let $\CC$ and $\CC'$ be minimal, unital, cyclic \Ai-categories and let $f$ be an unital and cyclic $A_\infty$-isomorphism
	\[ f=(f_1,f_2,\ldots): \big(\CC,\langle-,-\rangle,\{\m_k\}\big) \;\ra\; \big(\CC',\langle-,-\rangle',\{\m'_k\}\big)\]
(not necessarily with $f_1=\id$). Then we have
	\[ f_* F_{g,n}^{\CC,s} = F_{g,n}^{\CC', f_* s f_*^{-1}}.\]
\end{cor}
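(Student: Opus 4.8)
The plan is to reduce the general cyclic isomorphism $f$ to the special case $f_1=\id$ already handled in Theorem \ref{thm:cyclic-inv}. The obstruction is that $f_1:\CC\to\CC'$ need not be the identity on hom-spaces; however, $f_1$ is a linear isomorphism intertwining the two cyclic pairings (this is the first equation in \eqref{eq:F_cyclic}) and it is a strict $A_\infty$-functor precisely when the higher $f_i$ vanish — which we cannot assume. So first I would \emph{change coordinates} to absorb $f_1$. Concretely, let $\CC''$ be the $A_\infty$-category with the same objects and hom-spaces as $\CC$, with units $\one_X$ transported back via $f_1^{-1}$, cyclic pairing $\langle-,-\rangle_{\CC''} := \langle-,-\rangle$ (the one on $\CC$), and $A_\infty$-operations obtained by conjugating $\{\m'_k\}$ by the strict isomorphism $f_1$, i.e. $\widehat{\m''} := \widehat{f_1}^{-1}\circ\widehat{\m'}\circ\widehat{f_1}$. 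Since $f_1$ is linear and (by the first equation of \eqref{eq:F_cyclic}) preserves the pairing, $\CC''$ is again minimal, unital, cyclic with the \emph{same} hom-spaces, units, and cyclic pairing as $\CC$. Moreover $f_1$ itself is a strict cyclic, unital $A_\infty$-isomorphism $\CC''\to\CC'$.

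Next, form $g := f_1^{-1}\circ f : \CC\to\CC''$. By construction $g_1 = f_1^{-1}\circ f_1 = \id$, and $g$ is unital and cyclic as a composition of unital cyclic functors (for cyclicity of the composition one uses that cyclicity of a functor is the statement $F^*\langle-,-\rangle=\langle-,-\rangle$, which is clearly stable under composition; alternatively, directly from \eqref{eq:F_cyclic}). Theorem \ref{thm:cyclic-inv} applies to $g$ and gives
\[ g_* F^{\CC,s}_{g,n} = F^{\CC'', g_* s g_*^{-1}}_{g,n}. \]
It remains to handle the strict isomorphism $f_1:\CC''\to\CC'$. For a strict $A_\infty$-isomorphism that is additionally cyclic and unital, the induced map $(f_1)_*$ on the Hochschild chain complexes $CC_\bullet^{red}$ is just the obvious tensor-power map, and it intertwines the \emph{entire} TCFT structure of Theorem \ref{thm:tcft} (equivalently, all the data $b$, $B$, $\iota$, $\Delta$, $\rho^\CC$, the Mukai pairing, the Weyl algebra and Fock space of Section \ref{sec:defi}) on the nose, because every one of these operations is built from the $A_\infty$-operations and the cyclic pairing, all of which $f_1$ preserves strictly. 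Hence $(f_1)_*$ carries the Maurer–Cartan element $\widetilde{\beta^{\CC''}}$, the trivialization map $K^s$, and therefore $F^{\CC'',s}_{g,n}$ to the corresponding objects for $\CC'$, giving
\[ (f_1)_* F^{\CC'', t}_{g,n} = F^{\CC', (f_1)_* t (f_1)_*^{-1}}_{g,n} \]
for any splitting $t$ of $\CC''$ — this is the trivial, ``strict'' case of the invariance, and can be checked by inspecting each arrow in diagram \eqref{diag:def} and in the proof of Lemma \ref{lem:inclusion}.

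Composing the two displayed identities and using $f_* = (f_1)_* \circ g_*$ together with the functoriality $(f_1\circ g)_* = (f_1)_*\circ g_*$ on homology (so that $f_* s f_*^{-1} = (f_1)_* (g_* s g_*^{-1}) (f_1)_*^{-1}$), we obtain
\[ f_* F^{\CC,s}_{g,n} = (f_1)_* g_* F^{\CC,s}_{g,n} = (f_1)_* F^{\CC'', g_* s g_*^{-1}}_{g,n} = F^{\CC', f_* s f_*^{-1}}_{g,n}, \]
as desired. The main obstacle here is not any single step but making sure the bookkeeping in the ``strict case'' is airtight: one must verify that every structure entering the definition of $F_{g,n}$ — the combinatorial TCFT action $\rho^\CC$, the chain-level Mukai pairing, the operators $\iota$ and $\Delta$, the Weyl algebra relations, and the $L_\infty$ trivialization $K^s$ — is genuinely \emph{strictly natural} under a cyclic, unital, strict $A_\infty$-isomorphism, so that no homotopy-coherence argument is needed for $f_1$ and Theorem \ref{thm:cyclic-inv} is only invoked for $g$. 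This is routine but must be stated carefully.
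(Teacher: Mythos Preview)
Your proposal is correct and follows essentially the same approach as the paper: factor $f$ as a strict linear cyclic isomorphism $f_1$ followed (or preceded) by a cyclic unital $A_\infty$-isomorphism with identity linear part, apply Theorem~\ref{thm:cyclic-inv} to the latter, and observe that the strict piece transports all TCFT data on the nose. Your $\CC''$ and $g$ are exactly the paper's intermediate category $\widetilde{\CC}$ and functor $h$ (with the roles of the names $g,h$ swapped), and your discussion of why the strict case is immediate is a fuller version of the paper's one-line remark that the linear isomorphism induces isomorphisms on $\mathfrak{h}_\CC$ and $\widehat{\mathfrak{h}}_\CC$.
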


\begin{proof}
We may factor the $A_\infty$-isomorphism $f$ as a composition
\[ \CC \stackrel{h}{\longrightarrow} \widetilde{\CC} \stackrel{g}{\longrightarrow} \CC', \]
where $g$ is a linear $A_\infty$-isomorphism and $h$ has $h_1=\id$. We define $\widetilde{\CC}$ to have the same objects, morphism spaces and pairing as $\CC$ and define the \Ai-operations as $\widetilde{\m}_k= f_1^{-1}\circ \m'_k \circ f_1^{\otimes k}$.  This is possible since by the assumptions, $f_1$ are linear isomorphisms. We then define $h_k:= f_1^{-1}\circ f_k$, for $k\geq 1$ and $g_1=f_1$, $g_k=0$ for $k\geq 2$. It is easy to check these define cyclic \Ai-functors.

The functor $h$ satisfies the conditions in Theorem \ref{thm:cyclic-inv} and hence preserves CEI. Since the $A_\infty$-isomorphism $g$ is linear, it induces isomorphisms on $\mathfrak{h}_\CC$ and $\widehat{\mathfrak{h}}_\CC$, hence the invariance of CEI is obvious in this case.
\end{proof}

\subsection{Morita invariance of CEI}\label{sec:definition}

In this subsection, we formulate and prove the Morita invariance of CEI. Let $\CC$ be an $A_\infty$-category (not necessarily cyclic) that is proper and smooth. We also assume that it satisfies the Hodge-to-de-Rham degeneration property. Note that in the $\Z$-graded case, this condition is automatic by Kaledin's theorem~\cite{Kal}.

As in Subsection \ref{sec:trivCY} we shall refer to elements of $HH_\bullet(\CC)$ ($HC_\bullet^-(\CC)$) satisfying the same non-degeneracy condition as weak (strong) Calabi-Yau structures, via the duality isomorphisms:
\begin{align*}
HH_\bullet (\CC)^\vee & \cong HH_\bullet(\CC)\\
HC_\bullet^\lambda(\CC)^\vee & \cong HC_\bullet^-(\CC)
\end{align*}

Fix a parity $d\in \Z/2\Z$. Associated with the $A_\infty$-category $\CC$  we define a set $ \mathcal{M}^d_\CC$ consisting of pairs $(\omega, s)$ such that 
\begin{itemize}
\item[--] $\omega\in HH_\bullet (\CC)$ is a weak Calabi-Yau structure of parity $d$, 
\item[--] $s$ is an unital splitting with respect some split-generating subcategory $\mathcal{A}\subset \tw^\pi\CC$ (see Definition~\ref{defi:splitting}).
\end{itemize}

We refer to a pair $(\omega, s)$ as an \emph{extended Calabi--Yau structure} on $\CC$.

One may also define variants of $\mathcal{M}_\CC^d$ by requiring $s$ be good, or $\omega$-compatible, or both. Since the Mukai pairing and the $u$-connection are Morita invariant~\cite{She}, all these variants are also preserved under Morita equivalences.
 
Fix a pair of natural numbers $(g,n)$ such that $2g-2+n>0$. We proceed to define a function
\[ F^\CC_{g,n}: \mathcal{M}^d_\CC \ra {\sf Hom}^c\big( HH_\bullet(\CC)[d][[u]]^{\otimes n}, \mathbb{K}\big).\]
Indeed, given a pair $(\omega, s)\in \mathcal{M}_\CC^d$, since $s$ is unital, we may choose a split-generating subcategory $\mathcal{A}\subset \tw^\pi\CC$ such that $s(\omega)$ admits a lift to an unital Calabi-Yau structure of $\mathcal{A}$. We then apply the homological perturbation lemma to $\mathcal{A}$ and produce a minimal \Ai-algebra $\mathcal{A}_0$ quasi-isomorphic to $\mathcal{A}$. Thus, we may apply Proposition~\ref{prop:cyclic-even} (if $\omega$ is even) or Proposition~\ref{prop:cyclic-odd} (if $\omega$ is odd) to $\mathcal{A}_0$ and obtain an unital and cyclic $A_\infty$-model:
\[ G_\mathcal{A}: \mathcal{A}' \to \mathcal{A}_0 \to\mathcal{A}.\]
Denote by $(G_\CC)_*$ the induced map on the Hochschild invariants associated with the following zig-zag compositions:
\[ G_\CC: \mathcal{A}' \to \mathcal{A} \hookrightarrow \tw^\pi \CC \hookleftarrow \CC.\]
Define $F^\CC_{g,n}(\omega,s)$ to be the linear functional $F^{\CC,\omega, s}_{g,n}$ explicitly given by
\[ F^{\CC,\omega,s}_{g,n}(\alpha_1,\ldots,\alpha_n) := \langle (G_\CC)_*^{-1} \alpha_1,\ldots,(G_\CC)_*^{-1} \alpha_n \rangle_{g,n}^{\mathcal{A}', (G_\CC)_*^{-1} s (G_\CC)_*}.\]

\begin{lem}
The definition of $F^{\CC,\omega,s}_{g,n}$ is independent of the choice of the category $\mathcal{A}$ as well as its unital cyclic $A_\infty$-model $\mathcal{A}'$. 
\end{lem}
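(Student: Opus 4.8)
The plan is to prove the two independence statements in turn, first fixing the ambient subcategory $\mathcal{A}$ and varying the cyclic model $\mathcal{A}'$, then letting $\mathcal{A}$ vary. At the outset I would replace every subcategory by a minimal model, which is harmless since $HH_\bullet$, $HC_\bullet^-$ and the CEI depend only on the quasi-equivalence class; the point of this reduction is that a cyclic model $G_\mathcal{A}\colon \mathcal{A}'\to\mathcal{A}$ then becomes a unital $A_\infty$-\emph{isomorphism}. For the variation of $\mathcal{A}$ I would also note that if $\mathcal{A}_1,\mathcal{A}_2\subset\tw^\pi\CC$ are two split-generating subcategories for which $s$ is unital, then the full subcategory $\mathcal{A}_{12}$ of $\tw^\pi\CC$ on $\mathrm{ob}\,\mathcal{A}_1\cup\mathrm{ob}\,\mathcal{A}_2$ is again split-generating and makes $s$ unital (the unitality condition $\langle\one_X,s(\omega)\rangle_{\sf hres}\in\mathbb{K}$ constrains each object separately). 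Hence it suffices to handle the nested case $\mathcal{A}_1\subseteq\mathcal{A}_2$.

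For fixed minimal $\mathcal{A}$ and two cyclic models $G_i\colon\mathcal{A}'_i\to\mathcal{A}$, consider first the \emph{even} case. By Corollary~\ref{cor:exa_uni} the unital lift of the strong Calabi--Yau structure $s(\omega)$ is unique, so both $G_i$ pull back the \emph{same} unital strong homotopy inner product under Proposition~\ref{prop_un_iso}. Imitating the end of the proof of Proposition~\ref{prop:cyclic-even}, the composite $\widetilde F:=G_2^{-1}\circ G_1\colon\mathcal{A}'_1\to\mathcal{A}'_2$ is a unital $A_\infty$-isomorphism with $[\widetilde F^*\langle-,-\rangle_{\mathcal{A}'_2}]=[\langle-,-\rangle_{\mathcal{A}'_1}]$ in $H^\bullet(\Omega^{2,cl}_{un}(\mathcal{A}'_1),\partial)$, so Lemma~\ref{lem:unit_coho} supplies a unital $A_\infty$-automorphism $E$ of $\mathcal{A}'_1$ (the flow of a Hochschild cocycle, hence acting trivially on $HH_\bullet$) such that $\Phi:=\widetilde F\circ E\colon\mathcal{A}'_1\to\mathcal{A}'_2$ is a unital \emph{cyclic} $A_\infty$-isomorphism. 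Corollary~\ref{coro:cyclic-inv} then equates the CEI of $\mathcal{A}'_1$ and $\mathcal{A}'_2$ up to transport by $\Phi_*$; since $G_2\circ\Phi$ and $G_1$ induce the same map $HH_\bullet(\mathcal{A}'_1)\to HH_\bullet(\CC)$ through the zig-zag $\mathcal{A}'\to\mathcal{A}\hookrightarrow\tw^\pi\CC\hookleftarrow\CC$, substituting into the defining formula for $F^{\CC,\omega,s}_{g,n}$ and comparing the transported splittings $(G_\CC)_*^{-1}s(G_\CC)_*$ shows the two prescriptions agree.

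In the \emph{odd} case the lift, and with it the cyclic model, is genuinely non-unique (Example~\ref{ex:non-existence}), so I would invoke the tensor trick. Tensoring with the one-dimensional Clifford algebra $\Cl=\mathbb{K}[\epsilon]$, whose cyclic pairing has odd parity, turns each $\mathcal{A}'_i$ into a minimal, unital, \emph{even}-cyclic $A_\infty$-category $\mathcal{A}'_i\otimes\Cl$, and $G_i\otimes\id$ exhibits it as a cyclic model of $\mathcal{A}\otimes\Cl$ with the (now even) product Calabi--Yau structure $s(\omega)\boxtimes\langle-,-\rangle_\Cl$, whose unital lift is unique. The even case just proved therefore yields a unital cyclic $A_\infty$-isomorphism $\mathcal{A}'_1\otimes\Cl\cong\mathcal{A}'_2\otimes\Cl$, hence equal CEI for the two; and the invariance of CEI under tensoring with $\Cl$ established in Appendix~\ref{app:b-f} identifies the CEI of $\mathcal{A}'_i\otimes\Cl$ with those of $\mathcal{A}'_i$ under the canonical isomorphism $HH_\bullet(\mathcal{A}'_i\otimes\Cl)\cong HH_\bullet(\mathcal{A}'_i)$, closing the odd case.

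Finally, for the nested case $\mathcal{A}_1\subseteq\mathcal{A}_2$: fix a cyclic model $G_2\colon\mathcal{A}'_2\to\mathcal{A}_2$ and let $\mathcal{A}''_1\subseteq\mathcal{A}'_2$ be the full subcategory on the objects lying over $\mathcal{A}_1$. Then $G_2|_{\mathcal{A}''_1}\colon\mathcal{A}''_1\to\mathcal{A}_1$ is again a unital cyclic $A_\infty$-isomorphism, so $\mathcal{A}''_1$ is a cyclic model of $\mathcal{A}_1$; by the model-independence established above the CEI computed from $(\mathcal{A}_1,\mathcal{A}'_1)$ equals the one computed from $(\mathcal{A}_1,\mathcal{A}''_1)$. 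On the other hand $\mathcal{A}''_1\hookrightarrow\mathcal{A}'_2$ is a fully faithful inclusion of minimal, unital, cyclic $A_\infty$-categories inducing an isomorphism on $HH_\bullet$, so Lemma~\ref{lem:inclusion} (applied with the induced splitting) identifies the CEI computed from $(\mathcal{A}_1,\mathcal{A}''_1)$ with that from $(\mathcal{A}_2,\mathcal{A}'_2)$; composing the two identifications completes the argument. I expect the main obstacle to be the odd case: one must check carefully that the product Calabi--Yau structure, the induced unital structure and the transported splitting on $\mathcal{A}'_i\otimes\Cl$ are exactly the data to which Appendix~\ref{app:b-f} applies, and that the bookkeeping of the push-forwards $(G_\CC)_*$ and the transported splittings $(G_\CC)_*^{-1}s(G_\CC)_*$ remains consistent along all the zig-zags involved.
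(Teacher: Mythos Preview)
Your proposal is correct and follows essentially the same route as the paper: the even case via the uniqueness of unital cyclic models (Proposition~\ref{prop:cyclic-even}) together with Corollary~\ref{coro:cyclic-inv}, the odd case via tensoring with $\Cl$ and Appendix~\ref{app:b-f}, and independence of $\mathcal{A}$ via the union-of-objects reduction and Lemma~\ref{lem:inclusion}. In fact you are more careful than the paper on one point: you explicitly isolate the automorphism $E$ produced by Lemma~\ref{lem:unit_coho} and note that $E_*=\id$ on $HH_\bullet$ is what makes the two zig-zags $(G_\CC^i)_*$ agree; the paper's proof invokes Proposition~\ref{prop:cyclic-even} and Corollary~\ref{coro:cyclic-inv} without spelling this out. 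Your justification ``flow of a Hochschild cocycle, hence acting trivially on $HH_\bullet$'' is right but terse---it follows from the Cartan homotopy formula $\mathcal{L}_Z=[b,I_Z]$ for $\delta Z=0$ combined with Lemma~\ref{lem:lie_pull}(2), which shows $(F^t)_*$ is chain homotopic to $(F^0)_*=\id$.
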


\begin{proof}
When $\omega$ is even, Proposition~\ref{prop:cyclic-even} guarantees any two cyclic, unital models are related by an unital, cyclic \Ai-isomorphism.  Applying Corollary~\ref{coro:cyclic-inv} to this \Ai-isomorphism implies the independence of the choice of $\mathcal{A}'$. To argue the independence of the choice of $\mathcal{A}$. If $s(\omega)$ is unital with respect to $\cA_1$ and $\cA_2$, both split-generate $\tw^\pi \CC$, then we may consider a third subcategory $\cA\subset \tw^\pi\CC$ defined by the union of objects in $\cA_1$ and $\cA_2$. Clearly, $\cA$ contains both $\cA_1$ and $\cA_2$. Furthermore, $s(\omega)$ is also unital with respect to $\cA$. Thus, we may choose an unital cyclic model $\cA'$ of $\cA$, which induces cyclic models $\cA_1'$, $\cA_2'$ of $\cA_1$ and $\cA_2$ respectively. At this point, it suffices to apply Proposition~\ref{lem:inclusion} to conclude that the invariants of both $\cA_1'$ and $\cA_2'$ match the invariants of $\cA'$.

In the odd case, the cyclic unital model $\cA'$ may not be unique up to cyclic unital $A_\infty$-isomorphisms, as shown in Example~\ref{ex:non-existence}. However, we may use Theorem~\ref{thm:clifford} proved in the Appendix~\ref{app:b-f} to argue the desired independence. Indeed, assume that we have two unital cyclic models $G_1: \cA_1' \to \cA$ and $G_2: \cA_2' \to \cA$. Tensoring with the Clifford algebra $\Cl$ yields two unital cyclic models of $\cA\otimes \Cl$. But now we are back to the even case, where the unital cyclic model is unique up to unital cyclic $A_\infty$-isomorphisms. Together with Theorem~\ref{thm:clifford}, it follows that $F^\CC_{g,n}$ is independent of the choice of the unital cyclic $A_\infty$-model $\cA'$. The independence of the choice of $\cA$ is the same as the even case.
\end{proof}

Now we formulate and prove the Morita invariance of $F^{\CC,\omega,s}_{g,n}$. As proved in \cite[Theorem A.3]{She} two \Ai-categories are Morita equivalent if and only if ${\tw}^\pi\CC$ and ${\tw}^\pi\DD$ are quasi-equivalent, where ${\tw}^\pi\CC$ is the triangulated split-closure of $\CC$.  Inspired by \cite{GPS} we give the definition.

\begin{defn}\label{defn:moreq}
	Let $\CC$ and $\DD$ be \Ai-categories with extended Calabi--Yau structures $(\omega_\CC, s_\CC)$ and $(\omega_\DD, s_\DD)$. We say $(\CC, \omega_\CC, s_\CC)$ and $(\DD, \omega_\DD, s_\DD)$ are Morita equivalent if there is a quasi-equivalence $f: {\tw}^\pi\CC \to {\tw}^\pi\DD$ with $[f_*\omega_\CC]=[\omega_\DD]$ and $f_* \circ s_\CC = s_\DD\circ f_*$.
\end{defn}

\begin{thm}~\label{thm:main}
Let $(\CC, \omega_\CC, s_\CC)$ and $(\DD, \omega_\DD, s_\DD)$ be \Ai-categories with extended Calabi--Yau structures and let $f: {\tw}^\pi\CC \to {\tw}^\pi\DD$ be a Morita equivalence between these extended Calabi--Yau categories. Then the CEI of $(\CC, \omega_\CC, s_\CC)$ and $(\DD, \omega_\DD, s_\DD)$ agree. More precisely,  
 for any $(g,n)$ such that $2g-2+n>0$, we have
\[ F_{g,n}^{\CC, \omega_\CC, s_\CC}(\alpha_1,\ldots,\alpha_n) = F_{g,n}^{\DD, \omega_\DD, s_\DD}\big(  f_*(\alpha_1),\ldots, f_*(\alpha_n)\big).\]
\end{thm}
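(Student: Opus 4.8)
The plan is to reduce the Morita-equivalence statement to the already-established invariance properties by choosing a common split-generating subcategory on both sides. First I would unwind the definition of $F_{g,n}^{\CC,\omega_\CC,s_\CC}$: since $s_\CC$ is unital, pick a split-generating subcategory $\mathcal A\subset \tw^\pi\CC$ witnessing the unitality condition, so that $s_\CC(\omega_\CC)$ admits an unital cyclic model $G_\cA:\cA'\to\cA$. Now, given the quasi-equivalence $f:\tw^\pi\CC\to\tw^\pi\DD$, transport $\cA$ to the full subcategory $f(\cA)\subset\tw^\pi\DD$; because $f$ is a quasi-equivalence it restricts to a quasi-equivalence $\cA\to f(\cA)$, and the hypotheses $[f_*\omega_\CC]=[\omega_\DD]$ and $f_*\circ s_\CC=s_\DD\circ f_*$ guarantee that $f(\cA)$ witnesses the unitality of $s_\DD$ as well (and split-generates $\tw^\pi\DD$). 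The point is that, via the independence lemma proved above, we are free to compute $F_{g,n}^{\DD,\omega_\DD,s_\DD}$ using precisely the subcategory $f(\cA)$.

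The second step is to compare the cyclic models. On the $\CC$ side we have $G_\cA:\cA'\to\cA$. On the $\DD$ side, $f$ restricts to a (not necessarily linear) $A_\infty$-quasi-equivalence $\bar f:\cA\to f(\cA)$; composing $\bar f\circ G_\cA:\cA'\to f(\cA)$ and then invoking Proposition~\ref{prop:cyclic-even} (even case) or Proposition~\ref{prop:cyclic-odd}/the Clifford tensor trick of Theorem~\ref{thm:clifford} (odd case), together with the independence lemma, shows that $\cA'$ itself (with the pairing pulled back through $f$) serves as an unital cyclic model for $\big(f(\cA), s_\DD(\omega_\DD)\big)$, compatibly with the identifications on Hochschild homology. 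Concretely, write $G_\CC:\cA'\to\cA\hookrightarrow\tw^\pi\CC\hookleftarrow\CC$ and $G_\DD:\cA'\to f(\cA)\hookrightarrow\tw^\pi\DD\hookleftarrow\DD$; then on Hochschild homology $(G_\DD)_*=f_*\circ(G_\CC)_*$ as maps out of $HH_\bullet(\cA')$, and the pushed-forward splitting and weak Calabi--Yau structure match. The CEI of $\cA'$ are computed from the single cyclic $A_\infty$-category $\cA'$ together with the splitting $(G_\CC)_*^{-1} s_\CC (G_\CC)_* = (G_\DD)_*^{-1} s_\DD (G_\DD)_*$ (using $f_*\circ s_\CC = s_\DD\circ f_*$), so the two linear functionals $F_{g,n}^{\CC,\omega_\CC,s_\CC}$ and $F_{g,n}^{\DD,\omega_\DD,s_\DD}\circ f_*^{\otimes n}$ are given by the \emph{same} expression $\langle(G_\CC)_*^{-1}\alpha_1,\ldots\rangle_{g,n}^{\cA',\ldots}$, whence they agree.

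The main obstacle I expect is bookkeeping around the fact that $f$, and hence $\bar f:\cA\to f(\cA)$, is in general not linear, so one cannot directly pull back the cyclic structure along it: this is exactly the difficulty flagged in the introduction. The resolution is that one never needs to pull back a cyclic structure along $f$ itself — instead one uses $f$ only at the level of the weak Calabi--Yau structure $\omega$ and the splitting $s$ (where the hypotheses of Definition~\ref{defn:moreq} apply directly on homology), and then re-derives a cyclic model on the $\DD$ side via Propositions~\ref{prop:cyclic-even}--\ref{prop:cyclic-odd}. In the odd case there is the extra wrinkle that the cyclic model need not be unique; here one tensors everything with $\Cl$, applies the even-case argument to $\cA'\otimes\Cl$, and descends using Theorem~\ref{thm:clifford}, exactly as in the proof of the well-definedness lemma. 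Apart from that, the argument is a diagram chase assembling the independence lemma, Lemma~\ref{lem:inclusion}, and Corollary~\ref{coro:cyclic-inv}.
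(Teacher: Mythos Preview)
Your proposal is correct and follows essentially the same route as the paper: transport the split-generating subcategory $\cA$ to $f(\cA)$ via $f$, observe that the same $\cA'$ serves as an unital cyclic model for both $\cA$ and $f(\cA)$ (since $f_*s_\CC(\omega_\CC)=s_\DD(\omega_\DD)$), and conclude by the well-definedness lemma. The paper's write-up is a bit more streamlined in two respects. First, it does not re-invoke Propositions~\ref{prop:cyclic-even}--\ref{prop:cyclic-odd} or the Clifford trick at this stage --- those are already absorbed into the independence lemma --- but simply notes that $f\circ G_\cA:\cA'\to f(\cA)$ exhibits $\cA'$ directly as a cyclic model of $f(\cA)$, after which the independence lemma finishes. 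Second, the paper inserts one small technical step you omit: it first replaces $\cA$ by its skeleton (invoking Lemma~\ref{lem:inclusion}), so that no two objects of $\cA$ are isomorphic and hence $f$ restricted to $\cA$ is injective on objects; this guarantees that $f(\cA)$ is a bona fide full subcategory and that $f\circ G_\cA$ really lands in it as required.
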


\begin{proof}
The left hand side is defined using the CEI of an unital cyclic model $\cA'$ of $\cA\subset \tw^\pi\CC$. Since $\CC$ is a small category, by applying Proposition~\ref{lem:inclusion} we may replace $\cA$ by its skeleton. Equivalently, we may assume that no two objects in $\cA$ are isomorphic. Now, since $f$ is an equivalence, it preserves the Hochschild invariants as well as the higher residue pairing. Thus, the image full subcategory $f(\cA)\subset \tw^\pi\DD$ is split-generating, and with respect to which the pair $(\omega_\DD, s_\DD)$ is unital. Thus, the unital cyclic model $\cA'$, via the composition
\[ f \circ G_\cA: \cA' \to \cA \to f(\cA),\]
is an unital cyclic model of $f(\cA)$ whose CEI are by definition the right hand side.
\end{proof}

\section{Examples and Applications}\label{sec:ex-app}

\subsection{CEI of Frobenius associative algebras}\label{subsec:frob}

Throughout this subsection, let $A$ be a finite-dimensional, minimal, $\Z/2\Z$-graded, cyclic $A_\infty$ algebra such that its higher products $\m_3, \m_4,\ldots$ all vanish. As before, we continue to assume $A$ is unital, smooth, and satisfies the Hodge-to-de-Rham degeneration property. The cyclic pairing is of parity $d\in \Z/2\Z$. We shall refer to such an $A$ as a Frobenius associative algebra. Recall that the notation $L=C_\bullet^{red}(A)[d]$ is the shifted reduced Hochschild chain complex of $A$.

\begin{example}\label{ex:field}
	Consider the ground field  $\mathbb{K}$ as an $A_\infty$-category with one object. Its Hochschild invariants are $HH_\bullet(\mathbb{K})\cong \mathbb{K}$ and $HC_\bullet^-(\mathbb{K})\cong \mathbb{K}[[u]]$. In this case, the parity $d$ necessarily equals $0\in \Z/2\Z$. Furthermore, it has an unique unital splitting $s^{\sf can}(1)=1$. Thus, the set $\mathcal{M}_\mathbb{K} \cong \mathbb{K}^*$ with the correspondence given by
	\[ \lambda \in \mathbb{K}^* \mapsto (\lambda, s^{\sf can}) \in \mathcal{M}_\mathbb{K}.\]  
	It was shown in~\cite{Tu} that we have
	\[ F^{\mathbb{K},\lambda,s^{\sf can}}_{g,n} (u^{k_1},\ldots,u^{k_n}) = \lambda^{2-2g-n}\cdot \int_{[\overline{M}_{g,n}]} \psi_1^{k_1}\cdots\psi_n^{k_n}\]
	where $\psi_j$'s are the $\psi$-classes on the Deligne-Mumford moduli space $\overline{M}_{g,n}$. 
\end{example}

For a general Frobenius associative algebra, we need a few more lemmas.

\begin{lem}~\label{lem:mukai-vanish}
	The chain level Mukai pairing $\langle-,-\rangle_{\sf Muk}: L\otimes L \ra \mathbb{K}$ satisfies 
    \[ \langle a_0|a_1|\cdots|a_k, b_0|b_1|\cdots|b_l\rangle_{\sf Muk} = 0, \mbox{ if $k\geq 1$ or $l\geq 1$.}\]
\end{lem}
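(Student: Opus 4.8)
The plan is to analyze the chain level Mukai pairing directly using its known explicit formula on the reduced Hochschild complex, and then exploit the drastic vanishing of higher products $\m_k = 0$ for $k \geq 3$. Recall that the Mukai pairing on $CC_\bullet^{red}(A)$ is computed by a sum over ways of combining two Hochschild chains along a ``cylinder'' or ``Mukai ribbon graph,'' where the two input cycles $a_0|a_1|\cdots|a_k$ and $b_0|b_1|\cdots|b_l$ are glued using copies of the $A_\infty$-operations $\m_j$ and the cyclic pairing $\langle -, - \rangle$. Concretely, $\langle -, -\rangle_{\sf Muk}$ is given by $\rho^A(M)$, the TCFT action of the Mukai graph $M$ from \eqref{eq:Mukai-graph}, which is a ribbon graph with two input cycles and two vertices. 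The vertices are decorated by the maps $\widetilde{\m}_v = \langle \m_{|v|-1}(-,\ldots,-), - \rangle$.

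The key observation is the following counting argument. In the evaluation $\rho^A(M)$ on inputs $a_0|\cdots|a_k$ and $b_0|\cdots|b_l$, all $k+1$ elements $a_i$ and all $l+1$ elements $b_j$ must be fed into the vertices of the ribbon graph $M$, which has exactly two vertices. Each vertex $v$ contributes a factor $\m_{|v|-1}$, i.e. an $A_\infty$-operation, and by our hypothesis the only surviving operation is $\m_2$, which takes exactly two inputs. So each vertex absorbs exactly two ``slots'', giving at most $4$ input slots total across the two vertices. However, some of these slots are consumed by the edges of the ribbon graph $M$ (which carry the inverse pairing $\langle -, - \rangle^{-1}$ connecting the two vertices) rather than by the actual chain inputs $a_i, b_j$. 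First I would carefully count, for the Mukai graph $M$ with two inputs, how many of the vertex slots are taken up by internal edges versus external leaves, and deduce that the total number of chain inputs $a_i$ and $b_j$ that can be accommodated is at most $2$. Since $(k+1) + (l+1) = k + l + 2$, this forces $k + l \leq 0$, hence $k = l = 0$, and all other terms vanish.

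More precisely, I would argue as follows. The Mukai ribbon graph $M$ is (up to homotopy) the graph with two vertices joined so that the result has genus $0$, two input boundary cycles, and no output cycles; an Euler characteristic / valence count shows it has exactly two edges and two trivalent-or-less vertices, but with $\m_k = 0$ for $k \neq 2$ every vertex must be exactly trivalent (two inputs plus one output under the $\widetilde{\m}$ convention, or equivalently $\m_2$ has two arguments). Counting half-edges: two vertices of valence $3$ give $6$ half-edges; the two edges account for $4$ half-edges; this leaves $6 - 4 = 2$ half-edges, which are the leaves, i.e. the total number of chain elements inserted. Thus at most two of the $a_i, b_j$ can be inserted, forcing $k+l \le 0$. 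The main obstacle I anticipate is making this half-edge/leaf count fully rigorous with the combinatorial model of $M_{0,2,0}^{\sf fr}$ and being careful that the homotopy class of the Mukai graph is represented by such a graph (and that the action map is chain-homotopy invariant, so it suffices to compute on this representative); one must also handle the degenerate rotated/thickened variants consistently. Once the count is in place, the vanishing for $k \ge 1$ or $l \ge 1$ is immediate, and I would record the remaining $k = l = 0$ value $\langle a_0, b_0 \rangle_{\sf Muk} = \pm \langle a_0, b_0\rangle$ (up to the sign and normalization dictated by the conventions) for use in the subsequent computations of CEI of Frobenius algebras.
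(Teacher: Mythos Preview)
Your argument is correct and is exactly the paper's one-line proof (``if $k \geq 1$ or $l \geq 1$, then one needs higher products in order to be non-zero'') made explicit via a half-edge/valence count on the Mukai ribbon graph $M$. Two incidental remarks: you need not worry about homotopy classes or chain-homotopy invariance, since the lemma concerns the fixed chain $M$ and the specific map $\rho^A(M)$; and your closing claim that the $k=l=0$ value is $\pm\langle a_0,b_0\rangle$ is not accurate --- the length-zero Mukai pairing involves both $\m_2$'s and two copies of $\langle-,-\rangle^{-1}$ and is a Frobenius-trace-type expression rather than the bare cyclic pairing --- but this plays no role in the lemma itself.
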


\begin{proof}
	This follows from the definition of the Mukai pairing, since if $k\geq 1$ or $l\geq 1$, then one needs higher products in order to be non-zero.
\end{proof}

\begin{lem}~\label{lem:surj}
	The canonical inclusion map $A[d] \to L$ as length zero Hochschild chains induces a surjective map $A[d] \to H_\bullet(L)$ in homology.
\end{lem}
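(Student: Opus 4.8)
The plan is to prove that the image $V:=\im\big(H_\bullet(A[d])\to H_\bullet(L)\big)$ exhausts $HH_\bullet(A)[d]$. Since $\m_1,\m_3,\m_4,\dots$ vanish, $A$ is an ordinary $\mathbb{Z}/2\mathbb{Z}$-graded associative algebra, so the length-zero chains $CC^{red}_0(A)=A$ form a subcomplex of $CC^{red}_\bullet(A)$ carrying the \emph{zero} differential: the only potential term of the Hochschild differential on a length-zero chain would be $\pm\m_1(-)$. Hence $H_\bullet(A[d])=A[d]$, the map in the statement is simply $a\mapsto[a]$, and what must be shown is that every Hochschild homology class of $A$ is represented by a length-zero chain. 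The idea is to play the vanishing of the chain-level Mukai pairing on positive-length chains (Lemma~\ref{lem:mukai-vanish}) against its non-degeneracy in homology (Shklyarov~\cite{Shk}, available because $A$ is smooth and proper).

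Concretely, I would first note that Lemma~\ref{lem:mukai-vanish} says the chain-level pairing $\langle-,-\rangle_{\sf Muk}\colon L\otimes L\to\mathbb{K}$ factors through the (linear, \emph{not} chain) projection $\pi_0\colon L\to A[d]$ onto the length-zero summand, i.e.\ $\langle x,y\rangle_{\sf Muk}=P(\pi_0 x,\pi_0 y)$ with $P$ the restriction of $\langle-,-\rangle_{\sf Muk}$ to $A[d]$ (up to sign, the cyclic pairing of $A$). Next, if $x\in L$ is a cycle then $\pi_0 x\in A[d]$ is automatically a cycle and so defines a class $\overline x:=[\pi_0 x]\in V$; using that $\langle-,-\rangle_{\sf Muk}$ is a chain pairing one verifies that $\overline{(\,\cdot\,)}$ is a well-defined linear idempotent map $HH_\bullet(A)[d]\to V$, is the identity on $V$, and satisfies $\langle[x],[y]\rangle_{\sf Muk}=\langle\overline x,\overline y\rangle_{\sf Muk}$ for all classes. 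Then I would argue by contradiction: if $V\subsetneq HH_\bullet(A)[d]$, then — since $HH_\bullet(A)$ is finite-dimensional and the Mukai pairing is non-degenerate on it — there is a nonzero $\xi$ orthogonal to $V$, and for any $\eta$ one computes $\langle\xi,\eta\rangle_{\sf Muk}=\langle\overline\xi,\overline\eta\rangle_{\sf Muk}=\langle\xi,\overline\eta\rangle_{\sf Muk}=0$, the second equality applying the identity $\langle[x],[y]\rangle_{\sf Muk}=\langle\overline x,\overline y\rangle_{\sf Muk}$ together with $\overline{\overline\eta}=\overline\eta$, and the last because $\overline\eta\in V$. This contradicts non-degeneracy, so $V=HH_\bullet(A)[d]$.

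The hard part will be the bookkeeping around $\overline{(\,\cdot\,)}$, in particular that $[\pi_0 x]$ is independent of the cycle representing $[x]$: for $x'=x+b\gamma$ the discrepancy is $P(\pi_0 b\gamma,\pi_0 y)=\langle b\gamma,y\rangle_{\sf Muk}=\pm\langle\gamma,by\rangle_{\sf Muk}=0$, which rests precisely on the chain-pairing identity $\langle b-,-\rangle_{\sf Muk}\pm\langle-,b-\rangle_{\sf Muk}=0$; one must stay alert to the fact that $\pi_0$ itself is not a chain map, so the argument genuinely has to be routed through the pairing and not through a chain-level splitting. If one prefers a more structural route, it is also possible to avoid the Mukai pairing altogether: the non-degenerate cyclic pairing makes the finite-dimensional algebra $A$ Frobenius, hence self-injective, while smoothness forces $A$ to have finite global dimension; a self-injective algebra of finite global dimension is semisimple (in a minimal projective resolution the last nonzero term is projective, hence injective, hence a split summand of the previous term, contradicting minimality unless the resolution has length zero), so $HH_\bullet(A)=HH_0(A)$, onto which the length-zero inclusion is the evident surjection — and each of these steps goes through verbatim in the $\mathbb{Z}/2\mathbb{Z}$-graded setting over our characteristic-zero ground field.
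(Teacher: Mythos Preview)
Your Mukai-pairing argument is correct and uses exactly the same two inputs as the paper (Lemma~\ref{lem:mukai-vanish} plus Shklyarov's non-degeneracy), but you have worked much harder than necessary. The observation you are missing is the one the paper opens with: because only $\m_2$ is nonzero, the Hochschild differential $b$ is homogeneous of degree $-1$ in the \emph{length} grading, so $H_\bullet(L)$ itself splits as $\bigoplus_{k\ge 0} H_\bullet(L)_k$ with $H_\bullet(L)_k$ represented by length-$k$ cycles. Lemma~\ref{lem:mukai-vanish} then says that every class in $H_\bullet(L)_k$ for $k\ge 1$ lies in the radical of the Mukai pairing and hence vanishes --- no auxiliary projection $\overline{(\,\cdot\,)}$ is needed, and the well-definedness bookkeeping you flag as ``the hard part'' evaporates. (Your version of that bookkeeping is fine, incidentally: to see $[\pi_0 b\gamma]=0$ you are implicitly invoking non-degeneracy of the Mukai pairing on $HH_\bullet$ a second time, which is legitimate but makes the argument circuitous.)

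Your alternative route --- Frobenius $\Rightarrow$ self-injective, smooth $\Rightarrow$ finite global dimension, hence semisimple, hence $HH_n(A)=0$ for $n>0$ --- is a genuinely different and self-contained argument that avoids the Mukai pairing entirely. It has the virtue of yielding the stronger conclusion that $H_\bullet(L)$ is concentrated in length zero (the paper's argument gives only surjectivity), and of making the algebra's structure transparent; on the other hand the paper's approach stays within the Hochschild-theoretic framework already set up and does not require unpacking what ``smooth'' means for the underlying algebra.
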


\begin{proof}
	Since $A$ only has $\m_2$, the homology $H_\bullet(L)$ is also graded by the length of Hochschild chains. Let $\alpha=[a_0|a_1|\cdots|a_k]\in H_\bullet(L)$ be a Hochschild homology class with $k\geq 1$. Then by the previous lemma we have
	\[ \langle \alpha, \beta\rangle_{\sf Muk} = 0, \;\; \forall \beta \in H_\bullet(L).\]
	But $A$ is smooth and proper, by Shklyarov's non-degeneracy result we must have $\alpha=0$. Thus any Hochschild homology class must be represented by a length zero chain.
\end{proof}

\begin{lem}~\label{lem:s-can}
	There exists a unique splitting denoted by $s^{\sf can}: H_\bullet(L) \ra H_\bullet(L_+)$ of the Hodge filtration of $A$ characterized by the property that $\nabla_{u\frac{\partial}{\partial u}} s^{\sf can}(\alpha) = 0, \;\; \forall \alpha\in H_\bullet(L)$.
\end{lem}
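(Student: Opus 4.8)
The plan is to construct $s^{\sf can}$ explicitly on the length-zero classes and then argue uniqueness from the structure of the $u$-connection. By Lemma~\ref{lem:surj}, every class in $H_\bullet(L)$ is represented by a length-zero Hochschild chain $a\in A[d]$ with $\m_2$-induced differential; since $A$ has no higher products, one checks directly that $B$ vanishes on length-zero chains (the Connes differential raises length), so every length-zero class $\alpha=[a]$ is already $(b+uB)$-closed in $CC^{red}_\bullet(A)[d][[u]]$ with no $u$-corrections needed. This gives a candidate splitting $s^{\sf can}(\alpha):=[a]\in H_\bullet(L_+)$, i.e. the ``constant'' lift. One must check this is well-defined on homology (independent of the length-zero representative): two length-zero chains representing the same class in $H_\bullet(L)$ differ by $b$ of something, and because $A$ is minimal with only $\m_2$ the relevant boundary can be taken length-zero as well, so the lift descends. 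That $s^{\sf can}$ satisfies (S1.) is immediate from $\pi[a]=[a]$. For (S2.), the Lagrangian condition, one invokes Lemma~\ref{lem:mukai-vanish}: since $s^{\sf can}(\alpha)$ and $s^{\sf can}(\beta)$ are length-zero with no higher $u$-terms, $\langle s^{\sf can}(\alpha), s^{\sf can}(\beta)\rangle_{\sf hres}=\langle\alpha,\beta\rangle_{\sf Muk}$ directly from the definition of the higher residue pairing.

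The homogeneity/characterizing property $\nabla_{u\frac{\partial}{\partial u}} s^{\sf can}(\alpha)=0$ should follow because the $u$-connection on the negative cyclic complex is, on the nose, $\nabla_{u\frac{\partial}{\partial u}}=u\frac{\partial}{\partial u}+(\text{terms involving }B\text{ and the }A_\infty\text{ operations})$, and acting on the constant length-zero lift $[a]$ both $u\frac{\partial}{\partial u}$ (no $u$ appears) and the correction terms (they produce strictly positive length) vanish in $H_\bullet(L_+)$. So $s^{\sf can}$ as constructed is flat for the connection.

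For uniqueness: suppose $s$ and $s'$ are two splittings with $\nabla_{u\frac{\partial}{\partial u}} s(\alpha)=\nabla_{u\frac{\partial}{\partial u}} s'(\alpha)=0$. Their difference $\delta:=s-s'\colon H_\bullet(L)\to H_\bullet(L_+)$ lands in $u\cdot H_\bullet(L_+)$ by (S1.), and is flat: $\nabla_{u\frac{\partial}{\partial u}}\delta(\alpha)=0$. Writing $\delta(\alpha)=\sum_{k\geq 1}\delta_k(\alpha)u^k$ and expanding the flatness equation order-by-order in $u$ against the explicit form of the connection (whose order-$u^0$ part is $u\frac{\partial}{\partial u}$, multiplying the $u^k$-coefficient by $k$) forces $k\,\delta_k(\alpha)=(\text{lower-order terms})$; a descending induction, using that $H_\bullet(L_+)$ is $u$-adically complete and that the relevant multiplication-by-$k$ is invertible over $\mathbb{K}$ (characteristic zero), yields $\delta_k=0$ for all $k$, hence $s=s'$. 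I expect the main obstacle to be pinning down the precise form of the $u$-connection on the chosen chain model and verifying rigorously that its ``non-principal'' part strictly increases Hochschild length (so that it kills the length-zero constant lifts) — this is where one must be careful about which chain model of $HC^-$ is in play and about the interaction of $B$ with the $A_\infty$-structure; the rest is bookkeeping with the length grading supplied by Lemmas~\ref{lem:mukai-vanish} and~\ref{lem:surj}.
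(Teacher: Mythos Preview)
Your existence argument contains a genuine error: the claim that ``$B$ vanishes on length-zero chains'' is false. For a length-zero chain $a_0$, the Connes differential gives $B(a_0)=\one\otimes a_0$ (with $a_0$ now in bar position), which is a nonzero length-one chain in the reduced complex whenever $a_0\notin\mathbb{K}\cdot\one$. The parenthetical ``the Connes differential raises length'' is true but is the opposite of a reason for vanishing. Consequently the ``constant lift'' $[a]$ is not $(b+uB)$-closed, and your candidate $s^{\sf can}(\alpha):=[a]$ does not define a class in $H_\bullet(L_+)$ at all. The paper's construction is genuinely different here: one \emph{does} need $u$-corrections $s^{\sf can}(\alpha)=[a+\alpha_1 u+\alpha_2 u^2+\cdots]$, and the key point (which you are missing) is that since $b$ lowers length by one and $B$ raises it by one, these corrections can be chosen with $\alpha_k$ of length exactly $2k$. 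One then invokes the explicit form of the connection when only $\m_2$ is present, namely $\nabla_{u\partial_u}(a_0|a_1|\cdots|a_k\cdot u^n)=(n-\tfrac{k}{2})\cdot a_0|a_1|\cdots|a_k\cdot u^n$, so that terms with $k=2n$ are annihilated.

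Your flatness argument inherits the same confusion: you assert that the connection's ``correction terms produce strictly positive length'' and therefore ``vanish in $H_\bullet(L_+)$'', but Lemma~\ref{lem:surj} is about $H_\bullet(L)$, not $H_\bullet(L_+)$; positive-length negative-cyclic classes need not be zero. Your uniqueness sketch via order-by-order analysis of the flatness equation could in principle be made to work once you have the correct explicit formula for $\nabla_{u\partial_u}$ (the eigenvalue $n-k/2$ is nonzero on any term with $k\neq 2n$, which is exactly what you need), but as written it is contingent on the very thing you flag as the obstacle. The paper instead deduces uniqueness from nondegeneracy of the higher residue pairing together with Lemma~\ref{lem:mukai-vanish}.
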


\begin{proof}
	Since $\m_2$ is the only non-zero \Ai \ operation, the Hochschild class $\m'$ (in the notation of \cite{AT}) vanishes, hence the $u$-connection is given by the operator
	\[ \nabla_{u\frac{\partial}{\partial u}} (a_0|a_1|\cdots|a_k \cdot u^n) = (n-\frac{k}{2}) \cdot a_0|a_1|\cdots|a_k \cdot u^n.\]
	Let $\alpha=[a] \in H_\bullet(L)$ be a Hochschild class represented by a length zero chain $a\in A[d]$. Since both the Hochschild differential $b$ and the Connes operator $B$ are homogeneous respect to the length grading, i.e. $b$ is degree $-1$ while $B$ is degree $1$, we may choose $s(\alpha)$ to be of the form
	\[ s^{\sf can}(\alpha)= [ a+ \alpha_1 u +\alpha_2 u^2 + \cdots], \;\; \alpha_k \in A\otimes \overline{A}^{\otimes 2k}[d], \; \forall k\geq 1.\]
	Then one easily checks that we have $\nabla_{u\frac{\partial}{\partial u}} s(\alpha) = 0$. The uniqueness follows from the non-degeneracy of the higher residue pairing and Lemma~\ref{lem:mukai-vanish}.
\end{proof}

Recall from~\cite[Section 2.3]{AT} the cyclic pairing on $A$ induces a duality isomorphism
\[ D: HH^\bullet(A) \cong HH_\bullet(A)[d]=H_\bullet(L)\]
Let us denote by $\phi := D(\one_A)$. By construction, $\phi$ is a weak Calabi-Yau structure. 

\begin{lem}
	Under the correspondence in Corollary~\ref{cor:CY_SHIP}, the cyclic structure in $A$ corresponds to the strong Calabi-Yau structure $s^{\sf can}\big( \phi \big)\in H_\bullet(L_+)=HC_\bullet^-(A)[d]$.
\end{lem}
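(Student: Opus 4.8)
The plan is to follow the cyclic structure through the two identifications that make up the correspondence: first Corollary~\ref{cor:CY_SHIP}, which realizes it as a class in $HC^\lambda_\bullet(A)^\vee$, and then the chain of isomorphisms $HC^\lambda_\bullet(A)^\vee\cong HC^+_\bullet(A)^\vee\cong HC^-_\bullet(A)=H_\bullet(L_+)$ supplied by Shklyarov's non-degeneracy and the Mukai pairing (Subsection~\ref{sec:trivCY}). By Example~\ref{ex:cyclicCY}, the cyclic pairing, viewed as a constant strong homotopy inner product in $\Omega^{2,cl}(A)$, corresponds under Corollary~\ref{cor:CY_SHIP} to the class $\Phi\in HC^\lambda_\bullet(A)^\vee$ determined on chains by $\Phi([x_0\otimes\cdots\otimes x_n])=\langle\one_{X_0},x_0\rangle$ if $n=0$ and $\Phi([x_0\otimes\cdots\otimes x_n])=0$ if $n>0$. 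So the lemma amounts to showing that the element $\eta\in HC^-_\bullet(A)$ representing $\Phi$ — i.e. the unique $\eta$ with $\langle\gamma,\eta\rangle=\Phi(\gamma)$ for every $\gamma\in HC^+_\bullet(A)$, where $\langle\alpha u^i,\beta u^j\rangle:=\delta_{i+j=0}\langle\alpha,\beta\rangle_{\sf Muk}$ — coincides with $s^{\sf can}(\phi)$.

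To prove $\eta=s^{\sf can}(\phi)$ it suffices to check $\langle\gamma,s^{\sf can}(\phi)\rangle=\Phi(\gamma)$ for all $\gamma\in HC^+_\bullet(A)$. First I would record the shape of the two objects being paired. By Lemma~\ref{lem:s-can} (and its proof) the class $s^{\sf can}(\phi)$ has a representative $\phi+\phi_1u+\phi_2u^2+\cdots$ with $\phi_0$ a length-zero chain representing $\phi$ (such a representative exists by Lemma~\ref{lem:surj}) and $\phi_n\in A\otimes\overline{A}^{\otimes 2n}[d]$ of Hochschild length exactly $2n$; this is forced because $A$ has only $\m_2$, so $b$ and $B$ are homogeneous of length degrees $-1$ and $+1$. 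Writing a $u^{-1}$-model representative of $\gamma$ as $\sum_{j\ge 0}\gamma_j u^{-j}$, the residue pairing gives $\langle\gamma,s^{\sf can}(\phi)\rangle=\sum_{j\ge 0}\langle\gamma_j,\phi_j\rangle_{\sf Muk}$; by Lemma~\ref{lem:mukai-vanish} every term with $j\ge 1$ vanishes because $\phi_j$ has positive length, and likewise $\langle\gamma_0,\phi\rangle_{\sf Muk}=\langle(\gamma_0)_0,\phi\rangle_{\sf Muk}$, where $(\gamma_0)_0$ denotes the length-zero component of $\gamma_0$. Hence $\langle\gamma,s^{\sf can}(\phi)\rangle=\langle(\gamma_0)_0,\phi\rangle_{\sf Muk}$.

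On the other side, under the standard identification $HC^+_\bullet(A)\cong HC^\lambda_\bullet(A)$ of~\cite{Lod}, which on representatives sends $\sum_j\gamma_j u^{-j}$ to the class of $\gamma_0$ (a chain map because the Connes operator $B$ has image in $\im(\id-t)$), the functional $\Phi$ pulls back to $\gamma\mapsto\Phi([\gamma_0])=\langle\one_A,(\gamma_0)_0\rangle$, using that $\Phi$ is supported on length-zero chains. Thus matching the two sides reduces to the identity $\langle a,\phi\rangle_{\sf Muk}=\langle\one_A,a\rangle$ for length-zero chains $a\in A[d]$. But this is precisely the defining property of $\phi=D(\one_A)$: it expresses the compatibility of the Mukai pairing on $HH_\bullet(A)$ with the cyclic pairing through the duality isomorphism $D$ of~\cite[Section 2.3]{AT} (equivalently, $\langle-,\phi\rangle_{\sf Muk}=\Phi\circ\pi_*$ as functionals on $HH_\bullet(A)$, with $\pi_*:HH_\bullet(A)\to HC^\lambda_\bullet(A)$ the natural map appearing in Definition~\ref{def:cy}). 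Combining these reductions gives $\langle\gamma,s^{\sf can}(\phi)\rangle=\langle\one_A,(\gamma_0)_0\rangle=\Phi(\gamma)$, as required.

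I expect the main difficulty to be bookkeeping: keeping the three models for cyclic homology — Connes' complex $C^\lambda_\bullet(A)$, the $(b+uB)$ $u$-model, and the negative cyclic complex $L_+$ — mutually consistent, and tracking how the explicit cocycle $\Phi$ of Example~\ref{ex:cyclicCY} is transported first to a functional on $HC^+_\bullet(A)$ and then, via the higher residue duality, to an element of $HC^-_\bullet(A)$, all while getting the $[d]$-shifts and Koszul signs right. The only input not already contained in the excerpt is the $D$-compatibility from~\cite{AT}; Lemmas~\ref{lem:mukai-vanish}, \ref{lem:surj} and \ref{lem:s-can}, together with Example~\ref{ex:cyclicCY}, supply everything else.
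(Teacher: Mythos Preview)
Your proposal is correct and follows essentially the same route as the paper: identify the cyclic pairing with the functional $\Phi$ of Example~\ref{ex:cyclicCY}, compute the action of $s^{\sf can}(\phi)$ on $HC^+_\bullet(A)$ via the residue pairing, use Lemma~\ref{lem:mukai-vanish} together with the length structure of $s^{\sf can}(\phi)$ from Lemma~\ref{lem:s-can} to collapse everything to the length-zero term, and finish with the defining property of $\phi=D(\one_A)$. The paper's proof is organized identically, only more tersely.
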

\begin{proof}
	We denote $s^{\sf can}(\phi)=\sum_{k\geq 0} \phi_k u^k$ and compute the action of this Calabi--Yau structure on $HC_\bullet^+(A)$:
	\[\langle \sum_{k\geq 0} u^{-k} \alpha_k, s^{\sf can}( \phi ) \rangle= \sum_{k\geq 0} \langle\alpha_k, \phi_k \rangle_{{\sf Muk}}= \langle\alpha_0,  D(\one_A)\rangle_{{\sf Muk}}.\]
	In the second equality, we have used the fact that all the $\phi_k,\ k>0$ can be represented by chains of positive length (see proof of Lemma \ref{lem:s-can}), and therefore the corresponding pairings vanish by Lemma \ref{lem:mukai-vanish}. Again, by Lemma \ref{lem:mukai-vanish}, the remaining term also vanishes unless $\alpha_0$ has length zero, in which case $\langle\alpha_0,  D(1_A)\rangle_{{\sf Muk}}= \langle\one_A, \alpha_0\rangle$ by definition of $D$. 
	
	As computed in Example \ref{ex:cyclicCY}, the cyclic pairing in $A$ corresponds to the Calabi--Yau structure $\Phi$ defined in (\ref{eq:CY}).  The comparison map between the $u$-model and cyclic complexes for cyclic homology (see \cite{Lod}) sends $\sum_{k\geq 0} \phi_k u^{-k}$ to $[\phi_0]$. Therefore, we conclude that $\Phi$ agrees with the map described above. 
\end{proof}

Observe that $s^{\sf can}$ is $\phi$-compatible, with the scalar $r=0$ in Condition $(S4.)$ Definition~\ref{defi:splitting}. Since $A$ is a Frobenius associative algebra, the $u$-connection has a simple pole, as we saw above. Hence, we are in the second case of Proposition \ref{prop:comp_unital}, which implies $s^{\sf can}$ is unital. Thus, we may use the Frobenius algebra $A$ as a cyclic model to compute the CEI of the triple $(A, \phi, s^{\sf can})$.

Using the duality map $D$ above, we can transfer the cup product on $HH^\bullet(A)$ to $H_\bullet(L)$. Together with the Mukai pairing, we obtain an unital commutative Frobenius algebra $\big( H_\bullet(L),\phi, \cup, \langle-,-\rangle_{\sf Muk} \big)$. We refer to~\cite[Theorem 2.4]{AT} for a proof. It is well known that a commutative Frobenius algebra is equivalent to the structure of a $2$-dimensional topological field theory. In particular, this yields linear functionals
\begin{equation}~\label{eq:tqft}
	\omega_{g,n}^A: H_\bullet(L)^{\otimes n} \ra \mathbb{K}
\end{equation}

\begin{thm}
	The CEI of the triple $(A,\phi,s^{\sf can})$ are given by
	\[ F_{g,n}^{A,\phi,s^{\sf can}}(\alpha_1\cdot u^{k_1},\ldots,\alpha_n\cdot u^{k_n})= \omega_{g,n}^A(\alpha_1,\ldots,\alpha_n) \cdot \int_{[\overline{M}_{g,n}]} \psi_1^{k_1}\cdots\psi_n^{k_n}\]
	where $\alpha_1,\ldots,\alpha_n \in H_\bullet(L)$, and $k_1,\ldots,k_n\geq 0$.
\end{thm}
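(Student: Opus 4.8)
The plan is to reduce the computation to the known genus-expansion of the trivial splitting for a cyclic algebra with only $\m_2$, and then identify the combinatorial output of the stable-graph sum formula \eqref{eq:explicit-formula} with the TQFT correlators $\omega_{g,n}^A$ times the $\psi$-class integrals. The starting point is the preceding lemmas: $A$ itself (being cyclic, unital, smooth, $\Z$-graded) serves as its own unital cyclic model, so by the Lemma in Section~\ref{sec:trivCY} (and the fact that $s^{\sf can}$ is $\phi$-compatible with $r=0$, hence unital, via Proposition~\ref{prop:comp_unital}) we have $F_{g,n}^{A,\phi,s^{\sf can}} = \langle -,\ldots,-\rangle_{g,n}^{A,s^{\sf can}}$ directly, with no zig-zag correction. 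So the whole question is internal to the CEI construction of Section~\ref{sec:defi} applied to $(A,s^{\sf can})$.

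First I would invoke Costello's formula \eqref{eq:explicit-formula}: $\sum_{g,n} F^{A,s^{\sf can}}_{g,n}\hbar^g\lambda^{2g-2+n} = K^{s^{\sf can}}_*\widetilde{\beta^A}$, expanded as a sum over stable graphs $\Gamma$ of genus $g$ with $n$ legs. Each vertex of $\Gamma$ of genus $g_v$ with $n_v$ half-edges contributes a copy of $\beta^A_{g_v,n_v}$ (the string-vertex class pushed forward by $\rho^A$), each internal edge contributes the propagator built from the splitting $s^{\sf can}$, and the $L_\infty$-map $K^{s^{\sf can}}$ glues these along the BV/twisted-sewing data. The key structural input is Lemma~\ref{lem:mukai-vanish}: the chain-level Mukai pairing vanishes on Hochschild chains of positive length. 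This forces, at every edge of every stable graph, the insertion to be a length-zero chain, i.e. an element of $A[d]$; combined with Lemma~\ref{lem:surj} this means all homology classes flowing through the graph are represented in $A[d]$, where the Hochschild structure collapses to the genuine (commutative, after $D$) Frobenius algebra $(H_\bullet(L),\phi,\cup,\langle-,-\rangle_{\sf Muk})$ of \cite[Theorem 2.4]{AT}.

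Next I would separate the $u$-direction from the ``geometric'' direction. Because $s^{\sf can}$ is the flat splitting ($\nabla_{u\frac{d}{du}}s^{\sf can}=0$, Lemma~\ref{lem:s-can}), the propagator has no $u$-dependence beyond the tautological $\delta_{i+j}$ pairing, so the $u$-powers $k_1,\ldots,k_n$ on the legs decouple: the dependence on the $\psi$-variables is governed entirely by the pushforward of the string vertices $\widehat{\mathcal{V}}_{g,n}$ to $\overline{M}_{g,n}$. This is exactly the content of the $\mathbb{K}$-case, Example~\ref{ex:field}: for $A=\mathbb{K}$ one gets $\lambda^{2-2g-n}\int_{[\overline{M}_{g,n}]}\psi_1^{k_1}\cdots\psi_n^{k_n}$, and Costello's comparison (originally \cite{Cos2}, used in \cite{Tu}) identifies the string-vertex/propagator sum with the fundamental class of Deligne--Mumford space. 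The remaining ``coefficient'' part of the stable-graph sum — one $\omega^A$-correlator per vertex, one Frobenius copairing per edge, composed along $\Gamma$ — is precisely the state-sum presentation of a $2$d TQFT, which by the gluing axioms evaluates to the single correlator $\omega_{g,n}^A(\alpha_1,\ldots,\alpha_n)$ of \eqref{eq:tqft}. Multiplying the two factors gives the claimed formula.

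The main obstacle I anticipate is making the last ``it is just a $2$d TQFT state-sum'' step rigorous at the chain level rather than waving at it: one must check that $\rho^A(\widehat{\mathcal{V}}_{g,n})$, after killing positive-length chains via Lemma~\ref{lem:mukai-vanish} and passing to homology, really does reproduce the Frobenius-algebra correlator $\omega^A_{g,n}$ — i.e. that the combinatorial string vertices act, on the Frobenius-algebra part, by the standard pair-of-pants and cap operations — and that the propagator gluing in $K^{s^{\sf can}}$ matches the Frobenius copairing on the nose (signs and all). Here one would lean on the fact that the string-vertex Maurer--Cartan element is homotopy-unique (Theorem~\ref{thm:comb-mc}) so it suffices to exhibit \emph{one} representative that manifestly factors through the TQFT, and on the Dilaton/String relations to fix the unstable pieces $(g,n)=(0,1),(0,2)$; the ``product'' structure on $H_\bullet(L)$ entering the propagator is exactly the $\cup$-product transported by $D$, which is why the \cite{AT} Frobenius structure is the right one. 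The $\Z/2\Z$- versus $\Z$-grading subtlety is harmless here since $A$ is $\Z$-graded by hypothesis, so $s^{\sf can}$ genuinely exists and the degeneration property is automatic.
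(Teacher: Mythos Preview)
Your overall plan --- invoke the explicit stable-graph formula, use Lemma~\ref{lem:mukai-vanish} to kill positive-length chains, and separate the $u$-direction from the Frobenius-algebra direction --- is right in spirit, but you miss the crucial simplification and in one place misdescribe the propagator. In the paper's argument one first takes a length-homogeneous chain-level lift $R$ of $s^{\sf can}$ (each $R_n$ raises Hochschild length by $2n$, as in the proof of Lemma~\ref{lem:s-can}); then in the graph-sum formula from~\cite{CT}, every non-trivial edge/propagator contribution produces only positive-length chains, and the final Mukai pairing against the length-zero inputs $\alpha_i\in A[d]$ (Lemma~\ref{lem:surj}) annihilates these by Lemma~\ref{lem:mukai-vanish}. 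Hence \emph{only the single star graph} $\star_{g,1,n-1}$ (one vertex of genus $g$, one input, $n-1$ outputs, no internal edges) survives. There is no state-sum to assemble and no TQFT gluing axiom to invoke; the whole invariant reduces to $\rho^A(\widehat{\mathcal{V}}_{g,1,n-1})$ paired against length-zero inputs.

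In particular, your claim that ``the propagator gluing in $K^{s^{\sf can}}$ matches the Frobenius copairing on the nose'' is not correct: the propagator contributes \emph{zero} here, not the copairing, and that is precisely why non-star graphs drop out. The remaining step is then direct rather than the obstacle you anticipate: since $A$ has only $\m_2$, only the degree-zero (i.e.\ trivalent) ribbon graphs inside $\widehat{\mathcal{V}}_{g,1,n-1}$ act nontrivially on $A[d]$, and those act by the $2$d TQFT operations, giving the factor $\omega_{g,n}^A(\alpha_1,\ldots,\alpha_n)$; the $u$-powers $k_i$ decouple and are handled by the $A=\mathbb{K}$ computation of~\cite{Tu}. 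So your ``main obstacle'' (the state-sum identification across a general stable graph) is an artifact of not having noticed that only one graph contributes.
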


\begin{proof}
	As in the proof of Lemma~\ref{lem:s-can}, since the operators $b$ and $B$ are both homogeneous with respect to the length of Hochschild chains, we may choose a chain level lift $R: L \to L_+$ of the splitting operator $s^{\sf can}$ such that it is of the form
	\[ R(\alpha) = \alpha + u R_1\alpha +u^2 R_2\alpha + \cdots,\]
	with $R_n \alpha \in A\otimes \overline{A}^{2n+k}, \forall n\geq 0$ if $\alpha \in A\otimes \overline{A}^k$. Using Lemma~\ref{lem:mukai-vanish}, we see that $R$ trivially satisfies the symplectic condition, i.e.
	\[ \langle \alpha, \beta \rangle_{\sf Muk} = \langle R(\alpha), R(\beta) \rangle_{\sf hres}, \;\; \forall \alpha, \beta \in L.\]
	To this end, we may use of the following explicit formula from~\cite{CT} expressing $F_{g,n}$'s as a type of graph sum:
	\[ F_{g,n}^{A,\phi,s^{\sf can}}(\alpha_1\cdot u^{k_1},\ldots,\alpha_n\cdot u^{k_n}) =\sum_{j=1}^n \sum_{\mathbb{G}} {\sf wt}(\mathbb{G}) \cdot  \langle\rho(\mathbb{G})(\alpha_j u^{k_j}), \alpha_1 u^{k_1}\cdots \widehat{\alpha_ju^{k_j}}\cdots\alpha_n u^{k_n} \rangle_{\sf Muk}\]
	By Lemma~\ref{lem:surj}, we may assume that $\alpha_i= a_i \in A[d]$. Again by the vanishing in Lemma~\ref{lem:mukai-vanish}, in the above summation only the star graph $\star_{g,1,n-1}$ with one input and $n-1$ outputs can contribute. This reduces the above summation to
	\[ F_{g,n}^{A,\phi,s^{\sf can}}(\alpha_1\cdot u^{k_1},\ldots,\alpha_n\cdot u^{k_n}) =\sum_{j=1}^n  \frac{1}{(n-1)!}  \langle\rho(\mathcal{V}_{g,1,n-1})(\alpha_j u^{k_j}), \alpha_1 u^{k_1}\cdots \widehat{\alpha_ju^{k_j}}\cdots\alpha_n u^{k_n} \rangle_{\sf Muk}\]
	But since $\m_2$ is the only non-trivial operation in $A$, only degree zero ribbon graphs in $\mathcal{V}_{g,1,n-1}$ can contribute. Thus the invariant $ F_{g,n}^{A,\phi,s^{\sf can}}(\alpha_1\cdot u^{k_1},\ldots,\alpha_n\cdot u^{k_n})$ is the product of two parts: the first part is by action of degree zero ribbon graphs which give exactly the topological part $\omega^A_{g,n}(\alpha_1,\ldots,\alpha_n)$; the second part is the $\psi$-class contribution, which is computed in~\cite{Tu} and is given by  $\int_{[\overline{M}_{g,n}]} \psi_1^{k_1}\cdots\psi_n^{k_n}$.
\end{proof}

\begin{cor}
	Assume that $H_\bullet(L)\cong \mathbb{K}$ as a Frobenius algebra. Then we have
	\[ F_{g,n}^{A,\phi,s^{\sf can}}(\phi\cdot u^{k_1},\ldots,\phi\cdot u^{k_n})=  \int_{[\overline{M}_{g,n}]} \psi_1^{k_1}\cdots\psi_n^{k_n}.\]
\end{cor}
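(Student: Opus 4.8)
The plan is to derive the corollary directly from the theorem just proved, by specializing the inputs $\alpha_i = \phi$ and using the Frobenius algebra hypothesis $H_\bullet(L) \cong \mathbb{K}$. First I would invoke the theorem with $\alpha_i = \phi$ and $k_i$ arbitrary, which gives
\[
F_{g,n}^{A,\phi,s^{\sf can}}(\phi\cdot u^{k_1},\ldots,\phi\cdot u^{k_n}) = \omega_{g,n}^A(\phi,\ldots,\phi)\cdot \int_{[\overline{M}_{g,n}]}\psi_1^{k_1}\cdots\psi_n^{k_n},
\]
so everything reduces to computing the topological field theory number $\omega_{g,n}^A(\phi,\ldots,\phi)$ when $H_\bullet(L)$ is the one-dimensional Frobenius algebra.

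Next I would unwind what the isomorphism $H_\bullet(L)\cong \mathbb{K}$ \emph{as a Frobenius algebra} means concretely. Under the duality map $D$, the class $\phi = D(\one_A)$ corresponds to the unit of the algebra structure on $H_\bullet(L)$, so $\phi$ is the multiplicative identity. The Mukai pairing, being the Frobenius form, must then satisfy $\langle \phi,\phi\rangle_{\sf Muk}=1$ (this is forced: in a one-dimensional commutative Frobenius algebra the counit evaluated on the unit times the unit is the pairing of the unit with itself, and the standard normalization — as in \cite[Theorem 2.4]{AT} — makes this $1$; if a nonzero constant $c$ appeared instead, one rescales, but the hypothesis ``$\cong \mathbb{K}$ as a Frobenius algebra'' pins it to the standard model). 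The key step is then the standard fact from $2$-dimensional TQFT: for the trivial one-dimensional Frobenius algebra the partition-function-type invariants $\omega_{g,n}^A$ all equal $1$. Concretely, any genus-$g$ surface with $n$ boundary components decomposes into pairs of pants, caps and cylinders; each pair of pants contributes the multiplication $\mu(\phi,\phi)=\phi$, each cap the unit or counit, each cylinder the identity, and in the one-dimensional case every such composite evaluates to $1$ on the inputs $(\phi,\ldots,\phi)$. Hence $\omega_{g,n}^A(\phi,\ldots,\phi)=1$.

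Plugging this back in gives exactly
\[
F_{g,n}^{A,\phi,s^{\sf can}}(\phi\cdot u^{k_1},\ldots,\phi\cdot u^{k_n}) = \int_{[\overline{M}_{g,n}]}\psi_1^{k_1}\cdots\psi_n^{k_n},
\]
which is the claim. The only genuine content beyond citing the theorem is the TQFT computation $\omega_{g,n}^A(\phi,\ldots,\phi)=1$, and I expect \emph{that} — together with correctly matching the normalization conventions between the Frobenius structure on $H_\bullet(L)$ from \cite{AT} and the TQFT functionals $\omega_{g,n}^A$ in \eqref{eq:tqft} — to be the one point needing care; everything else is a direct substitution. In the written proof I would state the TQFT fact with a one-line justification (decompose the surface and observe each generator acts as the identity of $\mathbb{K}$) rather than belabor it.
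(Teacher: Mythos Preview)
Your proposal is correct and is exactly the argument the paper has in mind: the corollary is stated without proof because it is an immediate specialization of the preceding theorem, and the only extra ingredient is the trivial TQFT computation $\omega_{g,n}^A(\phi,\ldots,\phi)=1$ for the one-dimensional Frobenius algebra, which you supply. There is nothing to add.
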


\begin{example}
	Consider the Clifford algebra ${\sf Cl}$ introduced in Example \ref{ex:non-existence}. It was computed in~\cite{CLT} that $H_\bullet(L)\cong \mathbb{K}$ as a Frobenius algebra. The weak Calabi-Yau element is $[\epsilon] \in H_\bullet(L)$. Thus, we have
	\[ F_{g,n}^{{\sf Cl},[\epsilon],s^{\sf can}}([\epsilon]\cdot u^{k_1},\ldots,[\epsilon]\cdot u^{k_n})=  \int_{[\overline{M}_{g,n}]} \psi_1^{k_1}\cdots\psi_n^{k_n}.\]
\end{example}

\begin{example}
 There is a generalization of the previous example. The Clifford algebra ${\sf Cl}_d= \mathbb{K}[\epsilon_1,\ldots,\epsilon_d]$ generated by $d$ odd elements, with the relations $\m_2(\epsilon_i,\epsilon_j)+\m_2(\epsilon_j,\epsilon_i)= \delta_{ij}$. Again, the Hochschild homology $H_\bullet(L)$ has rank one, generated by $[\epsilon_1\cdots\epsilon_d]$. Using Lemma \ref{lem:5-3} for the Mukai graph (or \cite[Lemma 5.3]{AT}) one computes 
 \[\langle[\epsilon_1\cdots\epsilon_d], [\epsilon_1\cdots\epsilon_d]\rangle_{\sf Muk} = (-1)^{\frac{d(d-1)}{2}}.\]
 We choose the weak Calabi-Yau structure $\phi:=(\sqrt{-1})^{d(d-1)/2}[\epsilon_1\cdots\epsilon_d]$. Under the correspondence in Example \ref{ex:cyclicCY}, this Calabi-Yau structure is equivalent to equipping ${\sf Cl}_d$ with a cyclic structure determined by $\langle 1, \epsilon_1\cdots\epsilon_d\rangle = (\sqrt{-1})^{3d(d-1)/2}$. Moreover, with this choice of weak Calabi-Yau structure we have $H_\bullet(L)\cong \mathbb{K}$ as a Frobenius algebra, therefore
 \[ F_{g,n}^{{{\sf Cl}_d},\phi,s^{\sf can}}(\phi\cdot u^{k_1},\ldots,\phi\cdot u^{k_n})=  \int_{[\overline{M}_{g,n}]} \psi_1^{k_1}\cdots\psi_n^{k_n}.\]
\end{example}

\begin{rmk}
	There are many splittings of the Hodge filtration for ${\sf Cl}$, other than $s^{\sf can}$. These other splitting are unital (since we are in the odd case) but not homogeneous. We expect the CEI for these other splittings will give different (including non homogeneous) Cohomological Field Theories. In particular, we expect that for an appropriately chosen splitting for ${\sf Cl}$, one recovers the Cohomological Field Theory given by the total Chern class of the Hodge bundle on $\overline{M}_{g,n}$ \cite[Section 1]{Pan}. We will further study this problem in a future work. 
\end{rmk}

The next corollary serves to illustrate how one can use our Morita invariance result to compute the CEI of \Ai-categories relevant to Mirror Symmetry.

\begin{cor}
	Let $W\in\mathbb{K}[[x_1,\ldots, x_n]]$ be a potential with an isolated, Morse (non-degenerate) singularity at the origin. The category of matrix factorizations $\MF(W)$ is smooth, proper, has the Hodge-to-de-Rham degeneration property and $HH_\bullet(\MF(W))\cong \mathbb{K}\cdot\phi$. Moreover there is an unique $\phi$-compatible (and unital) splitting $s^{\sf can}$ and the corresponding CEI are equal to
	\[ F_{g,n}^{\MF(W),\phi,s^{\sf can}}(\phi\cdot u^{k_1},\ldots,\phi\cdot u^{k_n})=  \int_{[\overline{M}_{g,n}]} \psi_1^{k_1}\cdots\psi_n^{k_n}.\]
\end{cor}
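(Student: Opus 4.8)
The plan is to apply the Morita invariance of CEI (Theorem~\ref{thm:main}) to reduce the computation for $\MF(W)$ to a finite-dimensional cyclic $A_\infty$-algebra, and then to invoke the structural results already established for Frobenius-type algebras in this subsection. First I would recall the standard facts about matrix factorizations of an isolated singularity: by Dyckerhoff and others, $\MF(W)$ is smooth and proper (the singularity being isolated), is $\Z/2\Z$-graded, and satisfies Hodge-to-de-Rham degeneration (this can be seen via its identification with the category of modules over the Koszul-type algebra, or by Kaledin-type arguments for the $\Z/2$-graded setting after choosing the appropriate cyclic structure). The Hochschild homology is the Milnor ring: $HH_\bullet(\MF(W)) \cong \mathbb{K}[[x_1,\ldots,x_n]]/(\partial_1 W,\ldots,\partial_n W)$, which for a Morse singularity is one-dimensional, spanned by a class I will call $\phi$. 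So $HH_\bullet(\MF(W)) \cong \mathbb{K}\cdot\phi$ and the Mukai pairing is non-degenerate by \cite{Shk}.

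The key reduction step is that $\MF(W)$ is Morita equivalent to a very small model. For a Morse singularity $W = x_1^2 + \cdots + x_n^2$ (after a formal change of coordinates; in general a Morse singularity is formally equivalent to this), the category $\MF(W)$ is split-generated by the Koszul matrix factorization, and its endomorphism $A_\infty$-algebra is quasi-isomorphic to the Clifford algebra $\Cl_n = \mathbb{K}[\epsilon_1,\ldots,\epsilon_n]$ with $\m_2(\epsilon_i,\epsilon_j) = \frac12 \delta_{ij}$ and cyclic pairing $\langle 1, \epsilon_1\cdots\epsilon_n\rangle = 1$ (up to an even/odd shuffle depending on the parity of $n$; in the even case one may instead absorb a factor of $\Cl$ as in Appendix~\ref{app:b-f}). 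Thus $\tw^\pi\MF(W)$ is quasi-equivalent to $\tw^\pi \Cl_n$. Since $\Cl_n$ is a finite-dimensional cyclic $A_\infty$-algebra with only $\m_2$, it falls under the hypotheses of this subsection: by the Example computed at the end of the subsection, $H_\bullet(L_{\Cl_n}) \cong \mathbb{K}$ as a Frobenius algebra, the weak Calabi-Yau element is $[\epsilon_1\cdots\epsilon_n]$, and there is a unique $\phi$-compatible splitting $s^{\sf can}$ (with scalar $r=0$), which is automatically unital by Proposition~\ref{prop:comp_unital} (the odd case makes unitality automatic, or one checks compatibility directly). The Corollary following the main theorem of the subsection then gives $F_{g,n}^{\Cl_n,[\epsilon_1\cdots\epsilon_n],s^{\sf can}}(\phi u^{k_1},\ldots,\phi u^{k_n}) = \int_{[\overline{M}_{g,n}]}\psi_1^{k_1}\cdots\psi_n^{k_n}$.

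To conclude, I would observe that the unique $\phi$-compatible splitting is a Morita-invariant notion: since the $u$-connection is preserved under Morita equivalence \cite{She}, the canonical splitting $s^{\sf can}$ on $\MF(W)$ (characterized by $\nabla_{u\frac{d}{du}} s^{\sf can}(\phi) \in r\cdot s^{\sf can}(\phi) + u^{-1}\im(s^{\sf can})$ with a suitable $r$) corresponds under the Morita equivalence $f:\tw^\pi\MF(W)\to\tw^\pi\Cl_n$ to the canonical splitting of $\Cl_n$, and $f_*\phi = [\epsilon_1\cdots\epsilon_n]$ up to scalar (which can be normalized, or tracked and shown to be $1$ using the Frobenius-algebra identification $H_\bullet(L)\cong\mathbb{K}$). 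Hence $(\MF(W),\phi,s^{\sf can})$ and $(\Cl_n,[\epsilon_1\cdots\epsilon_n],s^{\sf can})$ are Morita equivalent as extended Calabi-Yau categories in the sense of Definition~\ref{defn:moreq}, and Theorem~\ref{thm:main} transports the formula above to $\MF(W)$.

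The main obstacle I expect is the bookkeeping around parity and the precise identification of the generating endomorphism algebra: for $n$ odd one gets $\Cl_n$ directly, but for $n$ even the Calabi-Yau pairing has even parity and one must either use a Clifford algebra with an extra odd generator and appeal to the tensor trick of Appendix~\ref{app:b-f} (Theorem~\ref{thm:clifford}), or verify directly that the even-parity Clifford model still has one-dimensional Frobenius-algebra Hochschild homology and a unique $\phi$-compatible unital splitting. A secondary, more technical point is checking that the canonical splitting really does match under the Morita equivalence and that no nontrivial scalar enters into $f_*\phi$; this is handled by comparing the two one-dimensional Frobenius algebras $H_\bullet(L_{\MF(W)})$ and $H_\bullet(L_{\Cl_n})$, both of which are $\cong\mathbb{K}$, so the equivalence must match their canonical generators up to a scalar that is fixed by the normalization of the Mukai pairing.
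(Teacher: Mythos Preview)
Your proposal is correct and follows essentially the same route as the paper: establish that $\MF(W)$ is Morita equivalent to the Clifford algebra $\Cl_n$, then apply Theorem~\ref{thm:main} together with the Clifford example already computed in this subsection. The only notable difference is that the paper reaches the Morita equivalence via intrinsic formality of $\Cl_n$ (citing \cite{She1}) rather than via the formal Morse lemma on $W$, and it does not pause over the parity or scalar-normalization issues you flag---those are absorbed into the one-dimensionality of $HH_\bullet$ and the uniqueness of the compatible splitting.
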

\begin{proof}
	The facts that $\MF(W)$ is smooth, proper and has the Hodge-to-de-Rham degeneration property are proved in \cite{Dyc}, and in fact hold without the Morse assumption. It is also proved in \cite{Dyc} that $\MF(W)$ has a compact generator $\mathbb{K}^{stab}$, and the cohomology of the endomorphism dg-algebra $end(\mathbb{K}^{stab})$ is isomorphic to a Clifford algebra corresponding to the Hessian of $W$. When $W$ has a Morse singularity the Clifford algebra is intrinsically formal \cite[Section 6.1]{She1} and therefore $end(\mathbb{K}^{stab})$ is quasi-isomorphic to the Clifford algebra ${\sf Cl}_n$. Hence $\MF(W)$ is Morita equivalent to ${\sf Cl}_n$. The result now follows from Theorem \ref{thm:main} and the previous example.
\end{proof}

\subsection{A-model comparison with Gromov-Witten theory}
Let $X$ be a compact symplectic manifold of real dimension $2d$ and let $\CC=\Fuk(X)$ be the Fukaya category of $X$. This is an \Ai-category linear over $\mathbb{K}=\Lambda$ the Novikov field  (with complex coefficients).  It is natural to expect that for the ``correct" choice of Calabi--Yau structure and splitting, the  induced CEI should coincide with the geometrically defined Gromov--Witten invariants of $X$.

We expect the correct splitting to be determined by the geometrically defined \emph{open-closed map}
\[\OC: HH_\bullet\big( \Fuk(X)\big) \to H^\bullet(X,\Lambda)[d],\]
and its cyclic enhancement
\[\OC^{\sf cyc}: HC_\bullet^-\big( \Fuk(X)\big) \to H^\bullet(X,\Lambda)[d][[u]].\]
The open-closed map $\OC$ has been constructed in several settings (starting with \cite{FOOOb}) and is expected to be an isomorphism for a wide class of symplectic manifolds. The cyclic version $\OC^{\sf cyc}$ was more recently constructed by Ganatra \cite{Gan}. Although the construction in \cite{Gan} is carried out in a somewhat limited technical setup, the arguments should generalize to any other setting where one constructs the Fukaya category.

From now on we make the assumption on $X$ that the open-closed map $\OC$ is an isomorphism. Then, as explained in \cite{Gan} for example, one can define a weak Calabi--Yau structure on $\Fuk(X)$ by taking the composition
\[\omega^{\OC}: HH_\bullet\big( \Fuk(X)\big)\stackrel{\OC}{\longrightarrow}H^\bullet(X,\Lambda)[d]\stackrel{\int}{\longrightarrow}\Lambda[-d],\]
where $\int$ is the integration map (or Poincare pairing) on $X$.

Assuming the $\OC$ map is an isomorphism also implies that $\OC^{\sf cyc}$ is an isomorphism. Therefore it determines a splitting $s^\OC$, by requiring the following diagram be commutative
\[\begin{CD}
	HH_\bullet\big( \Fuk(X)\big) @> s^{{\mathcal{OC}}}>> HC_\bullet^-\big( \Fuk(X)\big)[d]\\
	@VV {\mathcal{OC}} V       @VV {\mathcal{OC}}^{\sf cyc} V \\
	H^\bullet(X,\Lambda)[d]  @> i >> H^\bullet(X,\Lambda)[d][[u]],
\end{CD}\]
where $i$ is the inclusion by constant map. It should follow from general properties of open-closed maps that $s^\OC$ is a good, $\omega^\OC$-compatible and unital splitting. Therefore, we can use these to construct the CEI and obtain maps 
\[ F^{\Fuk(X)}_{g,n}(\omega^\OC, s^\OC; - ):  HH_\bullet(\Fuk(X))[[u]]^{\otimes n} \to  \Lambda\]
for each pair of integers $(g,n)$ such that $2-2g-n<0$. We make the following conjecture.
\begin{conj}\label{conj:a-model}
Assume the open-closed map $\OC: HH_\bullet\big( \Fuk(X)\big) \to H^\bullet(X,\Lambda)[d]$ is an isomorphism and let $( \omega^\OC, s^{\mathcal{OC}})$ be the extended Calabi--Yau structure defined above.
 Then, for any $\alpha_1,\ldots, \alpha_n \in HH_\bullet\big( \Fuk(X)\big)$ we have
\[ F^{\Fuk(X)}_{g,n}\big( \omega^\OC, s^{\mathcal{OC}};\alpha_1u^{k_1},\ldots,\alpha_nu^{k_n}\big) = \langle \mathcal{OC}(\alpha_1)\psi^{k_1},\ldots,\mathcal{OC}(\alpha_n)\psi^{k_n}\rangle_g^{X},\]
where the right-hand side denotes the descendant Gromov--Witten invariants of $X$.
\end{conj}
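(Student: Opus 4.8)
We sketch a strategy toward this conjecture. The plan is to identify the categorical TCFT underlying the left-hand side with the geometric TCFT underlying Gromov--Witten theory, and to show that under this identification the Calabi--Yau structure $\omega^\OC$ and the splitting $s^\OC$ correspond respectively to the fundamental Calabi--Yau structure $\int_X$ and to the constant-map calibration $i$ on $H^\bullet(X,\Lambda)$; the conjecture then follows because the descendant potential of $X$ is, by construction, the trivialization of the geometric theory with respect to that calibration. By Theorem~\ref{thm:main} we are moreover free to replace $\Fuk(X)$ by any split-generating subcategory, which is where the cyclic model entering the definition lives.

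The first step is to upgrade the open--closed map to a chain-level morphism of $2$-dimensional TCFTs. The moduli spaces $M_{g,k,l}^{\sf fr}$ of framed surfaces act on $H^\bullet(X,\Lambda)[d]$ by counting pseudoholomorphic maps from the corresponding (nodal, upon degeneration) domains, producing a TCFT structure $\rho^X$ in the sense of Theorem~\ref{thm:tcft}, i.e. the genus-decorated quantum $L_\infty$-structure on $H^\bullet(X,\Lambda)$; let $\widehat{\mathfrak h}_X$ be the associated DGLA, the analogue of $\widehat{\mathfrak h}_\CC$ built from $H^\bullet(X,\Lambda)$. The assertion to establish is that the open--closed operations assemble into a morphism of TCFTs $\rho^{\Fuk(X)}\to\rho^X$, hence a morphism of DGLA's $\widehat{\mathfrak h}_{\Fuk(X)}\to\widehat{\mathfrak h}_X$ sending the combinatorial string vertex $\widehat{\mathcal V}$ to the same element on both sides; this is the higher-genus, all-operations refinement of the chain-level open--closed map of Ganatra~\cite{Gan}, whose genus-$0$, no-descendant shadow --- compatibility of $\OC$ with the categorical cup product/Mukai pairing and the quantum product/Poincar\'e pairing --- is known in a number of settings. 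Combined with the cyclic open--closed map of \cite{Gan}, this morphism intertwines $\langle-,-\rangle_{\sf Muk}$ with $\int_X$, so it carries $\omega^\OC$ to the fundamental Calabi--Yau structure on $H^\bullet(X,\Lambda)$ and, by the defining commuting square for $s^\OC$, carries $s^\OC$ to the splitting $i$.

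The second step is to identify the abstract total descendant potential of the geometric TCFT $\rho^X$ with the genuine Gromov--Witten descendant potential of $X$. By Costello's mechanism underlying Section~\ref{sec:defi}, the Maurer--Cartan element $\widehat\beta^X=\rho^X_*\widehat{\mathcal V}$, transported to the Fock space and trivialized by the map $K^{i}$ associated with the constant-map splitting, assembles the partition function built from the Deligne--Mumford-compactified operations, i.e. the generating function of $\langle\gamma_1\psi^{k_1},\ldots,\gamma_n\psi^{k_n}\rangle_g^X$; here one uses that the geometric ``string vertices'' are homotopic to the combinatorial ones after the Sen--Zwiebach homotopy transfer, and that $K^{i}$ precisely reads off the $\psi$-class insertions --- the point verified in \cite{Tu} for $X=\mathrm{pt}$ and in the Frobenius case in Subsection~\ref{subsec:frob}. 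Feeding in the first step, the DGLA morphism identifies $\beta^{\Fuk(X)}$ with $\beta^X$ and $K^{s^\OC}$ with $K^{i}$, so that $F^{\Fuk(X)}_{g,n}(\omega^\OC,s^\OC;-)=K^{i}_*\widetilde{\beta^X}$ evaluated on $\OC(\alpha_1)\psi^{k_1},\ldots,\OC(\alpha_n)\psi^{k_n}$, which is the right-hand side.

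The main obstacle is the first step: realizing the open--closed map as a morphism of TCFTs compatible with every sewing operation, including the twisted (Mukai) sewings with $M$ and $\mathbb M$, requires moduli of pseudoholomorphic maps whose domains range over $M_{g,k,l}^{\sf fr}$ with both boundary and interior marked points, together with the transversality and gluing theory needed to make these operations chain-level and to verify the required relations; in positive genus this analytic input has not been carried out in full. A secondary difficulty is that the whole construction presupposes that $\Fuk(X)$, or a split-generating subcategory, is smooth, proper and satisfies Hodge-to-de-Rham degeneration --- known only under extra hypotheses on $X$ --- and that the geometric and combinatorial string vertices genuinely agree, which is Costello's comparison between the compactified and uncompactified theories. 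A sensible intermediate target is the genus-$0$ case: there the first step reduces to the compatibility of $\OC^{\sf cyc}$ with the quantum connection, which is largely established, and the second step is Givental's formula for the genus-$0$ descendant potential of a Frobenius manifold.
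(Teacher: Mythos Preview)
The statement is a \emph{conjecture}, and the paper does not prove it. After stating Conjecture~\ref{conj:a-model}, the paper only offers supporting commentary: it points to the earlier work~\cite{AT}, where under the additional hypothesis that $HH^\bullet(\Fuk(X))$ is semi-simple a splitting $s^\mu$ is constructed via the closed--open map, and where the genus-zero categorical invariants (built from a Saito-type primitive form, not yet known to coincide with CEI) are shown to match the Gromov--Witten invariants. That is the full extent of the paper's discussion; there is no proof to compare your proposal against.

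Your write-up is therefore not a proof attempt to be judged correct or incorrect, but a reasonable outline of what a proof would require, and you acknowledge this yourself. Your identification of the two main steps --- lifting $\OC$ to a morphism of TCFTs/DGLAs in all genera, and matching the geometric string vertices with the combinatorial ones so that the trivialization $K^i$ reads off the descendant potential --- is sound, as is your diagnosis that the first step is the principal analytic obstruction. This is broadly consistent with the spirit of the paper's remarks, though the paper does not articulate the strategy in this much detail; it simply cites the genus-zero semi-simple evidence from~\cite{AT} and leaves the conjecture open.
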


In \cite{AT} the authors use the closed-open map (dual to the open-closed map above) to construct a splitting $s^\mu$ for $\Fuk(X)$, under the extra assumption that $HH^\bullet(\Fuk(X))$ is a semi-simple ring. One can show that $s^\mu$ agrees with the splitting $s^\OC$ in the conjecture. Furthermore, it is proved in \cite{AT} that for the splitting $s^\mu$ the genus zero categorical Gromov--Witten invariants, computed using a categorical analogue of Saito's primitive forms, agree with the Gromov--Witten invariants of $X$. Even though it is not yet known these invariants agree with the genus zero CEI, this provides some evidence for the above conjecture.

\subsection{B-model: derived invariants of smooth projective Calabi-Yau's}

Consider the case when $\CC= D^b_{dg}\big({\sf Coh}(X)\big) $ is the dg enhancement of the derived category of coherent sheaves on a smooth projective Calabi-Yau variety $X$ over $\mathbb{C}$. Blanc~\cite{Bla} developed a theory of complex topological $K$-theory for $\mathbb{C}$-linear differential graded categories, based on To\"en's proposal~\cite{Toe}. An immediate consequence of this construction is that for the category $D^b_{dg}\big({\sf Coh}(X)\big)$ there exists an intrinsic splitting of the non-commutative Hodge filtration which we denote by $s^{\sf BT}$ (for ``Blanc-To\"en"). Under the comparison result obtained by Blanc~\cite[Theorem 1.1 (b), (d)]{Bla}, this splitting corresponds to the complex conjugate splitting of the classical Hodge filtration of $H^\bullet(X,\mathbb{C})$, through a comparison map (see~\cite[Section 4.6]{Bla}) $HP_\bullet(\CC)\cong H^\bullet(X,\mathbb{C})$.

Assume the complex dimension of $X$ is $d$. Using the splitting $s^{\sf BT}$, the CEI give a map
\[ F_{g,n}^{\CC, s^{\sf BT}}: HH_d(\CC)^* \times HH_\bullet(\CC)[d][[u]]^{\otimes n} \ra \;\mathbb{C}.\]
Note that here the space of weak (=strong) Calabi-Yau structures $HH_d(\CC)^*$  can be identified with the space of nowhere vanishing holomorphic volume forms on $X$ which is a $\mathbb{C}^*$-torsor since $X$ is a smooth and projective Calabi-Yau. Since the splitting map $s^{\sf BT}$ is intrinsic to the dg-category $\CC$, our main theorem above implies that $F_{g,n}^{\CC, s^{\sf BT}}$ only depends on the dg category $\CC=D^b_{dg}\big({\sf Coh}(X)\big) $. In fact, by Lunts-Orlov's uniqueness of dg-enhancements~\cite{LunOrl}, we can conclude that for each pair $(g,n)$, the map $F_{g,n}^{\CC, s^{\sf BT}}$ is an invariant of the derived category $D^b\big({\sf Coh}(X)\big)$!

In the case $d=0$ when $X={\sf Spec} \,\mathbb{C}$,  and we set the weak Calabi-Yau structure $\phi=1 \in HH_0(\mathbb{C})^*=\mathbb{C}^*$, the CEI agree with the Gromov-Witten invariants of a point, i.e. 
\[ F^{\mathbb{C}}_{g,n} (u^{k_1},\ldots,u^{k_n}) =  \int_{[\overline{M}_{g,n}]} \psi_1^{k_1}\cdots\psi_n^{k_n},\]
as explained in Example \ref{ex:field}.

In the case $d=1$ when $X_\tau = \mathbb{C}/ \langle 1,\tau\rangle$ is an elliptic curve. The invariant $F_{1,1}^{\CC, s^{\sf BT}}$ is computed in~\cite{CT2}.  It turns out the only non-trivial invariant is computed as
\[ F_{1,1}^{\CC,s^{\sf BT}} (2\pi i [dz]; \frac{1}{\tau-\overline{\tau}}[d\overline{z}])= -\frac{1}{24} E_2^*(\tau).\]
In this formula, the weak Calabi-Yau structure is $\phi= 2\pi i [ dz] \in HH_1(\CC)\cong H^{1,0}(X_\tau)$, and the insertion is $\frac{1}{\tau-\overline{\tau}}[d\overline{z}] \in HH_{-1}(\CC)\cong H^{0,1}(X_\tau)$. The function $E_2^*(\tau)$ is the non-holomorphic, but modular Eisenstein series  $E_2^*(\tau) = 1 - 24\sum_{k=1}^\infty \frac{k q^k}{1-q^k} - \frac{6 i}{(\tau-\overline{\tau}) \pi}$ with $q= \exp(2\pi i \tau)$. 

In the case $d=2$, $X$ is a $K3$ surface. In view of mirror symmetry that the Gromov-Witten invariants are essentially trivial on a $K3$, we make the following
\begin{conj}
Let $\CC=D^b({\sf Coh}(X))$ with $X$ a smooth projective $K3$ surface. Then the CEI are the ``topological invariants" determined by the unital Frobenius algebra $\big(HH_\bullet(\CC)[2],\phi,\cup,\langle-,-\rangle_{\sf Muk}\big)$, see Equation~\eqref{eq:tqft}. More precisely, we have
\begin{itemize}
\item if $g=0$, $k_1+\cdots+k_n=n-3$, $|\alpha_1|+\cdots +|\alpha_n| = -4$, we have
\[ F_{0,n}^{\CC,s^{\sf BT}}(\phi;\alpha_1\cdot u^{k_1},\ldots,\alpha_n\cdot u^{k_n}) =\omega_{0,n}^{\CC,\phi}(\alpha_1,\ldots,\alpha_n)\cdot \frac{(n-3)!}{k_1!\cdots k_n!}.\]
\item if $g=1$, $k_1+\cdots+k_n=n$, $|\alpha_1|+\cdots +|\alpha_n| = 0$, we have
\[ F_{1,n}^{\CC,s^{\sf BT}}(\phi;\alpha_1\cdot u^{k_1},\ldots,\alpha_n\cdot u^{k_n}) = 
\omega_{1,n}^{\CC,\phi}(\alpha_1,\ldots,\alpha_n)\cdot \int_{[\overline{M}_{1,n}]} \psi_1^{k_1}\cdots\psi_n^{k_n}\]
\item if $g\geq 2$, we have $F_{g,n}^{\CC,s^{\sf BT}}=0$.
\end{itemize}
Here the contribution $\omega_{0,n}^{\CC,\phi}(\alpha_1,\ldots,\alpha_n)$ and $\omega_{1,n}^{\CC,\phi}(\alpha_1,\ldots,\alpha_n)$ are topological invariants computed using the Frobenius algebra $HH_\bullet(\CC)[2]$.
\end{conj}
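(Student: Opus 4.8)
The plan is to establish that, for a $K3$ surface, the invariants $F_{g,n}^{\CC,s^{\sf BT}}$ assemble into a \emph{topological} cohomological field theory in genus $0$ and $1$ and vanish identically in genus $\geq 2$. The natural starting point is the explicit stable-graph-sum formula \eqref{eq:explicit-formula}, which exhibits $\sum_{g,n}F^{\CC,s^{\sf BT}}_{g,n}\hbar^g\lambda^{2g-2+n}$ as a sum over stable graphs, each built from the string vertices $\beta^\CC_{g,k,l}$ at the vertices and from the propagator determined by $s^{\sf BT}$ along the edges. As in Subsection~\ref{subsec:frob}, the resulting invariants are tautological: one writes $F_{g,n}^{\CC,s^{\sf BT}}(\alpha_1 u^{k_1},\ldots,\alpha_n u^{k_n})=\int_{[\overline{M}_{g,n}]}\Omega_{g,n}(\alpha_1,\ldots,\alpha_n)\,\psi_1^{k_1}\cdots\psi_n^{k_n}$ for a class $\Omega_{g,n}\in H^\bullet(\overline{M}_{g,n})$ depending multilinearly on the $\alpha_i$. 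In this language the conjecture asserts that $\Omega_{g,n}$ is a multiple of the fundamental class for $g\in\{0,1\}$, with coefficient the topological invariant $\omega_{g,n}^{\CC,\phi}$ of \eqref{eq:tqft}, and that $\Omega_{g,n}=0$ for $g\geq 2$. Note that once $\Omega_{g,n}$ is known to be a degree-zero class, the dimension of $\overline{M}_{g,n}$ forces precisely the $\psi$-selection rules $k_1+\cdots+k_n=3g-3+n$ appearing in the statement.

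The first substantial step is to extract the grading constraints on the insertions from the homogeneity of the Blanc--To\"en splitting. Since $s^{\sf BT}$ corresponds to the complex-conjugate Hodge filtration, it is a \emph{good} splitting in the sense of the homogeneity condition $(S3.)$ of Definition~\ref{defi:splitting}, so $\bigoplus_l u^{-l}\cdot{\sf Im}(s^{\sf BT})$ is stable under $\nabla_{u\frac{d}{du}}$; this endows $HH_\bullet(\CC)[2]$ with a weight grading $w$ on which $\nabla_{u\frac{d}{du}}$ acts diagonally. For a $K3$ surface the Hodge structure is pure, so this weight grading is compatible with the internal degree and the entire construction is homogeneous with respect to $w$. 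Tracking the weight through \eqref{eq:explicit-formula} then yields a conservation law relating $\sum_i w(\alpha_i)$, $\sum_i k_i$, and a genus-dependent anomaly proportional to $(d-3)(g-1)$ with $d=2$. This is the mechanism expected to produce the degree constraints $\sum_i|\alpha_i|=2n-4$ in genus $0$ and $|\alpha_i|=2$ in genus $1$, and to identify $\omega_{g,n}^{\CC,\phi}$ as the corresponding homogeneous component of the Frobenius TQFT attached to $\big(HH_\bullet(\CC)[2],\phi,\cup,\langle-,-\rangle_{\sf Muk}\big)$.

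For the genus $0$ and $1$ assertions the plan is to run a degree-count argument parallel to the Frobenius-algebra computation of Subsection~\ref{subsec:frob}. There, formality of the $A_\infty$-structure forced all but the degree-zero ribbon graphs to drop out, splitting each invariant into a topological factor $\omega_{g,n}$ times a pure $\psi$-class integral. The derived category of a $K3$ is not formal, but the weight bookkeeping above, together with the fact that $HH_\bullet(\CC)$ is concentrated in the three internal degrees $-2,0,2$, should bound the total weight a stable graph can carry tightly enough that in genus $0$ and $1$ only the ``classical'' contributions — those computed from the cup product and the Mukai pairing on $HH_\bullet(\CC)[2]$ — can be nonzero. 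This is the categorical shadow of the statement that the non-reduced genus $0$ and $1$ Gromov--Witten theory of a $K3$ is accounted for entirely by constant maps, the genus $1$ obstruction contribution producing exactly the extra factor $\int_{[\overline{M}_{1,n}]}\psi_1^{k_1}\cdots\psi_n^{k_n}$.

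The genuinely hard step, and the reason the statement is offered as a conjecture, is the vanishing $F_{g,n}^{\CC,s^{\sf BT}}=0$ for $g\geq 2$: here the invariants must vanish beyond what the topological TQFT predicts, so no formality-type reduction suffices. The expected mechanism is the categorical incarnation of the holomorphic symplectic form $\sigma\in H^0(X,\Omega^2_X)$, which geometrically trivializes a factor of the obstruction bundle $\mathbb{E}^\vee\boxtimes T_X$ over $\overline{M}_{g,n}\times X$ and so kills the constant-map contribution once $g\geq 2$. On the categorical side $\sigma$ should determine, through the Mukai pairing and the hyperk\"ahler rotation, an extra $\mathfrak{sl}_2$ (Lefschetz-type) symmetry of $HH_\bullet(\CC)$ compatible with the DGLA $\widehat{\mathfrak{h}}_\CC$ and the string-vertex action $\rho^\CC$; the aim is to show this symmetry survives the trivialization map $K^{s^{\sf BT}}$ and then to argue, by a weight count on the localized Fock space $\widehat{S}_\hbar L_-[[\hbar,\lambda]]$, that the genus $\geq 2$ part of the total descendant potential lies in a weight space forced to vanish. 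Making this precise — in particular showing that the extra symmetry genuinely survives the homotopy transfer into $\mathfrak{h}_\CC$ and interacts with the self-sewing operator $\Delta$ as required — is the main obstacle; a complementary route would exploit that by Theorem~\ref{thm:main} and Lunts--Orlov the invariants depend only on $D^b(\Coh(X))$, together with the holomorphic dependence of $s^{\sf BT}$ on the Hodge structure, to reduce the vanishing to a single highly symmetric $K3$. Either resolution would simultaneously confirm the expected agreement with the vanishing Gromov--Witten invariants of the $K3$.
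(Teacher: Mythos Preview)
The paper does not prove this statement: it is explicitly presented as a conjecture, motivated by the mirror-symmetry expectation that the Gromov--Witten invariants of a $K3$ surface are essentially trivial, and no argument is offered beyond that heuristic. So there is no ``paper's own proof'' to compare against.

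Your proposal is therefore not a proof to be checked but a plausible outline of how one might attack the conjecture, and you are clearly aware of this (you flag the $g\geq 2$ vanishing as ``the reason the statement is offered as a conjecture''). As an outline it is reasonable in spirit but has real gaps even in the parts you present as more tractable. In particular, the genus $0$ and $1$ cases do \emph{not} reduce to the Frobenius-algebra computation of Subsection~\ref{subsec:frob}: that argument relies essentially on formality ($\m_{\geq 3}=0$) to kill all positive-degree ribbon graphs via Lemma~\ref{lem:mukai-vanish}, and a dg model for $D^b(\Coh(X))$ of a $K3$ is not formal. Your proposed substitute --- a weight bookkeeping from the homogeneity of $s^{\sf BT}$ together with the fact that $HH_\bullet$ is concentrated in degrees $-2,0,2$ --- would at best constrain which insertions can pair nontrivially; it does not by itself force the higher string vertices $\widehat{\beta}^\CC_{g,k,l}$ or the propagator contributions to vanish, since those live in the chain-level Hochschild complex where no such degree concentration holds. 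Likewise, the $\mathfrak{sl}_2$-symmetry mechanism you sketch for $g\geq 2$ is an interesting idea but entirely speculative: you would need to exhibit a chain-level lift of the Lefschetz action compatible with the TCFT structure $\rho^\CC$ and the sewing operators, and there is no indication in the paper (or elsewhere) that this exists. In short, your write-up is an honest research plan rather than a proof, which is appropriate given that the statement is a conjecture.
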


The case $d=3$ is arguably the most interesting case due to its physics interpretation as the partition functions in topological string theory, which was already pointed out by Costello~\cite{Cos2}. In general, the CEI $F_{g,n}^{\CC,\phi,s^{\sf BT}}$ are maps, with Hochschild classes as inputs. A remarkable property in dimension $d=3$ is that we may obtain actual numbers. Indeed, we may define a complex number
\[ F_g^{\CC,\phi,s^{\sf BT}} :=
\frac{1}{2g-2} F_{g,1}^{\CC,\phi,s^{\sf BT}} (\phi\cdot u), \;\; \mbox{ if $g\geq 2$.}\]
The factor $\frac{1}{2g-2}$ is from the Dilaton equation in Gromov-Witten theory. It was shown in~\cite{CT} that if we scale $\phi$ to $\lambda\cdot\phi$, the invariants $F_{g,n}^{\CC,\phi,s^{\sf BT}}$ scale by $\lambda^{2-2g-n}$. Thus using the above definition we see that $F_{g}^{\CC,\phi,s^{\sf BT}}$ scales as $\lambda^{2-2g-1}\cdot \lambda = \lambda^{2-2g}$. To this end, let us consider the commutative ring 
\[ R:=\mathbb{C}[T_2,T_4,T_6\cdots]\]
freely generated by variables $T_2,T_4,T_6,\ldots$ of weights $\wt(T_i)=i$. Combining with a famous theorem~\cite{Bri} of Bridgeland that in complex dimension $3$ birational equivalence implies derived equivalence, our main Theorem~\ref{thm:main} implies the following.

\begin{cor}
Let $f(T)=\frac{P(T)}{Q(T)}$ be a weight zero element in the field of fractions of $R$. Then the evaluation
\[ f(T)|_{T_i= F_{\frac{2+i}{2}}^{\CC,\phi,s^{BT}}}\]
is a birational invariant of $X$ whenever it's well-defined. For example, we may take $f$ to be fractions such as $\frac{T_2^2}{T_4}$, $\frac{T_6^2}{T_2^6+T_4^3}$, and so on. 
\end{cor}

It is a remarkable conjecture~\cite{Cos2} that the derived invariants $F_g^{\CC,\phi,s^{BT}}$ constructed above should agree with topological string partition function originally studied by physicists~\cite{BCOV,COGP} in string theory. For example, in genus one, it is an open question to compare the CEI $F_{1,1}^{\CC,\phi,s^{\sf BT}}$ with the BCOV torsion invariants~\cite{BCOV,FLY}.  The discussion on its birational invariance  also echoes well with related works of Hu-Li-Ruan~\cite{HLR} in symplectic geometry and Maillot-R\"ossler~\cite{MaiRos} in genus one. Finally, we also refer to the upcoming work~\cite{ST} for some recent progresses on proving these B-model CEI satisfy the holomorphic anomaly equations from~\cite{BCOV}.

\appendix

\section{Sign diagrams in TCFT's}~\label{sec:sign}

\subsection{Signs in Equation~\eqref{eq:chain-map}} We shall deal with the case $l=0$, i.e. the number of outputs is zero. In this case, the right hand side of Equation~\eqref{eq:chain-map} is $(-1)^{|G|} \cdot \rho^\CC(G) \mathcal{L}_\m$. The composition $ \rho^\CC(G) \mathcal{L}_\m$ is illustrated in the following figure
\[\includegraphics[scale=.5]{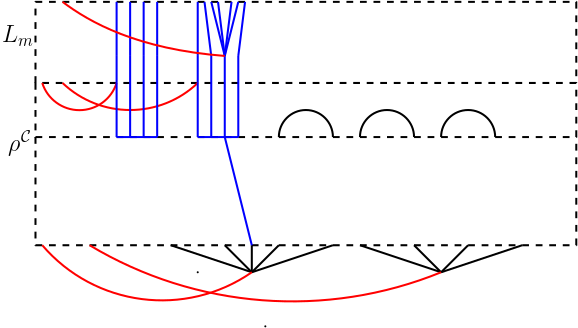}\]
Pulling the top operation $\mathcal{L}_\m$ down yields the following diagram
\[\includegraphics[scale=.4]{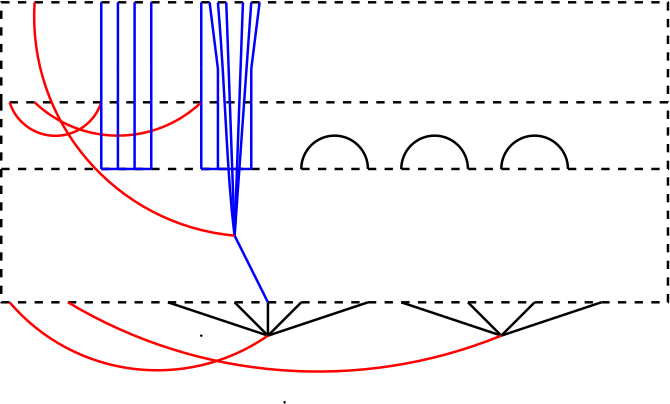}\]
The extra intersections of the red lines in these two diagrams is exactly equal to $k$, the number of inputs. Algebraically, this is because 
\[ \underbrace{(s\otimes \cdots \otimes s)}_{k \mbox{\; copies\;}}\circ \mathcal{L}_\m = (-1)^k \mathcal{L}_\m'\circ  \underbrace{(s\otimes \cdots \otimes s)}_{k \mbox{\; copies\;}},\]
where $\mathcal{L}_\m$ is the Hochschild differential of $CC_\bullet(\CC)^{\otimes k} $, while $\mathcal{L}_m'$ is the Hochschild differential of $CC_\bullet(\CC)[1]^{\otimes k}$.
Similarly, the left hand side of Equation~\eqref{eq:chain-map} $\rho^\CC(\partial G)$ is illustrated by the figure:
\[\includegraphics[scale=.6]{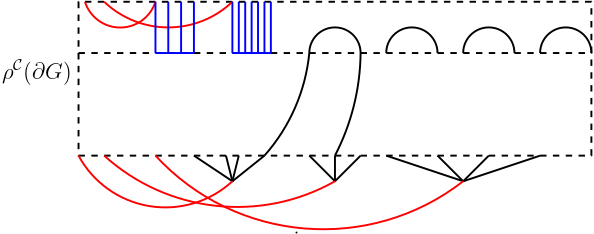}\]
We may move the first black vertex (from left to right) up to obtain the following figure:
\[\includegraphics[scale=.6]{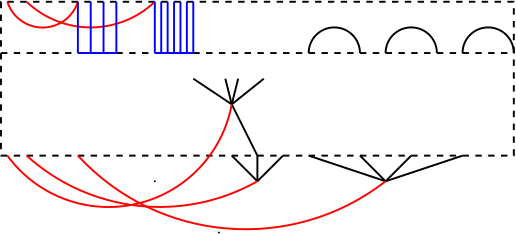}\]
The number of crossings of the red lines in the bottom is exactly equal to $|V_G^{\sf black}|$, the number of black vertices. To this point, we observe that the degree of a ribbon graph $G$ (in the case $l=0$) is equal to
\begin{equation}~\label{eq:graph-sign}
 |G|\equiv \sum_{v\in V_G^{\sf black}} \big({\sf val}(v) -3\big) \equiv \sum_{v\in V_G^{\sf black}} {\sf val}(v)+ |V_G^{\sf black}|\equiv k + |V_G^{\sf black}| \pmod{2}.
 \end{equation}
Putting the above together, and using the $A_\infty$ relation yields the desired identity
\[ \rho^{\CC}(\partial G) +(-1)^{|G|} \rho^{\CC}(G) \mathcal{L}_\m =0\]
\subsection{Signs in Equation~\eqref{eq:25}} We first compare the signs of the term $(i)$ with the first type term in $[d_{DR},\rho^{\CC\otimes \Omega^\bullet}_{[0]}(G)]$ of Equation~\eqref{eq:25}. This is illustrated in the following figure:
\[\includegraphics[scale=.6]{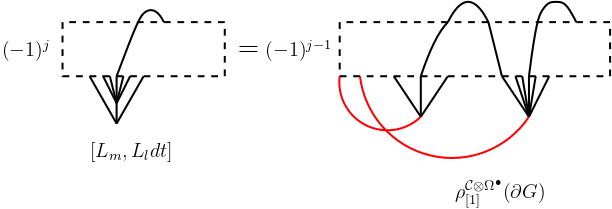}\]
Here the sign $(-1)^{j-1}$ appears from the definition of $\partial G$, while the extra $(-1)$ sign comes from the intersection of the two red lines.

For the $(ii)$ term, the corresponding term on the right hand side of Equation~\eqref{eq:25} is given by a composition of the form:
\[(-1)^j (\widetilde{\m}_{v_1}\otimes \cdots \otimes \widetilde{\m}\circ \widetilde{\fl}dt \otimes \cdots \otimes \widetilde{\m}_{v_{n}}) \circ \sigma_G \circ  \big(\underbrace{s\otimes \cdots \otimes s}_{k \mbox{\; copies\;}} \otimes \underbrace{\langle-,-\rangle^{-1}\otimes \cdots \otimes \langle-,-\rangle^{-1}}_{|E_G| \mbox{\; copies\;}}\big)\]
To match with the $(ii)$ term on the left hand side, we need to move the operator $\widetilde{\fl}dt$ to the rightmost part. This move yields a sign $(-1)^{n-j}\cdot (-1)^k$. Together with the sign $(-1)^j$, we obtain $(-1)^{n+k}$ which is equal to $(-1)^{|G|}$ by Equation~\eqref{eq:graph-sign}. The other term $(iii)$ is similar. 

\section{Boson-Fermion correspondence}\label{app:b-f}

In this section, we prove a type of Boson-Fermion correspondence for CEI. 
Let ${\sf Cl}$ be the Clifford algebra introduced in Example \ref{ex:non-existence}.
It is a $\Z/2\Z$-graded, cyclic, and unital $A_\infty$-algebra over $\mathbb{K}$. Moreover $\Cl$ is smooth and satisfies the Hodge-to-de-Rham degeneration property. Indeed, recall from~\cite[Section 2]{CLT} that its Hochschild homology is $1$-dimensional generated by $[\epsilon]$.  Furthermore, it has an unique homogeneous splitting of its non-commutative Hodge filtration explicitly given by
\[ \epsilon\mapsto \sum_{j\geq 0} (-1)^j (2j-1)!! \cdot  \epsilon\underbrace{[\epsilon|\cdots|\epsilon]}_{\mbox{$2j$-copies}} u^j.\]

Let $\CC$ be a $\Z/2\Z$-graded, cyclic, and unital $A_\infty$-category over $\mathbb{K}$. Assume also that $\CC$ is smooth, and satisfies the Hodge-to-de-Rham degeneration property. We may form another such $A_\infty$-category $\CC\otimes \Cl$. It has the same objects as $\CC$, and the morphism spaces are given by
\[ \CC\otimes \Cl ( X, X') := \CC (X, X') \otimes \Cl.\]
The $A_\infty$ operations are given by the formula
\begin{align}\label{eq:clubsuit}
\begin{split}
 \m_n(x_1\otimes \epsilon^{k_1},\ldots,x_n\otimes \epsilon^{k_n}) &= (-1)^\clubsuit \m_n(x_1,\ldots,x_n)\otimes \epsilon^{k_1+\cdots k_n},\\
 \clubsuit &= \sum_{j=2}^n (k_1+\cdots+k_{j-1}) |x_j|'.
 \end{split}
 \end{align}
 Here, in $\Cl$, we use the associative product $c_1\cdot c_2:=(-1)^{|c_1|}\m_2(c_1,c_2)$. The sign $(-1)^\clubsuit$ is given by the Koszul sign of moving the $\epsilon$'s to the right side of the $x$'s. This is a special case of the tensor product defined in \cite{A}.

We endow $\CC\otimes \Cl$ with the tensor product inner product:
\begin{equation}\label{eq:tensorpairing}
    \langle x\otimes \epsilon^k, y\otimes \epsilon^l\rangle:= (-1)^{k|y|'}\langle x, y\rangle \langle \epsilon^k,\epsilon^l\rangle^{ns},
\end{equation} 
where we use the non-shifted pairing in $\Cl$, precisely $\langle c_1, c_2\rangle^{ns}=(-1)^{|c_1|}\langle c_1, c_2\rangle$. The non-shifted pairing is graded symmetric and satisfies $\langle c_1\cdot c_2, c_3\rangle^{ns}= \langle c_1, c_2\cdot c_3\rangle^{ns}$.
The pairing in (\ref{eq:tensorpairing}) makes $\CC\otimes \Cl$ a cyclic \Ai-category. An important observation is that the parity of this pairing is the opposite from that of $\CC$. Moreover, this construction is functorial \cite{A}, meaning if $f: \CC \to \CC'$ is a (cyclic/unital) \Ai-functor there is an induced (cyclic/unital) \Ai-functor $F\otimes \id : \CC\otimes \Cl \to \CC'\otimes \Cl$.

The goal of this appendix is to prove that the CEI of $\CC$ and $\CC\otimes \Cl$ are the same. This is formulated more precisely in Theorem~\ref{thm:clifford} below.

\subsection{Hochschild invariants}~\label{subsec:hoch} 
We start by comparing the Hochschild invariants of $\CC$ and $\CC\otimes\Cl$ - this is a special case of \cite{Azu}. There is a shuffle product map (see~\cite[Section 4.2]{Lod} or \cite{Azu}) $ \sh : L^\CC \otimes L^{\sf Cl} \ra L^{\CC\otimes {\sf Cl}}$. It is defined as 
\[ \sh(a_0|a_1|\cdots|a_n,b_0|b_1|\cdots|b_m)= (-1)^\clubsuit a_0\otimes b_0 | \sh(a_1\otimes 1,\ldots,a_n\otimes 1,\one\otimes b_1,\ldots,\one\otimes b_m), \]
where $\clubsuit:=\sum_{p>q}|a_p|'|b_q|$ denotes the Koszul sign, and $\sh$ denotes the summation over $(n,m)$-type shuffles. Note that given a shuffle, the $\one$ in $\one\otimes b_j$ is labeled by the object which is the source of the next appearing morphism $a_j$.
Since $\epsilon\in L^{\sf Cl}$ is an even closed element, it determines a chain map denoted by
\[ i_0: L^\CC \ra L^{\CC\otimes {\sf Cl}}, \;\;\; i_0 := \sh(-,\epsilon).\]
\begin{lem}
The map $i_0: L^\CC \ra L^{\CC\otimes \Cl}$ is a quasi-isomorphism.
\end{lem}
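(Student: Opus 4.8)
The plan is to produce an explicit homotopy inverse to $i_0$ and then verify the two homotopy relations by direct computation. The key point is the Künneth-type decomposition: since $\Cl = \mathbb{K}\cdot\one \oplus \mathbb{K}\cdot\epsilon$ is finite dimensional with $\m_2(\epsilon,\epsilon)=\tfrac12\one$ the only nonzero higher product, the Hochschild complex $L^{\CC\otimes\Cl}$ splits (as a graded vector space) according to the ``$\epsilon$-content'' of each tensor slot, and one knows from \cite[Section 2]{CLT} that $HH_\bullet(\Cl)$ is $1$-dimensional, generated by $[\epsilon]$. So I would first recall the shuffle quasi-isomorphism $\sh: L^\CC \otimes L^\Cl \to L^{\CC\otimes\Cl}$ (standard, e.g. \cite[Section 4.2]{Lod}), which is a quasi-isomorphism because $\Cl$ is flat over $\mathbb{K}$ and we are over a field; then $i_0 = \sh(-,\epsilon)$ is the composite of $\mathrm{id}_{L^\CC}\otimes(\text{inclusion of }\mathbb{K}\epsilon\hookrightarrow L^\Cl)$ with $\sh$. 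Since $\mathbb{K}\epsilon \hookrightarrow L^\Cl$ represents the full homology $HH_\bullet(\Cl)\cong\mathbb{K}$ — indeed $\epsilon$ is a $b$-cycle because $\m_1=0$ and $b(\epsilon|\epsilon)=\m_2(\epsilon,\epsilon)\mp\m_2(\epsilon,\epsilon)$ cancels up to sign, and one checks the higher-length $\epsilon$-chains are killed in homology using the explicit contraction from \cite{CLT} — the inclusion $\mathbb{K}\epsilon\to L^\Cl$ is a quasi-isomorphism. Composing two quasi-isomorphisms gives the result.

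Concretely, the steps in order: (1) State the decomposition $L^{\CC\otimes\Cl} \cong \bigoplus$ over subsets of slots carrying an $\epsilon$, and identify the differential $b^{\CC\otimes\Cl}$ in these terms, isolating the ``internal'' $\Cl$-differential coming from $\m_2(\epsilon,\epsilon)=\tfrac12\one$ (which shortens $\epsilon$-runs) from the ``$\CC$-part''. (2) Recall/establish that $H_\bullet$ of the $\Cl$-factor is $\mathbb{K}\cdot[\epsilon]$ — quoting \cite{CLT} — together with an explicit contracting homotopy $h^\Cl$ onto $\mathbb{K}\epsilon$. (3) Define the candidate inverse $p_0: L^{\CC\otimes\Cl}\to L^\CC$ by ``pairing against the $\Cl$-coordinate'', i.e. applying $\mathrm{id}\otimes \pi^\Cl$ after the shuffle-decomposition, where $\pi^\Cl$ is the projection onto $\mathbb{K}\epsilon$ induced by $h^\Cl$; equivalently extract the coefficient of the chain whose every slot is $x_j\otimes\one$ except $x_0\otimes\epsilon$. (4) Check $p_0\circ i_0 = \mathrm{id}_{L^\CC}$ on the nose (the shuffle of a $\CC$-chain with $\epsilon$ has exactly one term in the ``all-$\one$-except-$x_0$'' component, with coefficient $\pm 1$, and the signs $(-1)^\clubsuit$ match by the Koszul convention in \eqref{eq:clubsuit}). (5) Produce a homotopy $K$ with $i_0\circ p_0 - \mathrm{id} = bK + Kb$ on $L^{\CC\otimes\Cl}$, built from $h^\Cl$ via the shuffle decomposition (this is exactly the homotopy witnessing $\sh$ and $\mathbb{K}\epsilon\hookrightarrow L^\Cl$ being quasi-isomorphisms, transported through $\sh$). (6) Conclude $i_0$ is a quasi-isomorphism, hence an isomorphism $HH_\bullet(\CC)\xrightarrow{\sim} HH_\bullet(\CC\otimes\Cl)$, and in particular it shifts parity (the generator $[\epsilon]$ of $HH_\bullet(\Cl)$ is odd).

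The main obstacle I anticipate is purely bookkeeping: getting the Koszul signs in \eqref{eq:clubsuit} to line up with the signs in the shuffle product and in the Hochschild differential of Definition \ref{defn:cc}, so that $p_0 i_0 = \mathrm{id}$ holds without a sign ambiguity and so that the homotopy relation closes. There is no conceptual difficulty — the statement is ``multiplication by a homology generator of an acyclic-mod-$\mathbb{K}\epsilon$ factor'' — but the $\Z/2\Z$-grading and the reduced (normalized) convention mean one must be careful that the chosen $h^\Cl$ is compatible with reducedness and that the induced $K$ preserves the reduced complex. An alternative, cleaner route that avoids some of this would be to invoke the Künneth theorem for Hochschild homology of tensor products of $A_\infty$-algebras directly (since we work over a field and all spaces are finite dimensional), $HH_\bullet(\CC\otimes\Cl)\cong HH_\bullet(\CC)\otimes HH_\bullet(\Cl)\cong HH_\bullet(\CC)\otimes\mathbb{K}[\epsilon\text{-class}]$, and then identify $i_0$ with the resulting isomorphism $\alpha\mapsto\alpha\otimes[\epsilon]$; I would state the lemma's proof this way if the Künneth statement is available in the form needed, falling back on the explicit homotopy above otherwise.
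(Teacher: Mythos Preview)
Your proposal is correct in outline but takes a more laborious route than the paper. The paper's proof is three lines: filter both sides by Hochschild chain length; $i_0$ respects this filtration; on the $E_1$-page one has taken $\m_1$-homology, so the induced $\m_2$ is strictly associative and the classical K\"unneth theorem for Hochschild homology of associative algebras \cite[Theorem 4.2.5]{Lod} applies directly to give an isomorphism already on $E_1$. No explicit homotopy is constructed.

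Your factorization $i_0 = \sh \circ (\id \otimes [\epsilon\hookrightarrow L^{\Cl}])$ is the right conceptual picture, and your ``alternative cleaner route'' via K\"unneth is essentially what the paper does --- but note that the statement ``$\sh$ is a quasi-isomorphism because $\Cl$ is flat and we are over a field'' is the \emph{associative} argument; for an $A_\infty$-category $\CC$ you would still need to justify it, and the cleanest justification is precisely the length-filtration spectral sequence the paper uses. So your alternative route collapses to the paper's argument once you unpack it. Your primary route (explicit $p_0$ and homotopy $K$) would work but is heavier; one caution: your description of $p_0$ as ``extract the coefficient of the chain whose every slot is $x_j\otimes\one$ except $x_0\otimes\epsilon$'' is not the same as projecting via the shuffle decomposition, and is also not the map $p$ the paper later uses (which tracks total $\epsilon$-content). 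If you pursue the explicit approach, use the paper's $p$ instead, for which $p\circ i_0 = \id$ on the nose and $p$ is manifestly a chain map.
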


\begin{proof}
Consider the length filtration of Hochschild chains on both sides. Obviously, the map $i_0$ respects the filtration. Hence, it induces maps on the associated spectral sequences. In the first page, we obtain the homology of $\m_1$. To this point, it suffices to observe that after taking $\m_1$, the product $\m_2$ becomes associative, which implies that the induced map of $i_0$ on the first page is already an isomorphism by the K\"unneth formula of Hochschild homology in the case of associative algebras~\cite[Theorem 4.2.5]{Lod}. This finishes the proof.
\end{proof}
It is easy to see that the map $i_0$ is not compatible with the circle operators. However, there is a cyclic extension of $\sh$ using cyclic shuffle product map  (see~\cite[Section 4.3]{Lod} or \cite{Azu}):
\[ \sh + u {\sf Sh} : L^\CC((u)) \otimes L^{\sf Cl}((u)) \ra L^{\CC\otimes {\sf Cl}}((u)).\]
Here ${\sf Sh}$ denotes the so-called cyclic shuffle map, which is of the form
\[ {\sf Sh}(a_0|a_1|\cdots|a_n,b_0|b_1|\cdots|b_m)= (-1)^\star \one\otimes 1 | {\sf Sh}(a_0\otimes 1,\ldots,a_n\otimes 1,\one\otimes b_0,\ldots,\one\otimes b_m), \]
where ${\sf Sh}$ is a sum of some particular permutations. We refer to~\cite[Section 4.3]{Lod} or~\cite{Azu,Shk2} for more details of this map. However, its precise formula is not needed in what follows. Using the above cyclic extension, the element $$\widetilde{\epsilon}=\sum_{j\geq 0} (-1)^j (2j-1)!! \cdot  \epsilon\underbrace{[\epsilon|\cdots|\epsilon]}_{\mbox{$2j$-copies}} u^j$$ induces a cyclic extension of $i_0$ which we denote by
\[ i: L^\CC((u)) \ra L^{\CC\otimes {\sf Cl}}((u)), \;\;\; i := \sh(-,\widetilde{\epsilon})+ u {\sf Sh}(-,\widetilde{\epsilon}).\]
Since $\widetilde{\epsilon}$ has no negative powers in $u$, the map $i$ preserves the positive sub-complexes, and hence it also induces a map on the quotient complexes. We shall still denote these induced maps by 
\begin{align*}
i:&  L^\CC_+ \to L^{\CC\otimes {\sf Cl}}_+,\\
i:& L^\CC_- \to L^{\CC\otimes {\sf Cl}}_-.
\end{align*}
\begin{lem}
The map $i: L^\CC((u)) \to L^{\CC\otimes {\sf Cl}}((u))$ is a quasi-isomorphism. The same holds for its induced maps on the positive and negative subspaces.
\end{lem}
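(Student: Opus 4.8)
The plan is to prove the quasi-isomorphism statement for $i : L^\CC((u)) \to L^{\CC \otimes \Cl}((u))$ first, and then deduce the positive and negative versions by a standard diagram chase. For the Tate version, I would again filter both sides by the length of Hochschild chains (equivalently, by the $u$-adic filtration combined with the length filtration, organized so that the filtration is exhaustive and bounded below on each $u$-degree). Since $\widetilde{\epsilon}$ starts with $\epsilon$ in length $1$ and all its higher-length corrections are proportional to $u^l$ with $l \geq 1$, the map $i$ is a filtered map whose associated-graded is, up to the shuffle combinatorics, governed by $i_0 = \sh(-,\epsilon)$. The $u$-corrections $u\,\mathsf{Sh}(-,\widetilde\epsilon)$ strictly increase the relevant filtration degree, so on the associated graded $i$ reduces to $i_0$ (extended $((u))$-linearly). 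Then I would invoke the previous lemma — $i_0 : L^\CC \to L^{\CC \otimes \Cl}$ is a quasi-isomorphism — together with the fact that $((u))$ is flat over $\mathbb{K}$, to conclude that the associated-graded map of $i$ is a quasi-isomorphism. A convergence argument for the spectral sequences (the filtration is bounded below in each internal degree because $\CC$ is proper, hence all hom-spaces are finite-dimensional, and minimal) then gives that $i$ itself is a quasi-isomorphism.

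For the positive subcomplex: the map $i$ preserves the positive subspaces $L^\CC_+ \hookrightarrow L^{\CC\otimes\Cl}_+$ precisely because $\widetilde\epsilon \in L^{\Cl}_+$ has no negative powers of $u$, so shuffling with it cannot produce negative $u$-powers. Here I would run the same filtered-spectral-sequence argument, now with the filtration restricted to $L_+$. The associated-graded is again controlled by $i_0$ extended $\mathbb{K}[[u]]$-linearly, and the key input is that Hochschild homology of $\Cl$ with coefficients in the positive cyclic model behaves well — but in fact the cleanest route is to observe that $L_+ = L[[u]]$ as a graded $\mathbb{K}[[u]]$-module with differential $b + uB$, filter by powers of $u$, and note $E_1$ of this filtration is $H_\bullet(L)[[u]]$ on each side; then $i$ on $E_1$ is $i_0 \otimes \mathrm{id}_{\mathbb{K}[[u]]}$, hence an isomorphism by the previous lemma. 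Completeness and exhaustiveness of the $u$-adic filtration on $L_+$ give convergence, so $i$ is a quasi-isomorphism on positive subcomplexes.

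For the negative subcomplex: I would use the short exact sequence of complexes
\[
0 \to u \cdot L_+ \to L^{\sf Tate} \to L_- \to 0
\]
which exists functorially for both $\CC$ and $\CC\otimes\Cl$, and with respect to which $i$ is a map of short exact sequences. Having established that $i$ is a quasi-isomorphism on $L^{\sf Tate}$ and (after shifting) on $u\cdot L_+$, the five lemma applied to the associated long exact sequences in homology forces $i$ to be a quasi-isomorphism on $L_-$ as well. Alternatively, and perhaps more in keeping with the previous lemma's proof, one can filter $L_- = L[u^{-1}]$ directly by $u^{-1}$-degree and argue as in the positive case.

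The main obstacle I expect is not the homological-algebra bookkeeping but pinning down precisely how the cyclic shuffle correction $u\,\mathsf{Sh}(-,\widetilde\epsilon)$ interacts with the chosen filtration: one must check that it is genuinely filtration-increasing (so that it drops out on the associated graded) while simultaneously verifying that $i$ with this correction is actually a chain map for $b + uB$ — the latter is where the specific coefficients $(-1)^j(2j-1)!!$ in the definition of $\widetilde\epsilon$ are forced, and it amounts to the statement that $\widetilde\epsilon$ is $(b+uB)$-closed in $L^{\Cl}_+$, i.e.\ that the given homogeneous splitting of the Hodge filtration of $\Cl$ is well-defined. That closedness is asserted in the appendix's preamble (it is the homogeneous splitting of $\Cl$), so it may be invoked, but the interaction with the shuffle products — ensuring $(\sh + u\mathsf{Sh})$ is compatible with $b+uB$ in the two-variable sense (this is the content of \cite[Section 2]{Shk2}) — is the delicate point that the proof should cite rather than reprove.
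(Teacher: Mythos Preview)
Your proposal is correct, and the core mechanism --- reduce to $i_0$ via a filtration spectral sequence --- is the same as the paper's. The paper, however, does all three cases uniformly with the $u$-adic filtration alone: on each of $L^\CC((u))$, $L^\CC_+$, and $L^\CC_-$ one filters by powers of $u$, observes that $i$ preserves this filtration (since $\widetilde{\epsilon}$ has no negative $u$-powers and the cyclic-shuffle correction carries an explicit factor of $u$), and notes that on the associated graded the differential reduces to $b$ and the map reduces to $i_0$, which is a quasi-isomorphism by the previous lemma. Your length-filtration argument for the Tate case and the five-lemma detour for $L_-$ both work, but they are unnecessary: the single $u$-filtration handles all three cases in one line, and you already identified this as ``the cleanest route'' for $L_+$. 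The discussion of why $\widetilde{\epsilon}$ is $(b+uB)$-closed and why $\sh + u\,\mathsf{Sh}$ intertwines the differentials is accurate and well-placed, though the paper simply cites these as established facts rather than re-examining them.
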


\begin{proof}
We may consider the $u$-filtration on both sides. The map $i$ preserves this filtration, and induces an isomorphism on the first page of the associated spectral sequences by the previous lemma.
\end{proof}

We shall also need a backward chain map $p: L^{\CC\otimes \Cl} \to L^\CC$ defined explicitly by
\[ p\big(x_0\otimes \epsilon^{k_0} | x_1\otimes \epsilon^{k_1} | \cdots | x_n\otimes \epsilon^{k_n} \big):= (-1)^\clubsuit\pi ( \epsilon^{k_0+\cdots+k_n-1} )\cdot  x_0 | x_1 |\cdots | x_n,\]
where the sign $(-1)^\clubsuit$ is as in (\ref{eq:clubsuit}) and $\pi: \Cl\to \mathbb{K}$ is the projection to scalars (unit component) in $\Cl$.
\begin{lem}
The map $p$ defined above is a chain map, $pb=bp$. Furthermore, it is also compatible with the circle actions, that is $pB=Bp$. Thus, it induces a chain map still denoted by
\[ p: L^{\CC\otimes \Cl}((u)) \to L^\CC((u)).\]
Then, we also have $p\circ i = \id$. In particular, since $i$ is a quasi-isomorphism, so is $p$.
\end{lem}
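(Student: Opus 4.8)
\subsection*{Proof plan}

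The plan is to verify the three assertions — $pb=bp$, $pB=Bp$, and $p\circ i=\id$ — essentially by direct computation, organised so as to keep the bookkeeping under control. Since $p$ is the $u$-linear extension of the map at $u=0$, it suffices to prove $pb=bp$ and $pB=Bp$ on $L^{\CC\otimes\Cl}$ itself, and then $p\circ i=\id$ on $L^\CC$; the induced statements on $L^\bullet_{\pm}$ and on $L^\bullet((u))$ then follow formally. The first useful observation is that, writing every Hochschild chain of $\CC\otimes\Cl$ in the basis $\{x\otimes\one,\,x\otimes\epsilon\}$ of the hom-spaces, the integer $m(\mathbf v):=\#\{i:\text{the }i\text{-th entry has the form }x_i\otimes\epsilon\}$ is well-defined, and by the tensor-product formula \eqref{eq:clubsuit} each application of an operation $\m_k$ either preserves the total $\epsilon$-exponent or, when $\epsilon^2=-\tfrac12\one$ is produced, lowers it by an even amount. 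Hence $b$ changes $m$ by an even number (using Definition~\ref{defn:cc}) and $B$ — which only inserts the unit $\one\otimes\one$ — preserves $m$; in particular both $b$ and $B$ preserve the $\Z/2$-splitting $L^{\CC\otimes\Cl}=(L^{\CC\otimes\Cl})_{\bar 0}\oplus(L^{\CC\otimes\Cl})_{\bar 1}$ by the parity of $m$.

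Because $\pi(\epsilon^{m-1})=0$ when $m$ is even, the map $p$ vanishes on $(L^{\CC\otimes\Cl})_{\bar 0}$, so $pb=bp$ and $pB=Bp$ hold trivially there and one only has to check them on $(L^{\CC\otimes\Cl})_{\bar 1}$. On that summand it is convenient to note that $\pi(\epsilon^{k-1})=\mathsf{tr}(\epsilon^k)$, where $\mathsf{tr}:\Cl\to\mathbb K$ is the Frobenius trace ($\mathsf{tr}(\one)=0$, $\mathsf{tr}(\epsilon)=1$), so that $p(x_0\otimes\epsilon^{k_0}|\cdots|x_n\otimes\epsilon^{k_n})=(-1)^\clubsuit\,\mathsf{tr}(\epsilon^{k_0}\cdots\epsilon^{k_n})\,x_0|\cdots|x_n$. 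Expanding both sides of $pb=bp$ with Definition~\ref{defn:cc} and of $pB=Bp$ with the formula for the Connes operator, and using \eqref{eq:clubsuit} throughout, each structure term on one side is matched by a structure term on the other, and the identity collapses to a sign check: the Koszul signs $\clubsuit$ created by moving the $\epsilon$'s past the $x$'s in the definition of $p$, together with the discrepancy between the shifted degrees $|x_i\otimes\epsilon^{k_i}|'$ in $\CC\otimes\Cl$ and $|x_i|'$ in $\CC$, must cancel. I expect this sign verification — in particular controlling the quadratic ``wrap-around'' sign $@$ of the $b$-differential when the $\epsilon$'s are permuted — to be the main technical obstacle, although it is routine once set up carefully.

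For $p\circ i=\id$, the point is that the higher-$u$ corrections in $\widetilde\epsilon=\sum_{j\geq 0}(-1)^j(2j-1)!!\,\epsilon[\underbrace{\epsilon|\cdots|\epsilon}_{2j}]u^j$ are annihilated by $p$: for $j\geq 1$, every term of $\sh(\alpha,\epsilon[\epsilon|\cdots|\epsilon]_{2j})$ and of $u\,{\sf Sh}(\alpha,\epsilon[\epsilon|\cdots|\epsilon]_{2j})$ is a Hochschild chain of $\CC\otimes\Cl$ with at least one bar entry of the form $\one\otimes\epsilon$, so $p$ sends it to a chain of $\CC$ with the unit $\one$ in the bar part, i.e. to zero in the reduced complex $L^\CC$. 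Only the $u^0$-term $\sh(\alpha,\epsilon)=i_0(\alpha)$ survives, and on it the sign $(-1)^\clubsuit$ in the definition of $i_0$ is cancelled by the sign $(-1)^\clubsuit$ in the definition of $p$ — both equal the Koszul sign of moving a single $\epsilon$ past $x_1,\dots,x_n$ — giving $p(i_0(\alpha))=\alpha$. By $u$-linearity this gives $p\circ i=\id$ on $L^\CC((u))$ and on the positive and negative sub/quotient complexes, and the final statement that $p$ is a quasi-isomorphism is immediate since $i$ is.
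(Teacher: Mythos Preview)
Your proposal is correct and follows essentially the same approach as the paper: the paper declares $pb=bp$ and $pB=Bp$ to be ``straightforward calculation'' (your parity-of-$m$ decomposition is a clean way to organise that calculation, but not a different method), and for $p\circ i=\id$ both you and the paper observe that every term of $(i-i_0)(\alpha)$ carries a bar entry of the form $\one_{X_j}\otimes\epsilon$ (or a $\one_{X_0}\otimes\one$ in the zeroth slot, coming from the cyclic shuffle), which $p$ sends to a reduced chain with a unit in the bar part. One small point of phrasing: your sentence ``for $j\geq 1$, every term of $\sh(\alpha,\cdots)$ and of $u\,{\sf Sh}(\alpha,\cdots)$ \dots'' omits the $j=0$ cyclic-shuffle term $u\,{\sf Sh}(\alpha,\epsilon)$, but the same reasoning applies there since the cyclic shuffle always inserts a unit.
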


\begin{proof}
Checking the two equations $pb=bp$ and $pB=Bp$ is a straight-forward calculation. For the identity $p\circ i=\id$, we first observe that $p \circ i_0 = \id$. Thus it suffices to prove that $p \circ (i-i_0)=0$.  Indeed, using the explicit formula of the shuffle product and cyclic shuffle product, the terms appearing in $(i-i_0)\big( x_0|x_1|\cdots|x_n )$ are all of the form:
\[ x_0\otimes \epsilon | \cdots | \one_{X_j}\otimes \epsilon |\cdots  \mbox{\;\;\; or \;\;} \one_{X_0} \otimes \one |\cdots |\one_{X_j}\otimes \epsilon | \cdots .\]
In either case, applying the map $p$ yields zero since we are using the reduced Hochschild chain complex of $\CC$.
\end{proof}

\subsection{CEI of tensor products with the Clifford algebra}

Recall from Subsection~\ref{subsec:tcft-dgla} the TCFT action of ribbon graphs associated with the cyclic $A_\infty$-category $\CC$ is given by a multi-linear map
\[ \rho^\CC_{g,k,l} (G): (L^\CC)^{\otimes k} \to (L^\CC)^{\otimes l},\]
for each ribbon graph $G$ of genus $g$ and with $k$ inputs and $l$ outputs.

\begin{lem}\label{lem:5-3}
As maps $(L^{\CC\otimes \Cl})^{\otimes k} \to (L^\CC)^{\otimes l}$, we have
\[ p^{\otimes l} \circ \rho^{\CC\otimes \Cl}_{g,k,l} (G) = \begin{cases}
    \rho^{\CC}_{g,k,l} (G) \circ p^{\otimes k}, \mbox{\;\;\;\;\;\; if $\CC$ is even,}\\
    (-1)^{1-g+k} \rho^{\CC}_{g,k,l} (G) \circ p^{\otimes k},  \mbox{\;\; if $\CC$ is odd.}
\end{cases} \]
\end{lem}

\begin{proof}
The evaluation of $\rho^{\CC\otimes \Cl}_{g,k,l} (G)$  is obtained in two steps: one from evaluating Hochschild chains in $\CC$ which gives $\rho^{\CC}_{g,k,l} (G)$; the other one from evaluating in $\Cl$ on a trivalent graph obtained by replacing all black vertices of $G$ by a binary tree. Then we observe that in the second step the result of evaluation is first of all independent of the choice of the binary trees by associativity of the product in $\Cl$, and furthermore it yields the desired equation. Most of the complications are from getting the signs to work out. For this reason, we shall use concrete examples and sign diagrams to illustrate how the signs appear in the proof. Also, in the following, we shall assume $\CC$ is an odd Calabi-Yau category since this is the case that we are mainly interested in.

{\bf Case 1.} Let us consider $G$ is the Mukai graph as in~\eqref{eq:Mukai-graph} with two insertions of the form $x\otimes \epsilon^i$ and $y\otimes \epsilon^j$. Then we may compute $\rho^{\CC\otimes\Cl}(G)$ according to the following sign diagram:
\[\includegraphics[scale=.5]{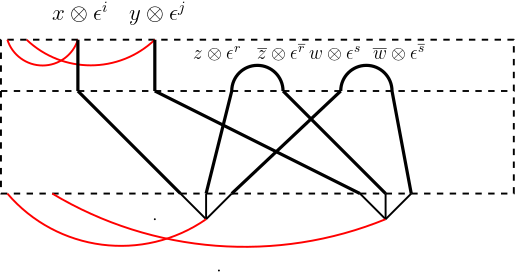}\]
Using the definition of multiplication and inner product of $\CC\otimes \Cl$, the evaluation of the bottom line in the above diagram is illustrated as follows.
\[\includegraphics[scale=.5]{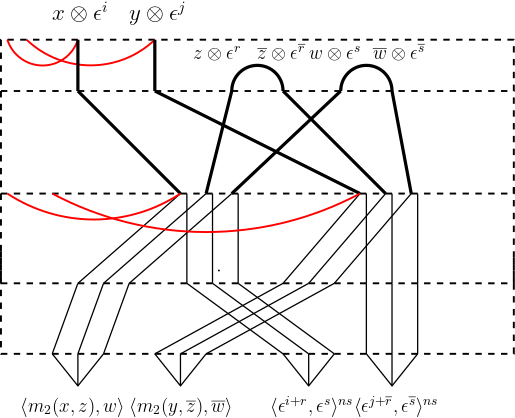}\]
Thus, the total sign of the evaluation  $\rho^{\CC\otimes \Cl} (G)(x\otimes \epsilon^i, y\otimes \epsilon^j)$ is given by
\[ (-1)^{|x|+i}\cdot (-1) \cdot (\mbox{Koszul sign from the total permutation in the above diagram}).\]
To compare with the evaluation $\rho^\CC(G)(x,y)$, we may first move the $\epsilon$'s to the right and then evaluate. The corresponding sign diagram is given by
\[\includegraphics[scale=.6]{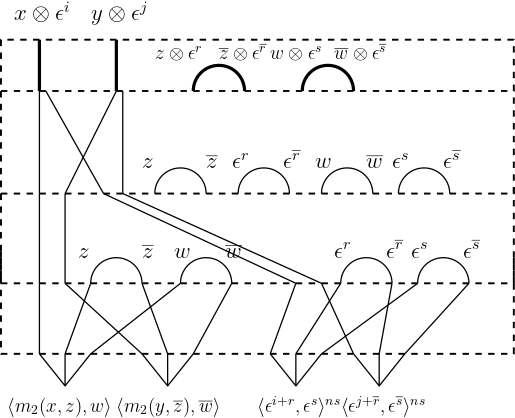} \]
Comparing the two evaluations yields
\[ \rho^{\CC\otimes \Cl}(G)(x\otimes \epsilon^i,y\otimes \epsilon^j) = (-1)^{|x|+i}(-1)^{|y|'i} \rho^\CC(G)(x,y)\cdot \rho^{\Cl,ns}(G)(\epsilon^i,\epsilon^j).\]
Finally, the non-shifted evaluation and the usual shifted evaluation are related by conjugation by the tensor products of the shift operator $\Cl \rightarrow \Cl[1]$. Analyzing this yields 
\[\rho^{\Cl,ns}(G)(\epsilon^i,\epsilon^j)= - A_G \cdot  \rho^{\Cl}(G)(\epsilon^i,\epsilon^j),\]
where $A_G$ is the sign difference of the orientation of $G$ from the canonical orientation of $G$ being a trivalent graph. But for the Clifford algebra $\Cl$ we have
\[ A_G \cdot \rho^{\Cl}(G)(\epsilon^i,\epsilon^j) = \begin{cases}
    1 & \mbox{if\;\;} i=j=1,\\
    0 & \mbox{otherwise.}
\end{cases}\]
Note that in the non-vanishing case when $i=j=1$, we also have $(-1)^{|x|+i}(-1)^{|y|'i}=(-1)^{|x|'+|y|'}$. But in order for $\rho^\CC(G)(x,y)\neq 0$, the degree sum must be $|x|'+|y|'= 0 \pmod{2}$. Hence we deduced that
\[  \rho^{\CC\otimes \Cl}(G)(x\otimes \epsilon^i,y\otimes \epsilon^j) = -\rho^\CC(G)(x,y).\]
The minus sign is indeed equal to $(-1)^{1-g+k}$ as for the Mukai graph $g=0$ and $k=2$.

{\bf Case 2.} Repeating the above sign analysis with a general trivalent graph $G$ of type $(g,k,0)$ and insertions $x_1\otimes \epsilon^{i_1},\ldots,x_k\otimes \epsilon^{i_k}$ yields
\begin{align*}
    \rho^{\CC\otimes \Cl} & (x_1\otimes \epsilon^{i_1},\ldots,x_k\otimes \epsilon^{i_k})= \begin{cases}
        (-1)^\star  \rho^{\CC}(G)(x_1,\ldots,x_k), & \mbox{if\;\;} i_1=\cdots = i_k=1,\\
    0 & \mbox{otherwise.}
    \end{cases},\\
    \star & = (k-1)\sum_{i=1}^k |a_i|'+ \frac{V(V-1)}{2}+\frac{E(E-1)}{2}+\frac{k(k-1)}{2}+kE.
\end{align*}
Here $V$ and $E$ denote the number of vertices and edges respectively. Since $G$ is a trivalent graph, we have $3V=2E+k$. Together with the Euler number equality $V-E=2-2g-k$ we may deduce that 
\[E=2-2g \pmod{4}, \mbox{ \;\;and\;\;}  V+k=0 \pmod{4}.\] 
Furthermore, the evaluation $\rho^{\CC}(G)(x_1,\ldots,x_k)$ is only non-zero with $\sum_{i=1}^k |a_i|'= 0 \pmod{2}$, which reduces star to $\frac{V(V-1)}{2}+\frac{E(E-1)}{2}+\frac{k(k-1)}{2}$. Thus the sign $(-1)^\star$ is indeed equal to
\[ (-1)^{\frac{E(E-1)}{2}}\cdot (-1)^{\frac{(V+k)(V+k-1)}{2}}\cdot (-1)^{Vk}= (-1)^{1-g+k}.\]

{\bf Case 3.} When the insertions are general elements in $L^{\CC\otimes\Cl}$ and $G$ is not necessarily trivalent, one can still repeat the above sign analysis by fixing a combinatorial configuration of how the insertions are distributed along vertices of $G$. The only difference with the previous case is the part that involves the evaluation of $\rho^\Cl$ where we need to replace each vertex of $G$ by a trivalent tree. Denote the resulting trivalent graph by $\hat{G}$. Using the associativity, we may move around the leaves in a cycle of $\hat{G}$ all to the cycle marking as shown in the following picture. 
\[\begin{tikzpicture}[baseline={([yshift=-0.5ex]current bounding
      box.center)},scale=0.5] 
\draw [thick] (-10,2) to (-10,-2);
\draw [thick] (-9,1) to (-10,2);
\draw [thick] (-8.2, -.2) to (-7.8, .2);
\draw [thick] (-8.2, .2) to (-7.8, -.2);
\draw [thick] (-8,0) to (-10,0);
\draw [thick] (-10,2) to (-7,3);
\draw [thick] (-7,3) to (-4,0);
\draw [thick] (-5.5,0) to (-4,0);
\draw [thick] (-7,-1.5) to (-7,-3);
\draw [thick] (-4,0) to (-7,-3);
\draw [thick] (-7,-3) to (-10,-2);
\draw [thick] [|->] (-3,0) to (3,0);
\draw [thick] (5,2) to (5,-2);
\draw [thick] (7.8, -.2) to (8.2, .2);
\draw [thick] (7.8, .2) to (8.2, -.2);
\draw [thick] (6.3,0) to (5,0);
\draw [thick] (5,2) to (8,3);
\draw [thick] (8,3) to (11,0);
\draw [thick] (6,0) to (8,0);
\draw [thick] (5.5,0) to (6.5,0.5);
\draw [thick] (6,0) to (7,-0.5);
\draw [thick] (6.5,0) to (7.5,0.5);
\draw [thick] (11,0) to (8,-3);
\draw [thick] (8,-3) to (5,-2);
\end{tikzpicture}\]
Taking the signs into account, we obtain
\begin{align*}
    \rho^{\CC\otimes \Cl}(G) & (\alpha_1,\ldots,\alpha_k)= 
        (-1)^\star  \rho^{\CC}(G)(p(\alpha_1),\ldots,p(\alpha_k)), \\
    \star & = (k-1)\sum_{i=1}^k |\alpha_i|'+ \frac{V(V-1)}{2}+\frac{E(E-1)}{2}+\frac{k(k-1)}{2}+kE + (k+1)|G|
\end{align*}
where $G$ is the degree of $G$. Since the evaluation $\rho^{\CC}(G)(p(\alpha_1),\ldots,p(\alpha_k))$ is only non-zero with $\sum_{i=1}^k |\alpha_i|'=|G| \pmod{2}$, we see that $(k-1)\sum_{i=1}^k |\alpha_i|'+(k+1)|G|=0\pmod{2}$.
Together with the Euler number equality $V-E=2-2g-k$ we may deduce that
\[ (-1)^{\frac{V(V-1)}{2}+\frac{E(E-1)}{2}+\frac{k(k-1)}{2}}= (-1)^{1-g}(-1)^{Vk}.\]
Putting together yields $(-1)^\star= (-1)^{1-g}(-1)^{k(V+E)}=(-1)^{1-g+k}$, as desired.

{\bf Case 4.} In the last step, we deal with the case when $G$ possibly has also white vertices. By the definition of the map $p$, in the Clifford part, we multiple the $\epsilon$'s then followed by Mukai pairing with $\epsilon$. Graphically, this means we replace each white vertex of $G$ to a trivalent graph locally as follows.
\[\begin{tikzpicture}[baseline={([yshift=-0.5ex]current bounding
      box.center)},scale=0.4] 
      \draw [thick] (-3,0) circle [radius=0.2];
\draw [thick] (-6,-0) to (-3.2,0);
\draw [thick] (-5,2) to (-3.2,0);
\draw [thick] (-5,-2) to (-3.2,0);
\draw [thick] (-2.8,0) to (-.5,0);
\draw [line width=2.4pt] (-1.5,0) to (-2.8,0);
\draw [thick] [|->] (1.5,0) to (4,0);
\draw [thick] (10,2) circle [radius=1];
\draw [thick] (6,1) to (8,-1);
\draw [thick] (7,2) to (8.8,0);
\draw [thick] (7,-2) to (8.8,0);
\draw [thick] (10,0) to (10,1);
\draw [thick] (10,3) to (10,2.2);
\draw [thick] (8.8,0) to (12,0);
\end{tikzpicture}\]
where inside the circle we insert $\epsilon$. After this replacements, the evaluation reduces to the previous case. The proof is finished.
\end{proof}

Let $s: H_\bullet(L^\CC) \to H_\bullet(L^\CC_+)$ be a splitting of the non-commutative Hodge filtration.  To compare the CEI of $\CC$ and $\CC\otimes \Cl$, we first construct a splitting $s^{\otimes} : H_\bullet(L^{\CC\otimes\Cl}) \to H_\bullet(L^{\CC\otimes \Cl}_+)$ from $s$. Indeed it is defined by the following diagram.
\[\begin{CD}
    H_\bullet(L^\CC) @>s>> H_\bullet(L^\CC_+) \\
    @A p AA @VV i V\\
    H_\bullet(L^{\CC\otimes \Cl}) @>s^{\otimes}>> H_\bullet(L^{\CC\otimes \Cl}_+)
\end{CD}\]

\begin{lem}
    The map $s^{\otimes} : H_\bullet(L^{\CC\otimes\Cl}) \to H_\bullet(L^{\CC\otimes \Cl}_+)$ defined above is indeed a splitting of the nc-Hodge filtration of $\CC\otimes \Cl$.
\end{lem}

\begin{proof}
    It is clear that $s^\otimes$ splits the canonical map $H_\bullet(L_+^{\CC\otimes \Cl}) \to H_\bullet(L^{\CC\otimes \Cl})$ since $s$ is a splitting. It remains to verify the Lagrangian condition in Definition~\ref{defi:splitting}.
    By the previous lemma (applied to the Mukai graph), we obtain that 
\[ \langle p\alpha, p\beta\rangle_{\sf Muk} = (-1)^d\langle \alpha,\beta\rangle_{\sf Muk},\]
with $d$ equal to the Calabi-Yau dimension of $\CC$. Then we may verify the Lagrangian condition:
\begin{align*}
    \langle s^{\otimes}(\alpha),s^{\otimes}(\beta)\rangle_{\sf hres} &= \langle isp(\alpha),isp(\beta)\rangle_{\sf hres}\\
    &= (-1)^d\langle pisp(\alpha),pisp(\beta)\rangle_{\sf hres}\\
    &= (-1)^d\langle sp(\alpha),sp(\beta)\rangle_{\sf hres}\\
    &= (-1)^d\langle p(\alpha),p(\beta)\rangle_{\sf Muk}\\
    &= \langle \alpha,\beta\rangle_{\sf Muk}.
\end{align*}
\end{proof}

Let $S: L^\CC \to L^\CC_+$ be a chain-level splitting which lifts the map $s$. In order to apply the formula in Equation~\eqref{eq:cei}, we would like to extend $S$ to the tensor product chain complexes. Since $p\circ i=\id$, we obtain a direct sum decomposition
\[ L^{\CC\otimes \Cl}_+ \cong L^\CC_+ \oplus {\sf ker}(p)_+,\]
where ${\sf ker}(p)\subset L^{\CC\otimes \Cl}$ denotes the kernel of the map $p$. Note that since $pB=Bp$, the subspace ${\sf ker}(p)$ is $B$-invariant. The equivariant chain complex ${\sf ker}(p)_+$ is formed using the subspace circle action. Since ${\sf ker}(p)$ is acyclic, we may choose a chain-level splitting of it, say
\[ S': {\sf ker}(p) \to {\sf ker}(p)_+.\]
Define a chain-level splitting of $\CC\otimes \Cl$ by setting
\[ S^{\otimes} := \begin{bmatrix}
S & 0\\
0 & S'
\end{bmatrix}\]
It is easy to see that its induced splitting in homology is indeed $s^{\otimes}$. Note that by construction, we have
\[ p\circ S^{\otimes} = S \circ p\]
at the chain-level.
Using the chain level splitting $S^{\otimes}$ and following the constructions in Equation~\eqref{eq:cei}, we define operators $\Theta^\otimes$, $H^\otimes$, $F^\otimes$, and $\delta^{\otimes}$ for the category $\CC\otimes \Cl$, compatible with $p$.
\begin{lem}\label{lem:commutators}
Let $\CC$ be of Calabi-Yau dimension $d$. On the category $\CC\otimes \Cl$ the maps $\Theta^\otimes$, $H^\otimes$, $F^\otimes$, and $\delta^{\otimes}$ satisfy the following identities:
\begin{itemize}
\item[(a.)] $H^\sym \circ (p\otimes p) = (-1)^d (H^{\otimes})^\sym$.
\item[(b.)] $p \circ F^{\otimes} = F\circ p$.
\item[(c.)] $\delta \circ (p\otimes p) = (-1)^d \delta^{\otimes}$.
\item[(d.)] $ p \circ \Theta^{\otimes} = \Theta \circ p$.
\end{itemize}
\end{lem}

\begin{proof}
We begin with part $(a.)$. It suffices to prove $H\circ (p\otimes p) = H^{\otimes}$. For this, we compute
\begin{align*}
H^{\otimes} (\alpha,\beta) =&  - \langle S^\otimes \tau_{\geq 1} R^\otimes (\alpha), \beta \rangle_{\sf res}\\
=& - (-1)^d\langle p S^\otimes \tau_{\geq 1} R^\otimes (\alpha), p\beta \rangle_{\sf res}\\
=& - (-1)^d \langle S \tau_{\geq 1} R (p\alpha), p\beta \rangle_{\sf res}\\
=& (-1)^d H(p\alpha,p\beta)
\end{align*}
The sign $(-1)^d$ is from Lemma~\eqref{lem:5-3}. Part $(b.)$ is similar to $(a.)$, and will be omitted. 

For part $(c.)$, from $(a.)$ we see that $H^{\otimes}(\alpha,\beta)=0$ if $\alpha\in {\sf ker}(p)$ or $\beta \in {\sf ker}(p)$. Hence we may simply take 
\[ \delta^\otimes = (-1)^d\begin{bmatrix}
\delta & 0\\
0 & 0 
\end{bmatrix},\]
which implies $(c.)$ immediately. Part $(d.)$ is a straightforward calculation. Indeed, by definition $\Theta: L^\CC_- \ra L^\CC_+[1]$ is given by $\Theta( \alpha_0 + \alpha_1 u^{-1} +\cdots ) = B\alpha_0$. Hence we have
\[ p \big(\Theta^\otimes ( \alpha_0 + \alpha_1 u^{-1} +\cdots)\big) = pB\alpha_0=Bp\alpha_0 = \Theta \big(p(\alpha)\big).\]
\end{proof}

With these preparations, we are ready to prove the following

\begin{thm}\label{thm:clifford}
Let $\CC$ be of Calabi-Yau dimension $d$. Let $s$ be a splitting of $\CC$. Let $s^\otimes$ the induced splitting of $\CC\otimes {\sf Cl}$. Then we have
\[ \langle \sh(\alpha_1,\epsilon)  u^{k_1},\ldots,\sh(\alpha_n,\epsilon) u^{k_n}\rangle_{g,n}^{\CC\otimes {\sf Cl},s^\otimes}= \begin{cases}
    (-1)^{1-g-n}\langle \alpha_1 u^{k_1},\ldots,\alpha_n u^{k_n}\rangle_{g,n}^{\CC,s} \mbox{\;\; if $d$ is odd,}\\
    \langle \alpha_1 u^{k_1},\ldots,\alpha_n u^{k_n}\rangle_{g,n}^{\CC,s} \mbox{\;\; if $d$ is even.}
\end{cases}.\]
Here $\sh$ is the K\"unneth map given by shuffle product.
\end{thm}

\begin{proof}
The sign is more involved in the case when $\CC$ is odd. Hence we shall deal with this case in the proof. Since the map $\bar{\iota}$ from Equation~\eqref{eq:iota-bar} is an embedding in homology, the homology class $[\bar{\iota}(F_{g,n}^{\CC,s})]$ determines the CEI of $(\CC,s)$ completely. Thus, to prove this theorem, it suffices to show that we have
\[ p^{\otimes n-1} \circ \bar{\iota}(F_{g,n}^{\CC\otimes \Cl, s^{\otimes}}) \circ i = (-1)^g \cdot \bar{\iota} (F_{g,n}^{\CC,s}),\]
as both $p$ and $i$ are quasi-isomorphisms. Note that we got rid of a sign $(-1)^{n-1}$ due to applying $(n-1)$ Mukai-pairings when turning outputs on both sides to inputs.

To prove the identity above, we make use of Equation~\eqref{eq:cei} to write both sides as a sum over partially directed graphs. Since the rational number $\frac{\wt(\GG)}{\Aut(\GG)}$ is only depends on $\GG$, it suffices to prove that the contributions from each graph $\GG\in \Gamma((g,1,n-1))_m$ are equal, up to a sign $(-1)^g$. For this, the idea is to make use of Lemma~\ref{lem:5-3} to move the projection maps $p^{\otimes n-1}$ from the output legs all the way ``up" to the input leg in a given partially directed graph $\GG\in \Gamma((g,1,n-1))_m$. In the following comparison we shall refer to $p^{\otimes n-1} \circ \bar{\iota}(F_{g,n}^{\CC\otimes \Cl, s^{\otimes}}) \circ i $ as LHS, and $\bar{\iota}(F_{g,n}^{\CC,s})$ as RHS.

There is a partial ordering $>$ defined on the set of vertices $V_G$ of $\GG$: we set $w>v$ if there is a directed path from $w$ to $v$. Let $v$ be a minimal element in this partial order. Then an outgoing half-edge at the vertex $v$ can be exactly one of the following three cases:
\begin{itemize}
\item[(1)] an outgoing leg,
\item[(2)] part of an un-directed edge not in the spanning tree $T$,
\item[(3)] part of an un-directed edge  in the spanning tree $T$.
\end{itemize}
In the case $(1)$, at an output leg of $\GG$, in $\bar{\iota}(F_{g,n}^{\CC\otimes \Cl, s^{\otimes}})$ it is assigned the operator $R^{\otimes}$. By construction, we have
\[ p \circ R^{\otimes}  = R\circ p.\]
This shows that indeed, after moving $p$, the output leg contribution becomes $R$ which matches with the contribution from the RHS. In the case $(2)$, the un-directed edge is decorated by $(H^{\otimes})^\sym$. We use the identity $$H^\sym \circ p^{\otimes 2} = -(H^{\otimes})^\sym$$. While in the case $(3)$, the un-directed edge is by $\delta^{\otimes} $ for which we use the identity $$ \delta \circ p^{\otimes 2} = - \delta^{\otimes}.$$ After these replacements, every outgoing half-edge of $v$ has a projection operator $p$ adjacent to it. Thus, we can apply Lemma~\ref{lem:5-3} to move the $p$'s to the input half-edges of $v$, with a sign $(-1)^{1-g(v)+k(v)}$.

To continue moving the $p$'s ``up", let $w$ be a second-minimal element of $V_G$ in the partial order $>$. At such a vertex, two more cases can appear for an outgoing half-edge:
\begin{itemize}
\item[(4)] part of a directed edge not in the spanning tree $T$,
\item[(5)] part of a directed edge  in the spanning tree $T$.
\end{itemize}
In the case $(4)$, we move the operator $p$ using the identity
\[ p \circ \Theta^{\otimes} = \Theta \circ p.\]
While in the case $(5)$, we use the identity
\[ p \circ F^{\otimes} = F\circ p\]
After these replacements, every outgoing half-edge of $w$ has a projection operator $p$ adjacent to it. Using Lemma~\ref{lem:5-3} we may move the $p$'s to the input half-edges of $w$ as well. 

Continuing to perform the above replacements at vertices of $G$ according to the partial ordering $<$, in the end we arrive at the desired equality that
\[ p^{\otimes n-1} \circ \bar{\iota}(F_{g,n}^{\CC\otimes \Cl, s^{\otimes}}) \circ i = \pm\cdot \bar{\iota} (F_{g,n}^{\CC,s}).\]
In the above process the sign $\pm$ is equal to $\prod_v (-1)^{1-g(v)-k(v)} \cdot \prod_{e \in E^{un-dir}} (-1)$ with $E^{un-dir}$ the set of un-directed edges in $\GG$. But since $\GG$ is of type $(g,1,n-1)$, we have
\[ g= \sum_{v} (g(v)+k(v)-1) + |E^{un-dir}|,\]
which implies that $\prod_v (-1)^{1-g(v)-k(v)} \cdot \prod_{e \in E^{un-dir}} (-1) = (-1)^g$. The proof is complete.
\end{proof}

\vspace{1cm}

\textbf{Address:}

\vspace{.3cm}	
	
\noindent Lino Amorim: Department of Mathematics, Kansas State University, 138 Cardwell Hall, 1228 N. 17th Street, Manhattan, KS 66506, USA.

Email: {\tt lamorim@ksu.edu}\\

\noindent Junwu Tu: Institute of Mathematical Sciences, ShanghaiTech University. 393 Middle Huaxia Road, Pudong New District, Shanghai, China, 201210. 

Email: {\tt tujw@shanghaitech.edu.cn}
	
\end{document}